
\documentclass[12pt,amstex]{amsart}

\usepackage{mathptmx}
\usepackage{mathrsfs}

\usepackage{verbatim}
\usepackage{url}
\usepackage[all]{xy}
\usepackage{color}

\usepackage[colorlinks=true,citecolor=blue]{hyperref}

\usepackage{stmaryrd}
\usepackage{epsfig}
\usepackage{amsmath}
\usepackage{amssymb}

\usepackage{amscd}
\usepackage{graphicx}
\usepackage{pstricks}

\topmargin=0pt \oddsidemargin=0pt \evensidemargin=0pt
\textwidth=15cm \textheight=22.1cm \raggedbottom


\theoremstyle{plain}
\newtheorem{thm}{Theorem}[section]
\newtheorem{lem}[thm]{Lemma}

\newtheorem{prop}[thm]{Proposition}
\newtheorem{ques}{Question}
\newtheorem{cor}[thm]{Corollary}
\theoremstyle{definition}
\newtheorem{de}[thm]{Definition}
\newtheorem{exam}[thm]{Example}
\theoremstyle{remark}
\newtheorem{rem}[thm]{Remark}

\numberwithin{equation}{section}

\def \N {\mathbb{N}}

\def \Z {\mathbb{Z}}

\def \O {\mathcal{O}}

\def \A {\mathcal A}
\def \F {\mathcal F}
\def \G {\mathcal{G}}

\def \P {\mathcal P}

\def \X {\mathcal{X}}

\def \RP {{\bf RP}}
\def \M {{\bf M}}

\def \id {{\rm id}}

\def \a {\alpha }

\def \ep {\epsilon}
\def \d {\delta}
\def \D {\Delta}
\def \c {\circ}
\def \w {\omega}

\def \lra {\longrightarrow}

\begin{document}
\title[Topological dynamical systems induced by polynomials]{Topological dynamical systems induced by polynomials and combinatorial consequences}
\author{Wen Huang}
\author{Song Shao}
\author{Xiangdong Ye}

\address{School of Mathematical Sciences, University of Science and Technology of China, Hefei, Anhui, 230026, P.R. China}

\email{wenh@mail.ustc.edu.cn}
\email{songshao@ustc.edu.cn}
\email{yexd@ustc.edu.cn}

\subjclass[2010]{Primary: 37B05; 54H20}
\keywords{piecewise syndetic, saturation theorems, regular minimal flows, integral polynomials}

\thanks{This research is supported by NNSF of China (12031019, 12090012, 11971455, 11731003).}

\date{2021.9.19}
\date{2022.5.27}
\date{2022.6.13}
\date{July 3, 2022}
\date{July 23, 2022}
\date{Aug. 16, 2022}
\date{Nov. 12, 2022}

\begin{abstract}
 Let $d\in\N$ and $p_i$ be an integral polynomial with $p_i(0)=0$, $1\le i\le d$.
It is shown that if $S$ is piecewise syndetic in $\Z$,
then $$\{(m,n)\in\Z^2: m+p_1(n),\ldots,m+p_d(n)\in S\}$$ is piecewise syndetic in $\Z^2$, which extends the result by Glasner
and Furstenberg for linear polynomials. 
Our result is obtained by showing the density of minimal points of a dynamical system of $\Z^2$ action associated with
the piecewise syndetic set $S$ and the polynomials $\{p_1,\ldots,p_d\}$.

Moreover, it is proved that if $(X,T)$ is minimal, then
for each non-empty open subset $U$ of $X$, there is $x\in U$ with $\{n\in \Z: T^{p_1(n)}x\in U, \ldots, T^{p_d(n)}x\in U\}$
piecewise syndetic.



\end{abstract}

\maketitle
\tableofcontents





\section{Introduction}

In the article, integers, nonnegative integers and natural numbers
are denoted by $\Z$, $\Z_+$ and $\N$ respectively. An {\em integral polynomial} is a polynomial
taking integer values at the
integers. The polynomials $p(n)$ and $q(n)$ are {\em essentially distinct} if $p(n)-q(n)$ is not a constant function.

\subsection{Combinatorial motivation}\
\medskip

A subset $S$ of $\Z$ is {\em piecewise syndetic} if there exists $N\in \N$ such that
$\bigcup_{i=1}^N (S-i)$ contains arbitrarily long intervals. In general, a subset $S$ of a countable abelian group $(G,+)$ is {\em piecewise syndetic} if there exists a finite subset $F$ of $G$ such that $\bigcup_{i\in F} (S-i)$ is {\em thick} in $G$, i.e. it contains a shifted copy of any finite subset of $G$. The set of all piecewise syndetic subsets of $G$ will be denoted by $\F_{ps}(G)$, or simply $\F_{ps}.$

Van der Waerden's theorem states that each piecewise syndetic subset of $\Z$ contains arbitrarily long arithmetic progressions. That is, if $S\subseteq \Z$ is piecewise syndetic, then for all $d\in \N$, the set $\{(m, n)\in \Z^2: m,m+n,\ldots, m+(d-1)n\in S, n\neq 0\}$ is not empty.
Furstenberg and Glasner \cite{FG98} obtained the following beautiful result using the Stone-\v{C}ech compactification of $\Z$.

\medskip
\noindent{\bf Theorem} (Furstenberg-Glasner)
{\em
Let $d\in \N$ and $S$ be piecewise syndetic in $\Z$, then $$\{(m, n)\in \Z^2: m,m+n,\ldots, m+(d-1)n\in S\}$$}
is piecewise syndetic in $\Z^2$.

\medskip

Later Beiglb\"{o}ck \cite{Beiglbock} provided a combinatorial proof for the Furstenberg-Glasner's result just using van der Waerden's theorem.
Bergelson and Hindman  \cite{BH01} extended this result to apply to
many notions of largeness in arbitrary semigroups and to partition regular
structures other than arithmetic progressions. One of the questions asked in \cite[Question 4.7]{BH01} is as follows:

\medskip

\begin{ques}\label{q-bl} 
Let $d\in \N$ and $p_i$ be an integral polynomial with $p_i(0)=0$, $1\le i\le d$.
Is it true that whenever $S$ is piecewise syndetic in $\Z$, then $$\{(m,n)\in \Z^2: m+p_1(n),m+p_2(n),\ldots, m+p_d(n)\in S\}$$
is piecewise syndetic in $\Z^2$?
\end{ques}

\medskip

\subsection{Dynamical motivation}\
\medskip

By a {\em topological dynamical system} (t.d.s. or system for short) we mean a pair $(X,T)$, where $X$ is a compact metric space (with a metric $\rho$) and $T:X \to X$ is a homeomorphism.
For $x\in X$, the {\it orbit of $x \in X$} is defined by $\O(x,T)=\{T^nx: n\in \Z\}$.
A t.d.s. $(X,T)$ is {\it transitive} if for any pair of non-empty open subsets $U$ and $V$ of $X$, there is $n\in\Z$ with $U\cap T^{-n}V\not=\emptyset$;
it is {\it weakly mixing} if 
$(X\times X,T\times T)$ is transitive; and
it is {\it minimal} if $\O(x,T)$ is dense in $X$ for every $x\in X$. A point $x \in X $
is	{\it minimal} or {\it uniformly recurrent} if the subsystem $(\overline{\O (x,T)},T)$ is minimal.
Recall that a subset $S$ of $\Z$ is {\it syndetic} if it has a bounded gap,
i.e. there is $N\in \N$ such that $\{i,i+1,\cdots,i+N\} \cap S \neq
\emptyset$ for every $i \in {\Z}$. $S$ is {\it thick} if it
contains arbitrarily long runs of integers. It is easy to verify that
a subset $S$ of $\Z$ is piecewise syndetic if and only if it is an
intersection of a syndetic set with a thick set. Let $(X,T)$ be a t.d.s. and $x\in X$. A classic result states that $x$ is a minimal
point if and only if $N_T(x,U)\triangleq\{n\in \Z: T^nx\in U\}$ is syndetic for any neighborhood
$U$ of $x$ (\cite[Chapter 4]{GH}).

\medskip

The Birkhoff recurrence theorem claims that any t.d.s. $(X,T)$ has a recurrent point $x$, that is, there is some increasing sequence $\{n_k\}_{k=1}^\infty$ of $\N$ such that $T^{n_k}x\to x,$ as $k\to \infty$.
Birkhoff recurrence theorem has the following generalization: for any $d\in \N$, there exist some $x\in X$ and some increasing sequence $\{n_k\}_{k=1}^\infty$ of $\N$ such that $T^{in_k}x\to x,$ as $ k\to \infty$
simultaneously for $i=1,\ldots,d$. We refer to this theorem as {\it the multiple Birkhoff recurrence theorem}, which is equivalent to van der Waerden's Theorem (\cite[Theorem 2.6]{F},
\cite[Theorem 1.4]{FW}). The multiple Birkhoff recurrence theorem can be deduced from the multiple recurrence theorem of Furstenberg \cite[Theorem 1.5]{F77} which was proved by using deep measure theoretic tools. It is Furstenberg and Weiss who presented a topologically dynamical proof  of the theorem (\cite[Theorem 1.4]{FW}). In the theorem,  such a point $x$ is called a {\em multiply recurrent point}, i.e., $x^{\otimes d}\triangleq (x,x,\ldots, x)\in X^d$ is a recurrent point of $\tau_d\triangleq T\times T^2\times \cdots \times T^d$.

\medskip

Furstenberg \cite{F77} proved Szemer\'edi's theorem via the multiple Poincar\'{e} recurrence theorem, which can be used to prove
 that for a t.d.s. $(X,T)$ and any $T$-invariant Borel probability measure $\mu$
on $X$, for $\mu$ almost all $x\in X$, every $d\in \N$ and
every neighborhood $U$ of $x$ the set $N_{\tau_d}(x^{\otimes d}, U^d)=\{n\in \Z: (\tau_d)^nx^{\otimes d}\in U^d=U\times \cdots \times U \}$
has positive upper density. Can we say more properties related to $N_{\tau_d}(x^{\otimes d}, U^d)$?
In \cite{F81} Furstenberg asked a question about multiply uniform recurrence:
Does there always exist a point $x$ such that $x^{\otimes d}$ is a minimal point for $\tau_d$, i.e. $N_{\tau_d}(x^{\otimes d}, U^d)$ is syndetic
for each neighborhood $U$ of $x$?
Recall that a t.d.s. $(X,T)$ is {\it distal} if for any $x\not=y\in X$, $\liminf_{n\in \Z}\rho(T^nx, T^ny)>0$.
For a distal t.d.s. $(X,T)$  and any $x\in X$, $x^{\otimes d}$ is not only a recurrent point of $\tau_d$,
but also a minimal point of $\tau_d$ (\cite[Proposition 5.11]{F81}). In \cite{HSY-21} Huang, Shao and Ye demonstrated that there exists a t.d.s. $(X,T)$ without any multiply
minimal points, which gives a negative answer to the question by Furstenberg.

\medskip

According to this negative answer, one can ask  a weaker question:
 Let $(X,T)$ be a t.d.s. and $d\in \N$.
Is there a point $x\in X$ such that $x^{\otimes d}$ is piecewise syndetic recurrent? That is, for any neighborhood
$U$ of $x$, is the set
$$N_{\tau_d}(x^{\otimes d}, U^d)=\{n\in \Z: (\tau_d)^nx^{\otimes d}\in U\times \cdots \times U \}$$
piecewise syndetic?

For weakly mixing minimal t.d.s. there do exist such points (see Remark \ref{rem-2.8} below). In general, 
we conjecture that it is not true. Nervelessness, we have the following result:

\medskip
{\noindent \bf Theorem} \cite{HSY-21}\quad {\em Let $(X,T)$ be a minimal system and $d\in \N$. Then for any non-empty open
subset $U$ of $X$, there is some $x\in U$ such that
$N_{\tau_d}(x^{\otimes d}, U^d)$ is piecewise syndetic.}

\medskip
Moreover, we think that this result is sharp, since it was shown in \cite{HSY-21} for a minimal PI system $(X,T)$ and $x\in X$, $(x,x)$ is minimal under $T\times T^2$ if and only if for each neighborhood $U$ of $x$, $\{n\in\Z: T^nx\in U, T^{2n}x\in U\}\in \F_{ps}$, and we
conjecture that there is a minimal PI system $(X,T)$ which has no point $x\in X$ such that $x$ is multiply minimal,
for more discussions on this aspect see \cite{HSY-21}.
According to the result obtained in \cite{HSY-21}, 
it is natural to ask the following question. 



\begin{ques}\label{ques2}
Let $(X,T)$ be a minimal t.d.s. and $d\in \N$.  Let $p_{i}$ be an integral polynomial with $p_{i}(0)=0$, $1\le i\le d$.
Is it true that for each non-empty open set $U$, there is a point $x\in U$ such that 
$$N_{\{p_1,\ldots, p_d\}}(x,U):=\{n\in\Z: T^{p_1(n)}x\in U, \ldots, T^{p_d(n)}x\in U\}$$
is piecewise syndetic?
\end{ques}

\subsection{Main results of the paper}\
\medskip

In this paper, we give affirmative answers to Questions \ref{q-bl} and \ref{ques2}. Particularly, we have a stronger result under
some mild conditions for Question \ref{ques2}. That is, we have


\medskip
\noindent{\bf Theorem A:}\quad
{\em
Let $d\in \N$ and $p_i$ be an integral polynomial with $p_i(0)=0$, $1\le i\le d$. If $S$ is piecewise syndetic in $\Z$, then $$\{(m,n)\in \Z^2: m+p_1(n),m+p_2(n),\ldots, m+p_d(n)\in S \}$$}
is piecewise syndetic in $\Z^2$.

\medskip
We remark that Theorem A deepens some result of Bergelson and Leibman \cite{BL96} which is equivalent to the following statement: under the same conditions as in Theorem A,
$\{(m,n)\in \Z^2: m+p_1(n),m+p_2(n),\ldots, m+p_d(n)\in S, n\not=0 \}\not=\emptyset$.

\medskip
Meanwhile, we  prove the following result:

\medskip
\noindent{\bf Theorem B:}\quad
{\em
Let $d\in \N$ and $p_i$ be an integral polynomial with $p_i(0)=0$, $1\le i\le d$. If $S\in\F_{ps}(\Z)$, then there is $A\in\F_{ps}(\Z)$ such that for any $N\in \N$, there is some ${a}_N\in \Z$ with 
$$A\cap [-N,N]\subseteq \{n\in\Z:  a_N+p_1(n), a_N+p_2(n), \ldots, a_N+p_d(n)\in S\}.$$}

In particular, we have that
$$\{n\in \Z: \exists \ m\in \Z \ \text{such that}\ m+p_1(n),m+p_2(n),\ldots, m+p_d(n)\in S\}$$
is piecewise syndetic in $\Z$, which is known \cite{BL96} \footnote{By \cite[Corollary 1.9]{BL96},
for any t.d.s. $(X,T)$, all integral polynomials $p_1,\ldots, p_d$ vanishing at zero,  the set $\{n\in Z: \forall \ep>0, \exists x\in X \ \text{such that }\ \forall i \in \{1,2,\ldots,d\}, \rho(T^{p_i(n)}x,x)<\ep\}$ is an IP$^*$ set. By this result, one can show that for a piecewise syndetic subset $S$ of $\Z$, the set $\{n\in \Z: \exists \ m\in \Z \ \text{such that}\ m+p_1(n),m+p_2(n),\ldots, m+p_d(n)\in S\}$ is an IP$^*$ subset and it is syndetic.}.

\medskip

Theorem A and Theorem B will be immediate consequences of dynamical results developed in the current
paper, i.e. they follow by the following Theorems E and D respectively. First we state Theorem C which shows stronger answer to Question \ref{ques2} can be obtained
under additional conditions.

\medskip
\noindent{\bf Theorem C:}\quad
{\em
Let $(X,T)$ be a minimal t.d.s. and $d\in \N$.  Let $p_{i}$ be an integral polynomial with $p_{i}(0)=0$, $1\le i\le d$. If one of the following conditions is satisfied,
\begin{itemize}
  \item $(X,T)$ is weakly mixing; 
  \item $(X,T)$ is distal;
  \item $\deg (p_i)\ge 2, 1\le i \le d$,
\end{itemize}
then there is a dense $G_\delta$ subset $X_0$ such that for each $x\in X_0$
and each neighbourhood $U$ of $x$
$$N_{\{p_1,\ldots,p_d\}}(x,U)=\{n\in\Z: T^{p_1(n)}x\in U, \ldots, T^{p_d(n)}x\in U\}$$
is piecewise syndetic.
}
\medskip

Generally, we have the following result which implies Theorem B.
We believe that it is the best possible one that people can expect,


\medskip
\noindent{\bf Theorem D:}\quad
{\em
Let $(X,T)$ be a minimal t.d.s. and $d\in \N$. Let $p_i$
be an integral polynomial with $p_i(0)=0$, $1\le i\le d$.  Then for any non-empty open subset $U$ of $X$, there is $x\in U$ such that
$$N_{\{p_1,\ldots,p_d\}}(x,U)=\{n\in\Z: T^{p_1(n)}x\in U, \ldots, T^{p_d(n)}x\in U\}$$
is piecewise syndetic.
}
\medskip

Finally, Theorem A is a corollary of the following Theorem E.

\medskip
\noindent{\bf Theorem E:}\quad
{\em
Let $(X,T)$ be a minimal t.d.s. and $d\in \N$. Let $p_i$
be an integral polynomial with  $p_i(0)=0$, $1\le i\le d$.
Then for each $x\in X$ and each neighbourhood $U$ of $x$
$$N^{\Z^2}_{\{p_1,\ldots,p_d\}}(x,U)=\{(m,n)\in\Z^2: T^{m+p_1(n)}x\in U, \ldots, T^{m+p_d(n)}x\in U\}$$
is piecewise syndetic in $\Z^2$.
}

\subsection{The main ideas of the proofs}\
\medskip

During the past several decades, one of the most important achievements in ergodic theory is that for a measure preserving system
$(X,\mathcal{X},\mu, T)$, each $f\in L^\infty (X, \mu)$ can be decomposed into three parts, i.e. $f=f_{\rm unif}+f_{\rm nil}+f_{\rm sml}$, where $f_{\rm nil}$ is the structured
one, $f_{\rm unif}$ is the random one and $f_{\rm sml}$ is the error one, see for example \cite[Theorem 16.10]{HK18}. The structured part $f_{\rm nil}$ depends on the algebraic
structure of the $d$-step pro-nilfactor of $(X, \mathcal{X},\mu, T)$.

\medskip

Searching for the corresponding result in topological dynamics (minimal systems) has a long history, starting from the early work
of Glasner in 1994 \cite{G94}. Finally, the problem was settled down in the recent work of Glasner, Huang, Shao, Weiss and Ye
\cite{GHSWY} by showing a saturation theorem for linear polynomials, which opens a door for possible study of many questions. In ergodic theory, when dealing with
questions related to polynomials, a powerful tool is the van der Corput lemma. Unfortunately, there is no similar tool in topological dynamics.
Thus, the saturation theorem for polynomials in topological dynamics can not be deduced from the linear one directly and one needs extra work to
complete it.  In the recent nice work by Qiu \cite{Qiu}, the author proved a weak version of the saturation theorem for polynomials, which allows him
to give a complete answer to a well known question concerning the density of  polynomial orbits in a totally minimal systems (for some previous
progress see \cite{GHSWY}). One of the main contributions and tools in the current work is 
Theorems \ref{thm-poly-sat1} and \ref{thm-poly-sat2} (strengthening the result of Qiu) which we will refer as the saturation theorems for polynomials.

\medskip
By Furstenberg's correspondence principle, we can associate each piecewise syndetic subset $S$ of $\Z$ with a minimal system $(X,T)$, whose dynamical properties are related to combinatorial properties of $S$. The other main contribution of the current
paper is that we further associate $(X,T)$ and given polynomials $\mathcal{A}=\{p_1,\ldots,p_d\}$ two dynamical systems:
one is $\Z$ action $(W_x^\A,\sigma)$, and the other one is $\Z^2$ action $(M_\infty(X,\A), \langle T^{\infty},\sigma\rangle)$
(see Section \ref{Section-Def-M}).
For example, when $\mathcal{A}=\{n^2\}$ and $x\in X$ we define
$$W_x^\A=\overline{\O}((T^{n^2}x)_{n\in\Z},\sigma)=\overline{\{(\ldots, T^{(n-1)^2}x, \underset{\bullet}T^{n^2}x, T^{(n+1)^2}x,\ldots):n\in\Z\}} \subset X^{\Z},$$
where $`` \ \underset{\bullet } \ "$ means the $0$-th coordinate, and $\sigma$ is the left shift;
and
\begin{equation*}
  \begin{split}
  M_\infty(X,\A)&=\overline{\O}((T^{n^2}x)_{n\in\Z}, \langle T^{\infty},\sigma\rangle)\\
  &= \overline{\{(\ldots, T^{m+(n-1)^2}x, \underset{\bullet}T^{m+n^2}x, T^{m+(n+1)^2}x,\ldots):n,m\in\Z\}}\subset X^{\Z},
  \end{split}
\end{equation*}
where $T^\infty=\cdots\times T\times T\times\cdots$, and $\langle T^{\infty},\sigma\rangle$
is the group generated by $T^\infty$ and $\sigma$.
We believe that such a consideration is new, which is not only very powerful in the current paper, but also provides a useful tool to study some other questions.
Note that the idea was inspired by some consideration carried out in \cite[Chapter 14]{HK18} for nilmanifolds and the shift $\sigma$. 
Moreover, in a companion paper
\cite{HSY-new}
by the same authors, we will construct measure preserving transformations associated with a given measure preserving transformation and
given polynomials, and derive some applications.

By showing the density of minimal points in $M_\infty(X,\A)$ (Theorem \ref{thm-poly-M-t.d.s}), we obtain our Theorem E (using Theorems \ref{Thm-equivalence1} and \ref{thm-poly-M-t.d.s}), and by lifting the property
from $\infty$-step pro-nilfactors, we show Theorem D. And then, Theorems A and B follow from Theorems E and D immediately.
We remark that the proof of Theorem D can be argued by the same method used in the proof of Theorem E, i.e. by showing that
if $\deg (p_i)\ge 2,\ 1\le i\le d$, then the minimal points are dense in $W_x^\A$ under $\sigma$, and then dealing further when there
are linear terms. Here we adopt another proof
to show how to use the saturation theorem for polynomials in a different way.

\medskip
Precisely, to show Theorem D we use the saturation theorem for polynomials and Leibman's result on the polynomial orbits on
nilmanifolds. When there are no linear terms in $\A$ (resp. $(X,T)$ is weakly mixing or distal), the situation is relatively
simple and we can get a stronger result (Theorem C). But
when there are linear terms in $\A$, things are much involved and we need some extra discussion, see the Claim in the proof
of Theorem C.

\medskip
Unfortunately, we do not know how to show Theorem E by using the idea of the proof of Theorem D.
To illustrate the idea and the difficult we face in the proof of Theorem E,
we first give the proof when $(X,T)$ is distal (see Theorem \ref{thm-distal}). From the proof of the distal case, we realize that 
we need suitable elements in ${\rm Aut} (X,T)$, the group of automorphisms of $(X,T)$.
To find the elements that we need, we pass our discussion to the universal minimal flow to find rich elements in ${\rm Aut} (X,T)$ for our purpose,
i.e. we use the regularizer from the theory of minimal flows 
to prove our theorem. Since our approach to show Theorem E uses heavy tools, it will be nice if one finds some elementary arguments to prove it.

\medskip
To finish the subsection we state the following corollary of Theorem E.
Note that a t.d.s. is an $M$-system if it is transitive and the set of minimal points is dense.

\medskip
\noindent{\bf Corollary F}: {\it Let $(X,T)$ be a t.d.s.  and $d\in \N$. Let $\A=\{p_1,\ldots, p_d\}$
be an family of integral polynomials with $p_i(0)=0$, $1\le i\le d$. Then

\begin{enumerate}
\item $(X,T)$ has dense minimal points if and only if so  does $(M_\infty(X,\A), \langle T^{\infty},\sigma\rangle)$.

\item  $(X,T)$ is an $M$-system if and only if so is $(M_\infty(X,\A), \langle T^{\infty},\sigma\rangle)$.
\end{enumerate}
}
\medskip

We remark that generally the minimality of $(X,T)$ does not imply the minimality of $(M_\infty(X,\A), \langle T^{\infty},\sigma\rangle)$, see
Section \ref{Section-Def-M}.

\subsection{Organization of the paper}

We organize the paper as follows. In Section \ref{Section-pre} we state some necessary notions and some known facts used in the paper. In Section \ref{section-saturate}, we give some saturation theorems for polynomials, which provide important tools in the paper. In Section \ref{Section-Def-M}, we define systems $N_\infty(X,\A)$ and $M_\infty(X,\A)$ induced by a t.d.s. $(X,T)$ and a family of integral polynomials $\A$, whose dynamical properties play very important roles in our study. After the preparation in the previous sections, in Sections \ref{Section-nil}, \ref{Section-Z-poly-rec} and \ref{Section-Z2-recurrence},
we prove Theorems C, D, E and Corollary F. In Section \ref{Section-combinatorial}, we deduce the combinatorial consequences and thus prove Theorems A and B. In the final section we state some open questions.

\medskip
We thank Dr. Jiahao Qiu and Hui Xu for careful readings and corrections.


\section{Preliminaries}\label{Section-pre}

In this section we give some necessary notions and some known facts used in the paper.
Note that, instead of just considering a single transformation $T$,
we will consider commuting
transformations $T_1$, $\ldots$ , $T_k$ of $X$. We only recall some basic
definitions and properties of systems for
one transformation. Extensions to the general case are
straightforward.

\subsection{Topological dynamical systems}\

\subsubsection{}
By a {\em topological dynamical system} (t.d.s.) we mean a pair $(X,T)$, where
$X$ is a compact Hausdorff space (in the paper we mainly deal with a compact metric space $X$ with a metric $\rho$ except in
Section \ref{Section-Z2-recurrence}) and $T:X\to X$ is a
homeomorphism. Let $(X, T)$ be a t.d.s. and $x\in X$. Then $\O(x,T)=\{T^nx: n\in \Z\}$ denotes the
{\em orbit} of $x$. A subset $A\subseteq X$ is called {\em invariant} (or {$T$-invariant}) if $TA= A$. When $Y\subseteq X$ is a closed and
invariant subset of the system $(X, T)$, we say that the system
$(Y, T|_Y)$ is a {\em subsystem} of $(X, T)$. Usually we will omit the subscript, and denote $(Y, T|_Y)$ by $(Y,T)$.
If $(X, T)$ and $(Y, S)$ are two t.d.s., their {\em product system} is the
system $(X \times Y, T\times S)$.

\medskip

When there are more than one t.d.s. involved, usually we should use different symbols to denote different transformations on different spaces, for example, $(X,T), (Y,S), (Z,H)$ etc. But when no confusing, it is convenient to use only one symbol $T$ for all transformations in all t.d.s. involved,  for example, $(X,T), (Y,T), (Z,T)$ etc.
In this paper, we {\em  use the same symbol $T$ for the transformations in all t.d.s.}

\subsubsection{}
Let $X, Y$ be compact metric spaces and $\phi: X \to Y$ be a map.  For $n \geq 2$ let
\begin{equation}\label{ite-n}
\phi^{(n)}=\underbrace{\phi\times \cdots \times \phi }_{\text{($n$ times)}}: X^n\rightarrow Y^n.
\end{equation} Thus
we write $(X^n,T^{(n)})$ for the $n$-fold product system $(X\times	\cdots \times X,T\times \cdots \times T)$.
The diagonal of $X^n$ is $$\Delta_n(X)=\{(x,\ldots,x)\in X^n: x\in X\}.$$
When $n=2$ we write	$\Delta(X)=\Delta_2(X)$.

\subsubsection{}
A t.d.s. $(X,T)$ is called {\em minimal} if $X$ contains no proper non-empty
closed invariant subsets. It is easy to verify that a t.d.s. is
minimal if and only if every orbit is dense. In a t.d.s. $(X, T)$ we say that a point $x\in X$ is {\em minimal} if $(\overline{\O(x,T)}, T)$ is minimal.
When $X$ is a metric compact space, $(X,T)$ is transitive if and only if there is some point $x\in X$ such that $\overline{\O(x,T)}=X$, which is called a {\em transitive point}.

\subsubsection{}
Let $(X,T)$ be a t.d.s. A pair $(x,y)\in X^2$ is {\it proximal} if $\inf_{n\in \Z} \rho(T^nx,T^ny)=0$; and it is {\it distal} if it is not proximal. Denote by ${\bf P}(X,T)$ the set of all proximal pairs of $(X,T)$.
A t.d.s. $(X,T)$ is
called {\it distal} if $(x,x')$ is distal whenever $x,x'\in X$ are
distinct.


\subsubsection{}
For a t.d.s. $(X,T)$, $x\in X$ and $U\subseteq X$ let
$$N_T (x,U)=\{n\in \Z: T^nx\in U\}.$$
A point $x\in X$ is said to be {\em recurrent} if for every
neighborhood $U$ of $x$, $N_T (x,U)$ is infinite. Equivalently, $x\in
X$ is recurrent if and only if there is a sequence $n_i\to\infty$ such that
$T^{n_i}x\to x,$ as $ i\to\infty$. 

\subsubsection{}
Generally, a {\em $G$-system} is a triple $\X=(X, G, \Pi)$ or just $(X,G)$, where $X$ is a compact Hausdorff space, $G$ is a Hausdorff topological group with the unit $e$ and $\Pi: G\times X\rightarrow X$ is a continuous map such that $\Pi(e,x)=x$ and $\Pi(s,\Pi(t,x))=\Pi(st,x)$ for all $s,t\in G$ and $x\in X$. We shall fix $G$ and suppress the action symbol. 
An analogous definition can be given if  $G$ is a semigroup. Also, the notions of transitivity, minimality and weak mixing are naturally generalized to group actions.

\subsubsection{}
The following lemma is easy to be verified, and one can find a proof in \cite[Chapter 4]{GH}
or \cite[Chapter IV, (1.2)]{Vr}.

\begin{lem}\label{lem-minimal-point}
Let $(X,G)$ be a $G$-system and let $H$ be a syndetic \footnote{A subset $H$ of  a group $G$ is {\em syndetic} if there exists a compact subset $K$ of $G$ such that $G=KH$.} normal subgroup of $G$. Then a point $x$ is $G$-minimal if and only if it is $H$-minimal.
\end{lem}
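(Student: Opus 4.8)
The plan is to reduce both implications to a single structural picture: that $\overline{Gx}$ decomposes as a union of $H$-minimal sets which $G$ permutes among themselves. Throughout I will use the characterisation that a (sub)system is minimal exactly when it has no proper non-empty closed invariant subset. Two elementary facts drive everything. First, \emph{normality} of $H$ lets me move $H$-minimal sets around by $G$: for $g\in G$ the left translation $L_g\colon y\mapsto gy$ is a homeomorphism satisfying $h\,L_g(y)=L_g\big((g^{-1}hg)y\big)$ with $g^{-1}hg\in H$, so $L_g$ conjugates the $H$-action to itself; consequently $gM$ is $H$-invariant (resp. $H$-minimal) whenever $M$ is. Second, \emph{syndeticity} supplies a compact $K\subseteq G$ with $G=KH$, and then for any $H$-invariant closed set $M$ one has $GM=KHM=KM$; since $K\times M$ is compact and the action is continuous, $KM=\Pi(K\times M)$ is compact, hence closed in the Hausdorff space $X$, and it is visibly $G$-invariant. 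Applied to $M=\overline{Hx}$ this yields the decomposition $\overline{Gx}=K\overline{Hx}=\bigcup_{g\in G} g\overline{Hx}$.

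For the implication $H$-minimal $\Rightarrow$ $G$-minimal, I assume $\overline{Hx}$ is $H$-minimal and let $Y\subseteq\overline{Gx}$ be non-empty, closed and $G$-invariant. Picking $y\in Y$, the decomposition above gives $y\in g\overline{Hx}$ for some $g\in G$, and since $g\overline{Hx}$ is $H$-minimal we get $\overline{Hy}=g\overline{Hx}$. As $Y$ is closed and $H$-invariant, $g\overline{Hx}=\overline{Hy}\subseteq Y$, whence $Y\supseteq G\big(g\overline{Hx}\big)=G\overline{Hx}=K\overline{Hx}=\overline{Gx}$, so $Y=\overline{Gx}$ and $\overline{Gx}$ is $G$-minimal. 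The converse runs along the same lines. Assuming $\overline{Gx}$ is $G$-minimal, I first produce an $H$-minimal subset $M\subseteq\overline{Gx}$ by a Zorn's-lemma argument: a decreasing chain of non-empty closed $H$-invariant sets in the compact space $\overline{Gx}$ has non-empty closed $H$-invariant intersection. By the two facts, $GM=KM$ is a non-empty closed $G$-invariant subset of $\overline{Gx}$, so $G$-minimality forces $KM=\overline{Gx}$, i.e. $\overline{Gx}=\bigcup_{g} gM$. In particular $x\in gM$ for some $g$, and since $gM$ is $H$-minimal this gives $\overline{Hx}=gM$, so $\overline{Hx}$ is $H$-minimal.

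The only genuinely delicate points are topological rather than dynamical, and these are where I expect the main obstacle. The argument lives in a compact Hausdorff (not necessarily metrisable) $X$, so I must avoid sequential reasoning: existence of an $H$-minimal subset comes from Zorn's lemma together with the finite-intersection property, and the closedness of sets such as $K\overline{Hx}$ is obtained from compactness of $K$ and continuity of $\Pi$ rather than from any limit argument. The other step to handle with care is the normality identity $h\,gy=g(g^{-1}hg)y$, which is exactly what makes each $gM$ an $H$-set and lets $G$ permute the $H$-minimal pieces; without normality the translate $gM$ would only be minimal for the conjugate subgroup $gHg^{-1}$ and the whole decomposition would collapse. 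Once these are in place, both implications follow immediately from the displayed decomposition $\overline{Gx}=\bigcup_{g} g\overline{Hx}$ (resp. $\bigcup_{g} gM$).
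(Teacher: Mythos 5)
Your proof is correct. Note that the paper does not actually supply its own argument for this lemma — it only cites Gottschalk–Hedlund and de Vries — and what you have written is precisely the standard argument found in those sources: normality makes each translate $g\overline{Hx}$ (resp.\ $gM$) an $H$-minimal set, syndeticity plus compactness of $K$ makes $K\overline{Hx}$ (resp.\ $KM$) closed and $G$-invariant, and both implications then drop out of the decomposition $\overline{Gx}=\bigcup_{g}g\overline{Hx}$. Your care with the non-metrisable setting (Zorn's lemma for the existence of an $H$-minimal set, closedness via compactness of $KM$ rather than sequences) is exactly what is needed, since the paper applies this lemma to the universal minimal flow $\mathbf{M}$ in Section 7.
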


\subsection{Factor maps}\

\subsubsection{}
A {\em factor map} $\pi: X\rightarrow Y$ between two t.d.s. $(X,T)$
and $(Y, T)$ is a continuous onto map which intertwines the
actions (i.e. $\pi\circ T= T\circ \pi$); one says that $(Y, T)$ is a {\it factor} of $(X,T)$ and
that $(X,T)$ is an {\it extension} of $(Y,T)$. The systems are said to be {\em isomorphic} if $\pi$ is bijective.

\subsubsection{}
The proof of the following lemma is easy.

\begin{lem}\label{den-minimal}
We have the following
\begin{enumerate}
\item Let $(X,T)$ be minimal and $k\in\N$. Then minimal points of $T^{(k)}$ is dense in $X^k$.

\item Let $(X, \langle T,S\rangle)$ be a minimal system with $T\circ S=S\circ T$, where $T,S:X\rightarrow X$ are homeomorphisms. Then
the set of minimal points of $T$ (resp.  of $S$) is dense in $X$.

\item Let $\pi: (X, G)\rightarrow (Y, G)$ be a factor map. If $y\in Y$ is minimal, then there is
a minimal point $x\in \pi^{-1}(y)$.
\end{enumerate}
\end{lem}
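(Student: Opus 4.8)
The plan is to prove the three assertions separately; in each case the key idea is to manufacture an abundance of minimal subsets by exploiting homeomorphisms that commute with the relevant action.

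For part (3) I would first reduce to the case where the base is itself minimal. Since $y$ is a minimal point, $Y_0:=\overline{Gy}$ is a minimal $G$-subsystem of $Y$. Then $\pi^{-1}(Y_0)$ is a non-empty closed $G$-invariant subset of $X$, so by Zorn's lemma (equivalently, by the existence of minimal subsets in any compact $G$-system) it contains a minimal subsystem $M$. The image $\pi(M)$ is a non-empty closed $G$-invariant subset of the minimal system $Y_0$, hence $\pi(M)=Y_0\ni y$. Choosing any $x\in M$ with $\pi(x)=y$ then gives a point of $\pi^{-1}(y)$ that is minimal, since it lies in the minimal set $M$.

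For part (1) the idea is to spread the diagonal around $X^k$ by translations. The diagonal $\Delta_k(X)$ is a closed $T^{(k)}$-invariant set and $(\Delta_k(X),T^{(k)})$ is conjugate to the minimal system $(X,T)$, hence minimal. For each $\mathbf m=(m_1,\ldots,m_k)\in\Z^k$ put $S_{\mathbf m}=T^{m_1}\times\cdots\times T^{m_k}$, a homeomorphism of $X^k$ commuting with $T^{(k)}$. Because $S_{\mathbf m}$ commutes with $T^{(k)}$, the set $S_{\mathbf m}(\Delta_k(X))$ is again closed, $T^{(k)}$-invariant, and conjugate to $(\Delta_k(X),T^{(k)})$, hence a $T^{(k)}$-minimal set; in particular every one of its points is a minimal point of $T^{(k)}$. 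It remains to check that $\bigcup_{\mathbf m\in\Z^k}S_{\mathbf m}(\Delta_k(X))$ is dense in $X^k$: given $(z_1,\ldots,z_k)\in X^k$ and $\varepsilon>0$, take the common base point $x=z_1$ and use the minimality of $(X,T)$ to pick $m_i$ with $\rho(T^{m_i}z_1,z_i)<\varepsilon$ for each $i$ (with $m_1=0$); then $S_{\mathbf m}(z_1,\ldots,z_1)=(T^{m_1}z_1,\ldots,T^{m_k}z_1)$ is within $\varepsilon$ of the target in each coordinate. This yields density of the minimal points of $T^{(k)}$.

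For part (2) I would run the analogous argument with the commuting homeomorphism $S$ in place of the translations. Let $M\subseteq X$ be a $T$-minimal subset. Since $S$ commutes with $T$, each $S^nM$ ($n\in\Z$) is again a $T$-minimal set, so on the dense subset $\bigcup_{n\in\Z}S^nM$ every point is $T$-minimal. The set $Z:=\overline{\bigcup_{n\in\Z}S^nM}$ is closed, $T$-invariant (a closure of a union of $T$-invariant sets), and $S$-invariant (since $S(\bigcup_n S^nM)=\bigcup_n S^nM$), hence $\langle T,S\rangle$-invariant; by minimality of $(X,\langle T,S\rangle)$ we get $Z=X$. Therefore the $T$-minimal points are dense, and interchanging the roles of $T$ and $S$ gives the statement for $S$. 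None of these steps is deep: the only real content is choosing the right commuting symmetries, so that a single known minimal set can be translated to a dense family of minimal sets, together with the standard fact that a homeomorphism commuting with the action carries minimal sets to minimal sets. The mildest point requiring care is the density computation in part (1), where one fixes the first coordinate as a common base point whose dense orbit then approximates the remaining coordinates; for part (3) the only external input is the existence of minimal subsets of compact invariant sets, which holds for arbitrary (not necessarily metrizable) compact $G$-systems via Zorn's lemma, matching the generality needed later in the paper.
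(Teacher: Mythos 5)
Your proof is correct, and your argument for part (2) --- transporting a $T$-minimal set by the commuting powers of $S$ and using minimality of $(X,\langle T,S\rangle)$ to conclude that the union of the translates is dense --- is essentially the paper's own proof, which moves a single $T$-minimal point into a prescribed open set by some $T^{n_0}S^{m_0}$ and verifies minimality of the translate via syndeticity of return times. The paper dismisses (1) and (3) as standard; your written-out arguments for those two parts (translating the diagonal by $T^{m_1}\times\cdots\times T^{m_k}$, and taking a minimal subset of $\pi^{-1}(\overline{Gy})$ which must surject onto the minimal base) are the standard ones and are correct.
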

\begin{proof} (1) and (3) are standard facts. It remains to show (2). Let $x$ be a minimal point of $T$ and $U$ be a non-empty open
set of $X$. By the minimality of $(X, \langle T,S\rangle)$, there are $n_0,m_0\in\Z$ with $y=T^{n_0}S^{m_0}x\in U$. We claim that $y$ is a
minimal point of $T$. To see this, we show that $N_T(y,V)$ is syndetic for each neighbourhood $V$ of $y$. Let $V$ be a neighborhood of $y$. We have $x\in T^{-n_0}S^{-m_0}V$, which implies that $\{n\in\Z:T^nx\in T^{-n_0}S^{-m_0}V\}$
is syndetic, and so is $\{n\in\Z:T^ny\in V\}$. This shows that the minimal points of $T$ is dense in $X$. Similarly, the minimal points of $S$ is dense in $X$ too.
\end{proof}

\subsubsection{}
Let $\pi: (X,T)\rightarrow (Y, T)$ be a factor map. Then
$$R_\pi=\{(x_1,x_2):\pi(x_1)=\pi(x_2)\}$$
is a closed invariant equivalence relation, and $Y=X/ R_\pi$.

Let $(X,T)$ and $(Y,T)$ be t.d.s. and let $\pi: (X,T) \to (Y,T)$ be a factor map.
One says that:
\begin{itemize}
  \item $\pi$ is an {\it open} extension if it sends open sets to open sets; 
  \item $\pi$ is a {\it proximal} extension if
$\pi(x_1)=\pi(x_2)$ implies $(x_1,x_2) \in {\bf P} (X,T)$;
  \item $\pi$ is  a {\it distal} extension if $\pi(x_1)=\pi(x_2)$ and $x_1\neq x_2$ implies $(x_1,x_2) \not\in {\bf P} (X,T)$;
  \item $\pi$ is an {\it almost one to one} extension  if there exists a dense $G_\d$ set $X_0\subseteq X$ such that $\pi^{-1}(\{\pi(x)\})=\{x\}$ for any $x\in X_0$;
\item $\pi$ is an {\it equicontinuous or isometric} extension if for any $\ep >0$ there exists $\d>0$
such that $\pi(x_1)=\pi(x_2)$ and $\rho(x_1,x_2)<\d$ imply $\rho(T^n x_1,T^n x_2)<\ep$ for any $n\in \Z$.

\end{itemize}

One can use the following
construction to modify a factor map to be open via almost one to one maps, which is due originally to Veech
(see \cite[Theorem 3.1]{Veech}).

\begin{thm}\label{thm-O}
Given a factor map $\pi:X\rightarrow Y$ between minimal t.d.s.
$(X,T)$ and $(Y,T)$, there exists a commutative diagram of factor
maps (called {\em O-diagram})
\[
\begin{CD}
X @<{\varsigma^*}<< X^*\\
@V{\pi}VV      @VV{\pi^*}V\\
Y @<{\varsigma}<< Y^*
\end{CD}
\]
such that

\noindent (a) $\varsigma$ and $\varsigma^*$ are almost one to one
extensions;

\noindent (b) $\pi^*$ is an open extension;

\noindent (c) $X^*$ is the unique minimal set in $R_{\pi
\varsigma}=\{(x,y)\in X\times Y^*: \pi(x)=\varsigma (y)\}$ and
$\varsigma^*$ and $\pi^*$ are the restrictions to $X^*$ of the
projections of $X\times Y^*$ onto $X$ and $Y^*$ respectively.
\end{thm}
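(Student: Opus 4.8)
The plan is to regularize $\pi$ by replacing its discontinuous fiber map with a continuous one living in a hyperspace. Let $2^X$ denote the space of non-empty closed subsets of $X$ with the Vietoris (Hausdorff) topology; it is a compact metric space on which $T$ acts as a homeomorphism, and I keep the name $T$ for this induced action. Consider the fiber map
\[
\Phi:Y\to 2^X,\qquad \Phi(y)=\pi^{-1}(y),
\]
which is $T$-equivariant and upper semicontinuous (by compactness of $X$ and continuity of $\pi$). Since $Y$ is minimal, the set $Y_c\subseteq Y$ of continuity points of $\Phi$ is a dense, $T$-invariant $G_\delta$ set, and all the regularity we gain will come from restricting attention to $Y_c$.

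First I would build the bottom row. Set
\[
Y^*=\overline{\{(y,\Phi(y)):y\in Y_c\}}\subseteq Y\times 2^X ,
\]
a closed $T$-invariant set, and let $\varsigma:Y^*\to Y$ be the projection onto the first coordinate. Over a continuity point $y\in Y_c$, any limit $(y,A)=\lim(y_n,\Phi(y_n))$ is forced to have $A=\Phi(y)$, so $\varsigma^{-1}(y)$ is the single point $(y,\Phi(y))$; since $\varsigma^{-1}(Y_c)$ is a dense invariant $G_\delta$ in $Y^*$, this already shows $\varsigma$ is almost one to one. Moreover $Y^*$ is \emph{minimal}: any minimal subset surjects onto $Y$ (its image is closed, invariant and non-empty), hence contains every singleton fiber $(y,\Phi(y))$ with $y\in Y_c$, and therefore contains their closure $Y^*$.

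Next I would pull back. Because $\varsigma$ is almost one to one it is a proximal extension, and hence so is the projection of the fibered product $R_{\pi\varsigma}=\{(x,\eta)\in X\times Y^*:\pi(x)=\varsigma(\eta)\}$ onto $X$: two points $(x,\eta_1),(x,\eta_2)$ over the same $x$ satisfy $\varsigma(\eta_1)=\varsigma(\eta_2)$, so $(\eta_1,\eta_2)\in{\bf P}(Y^*,T)$ and thus $((x,\eta_1),(x,\eta_2))\in{\bf P}(R_{\pi\varsigma},T)$. A proximal extension of a minimal system carries a \emph{unique} minimal subset; let $X^*$ be this set and let $\varsigma^*,\pi^*$ be the restrictions of the two coordinate projections. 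Both are onto by minimality. Over $x\in\pi^{-1}(Y_c)$ (a dense invariant $G_\delta$ of $X$) the fiber $\varsigma^{-1}(\pi(x))$ is a single point, so $\varsigma^*$ has singleton fibers there and is almost one to one.

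The hard part is the openness of $\pi^*$. A factor map between minimal metric systems is open exactly when its fiber map $\eta\mapsto(\pi^*)^{-1}(\eta)$ is continuous, and this map is always upper semicontinuous, so it suffices to prove lower semicontinuity. Writing $\eta=(y,A)$, I claim that $\{x:(x,\eta)\in X^*\}=A$, i.e. the $X$-fiber of $\pi^*$ coincides with the \emph{continuously varying} $2^X$-coordinate of $\eta$. For $\eta$ over a continuity point this is checked directly: surjectivity of $\varsigma^*$ together with the fact that $\varsigma^{-1}(\pi(x))$ is a singleton forces $A\times\{\eta\}\subseteq X^*$, while the constraint $\pi(x)=y$ forces the reverse inclusion. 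The general case follows by approximating an arbitrary $\eta$ by points $\eta_k=(y_k,A_k)\in\varsigma^{-1}(Y_c)$ with $A_k\to A$ in $2^X$ and using that $(\pi^*)^{-1}(\varsigma^{-1}(Y_c))$ is dense in $X^*$: Hausdorff convergence $A_k\to A$ transports fiber points in both directions, giving $\{x:(x,\eta)\in X^*\}=A$ for every $\eta$. Since $\eta\mapsto A$ is continuous, the fiber map $\eta\mapsto A\times\{\eta\}$ is continuous, so $\pi^*$ is open. The main obstacle is exactly this step: one must verify that passing to the graph-closure over continuity points does not merely make $\varsigma$ almost one to one, but genuinely continuizes the fibers of the pulled-back map — and the entire reason for working inside $2^X$ is to make that continuity automatic.
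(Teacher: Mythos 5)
Your proof is correct; note that the paper does not prove this statement at all but simply quotes it from Veech, and what you have written is precisely the classical construction behind that citation: continuize the u.s.c.\ fiber map $y\mapsto\pi^{-1}(y)$ over its residual set of continuity points, take $Y^*$ to be the closure of the resulting graph in $Y\times 2^X$, and pull back. All the delicate points are handled properly — in particular the identification of $X^*$ with the ``tautological'' set $\{(x,(y,A)):x\in A\}$ (one inclusion from approximating a general $\eta$ by graph points over $Y_c$ and lifting fiber points along Hausdorff convergence, the other from closedness of that set together with uniqueness of the minimal set in the proximal pullback), which is exactly what makes the fiber of $\pi^*$ equal to the continuously varying second coordinate and hence $\pi^*$ open.
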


We note that when $\pi$ is open, we have $X^*=X,\ Y^*=Y$ and $\pi^*=\pi$.


\subsection{Furstenberg families}\
\medskip

Let us recall some notions related to Furstenberg families (for
details see \cite[Chapter 2]{Ak97} or \cite[Chapter 9]{F}). We only discuss subsets of $\Z$ here, and it is similar for subsets of $\Z^d$.

\subsubsection{}
Let $\P=\P({\Z})$ be the collection
of all non-empty subsets of $\Z$. A subset $\F$ of $\P$ is a {\em
(Furstenberg) family}, if it is hereditary upwards, i.e., $F_1
\subseteq F_2$ and $F_1 \in \F$ imply $F_2 \in \F$. A family $\F$ is
{\it proper} if it is a proper subset of $\P$, i.e. neither empty
nor all of $\P$. If a proper family
$\F$ is closed under finite intersections, then $\F$ is called a {\it
filter}. For a family $\F$, the {\it dual family} is
$$\F^*=\{F\in\P: {\Z} \setminus F\notin\F\}=\{F\in \P:F \cap F' \neq
\emptyset \ for \ all \ F' \in \F \}.$$
Denote by $\F_{inf}$ the family consisting of all infinite subsets of $\Z$.
The
collection of all syndetic (resp. thick) subsets of $\Z$ is denoted by
$\F_s$ (resp. $\F_t$). Note that $\F_s^*=\F_t$ and $\F_t^*=\F_s$. Denote the set of
all piecewise syndetic sets by $\F_{ps}$. A set $F$ is called {\em thickly syndetic} if for every $N\in \N$ the positions where intervals with length $N$ runs begin form
a syndetic set. Denote the set of
all thickly syndetic sets by $\F_{ts}$, and we have $\F_{ts}$ is a filter and $\F^*_{ps}=\F_{ts}, \F^*_{ts}=\F_{ps}$.

\subsubsection{}
Let $\F$ be a family and $(X,T)$ be a t.d.s. We say $x\in X$ is
$\F$-{\it recurrent} if for each neighborhood $U$ of $x$, $N(x,U)\in
\F$. So the usual recurrent point is just $\F_{inf}$-recurrent one.
The following proposition characterizes piecewise syndetic recurrence (see for example \cite[Lemma 2.1]{HY05} or \cite[Theorem 3.1]{Ak97}).

\begin{prop}\label{prop-M}
Let $(X,T)$ be a transitive t.d.s. Then the following are equivalent:
\begin{enumerate}
  \item $(X,T)$ is an $M$-{system}, i.e., it is transitive and the minimal points are dense in $X$;
  \item each transitive point $x$ is $\F_{ps}$-recurrent; 
  \item there is a transitive point $x$ that is $\F_{ps}$-recurrent.
\end{enumerate}
\end{prop}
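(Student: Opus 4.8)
The plan is to establish the cycle $(1)\Rightarrow(2)\Rightarrow(3)\Rightarrow(1)$, using throughout the classical fact (quoted in the excerpt) that a point is minimal precisely when its return-time sets $N(z,W)$ to neighborhoods $W$ are syndetic.

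For $(1)\Rightarrow(2)$, let $x$ be a transitive point and $U$ a neighborhood of it. First I would use the density of minimal points to select a minimal point $z\in U$ together with a neighborhood $W$ of $z$ satisfying $\overline{W}\subseteq U$; put $\eta=\rho(\overline{W},X\setminus U)>0$. Minimality of $z$ makes $S:=N(z,W)$ syndetic, say with gap bound $L$. The key is to transport the syndetic return pattern of $z$ onto the orbit of $x$: fixing $K\in\N$, continuity of the finitely many maps $T^0,\dots,T^K$ yields $\delta>0$ such that $\rho(y,z)<\delta$ forces $T^n y\in U$ whenever $0\le n\le K$ and $T^n z\in W$. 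By transitivity I pick $m$ with $\rho(T^m x,z)<\delta$, which gives $m+(S\cap[0,K])\subseteq N(x,U)$. Since $S$ meets every subinterval of $[0,K]$ of length $L$, a short book-keeping argument shows $\bigcup_{i=0}^{L-1}\big(N(x,U)-i\big)$ contains the interval $[m,m+K-L]$; letting $K\to\infty$ makes this union thick, so $N(x,U)\in\F_{ps}$.

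The implication $(2)\Rightarrow(3)$ is immediate, since a transitive t.d.s.\ on a compact metric space possesses a transitive point. For $(3)\Rightarrow(1)$, let $x$ be a transitive $\F_{ps}$-recurrent point. I would first reduce the density of minimal points to the single claim that \emph{$\overline{U}$ contains a minimal point for every neighborhood $U$ of $x$}: given an arbitrary open set $V\subseteq X$, transitivity supplies $k$ with $T^k x\in V$, and choosing an open $U$ with $x\in U$ and $\overline{U}\subseteq T^{-k}V$, a minimal point $w\in\overline{U}$ produces the minimal point $T^k w\in V$, whence minimal points are dense.

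To prove the claim, I use that $N(x,U)\in\F_{ps}$, so there is $N$ with $\bigcup_{i=0}^{N}\big(N(x,U)-i\big)$ thick; equivalently, writing $V:=\bigcup_{i=0}^{N}T^{-i}U$, the orbit of $x$ lingers in $V$ over arbitrarily long runs. Centering these runs and passing to a convergent subsequence yields a point $z$ with $T^s z\in\overline{V}$ for all $s\in\Z$, so $\overline{\O(z,T)}\subseteq\overline{V}=\bigcup_{i=0}^{N}T^{-i}\overline{U}$. Taking a minimal subset of $\overline{\O(z,T)}$ gives a minimal point $w$, which must lie in $T^{-i_0}\overline{U}$ for some $i_0$, and then $T^{i_0}w$ is a minimal point in $\overline{U}$, proving the claim. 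I expect this last step to be the main obstacle: the genuine content is extracting the \emph{dynamical} minimality from the purely combinatorial piecewise-syndeticity, i.e.\ the compactness/limit argument that traps an entire orbit closure inside $\overline{V}$, after which the existence of a minimal subset finishes the proof. The remaining delicacy is merely to keep the finite union $\bigcup_{i=0}^N T^{-i}\overline{U}$ and the syndetic gap bound $L$ tracked carefully.
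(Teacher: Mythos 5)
Your proof is correct. The paper does not prove Proposition \ref{prop-M} itself but only cites \cite{HY05} and \cite{Ak97}; your argument — transporting the syndetic return set of a nearby minimal point onto the orbit of the transitive point for $(1)\Rightarrow(2)$, and for $(3)\Rightarrow(1)$ trapping an entire orbit closure inside $\bigcup_{i=0}^{N}T^{-i}\overline{U}$ by centering the long runs, passing to a limit, and extracting a minimal subset — is exactly the standard argument given in those references.
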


\begin{rem}
Proposition \ref{prop-M} still holds for t.d.s. under $\Z^k$-actions.
Let $T_1,T_2,\ldots, T_k: X\rightarrow X$ be commuting homeomorphisms ($k\in \N$). Let $G=\langle T_1,T_2,\ldots, T_d\rangle$ be the group generated by $T_1,T_2,\ldots, T_k$. Then $(X,G)$ is an $M$-system if and only if for each (some) transitive point $x\in X$ and any neighbourhood $U$ of $x$, $$N_G(x,U)=\{(n_1,n_2,\ldots, n_k)\in \Z^k: T_1^{n_1}\cdots T_k^{n_k}x\in U\}$$ is a piecewise syndetic subset of $\Z^k$.
For a proof, we refer to \cite[Theorem 3.1]{Ak97}.
\end{rem}


\subsection{Some facts about hyperspaces} \label{sub:ellis} \

\subsubsection{}
Let $X$ be a compact metric space. Let $2^X$ be the collection of nonempty closed subsets of $X$.
Let $\rho$ be the metric on $X$,
then one may define a metric on $2^X$ as follows:
\begin{equation*}
\begin{split}
 \rho_H(A,B)& = \inf \{\ep>0: A\subseteq B_\ep(B), B\subseteq B_\ep(A)\}\\
 &= \max \{\max_{a\in A} \rho(a,B),\max_{b\in B} \rho(b,A)\},
\end{split}
\end{equation*}
where $\rho(x,A)=\inf_{y\in A} \rho(x,y)$ and $B_\ep (A)=\{x\in X: \rho(x, A)<\ep\}$.
The metric $\rho_H$ is called the {\em Hausdorff metric} of $2^X$.

Let $\{A_i\}_{i=1}^\infty$ be an arbitrary sequence of subsets of $X$. Define
$$\liminf A_i=\{x\in X: \text{for any neighbourhood $U$ of $x$, $U\cap A_i\neq \emptyset$ for all but finitely many $i$}\};$$
$$\limsup A_i=\{x\in X: \text{for any neighbourhood $U$ of $x$, $U\cap A_i\neq \emptyset$ for infinitely many $i$}\}.$$
We say that $\{A_i\}_{i=1}^\infty$ converges to $A$, denoted by $\lim_{i\to \infty} A_i=A$, if
$$\liminf A_i=\limsup A_i=A.$$
Now let $\{A_i\}_{i=1}^\infty\subseteq 2^X$ and $A\in 2^X$. Then $\lim_{i\to\infty} A_i=A$ if and only if $\{A_i\}_{i=1}^\infty $ converges to $A$ in $2^X$ with respect to the Hausdorff metric.

\subsubsection{}
Let $X,Y$ be two compact metric spaces. Let $F: Y\rightarrow 2^X$ be a map and $y\in Y$.
We say that $F$ is {\em upper semi-continuous (u.s.c.)} at $y$ if whenever $\lim y_i=y$, one has that $\limsup F(y_i)\subseteq F(y)$. If $F$ is u.s.c. at every point of $Y$, then we say that $F$ is u.s.c.
We say $F$ is {\em lower semi-continuous (l.s.c.)} at $y$ if whenever $\lim y_i=y$, one has that $\liminf F(y_i)\supset F(y)$. If $F$ is l.s.c. at every point of $Y$, then we say that $F$ is l.s.c.


It is easy to verify that $F: Y\rightarrow 2^X$ is u.s.c. at $y\in Y$ if and only if for each $\ep>0$ there exists a neighbourhood $U$ of $y$ such that $F(U)\subseteq B_\ep(F(y))$; and $F: Y\rightarrow 2^X$ is l.s.c. at $y\in Y$ if and only if for each $\ep>0$ there exists a neighbourhood $U$ of $y$ such that $F(y)\subseteq B_\ep(F(y'))$ for all $y'\in U$.

If $f: X\rightarrow Y$ is a continuous map, then it is easy to verify that
$$F=f^{-1}: Y\rightarrow 2^X, y\mapsto f^{-1}(y)$$ is u.s.c. Let $(X,T)$ be t.d.s. Then the map $$F: X\rightarrow 2^X, x\mapsto \overline{\O(x,T)}$$ is l.s.c. (see Subsection \ref{subsec-lsc}).

\medskip

We have the following well known result, for a proof see \cite[p.70-71]{Kura2} and \cite[p.394]{Kura1}, or \cite{Fort}.
\begin{thm}\label{thm-Fort}
Let $X,Y$ be compact metric spaces. If $F: Y\rightarrow 2^X$ is u.s.c. (or l.s.c.),
then the points of continuity of $F$ form a dense $G_\delta$ set in $Y$.
\end{thm}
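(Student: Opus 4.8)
The plan is to reduce the whole statement to a single real-valued oscillation function and then split the work into an automatic $G_\delta$ part and a semicontinuity-driven density part. Since a map $F$ into $(2^X,\rho_H)$ is continuous at $y$ exactly when it is simultaneously u.s.c.\ and l.s.c.\ there, I would first record that in the u.s.c.\ case the continuity points are precisely the points of l.s.c.\ (and dually in the l.s.c.\ case). To quantify the failure of continuity I would introduce the oscillation
$$\omega(y)=\inf_{U\ni y}\diam_{\rho_H}F(U),$$
the infimum running over neighbourhoods $U$ of $y$. Then $F$ is continuous at $y$ if and only if $\omega(y)=0$, so the set of continuity points equals $\bigcap_{n\in\N}\{y:\omega(y)<1/n\}$.

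The $G_\delta$ part is then formal and uses nothing about $F$ beyond the fact that its target is a metric space. Indeed, $\omega$ is upper semicontinuous as a real function: if $\diam_{\rho_H}F(U)<\ep$ for some neighbourhood $U$ of $y$, the same $U$ witnesses $\omega(y')<\ep$ for every $y'\in\intt U$. Hence each $\{y:\omega(y)<1/n\}$ is open, and the continuity set is automatically a $G_\delta$, whether $F$ is u.s.c.\ or l.s.c.

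The real content is density, and I would isolate the following \emph{shrinking lemma}: for every nonempty open $V\subseteq Y$ and every $\ep>0$ there is a nonempty open $W\subseteq V$ with $\diam_{\rho_H}F(W)\le\ep$. Granting this, each $\{\omega<\ep\}$ is dense; since $Y$ is a compact metric space, hence Baire, the intersection $\bigcap_n\{\omega<1/n\}$ is a dense $G_\delta$, which is the theorem. To prove the lemma I would cover the compact space $X$ by finitely many balls $B_1,\dots,B_k$ of radius $\ep/2$ and, for each $y$, track the finite pattern of balls that $F(y)$ meets. In the l.s.c.\ case the sets $\{y:F(y)\cap B_i\neq\emptyset\}$ are open, so I would choose $y_1\in V$ \emph{maximizing} the number of balls met; on the open set $W=V\cap\bigcap_{i:F(y_1)\cap B_i\neq\emptyset}\{y:F(y)\cap B_i\neq\emptyset\}$ maximality forces the met-pattern to stay constant, and two closed sets meeting exactly the same radius-$\ep/2$ balls are within $\ep$ in $\rho_H$, giving $\diam_{\rho_H}F(W)\le\ep$. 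In the u.s.c.\ case I would run the dual argument: the sets $\{y:F(y)\cap\overline{B_i}\neq\emptyset\}$ are closed (equivalently $\{y:F(y)\subseteq U\}$ is open for open $U$), so I would \emph{minimize} the number of closed balls met, and the same extremal/cardinality trick pins the pattern down on a nonempty open $W$.

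The hard part is exactly this shrinking lemma, and it is precisely where semicontinuity is indispensable: without it the met-pattern function is neither eventually monotone nor has open level sets, discontinuities can be dense, and no Baire-category formalism would then manufacture continuity points. The one point to handle with care is matching each hypothesis to the correct extremal choice --- maximizing met open balls for l.s.c., minimizing met closed balls for u.s.c.\ --- so that the relevant level set is genuinely open and the chosen optimum propagates to a whole neighbourhood. Everything else (upper semicontinuity of $\omega$, the $\rho_H$ estimate from a common ball-pattern, and the concluding Baire step) is routine. This is the classical Fort--Kuratowski argument, for which I would cite \cite{Fort}, \cite{Kura1} and \cite{Kura2}.
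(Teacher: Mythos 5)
Your proof is correct. The paper does not prove this statement at all --- it is quoted as a known fact with references to Fort and to Kuratowski --- and your argument (oscillation function for the $G_\delta$ part, plus the shrinking lemma via the extremal ball-pattern trick, with the correct pairing of ``maximize open balls met'' for l.s.c.\ and ``minimize closed balls met'' for u.s.c.) is a sound, self-contained rendition of exactly the classical Fort--Kuratowski argument those references contain.
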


\subsection{$N_d(X)$ and Glasner's theorem}\
\medskip

Let $x\in X$. 
For convenience, sometimes we denote the orbit closure of $x\in X$ under $T$ by  $\overline{\O}(x,T)$ instead of $\overline{\O(x,T)}$. Let $A\subseteq X$, the  orbit of $A$ is defined by
$\O(A,T)=\{T^nx: x\in A,n\in \Z\}$, and its closure is denoted by $\overline{\O}(A,T)= \overline{\O(A,T)}$.

\medskip

Let $(X,T)$ be a t.d.s., and $d\in \N$. Recall that $\tau_d=T\times T^2 \times \cdots \times T^{d},$
$\Delta_d(X)=\{(x,x,\ldots,x): x\in X\}\subseteq X^d,$ and $T^{(d)}=T\times \ldots\times T \ (d \ \text{times})$.
Let
$$N_d(X)=\overline{\O}(\D_d(X), \tau_d).$$
If $(X,T)$ is transitive and $x\in X$ is a transitive point, then
$N_d({X})=\overline{\O}(x^{\otimes d},
\langle\tau_d, T^{(d)}\rangle),$
is the orbit closure of
$x^{\otimes d}=(x,\ldots,x)$ ($d$ times) under the action of the group
$\langle\tau_d, T^{(d)}\rangle$ generated by $\tau_d$ and $T^{(d)}$.

\medskip

The following result was obtained by Glasner.

\begin{prop} \cite[Theorem 5.1, Corollary 2.5]{G94}\label{thm-Glasner}
Let $(X,T)$ be a minimal t.d.s. Then
\begin{enumerate}
\item $(N_d(X), \langle\tau_d, \sigma_d\rangle)$ is minimal for each $d\in\N$.

\item If in addition $(X,T)$ is weakly mixing, then there is a dense $G_\delta$ set $X_0$ of $X$ such that
for each $x\in X_0$ and each $d\in\N$, $\overline{\O}(x^{\otimes d},\tau_d)=X^d$.
\end{enumerate}
\end{prop}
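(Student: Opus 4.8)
The plan is to treat the two assertions separately, in each case reducing the claim to a cleaner statement about the single \emph{diagonal point} $x^{\otimes d}$ and the group $G:=\langle\tau_d,\sigma_d\rangle$, where $\sigma_d=T^{(d)}$.

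For (1), first note that $N_d(X)=\overline{\O}(\Delta_d(X),\tau_d)$ is closed and $G$-invariant, since $\sigma_d$ preserves $\Delta_d(X)$ and commutes with $\tau_d$. Because $(X,T)$ is minimal, $\Delta_d(X)=\overline{\O}(x^{\otimes d},\sigma_d)$ for every $x$, and consequently $\overline{\O}(x^{\otimes d},G)=\overline{\O}(\overline{\O}(x^{\otimes d},\sigma_d),\tau_d)=N_d(X)$. Thus the $G$-orbit closure of \emph{any} diagonal point is already all of $N_d(X)$, and I would exploit this rigidity: it suffices to exhibit a \emph{single} diagonal point $z^{\otimes d}$ that is $G$-almost periodic, for then its orbit closure $N_d(X)$ is $G$-minimal, and automatically every point of it (being in the orbit closure of $z^{\otimes d}$) is minimal. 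So the whole of (1) comes down to producing one $G$-minimal diagonal point.

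The natural starting point is the topological multiple Birkhoff recurrence theorem (equivalent to van der Waerden's theorem, as recalled in the Introduction), which furnishes a point $z\in X$ with $z^{\otimes d}$ \emph{$\tau_d$-almost periodic}. The task is then to upgrade $\tau_d$-almost periodicity to $G$-almost periodicity, i.e.\ to show $\{(a,b)\in\Z^2:\tau_d^{a}\sigma_d^{b}z^{\otimes d}\in U\}$ is syndetic in $\Z^2$ for every neighbourhood $U$ of $z^{\otimes d}$. Here $\sigma_d^{b}z^{\otimes d}=(T^{b}z)^{\otimes d}$, and since $z$ is $T$-almost periodic the return times $\{b:T^bz\approx z\}$ are syndetic; the difficulty is that one then needs the $\tau_d$-return times of $(T^bz)^{\otimes d}$ into $U$ to behave \emph{uniformly} in $b$, which fails in a general minimal system because recurrence there is not uniform. \textbf{This uniformity problem is the main obstacle in (1).} I would resolve it with the enveloping semigroup: choosing a minimal idempotent $u$ in a minimal left ideal of $E(N_d(X),G)$, the point $u z^{\otimes d}$ is $G$-minimal and proximal to $z^{\otimes d}$; reading $u$ through the coordinate projections $\pi_j$ (which satisfy $\pi_j(\tau_d^{a}\sigma_d^{b}\cdot)=T^{ja+b}\pi_j(\cdot)$, so that $u$ acts coordinatewise through $E(X,T)$) one pins $u$ down on the diagonal and concludes $u z^{\otimes d}=z^{\otimes d}$, making $z^{\otimes d}$ itself $G$-minimal. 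In the distal case this is immediate, which is exactly why the distal subcase is transparent.

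For (2), assume $(X,T)$ weakly mixing. I would first reduce the dense $G_\delta$ claim to a pure \emph{existence} statement. Fixing $d$ and a countable base $\{V_k\}$ of $X^d$, the set $A_d:=\{x:\overline{\O}(x^{\otimes d},\tau_d)=X^d\}$ equals $\bigcap_k\{x:x^{\otimes d}\in\bigcup_n\tau_d^{-n}V_k\}$, and since $x\mapsto x^{\otimes d}$ is continuous each set in the intersection is open, so $A_d$ is $G_\delta$. Moreover $A_d$ is $T$-invariant, because $\overline{\O}((Tx)^{\otimes d},\tau_d)=\sigma_d\,\overline{\O}(x^{\otimes d},\tau_d)$ gives $x\in A_d\iff Tx\in A_d$. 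As $(X,T)$ is minimal, a nonempty $T$-invariant set is dense; hence once $A_d\neq\emptyset$ it is automatically a dense $G_\delta$, and $X_0:=\bigcap_d A_d$ is the desired dense $G_\delta$ by Baire. Thus everything reduces to finding, for each $d$, one $x$ with $\overline{\O}(x^{\otimes d},\tau_d)=X^d$. Weak mixing enters to guarantee that $(X^d,\tau_d)$ is transitive and that $N_d(X)=\overline{\O}(\Delta_d(X),\tau_d)=X^d$, i.e.\ the orbit of the \emph{entire} diagonal is dense. The hard part of (2) is to pass from the full diagonal to a \emph{single} diagonal point: because $\Delta_d(X)$ is nowhere dense in $X^d$, transitivity of $\tau_d$ does not by itself put a transitive point on the diagonal (a dense $G_\delta$ of transitive points can miss $\Delta_d(X)$ entirely). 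This is the principal obstacle, and it is where weak mixing must be used beyond mere transitivity of $\tau_d$ — for instance through the density of proximal pairs in a weakly mixing minimal system, used to locate a transitive point on, or proximal to, the diagonal.
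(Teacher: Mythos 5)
The paper itself offers no proof of this proposition; it is imported verbatim from Glasner \cite{G94}, so your argument has to stand on its own. Your two reductions are correct and worth keeping: in (1), that $\overline{\O}(x^{\otimes d},G)=N_d(X)$ for \emph{every} $x\in X$ (with $G=\langle\tau_d,\sigma_d\rangle$), so that a single $G$-minimal diagonal point forces minimality of $(N_d(X),G)$; in (2), that $A_d$ is a $T$-invariant $G_\delta$ set, so that everything reduces to $A_d\neq\emptyset$. But in each part the residual step is exactly the substance of the theorem, and your treatment of it breaks down. In (1) the announced starting point is false: the multiple Birkhoff recurrence theorem produces a point $z$ with $z^{\otimes d}$ $\tau_d$-\emph{recurrent}, not $\tau_d$-almost periodic, and by the result of \cite{HSY-21} quoted in the Introduction there exist minimal systems with no $\tau_d$-almost periodic diagonal point at all. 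The enveloping-semigroup repair does not close the hole. A minimal idempotent $u$ of the enveloping semigroup of $(N_d(X),G)$ does act coordinatewise as $p_1\times\cdots\times p_d$ with each $p_j$ an idempotent of $E(X,T)$, and $uz^{\otimes d}$ is $G$-almost periodic and proximal to $z^{\otimes d}$; but $uz^{\otimes d}=(p_1z,\ldots,p_dz)$ lands on the diagonal, let alone equals $z^{\otimes d}$, only if all the $p_j$ fix $z$. Nothing forces this unless $z$ is a distal point, which a minimal system need not possess. Worse, the identical coordinatewise ``pinning down'', run for the cyclic group $\langle\tau_d\rangle$ instead of $G$, would manufacture a $\tau_d$-minimal diagonal point and contradict \cite{HSY-21}; so any correct argument must use $\sigma_d$ in an essential way that your sketch never supplies.

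In (2) you correctly isolate the crux --- transitivity of $(X^d,\tau_d)$ does not by itself place a transitive point on the nowhere dense diagonal --- but the proposed remedy (``density of proximal pairs\dots to locate a transitive point on, or proximal to, the diagonal'') is only a gesture, and it would fail as stated. If $x^{\otimes d}$ is $\tau_d$-proximal to a transitive point $\mathbf{y}$, one gets $\overline{\O}(x^{\otimes d},\tau_d)\supseteq\overline{\O}(\mathbf{y},\tau_d)$ only when $\mathbf{y}$ is in addition $\tau_d$-\emph{almost periodic}; proximality to a merely transitive point gives no lower bound on the orbit closure of $x^{\otimes d}$, and you do not produce a transitive, almost periodic partner near the diagonal. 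The missing ingredient is the one this paper itself sets up in Section \ref{section-saturate}: the map $x\mapsto\overline{\O}(x^{\otimes d},\tau_d)$ is lower semicontinuous, hence has a residual set of continuity points by Theorem \ref{thm-Fort}; at such a point $x_0$, the syndetic set of return times of $x_0$ to a small neighbourhood shows that $\overline{\O}(x_0^{\otimes d},\tau_d)$ is nearly invariant under a syndetic subset of $G$, and the $G$-minimality of $X^d=N_d(X)$ supplied by part (1) together with weak mixing then forces $\overline{\O}(x_0^{\otimes d},\tau_d)=X^d$. Without an argument of this kind, the existence of even one point of $A_d$ is not established.
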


\begin{rem}\label{rem-2.8}
By Proposition \ref{thm-Glasner}-(2), if $(X,T)$ is a weakly mixing minimal t.d.s., then there is a dense $G_\d$ subset $X_0$ of $X$ such that for all $x\in X_0$, $x^{\otimes d}$ is a transitive point of $(X^d, \tau_d)$. Note that by Proposition \ref{thm-Glasner}-(1) and Lemma \ref{den-minimal}, the set of $\tau_d$-minimal points is dense in $(X^d, \tau_d)$. Thus $(X^d,\tau_d)$ is an $M$-system. By Proposition \ref{prop-M}, for any neighbourhood $U$ of $x\in X_0$, $N_{\tau_d}(x^{\otimes d},U^d)=\{n\in \Z: \tau_d x\in U^d=U\times U\times \cdots \times U\}$ is piecewise syndetic.
\end{rem}

\subsection{Nilmanifolds and nilsystems}\

\subsubsection{}
Let $G$ be a group. For $g, h\in G$ and $A,B \subseteq G$, we write $[g, h] =
ghg^{-1}h^{-1}$ for the commutator of $g$ and $h$ and
$[A,B]$ for the subgroup spanned by $\{[a, b] : a \in A, b\in B\}$.
The commutator subgroups $G_j$, $j\ge 1$, are defined inductively by
setting $G_1 = G$ and $G_{j+1} = [G_j ,G]$. Let $d \ge 1$ be an
integer. We say that $G$ is {\em $d$-step nilpotent} if $G_{d+1}$ is
the trivial subgroup.

\subsubsection{}
Let $d\in\N$, $G$ be a $d$-step nilpotent Lie group and $\Gamma$ be a discrete
cocompact subgroup of $G$. The compact manifold $X = G/\Gamma$ is
called a {\em $d$-step nilmanifold}. The group $G$ acts on $X$ by
left translations and we write this action as $(g, x)\mapsto gx$.
The Haar measure $\mu$ of $X$ is the unique Borel probability measure on
$X$ invariant under this action. Fix $t\in G$ and let $T$ be the
transformation $x\mapsto t x$ of $X$, i.e. $t(g\Gamma)=(tg)\Gamma$ for each $g\in G$. Then $(X, \mu, T)$ is
called a {\em $d$-step nilsystem}. In the topological setting we omit the measure
and just say that $(X,T)$ is a $d$-step nilsystem. For more details on nilsystems, refer to \cite[Chapter 11]{HK18}.
Here are some basic properties of nilsystems.

\begin{thm}\cite{Leibman051, P}\label{thm-ParryLeibman}
Let $(X = G/\Gamma,\mu , T )$ be a $d$-step nilsystem with $T$ the
translation by the element $t\in G$. Then:

\begin{enumerate}
\item $(X, T )$ is uniquely ergodic if and only if $(X,\mu , T )$ is
ergodic if and only if $(X, T )$ is minimal if and only if $(X, T )$
is transitive.

\item Let $Y$ be the closed orbit of some point $x\in X$. Then $Y$ can
be given the structure of a nilmanifold, i.e. $Y = H/\Lambda$, where $H$
is a closed subgroup of $G$ containing $t$ and $\Lambda$ is a closed
cocompact subgroup of $H$.

\end{enumerate}
\end{thm}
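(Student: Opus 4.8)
The plan is to prove both assertions by induction on the nilpotency step $d$, exploiting the fact that a $d$-step nilsystem is a tower of extensions over lower-step nilsystems, with base case $d=1$ the rotation $x\mapsto tx$ on the compact abelian group $G/\Gamma$. For that base case everything in (1) is classical: by Weyl equidistribution and Kronecker's theorem, such a rotation is transitive iff $\{t^n\}$ is dense iff it is minimal iff it is ergodic for Haar measure iff it is uniquely ergodic. So the whole task is to propagate these equivalences up the tower.

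For part (1) I would first pass to the maximal abelian factor. Writing $G_2=[G,G]$, the quotient $\bar X=G/(G_2\Gamma)$ is a compact abelian group, and the projection $\pi\colon X\to\bar X$ intertwines $T$ with the rotation by the image $\bar t$ of $t$; transitivity, minimality and ergodicity all descend to $\bar X$ for free. The loop is then closed as follows. If $X$ is transitive then $\bar X$ is transitive, hence (base case) ergodic. The engine of the argument is the implication: if the base rotation $\bar X$ is ergodic, then \emph{every} orbit $\{t^n x\}$ equidistributes with respect to the Haar measure $\mu$ of $X$. Granting this, uniform equidistribution of all orbits immediately gives unique ergodicity; unique ergodicity together with the fact that $\mu$ has full support forces minimality, since any proper closed invariant $Y\subsetneq X$ would carry an invariant measure which by unique ergodicity equals $\mu$, contradicting $\mu(X\setminus Y)>0$; and minimality gives transitivity and ergodicity, completing the cycle. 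The equidistribution statement itself is what I would prove by the induction on $d$, realizing $X$ as a homogeneous extension of $\bar X$ and decomposing $L^2(X,\mu)$ into isotypic components for the action of the connected centre of $G$, so that ergodicity of the base lifts through each central extension (equivalently, invoking Leibman's polynomial equidistribution theorem directly).

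For part (2) I would first reduce to the base point. Given $x=g\Gamma$, the identity $t^n g\Gamma=g\,(g^{-1}tg)^n\Gamma$ shows $\overline{\O}(x,T)=g\cdot\overline{\O}(e\Gamma,t')$ with $t'=g^{-1}tg$, so it suffices to describe the orbit closure of $e\Gamma$ at the cost of replacing $t$ by a conjugate. I would then run the same induction: project to the torus factor $\bar X$, where the orbit closure of $\pi(x)$ is a closed subgroup (a subtorus) by the abelian theory, and lift through the central extension, tracking at each stage the closed subgroup generated by the fibrewise translations that appear. This produces a closed subgroup $H\le G$ with $t\in H$ such that $Y=\overline{\O}(x,T)=Hx$ and $\Lambda=H\cap g\Gamma g^{-1}$ is cocompact in $H$, identifying $(Y,T)$ with the nilsystem $(H/\Lambda,t)$; its minimality is then exactly part (1) applied to this smaller nilmanifold.

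The main obstacle is the equidistribution/relative-ergodicity engine inside part (1): pushing ergodicity (and hence unique ergodicity) through the non-abelian central extensions is the one place where soft dynamics does not suffice and one genuinely needs harmonic analysis on $G/\Gamma$ — the isotypic decomposition of $L^2(X,\mu)$, or equivalently Leibman's equidistribution theorem for translation orbits on nilmanifolds. Everything else — the torus base case, the automatic descent to factors, the full-support deduction of minimality from unique ergodicity, and the group-theoretic bookkeeping in part (2) — is comparatively routine once that engine is available.
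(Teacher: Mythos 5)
The paper gives no proof of this statement: it is quoted as a known theorem with \cite{P} and \cite{Leibman051} as references, so there is no in-paper argument to measure yours against. Your outline reproduces the standard proof from those sources (see also Host--Kra, Chapter 11): the abelian base case, descent to the maximal abelian factor $G/(G_2\Gamma)$, the equidistribution engine pushed up through central extensions via a vertical Fourier decomposition, unique ergodicity plus full support of Haar measure forcing minimality, and the conjugation-plus-induction argument for orbit closures; the logical cycle among the four conditions does close as you arrange it. Two caveats: the ``engine'' (ergodicity of the abelian factor implies unique ergodicity of $X$) is precisely the content of the cited theorems, so ``invoking Leibman's equidistribution theorem directly'' would be circular in a from-scratch proof and you would genuinely have to carry out the isotypic-decomposition induction; and when $G$ is not connected the factor $G/(G_2\Gamma)$ is not the right object --- one must first normalize $G$ to be generated by its identity component together with $t$ --- a point your sketch passes over but which Parry's and Leibman's treatments handle explicitly.
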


One can generalize the above results to the action of several translations. For example,
let $X= G/\Gamma$ be a nilmanifold with the Haar measure $\mu$ and let
$t_1,\cdots , t_k$ be commuting elements of $G$. If the group
spanned by the translations $t_1, \cdots , t_k$ acts ergodically on
$(X,\mu)$, then $X$ is uniquely ergodic for this group. For more details, refer to \cite{Leibman051, Leibman052, HK18}.

\subsubsection{}
We will need to use inverse limits of nilsystems, so we recall the
definition of a sequential inverse limit of systems. If
$(X_i,T_i)_{i\in \N}$ are systems 
and $\pi_i: X_{i+1}\rightarrow X_i$ is a factor map for all $i\in\N$, the {\em inverse
limit} of these systems is defined to be the compact subset of
$\prod_{i\in \N}X_i$ given by $\{ (x_i)_{i\in \N }: \pi_i(x_{i+1}) =
x_i, i\in\N\}$, and we denote it by
$\lim\limits_{\longleftarrow}(X_i,T_i)_{i\in\N}$.
It is a compact metric space.
We note that the maps $\{T_i\}_{i\in \N}$ induce naturally a transformation $T$ on the inverse
limit such that $T(x_1,x_2,\ldots)=(T_1(x_1),T_2(x_2),\ldots)$.




\begin{de} \cite[Definition 1.2]{HKM} 
For $d\in \N$, a minimal t.d.s. $(X,T)$ is called  a {\em $d$-step pro-nilsystem} or {\it system of order $d$} if $X$ is an inverse limit of $d$-step minimal nilsystems.
\end{de}

\subsection{Regionally proximal relation
of order $d$}\
\medskip

\begin{de}\cite[Definition 3.2]{HKM} 
Let $(X, T)$ be a t.d.s. and let $d\in \N$. The points $x, y \in X$ are
said to be {\em regionally proximal of order $d$} if for any $\d  >
0$, there exist $x', y'\in X$ and a vector ${\bf n} = (n_1,\ldots ,
n_d)\in\Z^d$ such that $\rho (x, x') < \d, \rho (y, y') <\d$, and $$
\rho (T^{{\bf n}\cdot \ep}x', T^{{\bf n}\cdot \ep}y') < \d\
\text{for every  $\ep=(\ep_1,\ldots,\ep_d)\in \{0,1\}^d\setminus\{00\cdots 0\}$},$$
where ${\bf n}\cdot \ep=\sum_{i=1}^d n_i\ep_i$.
The set of regionally proximal pairs of
order $d$ is denoted by $\RP^{[d]}$ (or by $\RP^{[d]}(X,T)$ in case of
ambiguity), and is called {\em the regionally proximal relation of
order $d$}.
\end{de}

It is easy to see that $\RP^{[d]}$ is a closed and invariant
relation, and 
\begin{equation*}
    {\bf P}(X,T)\subseteq  \ldots \subseteq \RP^{[d+1]}\subseteq
    \RP^{[d]}\subseteq \ldots \RP^{[2]}\subseteq \RP^{[1]}.
\end{equation*}

The following theorems were proved in \cite{HKM} (for minimal distal systems) and
in \cite{SY} (for general minimal systems).

\begin{thm}\label{thm-1}
Let $(X, T)$ be a minimal t.d.s. and let $d\in \N$. Then
\begin{enumerate}

\item $\RP^{[d]}$ is an equivalence relation.

\item $(X,T)$ is a $d$-step pro-nilsystem if and only if $\RP^{[d]}=\Delta_X$.
\end{enumerate}
\end{thm}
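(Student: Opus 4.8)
The plan is to recast $\RP^{[d]}$ in terms of the \emph{dynamical cube system} and extract both conclusions from the minimality of that system. For $d\ge 1$ write $X^{[d]}=X^{2^d}$, and for $1\le j\le d$ let $T_j^{[d]}$ be the face transformation acting as $T$ on the coordinates $\ep\in\{0,1\}^d$ with $\ep_j=1$ and as the identity elsewhere. Let $\F^{[d]}=\langle T_1^{[d]},\ldots,T_d^{[d]}\rangle$ be the face group, $\G^{[d]}=\langle \F^{[d]}, T^{[d]}\rangle$ where $T^{[d]}$ is the diagonal transformation, and set
\[
\mathbf Q^{[d]}(X)=\overline{\O}(\Delta^{[d]}(X),\F^{[d]})\subseteq X^{[d]},\qquad \Delta^{[d]}(X)=\{(x,\ldots,x):x\in X\}.
\]
The definition of $\RP^{[d]}$ given above is, after applying a single element $\prod_j (T_j^{[d]})^{n_j}\in\F^{[d]}$ to the diagonal points of $x'$ and $y'$ \emph{simultaneously}, exactly the assertion that $(x,y)$ occupies the $\vec 0$ corner of a cube of the product system whose remaining corners lie on the diagonal; precisely, a compactness argument gives
\[
(x,y)\in\RP^{[d]}\iff \exists\,\mathbf w\in\mathbf Q^{[d]}(X\times X)\ \text{with}\ w_{\vec 0}=(x,y)\ \text{and}\ w_\ep\in\Delta(X\times X)\ \forall\,\ep\ne\vec 0.
\]
(One checks on an irrational rotation that the simultaneous use of the same $\prod_j(T_j^{[d]})^{n_j}$ is essential and forces $\RP^{[1]}=\Delta$.) Reflexivity and symmetry are immediate from this description; the whole weight of part (1) is transitivity.

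The central step, from which everything follows, is to prove that for a minimal $(X,T)$ the cube system $(\mathbf Q^{[d]}(X),\G^{[d]})$ is itself minimal. Granting this, transitivity of $\RP^{[d]}$ is obtained by a \emph{gluing} argument: given $(x,y),(y,z)\in\RP^{[d]}$ one has two cubes $\mathbf u,\mathbf v\in\mathbf Q^{[d]}(X\times X)$ realizing the two relations and sharing the value $y$ along a common face, and one must produce a single cube realizing $(x,z)$. Minimality of the cube system is precisely what lets one move within $\mathbf Q^{[d]}$ so as to concatenate $\mathbf u$ and $\mathbf v$ along their common $y$-face while keeping every non-$\vec 0$ corner on the diagonal. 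I expect \textbf{the proof that $(\mathbf Q^{[d]}(X),\G^{[d]})$ is minimal to be the main obstacle}: for distal $X$ it can be read off from Furstenberg's structure theorem (an inverse limit of isometric extensions) by induction up the tower, but for a general minimal system no such structure is available, and one must instead work in the enveloping semigroup $E(X)$, using minimal idempotents to manufacture, for prescribed corner data, a genuine cube in $\mathbf Q^{[d]}(X)$. This Ellis-semigroup analysis is the technical heart, and it is exactly where the distal treatment of \cite{HKM} and the general treatment of \cite{SY} diverge.

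For part (2), the direction ``$d$-step pro-nilsystem $\Rightarrow \RP^{[d]}=\Delta$'' I would reduce, via $\RP^{[d]}(\varprojlim X_i)=\varprojlim \RP^{[d]}(X_i)$, to a single $d$-step nilsystem $X=G/\Gamma$. There the cubes $\mathbf Q^{[d]}(X)$ form a sub-nilmanifold cut out by the Host--Kra cube group, and a direct computation in the nilpotent group — crucially $G_{d+1}=\{e\}$, so all $(d+1)$-fold commutators vanish — forces any admissible $\mathbf w$ to satisfy $x=y$. The reverse direction ``$\RP^{[d]}=\Delta\Rightarrow X$ is a $d$-step pro-nilsystem'' is the deep one, and I would not attempt it by hand: it is the assertion that $X/\RP^{[d]}$ is \emph{always} the maximal $d$-step pro-nilfactor, so that $\RP^{[d]}=\Delta$ makes $X$ coincide with its own pro-nilfactor. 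Establishing that $X/\RP^{[d]}$ is an inverse limit of nilsystems is the Host--Kra--Maass theorem in the distal case and its extension in \cite{SY} in general; it is proved by verifying that the cube structure on $X/\RP^{[d]}$ satisfies the relevant unique-closing (nilspace) axioms and then invoking the nilspace-to-nilsystem machinery. In short, I would hang the entire argument on a single hard input, the minimality of the cube system for minimal $X$, deriving transitivity and the ``pro-nil $\Rightarrow \RP^{[d]}=\Delta$'' direction from it, while flagging the construction of the nilstructure in the converse of (2) as the part genuinely requiring the full weight of the structure theory.
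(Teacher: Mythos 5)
This statement is not proved in the paper at all: it is quoted as background, with the proof attributed to \cite{HKM} (for minimal distal systems) and \cite{SY} (for general minimal systems). So there is no in-paper argument to compare yours against; the relevant comparison is with those references, and your outline does track their strategy faithfully. Your cube reformulation of $\RP^{[d]}$ via $\mathbf Q^{[d]}(X\times X)$ is correct (it is a direct compactness translation of the definition; the references more often use the equivalent characterization $(x,y)\in\RP^{[d]}\iff (x,y,\ldots,y)\in \mathbf Q^{[d+1]}(X)$, but the two are interchangeable for this purpose), and you correctly identify the two load-bearing inputs: minimality of the cube system under $\mathcal G^{[d]}$ for a general minimal $(X,T)$, proved in \cite{SY} by Ellis semigroup / minimal idempotent techniques rather than by Furstenberg's distal tower, and the construction of the pro-nil structure on $X/\RP^{[d]}$ for the converse of part (2).

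That said, as a standalone proof your text has genuine gaps, and you are candid about them: neither the minimality of $(\mathbf Q^{[d]}(X),\mathcal G^{[d]})$ nor the gluing argument that converts it into transitivity is actually carried out, and the gluing step in particular is not routine --- concatenating two cubes along a common $y$-face while keeping all non-$\vec 0$ corners on the diagonal is exactly where \cite{SY} must do delicate work with the face-group orbit closures, and ``minimality lets one move within $\mathbf Q^{[d]}$'' does not by itself produce the required cube. Likewise the implication $\RP^{[d]}=\Delta_X\Rightarrow X$ is a $d$-step pro-nilsystem is the full Host--Kra--Maass/Shao--Ye structure theorem and is only named, not argued. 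Since the authors of the present paper also take Theorem \ref{thm-1} as a black box, your submission is best read as an accurate map of where the proof lives rather than as a proof; if you want a self-contained argument you would need to supply the Ellis-semigroup construction of cubes with prescribed corner data and the nilspace/inverse-limit machinery, which together constitute the bulk of the two cited papers.
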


The regionally proximal relation of order $d$ allows us to construct the maximal $d$-step
pro-nilfactor of a system.

\begin{thm}\label{thm0}
Let $\pi: (X,T)\rightarrow (Y,T)$ be a factor map between minimal t.d.s.
and let $d\in \N$. Then,
\begin{enumerate}
  \item $\pi\times \pi (\RP^{[d]}(X,T))=\RP^{[d]}(Y,T)$.
  \item $(Y,T)$ is a $d$-step pro-nilsystem if and only if $\RP^{[d]}(X,T)\subseteq R_\pi$.
\end{enumerate}
In particular, $X_d\triangleq X/\RP^{[d]}(X,T)$, the quotient of $(X,T)$ under $\RP^{[d]}(X,T)$, is the
maximal $d$-step pro-nilfactor of $X$. 
\end{thm}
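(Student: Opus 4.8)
The plan is to reduce everything to part (1), and within part (1) to a single nontrivial inclusion. Once part (1) is available, part (2) and the concluding assertion are formal. For the direction ($\Leftarrow$) of (2), if $\RP^{[d]}(X,T)\subseteq R_\pi$ then part (1) gives $\RP^{[d]}(Y,T)=\pi\times\pi(\RP^{[d]}(X,T))\subseteq\pi\times\pi(R_\pi)=\Delta_Y$, so $(Y,T)$ is a $d$-step pro-nilsystem by Theorem \ref{thm-1}(2); conversely, if $\RP^{[d]}(Y,T)=\Delta_Y$ then part (1) forces $\pi(x_1)=\pi(x_2)$ for every $(x_1,x_2)\in\RP^{[d]}(X,T)$, i.e. $\RP^{[d]}(X,T)\subseteq R_\pi$. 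Applying (1) to the quotient map $X\to X_d:=X/\RP^{[d]}(X,T)$ (legitimate, since $\RP^{[d]}(X,T)$ is a closed invariant equivalence relation by Theorem \ref{thm-1}(1)) yields $\RP^{[d]}(X_d)=\Delta$, whence $X_d$ is a $d$-step pro-nilsystem by Theorem \ref{thm-1}(2); the already-proved part (2) then shows that any pro-nil factor of $X$ factors through $X_d$, so $X_d$ is the maximal one.

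It remains to prove part (1). The inclusion $\pi\times\pi(\RP^{[d]}(X,T))\subseteq\RP^{[d]}(Y,T)$ is straightforward: given $(x_1,x_2)\in\RP^{[d]}(X,T)$ and $\delta>0$, pick by uniform continuity of $\pi$ an $\eta>0$ with $\rho(\pi a,\pi b)<\delta$ whenever $\rho(a,b)<\eta$; applying the definition of $\RP^{[d]}(X,T)$ at scale $\eta$ produces points $x_1',x_2'$ and a vector ${\bf n}\in\Z^d$ whose images $\pi x_1',\pi x_2'$, together with the same ${\bf n}$, witness regional proximality of order $d$ for $(\pi x_1,\pi x_2)$, using $\pi T=T\pi$. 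The substance is the reverse inclusion $\RP^{[d]}(Y,T)\subseteq\pi\times\pi(\RP^{[d]}(X,T))$: every regionally proximal pair of order $d$ in $Y$ must lift to one in $X$.

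To prove this lifting I would first reduce to the case of an \emph{open} factor map. Using the O-diagram of Theorem \ref{thm-O}, the maps $\varsigma,\varsigma^*$ are almost one-to-one, hence proximal; since ${\bf P}(X,T)\subseteq\RP^{[d]}$ for every minimal system, regionally proximal pairs of order $d$ transfer across a proximal extension in both directions, so it suffices to establish the lifting for the open map $\pi^*$. For an open map the fiber assignment $y\mapsto (\pi^*)^{-1}(y)$ is lower semi-continuous, which lets me transport approximate witnesses: given $(y_1,y_2)\in\RP^{[d]}(Y^*)$ and $\delta>0$, I fix preimages $x_1,x_2$ and, using openness, choose lifts $x_1',x_2'$ of the $\delta$-witnesses $y_1',y_2'$ that stay within $\delta$ of $x_1,x_2$; a compactness argument in the product $X^{2^d}$, organized by the face coordinates $\ep\in\{0,1\}^d$, then extracts from these approximate configurations (as $\delta\to 0$) a genuine witness showing $(x_1,x_2)\in\RP^{[d]}(X,T)$ over $(y_1,y_2)$. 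The main obstacle is exactly this last step: openness controls one lifted point at a time, whereas the definition of $\RP^{[d]}$ demands that all $2^d-1$ face-distances $\rho(T^{{\bf n}\cdot\ep}x_1',T^{{\bf n}\cdot\ep}x_2')$ be simultaneously small, so the lifts must be chosen coherently across every face. Handling this requires passing to the orbit closure of the lifted configuration and invoking minimality together with the equivalence-relation property of Theorem \ref{thm-1}(1) to absorb the accumulated errors, and this coherent simultaneous lifting is where the genuine work lies.
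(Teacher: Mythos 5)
The formal skeleton of your argument is fine: deriving part (2) from part (1) via Theorem \ref{thm-1}(2), and the maximality of $X_d$ from (1) applied to the quotient map, is exactly the right reduction, and the inclusion $\pi\times \pi (\RP^{[d]}(X,T))\subseteq\RP^{[d]}(Y,T)$ via uniform continuity of $\pi$ is complete. Note, however, that the paper does not prove this theorem at all: it quotes it as a known result of \cite{HKM} and \cite{SY}. So the only question is whether your argument for the remaining inclusion $\RP^{[d]}(Y,T)\subseteq\pi\times\pi(\RP^{[d]}(X,T))$ closes. It does not, and you essentially say so yourself.

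The gap is precisely the ``coherent simultaneous lifting'' you flag at the end. Openness of $\pi^*$, equivalently lower semi-continuity of $y\mapsto(\pi^*)^{-1}(y)$, lets you lift the base points $y_1',y_2'$ to points near $x_1,x_2$, but it gives no control whatsoever on the distances $\rho(T^{{\bf n}\cdot\ep}x_1',T^{{\bf n}\cdot\ep}x_2')$ upstairs: closeness in $Y$ never lifts to closeness in $X$ along a factor map, so the $2^d-1$ face conditions are not even approximately satisfied by your lifts. Passing to limits in $X^{2^d}$ as $\delta\to 0$ only produces a point of the space of dynamical cubes of $X$ sitting over the degenerate configuration downstairs; to conclude from such a cube that $(x_1,x_2)\in\RP^{[d]}(X,T)$ you would need the cube-space characterization of $\RP^{[d]}$ (a pair is regionally proximal of order $d$ if and only if a suitable point, diagonal in all but one coordinate, lies in the closure of the face-group orbit of the diagonal), and that characterization is itself one of the main theorems of \cite{HKM} and \cite{SY}, proved there with enveloping-semigroup and structure-theorem machinery (decomposing $\pi$ into proximal, isometric and RIC weakly mixing pieces), not by a compactness argument. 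Even your preliminary reduction --- that $\RP^{[d]}$ ``transfers across a proximal extension in both directions'' --- is, in the upward direction, a nontrivial special case of the very lifting statement being proved; the inclusion ${\bf P}(X,T)\subseteq\RP^{[d]}(X,T)$ alone does not yield it. So what you have established are the formal consequences of part (1), not part (1) itself.
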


\subsection{$\infty$-step pro-nilsystems}\
\medskip

By Theorem \ref{thm-1} for any minimal t.d.s. $(X,T)$,
$$\RP^{[\infty]}=\bigcap\limits_{d\ge 1} \RP^{[d]}$$
is a closed invariant equivalence relation (we write $\RP^{[\infty]}(X,T)$ in case of ambiguity). Now we formulate the
definition of $\infty$-step pro-nilsystems. 

\begin{de}\cite[Definition 3.4]{D-Y}
A minimal t.d.s. $(X, T)$ is an {\em $\infty$-step
pro-nilsystem} or {\em a system of order $\infty$}, if the equivalence
relation $\RP^{[\infty]}$ is trivial, i.e., coincides with the
diagonal.
\end{de}

\begin{rem}
\begin{enumerate}
  \item Similar to Theorem \ref{thm0}, one can show that the quotient of a
minimal system $(X,T)$ under $\RP^{[\infty]}$ is the maximal
$\infty$-step pro-nilfactor of $(X,T)$. We denote the maximal
$\infty$-step pro-nilfactor of $(X,T)$ by $X_\infty$.

  \item A minimal system is an $\infty$-step pro-nilsystem if and only if it is
an inverse limit of minimal nilsystems \cite[Theorem 3.6]{D-Y}.

\end{enumerate}
\end{rem}

\section{Saturation theorems for polynomials}\label{section-saturate}

In this section, we generalize the saturation theorems in \cite{GHSWY} and \cite{Qiu} to polynomial version,
which will be used in the proofs of Theorems C, D and E.

Let $X, Y$ be sets, and let $\phi : X\rightarrow Y$ be a map. A subset $L$ of $X$ is called
{\em $\phi$-saturated} if $$\{x\in L: \phi^{-1}(\phi(x))\subseteq L\}=L,$$ i.e., $L=\phi^{-1}(\phi(L))$.

\subsection{Topological characteristic factors }\
\medskip

Given a factor map $\pi: (X,T)\rightarrow (Y,T)$ and $d\ge 2$,
the t.d.s. $(Y,T)$ is said to be a {\em $d$-step topological
characteristic factor
(for $\tau_d=T\times T^2\times \cdots \times T^d$)
of  $(X,T)$}, if there exists a dense $G_\d$
subset $\Omega$ of $X$ such that for each $x\in \Omega$ the orbit
closure $L_x=\overline{\O}((x, \ldots,x), \tau_d)$ is $\pi^{(d)}=\pi\times \cdots \times
\pi$ ($d$-times) saturated. That is, $(x_1,x_2,\ldots, x_d)\in L_x$
if and only if $(x_1',x_2',\ldots, x_d')\in L_x$, where $\pi(x_i)=\pi(x_i')$ for all $i\in \{1,2,\ldots, d\}$.
The following theorem was proved in \cite{GHSWY}.
\begin{thm}\cite[Theorem A]{GHSWY}\label{thm-GHSWY}
Let $(X,T)$ be a minimal t.d.s., and $\pi:X\rightarrow X_\infty$ be the factor map from $X$ to its maximal $\infty$-step pro-nilfactor $X_\infty$. Then there are minimal t.d.s. $X^*$ and $X_\infty^*$ which are almost one to one
extensions of $X$ and $X_\infty$ respectively, and a commuting diagram below such that $\pi^*: X^*\rightarrow X^*_\infty$ is open, and $X_\infty^*$ is a $d$-step topological characteristic factor of $X^*$ for all $d\ge 2$.
\[
\begin{CD}
X @<{\varsigma^*}<< X^*\\
@VV{\pi}V      @VV{\pi^*}V\\
X_\infty @<{\varsigma}<< X_\infty^*
\end{CD}
\]
\end{thm}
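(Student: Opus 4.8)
The plan is to reduce everything to an \emph{open} factor map and then to establish the saturation property on a residual set. First I would apply the Veech construction of Theorem \ref{thm-O} to the factor map $\pi\colon X\to X_\infty$; this yields exactly the commuting square of the statement, with $\varsigma^*\colon X^*\to X$ and $\varsigma\colon X_\infty^*\to X_\infty$ almost one to one and $\pi^*\colon X^*\to X_\infty^*$ open. Since almost one to one extensions of minimal systems are proximal, they do not change the regionally proximal relation of any finite order, so the maximal $\infty$-step pro-nilfactor of $X^*$ is still $X_\infty$ and $X_\infty^*$ is an almost one to one extension of it. The theorem therefore reduces to the following: for the open map $\pi^*$ and each fixed $d\ge 2$, there is a dense $G_\delta$ set $\Omega\subseteq X^*$ such that $L_x=\overline{\O}((x,\ldots,x),\tau_d)$ is $(\pi^*)^{(d)}$-saturated for every $x\in\Omega$. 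Intersecting the sets obtained for the countably many values of $d$ then gives the characteristic factor property for all $d\ge 2$ simultaneously.

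To produce $\Omega$ I would invoke the semicontinuity machinery recalled earlier. The assignment $x\mapsto L_x\in 2^{(X^*)^d}$ is the composition of the continuous diagonal map $x\mapsto(x,\ldots,x)$ with the orbit-closure map of $((X^*)^d,\tau_d)$, and the latter is lower semi-continuous; hence $x\mapsto L_x$ is lower semi-continuous, and by Theorem \ref{thm-Fort} its points of continuity form a dense $G_\delta$ set, which I take to be $\Omega$. The role of the openness of $\pi^*$ is that it makes the fibre map $(\pi^*)^{(d)}$ behave well under limits, so that at a continuity point a point of $L_x$ can be perturbed within its fibre by approximating in the image and lifting.

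The heart of the proof, and the step I expect to be the main obstacle, is to show that at $x\in\Omega$ one may move each coordinate of a point of $L_x$ within its $\pi^*$-fibre without leaving $L_x$. By Theorems \ref{thm-1} and \ref{thm0} the relation $\pi^*(u)=\pi^*(v)$ is, up to the almost one to one modification, exactly $\RP^{[\infty]}(X^*)=\bigcap_{k\ge 1}\RP^{[k]}(X^*)$, so the required fibre motions are precisely motions along pairs that are regionally proximal of every order. One wants to realize such motions as limits of the $\tau_d$-dynamics on the diagonal: using Glasner's minimality of $(N_d(X^*),\langle\tau_d,\sigma_d\rangle)$ (Proposition \ref{thm-Glasner}) one manufactures transformations moving the diagonal orbit, and then one must argue that, relative to the pro-nilfactor $X_\infty$, the extension is as far from nilpotent as possible, so that the $\tau_d$-orbit closure sweeps out whole fibres. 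In ergodic theory this final ``relative weak mixing'' step is exactly where the van der Corput lemma enters; since no such tool is available in topological dynamics, I expect the difficulty to concentrate here and to require the full structure (PI) theory of minimal flows --- inducting up a canonical tower, treating proximal and isometric extensions separately, and controlling the action of $\tau_d$ across each isometric layer. Once saturation is verified on $\Omega$ for every $d\ge 2$, the definition of a $d$-step topological characteristic factor is met verbatim and the commuting diagram with all the stated properties is complete.
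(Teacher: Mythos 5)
Your proposal reproduces the outer shell of the argument correctly, and in fact it coincides with the route recorded in Remark \ref{rem-3.3}: the paper does not reprove this statement but imports it from \cite{GHSWY}, where one first proves the key saturation result for an \emph{open} extension $\pi$ whose target factors onto $X_\infty$ (\cite[Theorem 4.2]{GHSWY}), and then passes to the general case via the Veech O-diagram of Theorem \ref{thm-O}. Your reduction steps are sound: the O-diagram supplies the square with $\varsigma,\varsigma^*$ almost one to one and $\pi^*$ open, and since almost one to one extensions are proximal and ${\bf P}(X^*,T)\subseteq \RP^{[\infty]}(X^*)$, the maximal $\infty$-step pro-nilfactor of $X^*$ is still $X_\infty$, so $X^*_\infty$ sits above it as required. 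Likewise, lower semicontinuity of $x\mapsto L_x$ together with Theorem \ref{thm-Fort} correctly produces the residual set $\Omega$, and intersecting over $d$ is harmless.

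The genuine gap is the saturation itself: you must show that for $x\in\Omega$ and $(x_1,\ldots,x_d)\in L_x$, every point of $\prod_i(\pi^*)^{-1}(\pi^*(x_i))$ again lies in $L_x$, and nothing in your proposal actually establishes this. Continuity of $x\mapsto L_x$ at $x$ is only an auxiliary device for transporting information from nearby diagonal points; it does not by itself force $L_x$ to contain whole fibres. Your appeal to Proposition \ref{thm-Glasner} also does not close the gap: minimality of $(N_d(X^*),\langle\tau_d,\sigma_d\rangle)$ concerns the joint action of $\tau_d$ and $T^{(d)}$ on the union of all diagonal orbit closures, and says nothing direct about the single orbit closure $L_x$ under $\tau_d$ alone. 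The substantive input --- that relative to the $\infty$-step pro-nilfactor the system has no further ``structured'' obstruction, so the $\tau_d$-orbit of the diagonal sweeps out fibres --- is exactly the content of \cite[Theorem 4.2]{GHSWY}, whose proof runs through the structure theorem for minimal flows (RIC weakly mixing extensions, the Furstenberg--Ellis--Glasner tower, and an induction up that tower). You correctly diagnose that this is where the difficulty concentrates and that no van der Corput substitute is available topologically, but naming the obstacle is not the same as overcoming it; as written, the heart of the theorem is assumed rather than proved.
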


We remark that the almost one to one modifications in the above theorem are needed, see for example \cite{G94, WXY}. Based on Theorem \ref{thm-GHSWY}, using PET induction and elaborate constructions, it was shown in \cite{Qiu}

\begin{thm}\cite[Theorem B]{Qiu}\label{Thm-Qiu}
Let notation be as in Theorem \ref{thm-GHSWY} and $d\in \N$. Then for any non-empty open subsets $V_0,V_1,\ldots, V_d$ of $X^*$ with $\bigcap _{i=0}^d \pi^*(V_i)\neq \emptyset$ and essentially distinct non-constant integral polynomials $p_1,p_2,\ldots, p_d$ with $p_{i}(0)=0$, $i=1,2,\ldots, d$, there is some $n\in \N$ such that
$$V_0\cap T^{-p_1(n)}V_1\cap T^{-p_2(n)}V_2\cap \ldots \cap T^{-p_d(n)}V_d\neq \emptyset.$$
\end{thm}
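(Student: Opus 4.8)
The plan is to deduce the intersection statement from a polynomial analogue of the saturation in Theorem \ref{thm-GHSWY}, verifying the corresponding recurrence on the pro-nilfactor $X_\infty^*$ by Leibman's theory of polynomial orbits on nilmanifolds, and then lifting it back to $X^*$ through the open map $\pi^*$. Throughout I write $W_i=\pi^*(V_i)$; since $\pi^*$ is open these are nonempty open subsets of $X_\infty^*$, and by hypothesis the open set $W=\bigcap_{i=0}^d W_i$ is nonempty, with $W\subseteq W_0=\pi^*(V_0)$.

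I would begin by fixing the base point. Let $\Omega\subseteq X^*$ be the dense $G_\delta$ set of points at which the polynomial saturation discussed below holds. Since $V_0\cap(\pi^*)^{-1}(W)$ is nonempty (any $w\in W\subseteq\pi^*(V_0)$ has a preimage in $V_0$) and open, $\Omega$ meets it, so I may choose $x\in\Omega\cap V_0$ with $w:=\pi^*(x)\in W$. On the factor, consider the polynomial orbit $\{(T^{p_1(n)}w,\ldots,T^{p_d(n)}w):n\in\Z\}$ in $(X_\infty^*)^d$. Because each $p_i(0)=0$, its value at $n=0$ is the diagonal point $(w,\ldots,w)$, which already lies in the open box $W_1\times\cdots\times W_d$ since $w\in W_i$ for every $i$. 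Reducing to a finite-step minimal (hence uniquely ergodic) nilsystem quotient of the inverse limit $X_\infty^*$ and applying Leibman's theorem, this orbit is equidistributed on its closure, so the set of return times $\{n:(T^{p_1(n)}w,\ldots,T^{p_d(n)}w)\in W_1\times\cdots\times W_d\}$ has positive density; in particular it is infinite and contains some $n\in\N$. This gives $w\in W_0$ and $T^{p_i(n)}w\in W_i$ for all $1\le i\le d$, i.e. the desired configuration over the nilfactor.

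Next I would lift this back to $X^*$. Grant the \emph{polynomial saturation property}: for $x\in\Omega$ the orbit closure $P_x=\overline{\{(x,T^{p_1(n)}x,\ldots,T^{p_d(n)}x):n\in\Z\}}$ is $(\pi^*)^{(d+1)}$-saturated. The genuine orbit point at the time $n$ just found lies in $P_x$, and its image under $(\pi^*)^{(d+1)}$ is exactly $(w,T^{p_1(n)}w,\ldots,T^{p_d(n)}w)$; by saturation the entire fibre over that image lies in $P_x$. Each fibre coordinate meets the corresponding open set, since $(\pi^*)^{-1}(w)$ meets $V_0$ (as $w\in W_0$) and $(\pi^*)^{-1}(T^{p_i(n)}w)$ meets $V_i$ (as $T^{p_i(n)}w\in W_i$), so the open box $V_0\times\cdots\times V_d$ meets $P_x$. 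As $P_x$ is an orbit closure and the box is open, the orbit itself meets the box: some $n'$ gives $(x,T^{p_1(n')}x,\ldots,T^{p_d(n')}x)\in V_0\times\cdots\times V_d$, that is, $x\in V_0\cap T^{-p_1(n')}V_1\cap\cdots\cap T^{-p_d(n')}V_d$. Positivity of the return density lets one arrange $n'\in\N$.

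The main obstacle is establishing the polynomial saturation itself, since Theorem \ref{thm-GHSWY} supplies saturation only for the linear action $\tau_d=T\times T^2\times\cdots\times T^d$. I would prove it by a PET induction on the Bergelson weight of the family $\{p_1,\ldots,p_d\}$: the base case consists of linear families, handled directly by Theorem \ref{thm-GHSWY}, and the inductive step lowers the complexity by differencing the polynomials. In ergodic theory this differencing is executed by the van der Corput inequality, for which there is no topological counterpart; hence the technical heart of the argument must be an explicit construction transferring saturation from a differenced, lower-complexity family back to the original one. Concretely I would run a compactness and Baire-category argument in the hyperspace $2^{(X^*)^{d+1}}$ for the orbit-closure map $x\mapsto P_x$, using the openness of $\pi^*$ so that projected return sets stay open and the saturation relation survives the limits produced by differencing, together with the genericity of the good base points. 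Ensuring that each differencing step genuinely decreases the PET-complexity while remaining compatible with the open factor map and with $\Omega$ is precisely where the essential distinctness and the vanishing at $0$ of the $p_i$ are indispensable; once this saturation is in hand, the nilfactor computation and the lift above are comparatively routine.
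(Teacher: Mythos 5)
This theorem is not proved in the paper at all: it is quoted verbatim from Qiu's work (\cite[Theorem B]{Qiu}), where the authors describe its proof as resting on Theorem \ref{thm-GHSWY} together with ``PET induction and elaborate constructions.'' Your overall architecture --- pass to the pro-nilfactor, use Leibman's theory to locate a good return time there, and lift back through the open map $\pi^*$ via a saturation property of polynomial orbit closures --- is the right shape, and the nilfactor computation and the lifting step are essentially routine once saturation is granted. But there is a genuine gap, and it sits exactly where you locate it yourself: the ``polynomial saturation property'' for the orbit closures $P_x=\overline{\{(x,T^{p_1(n)}x,\ldots,T^{p_d(n)}x):n\in\Z\}}$ is never established. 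In this paper that saturation statement is Theorem \ref{thm-poly-sat1}, and it is \emph{deduced from} Theorem \ref{Thm-Qiu}, not the other way around; so your plan reduces the theorem to a strictly stronger statement and then offers only a programmatic sketch of how to prove it. Saying that one should ``run a compactness and Baire-category argument in the hyperspace \ldots so that the saturation relation survives the limits produced by differencing'' is a restatement of the difficulty, not an argument: the whole point, which you correctly note, is that there is no topological van der Corput lemma, so the differencing step of PET induction cannot be executed in the usual way, and showing that the linear saturation of Theorem \ref{thm-GHSWY} propagates through each complexity-reducing step is precisely the ``elaborate construction'' that constitutes Qiu's proof. Without carrying out at least one full inductive step --- specifying the induction parameter (the Bergelson weight vector), the exact statement being inducted on, and how openness of $\pi^*$ and the choice of the residual set $\Omega$ interact with the differenced family --- the proposal does not constitute a proof.

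Two smaller points. First, your final sentence ``Positivity of the return density lets one arrange $n'\in\N$'' needs justification: the lifting argument as written only produces some $n'\in\Z$, and converting this to $n'\in\N$ requires knowing that the set of good $n'$ is infinite in the positive direction (e.g.\ syndetic, which Theorem \ref{thm-Leibman-poly-nil} does give on the nilfactor, but you must then thread that through the saturation step rather than invoke it after the fact). Second, on the nilfactor you should reduce to the orbit closure of the diagonal point inside a finite-step nilsystem quotient and use that Haar measure on the resulting sub-nilmanifold has full support before invoking equidistribution; as written the positive-density claim for the open box $W_1\times\cdots\times W_d$ is asserted rather than derived. Neither of these is fatal, but the missing PET induction is.
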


\begin{rem}\label{rem-3.3}
In \cite{GHSWY}, to get Theorem \ref{thm-GHSWY}, the authors proved the following result: Let $\pi:(X,T)\rightarrow (Y,T)$ be an extension of minimal t.d.s.
Assume that $\pi$ is open and $X_{\infty}$ is a factor of $Y$.
$$
\xymatrix{
                &         X \ar[d]^{\pi} \ar[dl]_{\pi_\infty}    \\
  X_{\infty} & Y   \ar[l]_{\phi}          }
$$
Then $Y$ is a
$d$-step topological characteristic factor of $X$ for all $d\ge 2$, that is, for all $d\in \N$ there exists a dense $G_\d$ subset $\Omega_d$ of $X$ such that for each $x\in \Omega_d$ the orbit closure $L_x=\overline{\O}(x^{(d)}, \tau_{d})$ is $\pi^{(d)}$-saturated \cite[Theorem 4.2]{GHSWY}. Using this result and O-diagram (Theorem \ref{thm-O}), one has Theorem \ref{thm-GHSWY}. Thus in Theorem \ref{thm-GHSWY}, when $\pi: X\rightarrow X_\infty$ is open, then $X^*=X$ and $X^*_\infty=X_\infty$. The same thing happens in Theorem \ref{Thm-Qiu}.

\end{rem}

\subsection{A l.s.c. map}\label{subsec-lsc}\
\medskip

Recall that for metric spaces $X$ and $Y$, the map $F: Y\rightarrow 2^X$ is l.s.c. at $y\in Y$ if and only if for each $\ep>0$ there exists a neighbourhood $U$ of $y$ with $F(y)\subseteq B_\ep(F(y'))$ for all $y'\in U$.

Let $(X,T)$ be a t.d.s. and $d\in \N$. Let $\A=\{p_1,\ldots, p_d\}$ be a set of integral polynomials and define
$$\O_\A((x_1,\ldots,x_d))=\O_{\{p_1,\ldots,p_d\}}((x_1,\ldots,x_d))=\{(T^{p_1(n)}x_1, \ldots, T^{p_d(n)}x_d): n\in \Z \},$$
and
$$\overline{\O}_\A((x_1,\ldots,x_d))=\overline{\{(T^{p_1(n)}x_1, \ldots, T^{p_d(n)}x_d): n\in \Z \}}.$$

\begin{lem}
Let $(X,T)$ be a t.d.s. and $d\in \N$. Let $\A=\{p_1,\ldots, p_d\}$ be a set of integral polynomials and define
$$G_{\A}: X^d\rightarrow 2^{X^d}, (x_1,\ldots, x_d)\mapsto \overline{\O}_\A((x_1,\ldots,x_d)).$$
Then $G_\A$ is l.s.c.
\end{lem}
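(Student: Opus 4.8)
The plan is to verify lower semi-continuity directly via the $\ep$--$\delta$ characterization recalled just before the lemma: it suffices to show that for every $a=(x_1,\ldots,x_d)\in X^d$ and every $\ep>0$ there is a neighbourhood $U$ of $a$ with $G_\A(a)\subseteq B_\ep(G_\A(a'))$ for all $a'\in U$. It is convenient to write $\Phi_n\colon X^d\to X^d$ for the map $(x_1,\ldots,x_d)\mapsto (T^{p_1(n)}x_1,\ldots,T^{p_d(n)}x_d)$, so that $\O_\A(a)=\{\Phi_n(a):n\in\Z\}$ and $G_\A(a)=\overline{\O}_\A(a)$; each $\Phi_n$ is a homeomorphism of $X^d$, being a product of powers of $T$. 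I equip $X^d$ with the maximum metric induced by $\rho$, and write $B_\ep(\cdot)$ for balls in $X^d$.

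The one real point is uniformity. For a fixed $a'$ near $a$ and a \emph{single} $z\in G_\A(a)$ it is trivial to find a point of $G_\A(a')$ close to $z$: pick $n$ with $\Phi_n(a)$ close to $z$ and move $a$ to $a'$. But the index $n$ needed depends on $z$, and the modulus of continuity of $\Phi_n$ worsens as $n$ grows, so a naive pointwise argument produces no single $U$ that works for all of $G_\A(a)$ at once. I would resolve this by compactness. Since $G_\A(a)$ is a closed subset of the compact space $X^d$ and the orbit $\O_\A(a)$ is dense in it, finitely many orbit points suffice: choose indices $n_1,\ldots,n_m$ so that $G_\A(a)\subseteq\bigcup_{j=1}^m B_{\ep/2}(\Phi_{n_j}(a))$.

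Now only finitely many of the continuous maps $\Phi_{n_1},\ldots,\Phi_{n_m}$ enter, so their joint continuity at $a$ yields a single $\delta>0$ such that $\rho(a,a')<\delta$ implies $\rho(\Phi_{n_j}(a),\Phi_{n_j}(a'))<\ep/2$ for every $j\in\{1,\ldots,m\}$. Set $U=\{a'\in X^d:\rho(a,a')<\delta\}$. Given $a'\in U$ and any $z\in G_\A(a)$, choose $j$ with $\rho(z,\Phi_{n_j}(a))<\ep/2$; since $\Phi_{n_j}(a')\in\O_\A(a')\subseteq G_\A(a')$ and $\rho(\Phi_{n_j}(a),\Phi_{n_j}(a'))<\ep/2$, the triangle inequality gives $\rho(z,\Phi_{n_j}(a'))<\ep$, i.e. $z\in B_\ep(G_\A(a'))$. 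Hence $G_\A(a)\subseteq B_\ep(G_\A(a'))$ for all $a'\in U$, which is precisely the l.s.c. condition at $a$; as $a$ was arbitrary, $G_\A$ is l.s.c.

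I expect the compactness/finite-cover step to be the crux, since it is what converts the easy pointwise estimate into a uniform one, and it is where the growth of the modulus of continuity of $\Phi_n$ is neutralized; everything else is continuity of the $\Phi_n$ and the triangle inequality. Note that this argument specializes, at $d=1$ and $p_1(n)=n$, to the earlier claim that $x\mapsto\overline{\O}(x,T)$ is l.s.c., so the lemma is the natural polynomial analogue of that fact.
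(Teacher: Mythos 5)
Your proof is correct and follows essentially the same route as the paper's: both use compactness of $\overline{\O}_\A(a)$ to extract a finite $\ep/2$-dense set of orbit points $\Phi_{n_1}(a),\ldots,\Phi_{n_m}(a)$, then continuity of these finitely many maps to get a uniform $\delta$, and finish with the triangle inequality. No substantive differences.
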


\begin{proof}
Let ${\bf x}=(x_1,\ldots, x_d)\in X^d$ and $\ep>0$. Let $g(n)({\bf x})=(T^{p_1(n)}x_1, \ldots, T^{p_d(n)}x_d)$ for $n\in \Z$. Then $G_\A({\bf x})=\overline{\O}_\A({\bf x})=\overline{\{g(n){\bf x}: n\in \Z\}}$. Since $G_\A({\bf x})$ is compact, there exist $m_1,m_2\ldots, m_k\in \Z$ such that $\{g(m_1)({\bf x}), g(m_2)({\bf x}),\ldots, g(m_k)({\bf x})\}$ is $\ep/2$-dense in $G_\A({\bf x})$.

Since $T$ is continuous, there is some $\d>0$ such that whenever $\rho_d({\bf x},{\bf x'})<\d$, $$\rho_d(g(m_i)({\bf x}), g(m_i)({\bf x'}))<\ep/2, \ 1\le i\le k,$$
where $\rho_d$ is the metric of $X^d$. Now we show for each ${\bf x'}\in X^d$, if $\rho_d({\bf x}, {\bf x'})<\d$, then $G_\A({\bf x})\subseteq B_{\ep}(G_\A({\bf x'}))$.
In fact, let ${\bf y}\in G_\A({\bf x})$. Then there is some $i\in \{1,2\ldots, k\}$ such that $\rho_d(g(m_i)({\bf x}),{\bf y})<\ep/2$. As $\rho_d (g(m_i)({\bf x}), g(m_i)({\bf x'}))<\ep/2$, we have that $\rho_d(g(m_i)({\bf x'}),{\bf y})<\ep$. That means ${\bf y}\in B_\ep(G_\A({\bf x'}))$.
The proof is complete.
\end{proof}

With the same proof, we have the following lemma.

\begin{lem}\label{cont-points}
Let $(X,T)$ be a t.d.s. and $d\in \N$. Let $\A=\{p_1,\ldots, p_d\}$ be a set of integral polynomials and define
$$F_{\A}: X \rightarrow 2^{X^d},  x \mapsto \overline{\O}_\A(x^{\otimes d}).$$
Then $F_\A$ is l.s.c.
\end{lem}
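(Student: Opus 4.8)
The plan is to repeat the argument of the preceding lemma almost verbatim, the only change being that the orbit is now taken along the diagonal rather than from an arbitrary point of $X^d$. Writing $\iota: X\to X^d$ for the diagonal embedding $x\mapsto x^{\otimes d}=(x,\ldots,x)$, one has $F_\A=G_\A\circ \iota$, where $\iota$ is continuous (indeed a homeomorphism of $X$ onto $\Delta_d(X)$). Since the composition of a continuous single-valued map with a lower semi-continuous set-valued map is again lower semi-continuous, the l.s.c. of $F_\A$ follows immediately from the previous lemma: if $\lim_i x_i=x$ in $X$, then $\lim_i \iota(x_i)=\iota(x)$ by continuity, whence $\liminf F_\A(x_i)=\liminf G_\A(\iota(x_i))\supseteq G_\A(\iota(x))=F_\A(x)$. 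I would record this as the conceptual one-line proof.

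Alternatively, I would give the self-contained $\ep$--$\d$ argument. Fix $x\in X$ and $\ep>0$, and set $h(n)(x)=(T^{p_1(n)}x,\ldots,T^{p_d(n)}x)$, so that $F_\A(x)=\ol{\{h(n)(x):n\in\Z\}}$. Since $F_\A(x)$ is a compact subset of $X^d$, I would choose finitely many $m_1,\ldots,m_k\in\Z$ for which $\{h(m_1)(x),\ldots,h(m_k)(x)\}$ is $\ep/2$-dense in $F_\A(x)$. By continuity of $T$, hence of each of the finitely many maps $x\mapsto h(m_i)(x)$, there is $\d>0$ such that $\rho(x,x')<\d$ forces $\rho_d(h(m_i)(x),h(m_i)(x'))<\ep/2$ for every $1\le i\le k$. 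Then for any such $x'$ and any ${\bf y}\in F_\A(x)$, I pick $i$ with $\rho_d(h(m_i)(x),{\bf y})<\ep/2$; the triangle inequality gives $\rho_d(h(m_i)(x'),{\bf y})<\ep$, so ${\bf y}\in B_\ep(F_\A(x'))$. Hence $F_\A(x)\subseteq B_\ep(F_\A(x'))$ whenever $\rho(x,x')<\d$, which is exactly the l.s.c. criterion recalled before the statement.

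There is essentially no obstacle here: the entire content is the uniform continuity, on the compact space $X$, of the finitely many iterates singled out by compactness of the orbit closure. The reason one needs only finitely many $m_i$, and therefore a single $\d$ that works uniformly, is that $F_\A(x)$ is compact and hence totally bounded, so it is $\ep/2$-approximated by a finite subset of the orbit $\O_\A(x^{\otimes d})$; compactness of $X^d$ is what guarantees this. I would present the direct argument for self-containedness, while noting the composition viewpoint as the structural reason the proof is literally "the same" as that of the previous lemma.
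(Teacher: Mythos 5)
Your proof is correct and matches the paper exactly: the paper's entire proof of this lemma is the remark "With the same proof" referring to the preceding lemma on $G_\A$, and your self-contained $\ep$--$\d$ argument is precisely that proof transplanted to the diagonal. The composition observation $F_\A=G_\A\circ\iota$ is a valid and slightly cleaner way to package the same fact, but adds nothing essentially new.
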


Let $(X,T)$ be a t.d.s. By Theorem \ref{thm-Fort}, for any finite set $\A$ of integral polynomials,
the continuous points of $F_\A$ (in Lemma \ref{cont-points}) form a residual subset $X_\A$ of $X$. Let
\begin{equation}\label{cont-poly}
X_{pol}=\bigcap X_\A,
\end{equation}
where $\A$ ranges over all finite sets consisting of integral polynomials.
Since the set of such $\A$ is countable, $X_{pol}$ is still a residual subset of $X$. It is easy to verify  $T(X_{pol})=X_{pol}$.

\subsection{Polynomial saturation theorems}\
\medskip

Now we are able to prove polynomial saturation theorems in different settings, namely Theorem \ref{thm-poly-sat1} and Theorem \ref{thm-poly-sat2}.

\subsubsection{} First we give the polynomial version of Theorem \ref{thm-GHSWY}.

\begin{thm}\label{thm-poly-sat1}
Let $(X,T)$ be a minimal t.d.s., and $\pi:X\rightarrow X_\infty$ be the factor map from $X$ to its maximal $\infty$-step pro-nilfactor $X_\infty$.
Then there are minimal t.d.s. $X^*$ and $X_\infty^*$ which are almost one to one
extensions of $X$ and $X_\infty$ respectively, an open factor map $\pi^*$ and a commuting diagram below
\[
\begin{CD}
X @<{\varsigma^*}<< X^*\\
@VV{\pi}V      @VV{\pi^*}V\\
X_\infty @<{\varsigma}<< X_\infty^*
\end{CD}
\]
such that there is a $T$-invariant residual subset $X^*_0$ of $X^*$ having the following property:  for all $x\in X^*_0$, all sets of essentially distinct
non-constant integral polynomials $\A=\{p_1,\ldots, p_d\}$ and $d\in \N$, $\overline{\O}_\A(x^{\otimes d})$ is ${\pi^*}^{(d)}$-saturated, that is,
$$\overline{\O}_\A(x^{\otimes d})=({\pi^*}^{(d)})^{-1}\Big({\pi^*}^{(d)}(\overline{\O}_\A(x^{\otimes d}))\Big).$$

If in addition $\pi$ is open, then $X^*=X$, $X_\infty^*=X_\infty$ and $\pi^*=\pi$.
\end{thm}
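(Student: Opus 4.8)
The plan is to prove Theorem \ref{thm-poly-sat1} by taking Qiu's Theorem \ref{Thm-Qiu} as the dynamical engine and upgrading its single-$n$ ``weak'' conclusion to a per-point saturation statement through a Baire/continuity-point argument for the lower semi-continuous orbit-closure maps of Lemma \ref{cont-points}; morally this is the polynomial analogue of the passage, recorded in Remark \ref{rem-3.3}, from a return-time density statement to the full topological characteristic factor (Theorem $4.2$ of \cite{GHSWY}). First I would fix the diagram: applying the Veech construction (Theorem \ref{thm-O}) together with Theorem \ref{thm-GHSWY} produces minimal almost one-to-one extensions $\varsigma^*\colon X^*\to X$, $\varsigma\colon X_\infty^*\to X_\infty$ and an \emph{open} factor map $\pi^*\colon X^*\to X_\infty^*$, and by Remark \ref{rem-3.3} if $\pi$ is already open one may take $X^*=X$, $X_\infty^*=X_\infty$, $\pi^*=\pi$, which disposes of the last sentence. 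For the residual set I would apply the construction \eqref{cont-poly} inside $X^*$, setting $X_0^*=X^*_{pol}$ to be the intersection, over all finite families $\A$ of essentially distinct non-constant integral polynomials vanishing at $0$, of the continuity points of $F_\A$. By Lemma \ref{cont-points} each $F_\A$ is l.s.c., so Theorem \ref{thm-Fort} makes its continuity points residual; as there are countably many such $\A$ the set $X_0^*$ is residual and, as noted after \eqref{cont-poly}, $T$-invariant.

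Next I would reduce saturation to a single inclusion. Fix $x\in X_0^*$, a family $\A=\{p_1,\dots,p_d\}$ and $d\in\N$. Since ${\pi^*}^{(d)}$ is continuous and $\overline{\O}_\A(x^{\otimes d})$ is compact, one gets ${\pi^*}^{(d)}\big(\overline{\O}_\A(x^{\otimes d})\big)=\overline{\O}_\A(\pi^*(x)^{\otimes d})$, and the inclusion $\overline{\O}_\A(x^{\otimes d})\subseteq({\pi^*}^{(d)})^{-1}\big({\pi^*}^{(d)}(\overline{\O}_\A(x^{\otimes d}))\big)$ is automatic. Thus saturation is equivalent to: whenever $(w_1,\dots,w_d)\in (X^*)^d$ satisfies $(\pi^*(w_1),\dots,\pi^*(w_d))\in\overline{\O}_\A(\pi^*(x)^{\otimes d})$, one has $(w_1,\dots,w_d)\in\overline{\O}_\A(x^{\otimes d})$.

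To establish this inclusion I would combine three ingredients: the openness of $\pi^*$, the continuity of $F_\A$ at $x$, and Qiu's Theorem \ref{Thm-Qiu}. Pick $(w_i^0)_i\in\overline{\O}_\A(x^{\otimes d})$ lying in the same ${\pi^*}^{(d)}$-fibre as $(w_i)_i$ (possible by the equality of images above), together with times $n_k$ with $(T^{p_i(n_k)}x)_i\to(w_i^0)_i$, so that $\pi^*(T^{p_i(n_k)}x)=T^{p_i(n_k)}\pi^*(x)\to\pi^*(w_i)$. Because $\pi^*$ is open and continuous, the fibre map $\bar y\mapsto(\pi^*)^{-1}(\bar y)$ is continuous, which supplies points in the correct fibres over the $T^{p_i(n_k)}\pi^*(x)$ converging to the prescribed $w_i$; the task is then to show these fibre-points can be taken inside the orbit closure, and this is exactly where Qiu's density of the polynomial orbits over a common $\pi^*$-fibre feeds in, after which the upper semi-continuity of $F_\A$ at the continuity point $x$ transfers the resulting configurations from nearby base points back to $x$ and the closedness of $\overline{\O}_\A(x^{\otimes d})$ finishes the argument as the neighbourhoods shrink.

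The main obstacle is precisely this last bootstrap. Theorem \ref{Thm-Qiu} only produces a time $n$ when the open sets share a \emph{common} $\pi^*$-fibre (the hypothesis $\bigcap_{i=0}^d\pi^*(V_i)\neq\emptyset$), whereas the configurations $(w_1,\dots,w_d)$ needed for saturation live over $d$ \emph{distinct} points of $X_\infty^*$, namely in $\prod_i(\pi^*)^{-1}(\pi^*(w_i))$. Converting the common-fibre density of Qiu's theorem into the independent coordinatewise fibre motion required here — so that one may realize an arbitrary point of the fibre product over $\overline{\O}_\A(\pi^*(x)^{\otimes d})$ while keeping the base point controlled near $x$ — is the technical heart of the matter, and I expect it to require a careful interplay of the openness of $\pi^*$, the semi-continuity of $F_\A$, and very likely an induction on the complexity of $\A$ in the spirit of the PET induction underlying Theorem \ref{Thm-Qiu}. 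Checking that the continuity-point transfer remains valid for these distinct-fibre configurations, rather than only for the common-fibre ones directly furnished by Qiu's theorem, is the step I anticipate will demand the most care.
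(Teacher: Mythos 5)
Your setup coincides with the paper's: the diagram comes from Theorem \ref{thm-GHSWY}, the residual set is $X_0^*=X^*_{pol}$ from \eqref{cont-poly}, and the problem reduces to showing that every point of $({\pi^*}^{(d)})^{-1}({\bf y})$, for ${\bf y}$ over the base orbit, lies in $\overline{\O}_\A(x^{\otimes d})$. But you stop exactly at the decisive step. You correctly observe that Theorem \ref{Thm-Qiu} only applies when the open sets have a common point in their $\pi^*$-images, while the target $(x_1,\dots,x_d)$ sits over the $d$ distinct points $T^{p_1(k)}y,\dots,T^{p_d(k)}y$, and you then only speculate that bridging this requires some new ``independent coordinatewise fibre motion'' and perhaps a fresh PET induction. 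That is the gap: without an explicit bridging device the argument is incomplete, and in fact no extra induction is needed.

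The paper's resolution is a short re-centering trick. Write ${\bf y}=(T^{p_1(k)}y,\dots,T^{p_d(k)}y)$ and pull the target neighbourhoods back by the orbit: the sets $B_\d(x)$ and $T^{-p_i(k)}B_\ep(x_i)$, $1\le i\le d$, all contain $y$ in their $\pi^*$-images (here the openness of $\pi^*$ enters), so Theorem \ref{Thm-Qiu} applies to them with the polynomials $p_i(n+k)-p_i(k)$, which still vanish at $0$ and remain essentially distinct. This yields $\widetilde n$ and a point $x'\in B_\d(x)$ with $T^{p_i(\widetilde n+k)}x'\in B_\ep(x_i)$ for all $i$, i.e.\ $F_{\A'}(x')$ meets $\prod_{i}B_\ep(x_i)$ for the shifted family $\A'=\{p_i(n+k)\}$. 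Since $x$ is a continuity point of $F_{\A'}$ and $\rho(x,x')<\d$, the Hausdorff distance between $F_{\A'}(x)$ and $F_{\A'}(x')$ is less than $\ep$, so $F_{\A'}(x)$ meets $\prod_i B_{2\ep}(x_i)$; letting $\ep\to 0$ gives $(x_1,\dots,x_d)\in\overline{\O}_\A(x^{\otimes d})$. Two further small points: the paper reduces to fibres over the orbit $\O_\A(y^{\otimes d})$ rather than over its closure (limits are then handled by the continuity of $({\pi^*}^{(d)})^{-1}$, again from openness), and the continuity-point set must be taken over \emph{all} finite families of integral polynomials so that it covers every shift $\A'$ --- which \eqref{cont-poly} already arranges.
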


\begin{proof}
Let $X^*, X_\infty^*$ and $\pi^*$ be as in Theorem \ref{thm-GHSWY}. Let $X_0^*=X^*_{pol}$ be defined by \eqref{cont-poly}. It is a $T$-invariant residual subset of $X^*$.
We show that for all $x\in X_0^*$, $d\in\N$ and all sets of integral polynomials $\A=\{p_1,\ldots, p_d\}$, $\overline{\O}_\A(x^{\otimes d})$ is ${\pi^*}^{(d)}$-saturated.

Let $y=\pi^*(x)$. Since $\pi^*$ is open, it suffices to show for any ${\bf y}\in \O_\A(y^{\otimes d})$, $({\pi^*}^{(d)})^{-1}({\bf y})\subset \overline{\O}_\A(x^{\otimes d})$.

\medskip

Now let ${\bf y}\in \O_\A(y^{\otimes d})$. Then there is some $k\in \Z$ such that ${\bf y}=(T^{p_1(k)}y, T^{p_2(k)}y,\ldots, T^{p_d(k)}y)$. Let $(x_1,x_2,\ldots,x_d)\in ({\pi^*}^{(d)})^{-1}({\bf y})=\prod_{i=1}^d(\pi^*)^{-1}(T^{p_i(k)} y)$.

\medskip

Let $\A'=\{p_i(n+k): 1\le i\le d\}$. Then $\A'$ is still a set of essentially distinct non-constant integral polynomials. Since $x\in X_0^*= X^*_{pol}$, it is a continuous point of the map
$$F_{\A'}: X^*\rightarrow 2^{{X^*}^{d}}.$$
Thus for any fixed $\ep>0$, there is some $\d>0$ such that $\rho(x,x')<\d$ and $x,x'\in X^*$ implies that
$$(\rho^{d})_H(F_{\A'}(x), F_{\A'}(x'))<\ep,$$
where $$(\rho^{d})_H\big((z_i)_{i=1}^{d},(y_i)_{i=1}^{d}\big)=\max_{1\le i\le d}\rho(z_i,y_i)$$ for any
$(z_i)_{i=1}^{d},(y_i)_{i=1}^{d}\in (X^*)^{d}$, and $(\rho^{d})_H$ is the corresponding Hausdorff metric on $2^{{X^*}^{d}}$.

Since $(x_1,x_2,\ldots,x_d)\in ({\pi^*}^{(d)})^{-1}({\bf y})=\prod_{i=1}^d(\pi^*)^{-1}(T^{p_i(k)} y)$, it follows that
$$y\in \pi^*(B_\d(x))\cap \bigcap_{1\le i\le d} \pi^*(T^{-p_i(k)}(B_\ep(x_i))). $$

Applying Theorem \ref{Thm-Qiu} to essentially distinct non-constant integral polynomials \break $\{p_i(n+k)-p_i(k)\}_{i=1}^d$, we know that there is some $\widetilde{n}\in \N$ such that
$$B_\d(x)\cap \bigcap_{1\le i\le d} T^{-(p_i(\widetilde{n}+k)-p_i(k))}(T^{-p_i(k)}B_\ep(x_i)) \neq \emptyset.$$
That is,
$B_\d(x)\cap \bigcap_{1\le i\le d} T^{-p_i(\widetilde{n}+k)}B_\ep(x_i) \neq \emptyset.$
Let
\begin{equation}\label{t1}
  x'\in B_\d(x)\cap \bigcap_{1\le i\le d} T^{-p_i(\widetilde{n}+k)}B_\ep(x_i).
\end{equation}
Then $\rho(x,x')<\d$, and hence
\begin{equation}\label{t2}
  \rho^{d}_H(F_{\A'}(x), F_{\A'}(x'))<\ep.
\end{equation}
By \eqref{t1},
$$\Big(T^{p_i(\widetilde{n}+k)}(x')\Big)_{1\le i\le d}\in \prod_{i=1}^dB_\ep(x_i)= B_\ep(x_1)\times B_\ep(x_2)\times \cdots \times B_\ep(x_d).$$
So $F_{\A'}(x')\cap \prod_{i=1}^dB_\ep(x_i)\neq \emptyset.$
Following \eqref{t2}, one has that
$$F_{\A'}(x)\cap \prod_{i=1}^dB_{2\ep}(x_i)\neq \emptyset.$$
In particular, there is some $m\in \Z$ such that
$$\Big(T^{p_1(m+k)}x, T^{p_2(m+k)}x,\ldots, T^{p_d(m+k)}x\Big)\in \prod_{i=1}^dB_{2\ep}(x_i).$$
As $\ep$ is arbitrary, we have that $(x_1,x_2,\ldots, x_d)\in \overline{\O_\A}(x^{\otimes d})$, and hence
$$({\pi^*}^{(d)})^{-1}({\bf y})\subset \overline{\O}_\A(x^{\otimes d}).$$

\medskip
If in addition $\pi: X\rightarrow X_\infty$ is open, then by Remark \ref{rem-3.3},  $X^*=X$, $X_\infty^*=X_\infty$ and $\pi^*=\pi$. The proof is complete.
\end{proof}

It is easy to see that Theorem \ref{thm-GHSWY} is a special case of Theorem \ref{thm-poly-sat1} when $\A=\{n,2n,\ldots, dn\}$. Also it is clear that Theorem \ref{thm-poly-sat1} implies Theorem \ref{Thm-Qiu}.
We can rewrite Theorem \ref{thm-poly-sat1} as follows:

\begin{thm}
Let $(X,T)$ be a minimal t.d.s., and $\pi:X\rightarrow X_\infty$ be the factor map from $X$ to its maximal $\infty$-step pro-nilfactor $X_\infty$.
Then there are minimal t.d.s. $X^*$ and $X_\infty^*$ which are almost one to one
extensions of $X$ and $X_\infty$ respectively, an open factor map $\pi^*$ and a commuting diagram below
\[
\begin{CD}
X @<{\varsigma^*}<< X^*\\
@VV{\pi}V      @VV{\pi^*}V\\
X_\infty @<{\varsigma}<< X_\infty^*
\end{CD}
\]
such that there is a $T$-invariant residual subset $X^*_0$ of $X^*$ having the following property:  for all $x\in X^*_0$, for any non-empty open subsets $V_1,\ldots, V_d$ of $X^*$ with $\pi(x)\in \bigcap _{i=1}^d \pi^*(V_i)$ and essentially distinct non-constant integral polynomials $p_1,p_2,\ldots, p_d$ with $p_{i}(0)=0$, $i=1,2,\ldots, d$, there is some $n\in \N$ such that
$$x\in  T^{-p_1(n)}V_1\cap T^{-p_2(n)}V_2\cap \cdots \cap T^{-p_d(n)}V_d.$$
\end{thm}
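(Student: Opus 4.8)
The plan is to read this statement as an equivalent reformulation of Theorem~\ref{thm-poly-sat1} and to deduce it from the saturation of the polynomial orbit closures established there. First I would take $X^*$, $X_\infty^*$ and the open map $\pi^*$ exactly as produced by Theorem~\ref{thm-poly-sat1}, together with the $T$-invariant residual set $X_0^*$ on which $\overline{\O}_\A(x^{\otimes d})$ is ${\pi^*}^{(d)}$-saturated for every finite set $\A=\{p_1,\ldots,p_d\}$ of essentially distinct non-constant integral polynomials vanishing at $0$. (Here $\pi(x)$ in the displayed hypothesis should read $\pi^*(x)$, since $x\in X^*$.) The point is that the saturation statement and the ``simultaneous hitting'' statement carry exactly the same information about the single fibre over ${\pi^*}^{(d)}(x^{\otimes d})$.

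The core deduction is short. Fix $x\in X_0^*$, polynomials as above, and non-empty open $V_1,\ldots,V_d\subseteq X^*$ with $\pi^*(x)\in\bigcap_{i=1}^d\pi^*(V_i)$. For each $i$ choose $v_i\in V_i$ with $\pi^*(v_i)=\pi^*(x)$, which is possible precisely because $\pi^*(x)\in\pi^*(V_i)$. Then $(v_1,\ldots,v_d)$ lies in the fibre $({\pi^*}^{(d)})^{-1}\big({\pi^*}^{(d)}(x^{\otimes d})\big)$, since $\pi^*(v_i)=\pi^*(x)$ for all $i$. Because $p_i(0)=0$ for every $i$, the point $x^{\otimes d}=(T^{p_1(0)}x,\ldots,T^{p_d(0)}x)$ belongs to $\O_\A(x^{\otimes d})\subseteq\overline{\O}_\A(x^{\otimes d})$, so saturation gives $({\pi^*}^{(d)})^{-1}\big({\pi^*}^{(d)}(x^{\otimes d})\big)\subseteq\overline{\O}_\A(x^{\otimes d})$. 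Hence $(v_1,\ldots,v_d)\in\overline{\O}_\A(x^{\otimes d})\cap(V_1\times\cdots\times V_d)$, and since $V_1\times\cdots\times V_d$ is open it must already meet the orbit itself: there is $n$ with $(T^{p_1(n)}x,\ldots,T^{p_d(n)}x)\in V_1\times\cdots\times V_d$, i.e. $x\in\bigcap_{i=1}^d T^{-p_i(n)}V_i$.

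The one genuine subtlety, and what I expect to be the main obstacle, is the \emph{positivity} $n\in\N$: the argument above and the proof of Theorem~\ref{thm-poly-sat1} only produce $n\in\Z$, because the continuity-transfer step there returns an a priori arbitrary exponent. To recover $n\in\N$ I would rerun the same argument with the forward orbit closures $\overline{\O}^+_\A(x^{\otimes d})=\overline{\{(T^{p_1(n)}x,\ldots,T^{p_d(n)}x):n\in\N\}}$ in place of the two-sided ones. The map $x\mapsto\overline{\O}^+_\A(x^{\otimes d})$ is still l.s.c. (the proof of Lemma~\ref{cont-points} uses only finitely many orbit points, which may be chosen with nonnegative exponents), so by Theorem~\ref{thm-Fort} its joint continuity points over all admissible $\A$ form a $T$-invariant residual set $X_0^*$; here $T$-invariance follows from $\overline{\O}^+_\A((Tx)^{\otimes d})=T^{(d)}\overline{\O}^+_\A(x^{\otimes d})$. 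In this forward setting the crucial appeal to Theorem~\ref{Thm-Qiu} already delivers an exponent in $\N$, and since the relevant product box is open the transferred exponent for $x$ can likewise be taken in $\N$. The remaining things to check are routine: that the shifted families $\{p_i(n+k)-p_i(k)\}$ stay essentially distinct, non-constant, and vanishing at $0$ so that Theorem~\ref{Thm-Qiu} applies, and that selecting the $v_i$ over the common value $\pi^*(x)$ is legitimate, both of which are immediate.
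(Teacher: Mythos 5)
Your deduction is correct and is exactly what the paper intends: the paper states this theorem as a direct rewrite of Theorem \ref{thm-poly-sat1} with no further argument, and your fibre argument (choosing $v_i\in V_i$ over $\pi^*(x)$ and using that saturation places the whole fibre of $x^{\otimes d}$ inside $\overline{\O}_\A(x^{\otimes d})$, which contains $x^{\otimes d}$ because $p_i(0)=0$) supplies precisely the omitted details, including the correct reading of $\pi(x)$ as $\pi^*(x)$. Your point about $n\in\N$ versus $n\in\Z$ is also well taken: Theorem \ref{thm-poly-sat1} is stated for two-sided orbit closures, so the literal conclusion with $n\in\N$ does require either your forward-orbit variant (which goes through verbatim, since Theorem \ref{Thm-Qiu} already yields positive exponents) or a relaxation to $n\in\Z$ --- a point the paper passes over silently.
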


\subsubsection{Some special cases}

When $(X,T)$ is distal, then $\pi: X\rightarrow X_\infty$ is open. Thus we have the following corollary.

\begin{cor}
Let $(X,T)$ be a minimal distal t.d.s., and $\pi:X\rightarrow X_\infty$ be the factor map from $X$ to its maximal $\infty$-step pro-nilfactor $X_\infty$.
Then there is a residual subset $X_0$ of $X$ such that for all $x\in X_0$ and all sets of essentially distinct non-constant integral polynomials $\A=\{p_1,\ldots, p_d\}$, $\overline{\O}_\A(x^{\otimes d})$ is ${\pi}^{(d)}$-saturated.
\end{cor}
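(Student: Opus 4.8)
The plan is to read the corollary off Theorem~\ref{thm-poly-sat1} directly, the only real task being to verify that its openness hypothesis is automatic in the distal category. First I would invoke the fact, recorded in the sentence preceding the statement, that for a minimal distal system the factor map $\pi:X\rightarrow X_\infty$ is open. Granting this, the final clause of Theorem~\ref{thm-poly-sat1} says that the O-diagram collapses, i.e. $X^*=X$, $X_\infty^*=X_\infty$ and $\pi^*=\pi$. The $T$-invariant residual set $X_0^*\subseteq X^*$ produced there is then a residual subset $X_0$ of $X$, and for every $x\in X_0$ and every set of essentially distinct non-constant integral polynomials $\A=\{p_1,\ldots,p_d\}$ the orbit closure $\overline{\O}_\A(x^{\otimes d})$ is ${\pi^*}^{(d)}=\pi^{(d)}$-saturated. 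Since this is precisely the assertion of the corollary, no further argument is required once openness is in hand.

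The substantive point is thus the openness of $\pi$. Here I would appeal to the structure theory of minimal distal systems: by the Furstenberg structure theorem a minimal distal flow is an inverse limit of isometric (equicontinuous) extensions, each of which is open, and openness is preserved under composition and inverse limits in the minimal setting. Since $X_\infty$ is a factor of $X$, the map $\pi$ is subordinate to this tower and is therefore open. Equivalently, one may cite the classical fact that every factor map between minimal distal systems is open, whose proof rests on the coset structure of the fibres in the group-extension representation of a distal flow.

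I do not expect any genuine obstacle at this stage. The corollary is a specialization of Theorem~\ref{thm-poly-sat1}, and all the analytic work---the use of the polynomial Theorem~\ref{Thm-Qiu}, the lower semicontinuity of $F_\A$ (Lemma~\ref{cont-points}), and the continuity-point argument through $X^*_{pol}$---has already been carried out in the proof of that theorem. The entire content of the corollary is the reduction $X^*=X$, $\pi^*=\pi$ afforded by openness, which is a standard feature of the distal setting rather than something demanding new ideas.
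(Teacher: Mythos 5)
Your proposal is correct and follows exactly the route the paper takes: the paper's entire justification is the one-line observation that $\pi:X\rightarrow X_\infty$ is open when $(X,T)$ is distal, after which the final clause of Theorem \ref{thm-poly-sat1} gives $X^*=X$, $X^*_\infty=X_\infty$, $\pi^*=\pi$ and hence the corollary. Your additional remarks justifying openness (distal extensions of minimal systems are open) are standard and consistent with what the paper implicitly relies on.
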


When $(X,T)$ is a weakly mixing minimal t.d.s., $X_\infty$ is trivial, and we have the following corollary.

\begin{cor}\label{wm-3} \cite[Theorem 1.2]{HSY-19-1}
Let $(X,T)$ be a weakly mixing and minimal t.d.s. Then there is a dense $G_\delta$ subset $X_0$ of $X$ such that for each $x\in X_0$ and  essentially distinct non-constant integral polynomials $p_1, \ldots, p_k$,
$$\{(T^{p_1(n)}x, \ldots, T^{p_k(n)}x): n\in\Z\}$$
is dense in $X^k$.
\end{cor}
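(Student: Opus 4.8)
The plan is to read off this corollary from the polynomial saturation theorem (Theorem \ref{thm-poly-sat1}) in the degenerate case where the base of the tower, the maximal $\infty$-step pro-nilfactor $X_\infty$, collapses to a point. Once $X_\infty$ is known to be trivial, the whole O-diagram trivializes, the factor map $\pi$ is automatically open, and the ${\pi^*}^{(k)}$-saturation of $\overline{\O}_\A(x^{\otimes k})$ degenerates into the assertion that this orbit closure is all of $X^k$.

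First I would verify that $X_\infty$ is trivial for a weakly mixing minimal system. Indeed, $X_\infty$ is an inverse limit of minimal nilsystems, hence distal; on the other hand it is a factor of the weakly mixing system $(X,T)$, and factors of weakly mixing systems are weakly mixing (apply $\pi\times\pi$ to the transitive system $(X\times X, T\times T)$). A minimal system that is simultaneously distal and weakly mixing must be a single point: if $(Y,T)$ were such a system with $Y$ nontrivial, then $Y\times Y$ would be distal, so every point of $Y\times Y$ would be minimal, while weak mixing makes $Y\times Y$ transitive; a transitive (compact metric) system all of whose points are minimal is itself minimal, contradicting the fact that $\Delta(Y)$ is a proper closed invariant subset of $Y\times Y$. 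Hence $X_\infty$ is a single point.

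With $X_\infty$ a single point, the factor map $\pi:X\to X_\infty$ sends every open set onto $X_\infty$, so $\pi$ is open. By the last sentence of Theorem \ref{thm-poly-sat1}, this forces $X^*=X$, $X_\infty^*=X_\infty$ and $\pi^*=\pi$, and the theorem produces a $T$-invariant residual subset $X_0\subseteq X$ such that for every $x\in X_0$ and every family $\A=\{p_1,\ldots,p_k\}$ of essentially distinct non-constant integral polynomials, the orbit closure $\overline{\O}_\A(x^{\otimes k})$ is $\pi^{(k)}$-saturated. Now $\pi^{(k)}\colon X^k\to X_\infty^k$ has a single point as target, so its unique fibre is all of $X^k$; since $\overline{\O}_\A(x^{\otimes k})$ is nonempty (it contains $x^{\otimes k}$), saturation forces $\overline{\O}_\A(x^{\otimes k})=X^k$, that is, $\{(T^{p_1(n)}x,\ldots,T^{p_k(n)}x):n\in\Z\}$ is dense in $X^k$. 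Finally, replacing $X_0$ by a dense $G_\delta$ subset it contains (possible since the compact metric space $X$ is Baire) yields the stated $G_\delta$ set.

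As for difficulty, there is essentially no obstacle here: all of the real work is packed into Theorem \ref{thm-poly-sat1}. The only point requiring a word of justification is the triviality of $X_\infty$ in the weakly mixing case, and even that reduces to the elementary fact that a nontrivial minimal distal system cannot be weakly mixing. I would emphasize that one does not need to intersect over the countably many polynomial families by hand, since Theorem \ref{thm-poly-sat1} already furnishes a single residual set valid for all such $\A$ simultaneously.
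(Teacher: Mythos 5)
Your proposal is correct and follows essentially the same route as the paper, which derives the corollary from Theorem \ref{thm-poly-sat1} by observing that $X_\infty$ is trivial for a weakly mixing minimal system (so $\pi$ is open, $X^*=X$, and ${\pi}^{(k)}$-saturation of the nonempty set $\overline{\O}_\A(x^{\otimes k})$ forces it to equal $X^k$). Your added justification that a nontrivial minimal distal system cannot be weakly mixing is a correct filling-in of the one-line remark the paper relies on.
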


Let $(X,T)$ be a t.d.s. and $\A=\{p_1,\ldots, p_d\}$ be a set of integral polynomials. Put
\begin{equation}\label{nn}
  N_\A(X)=\overline{\bigcup_{x\in X}\O_{\A}(x^{\otimes d})}=\overline{\{(T^{p_1(n)}x, \ldots, T^{p_d(n)}x): x\in X, n\in \Z \}}.
\end{equation}
Note that when $\A=\{n,2n, \ldots, dn\}$, $N_{\A}(X)=N_d(X)$. The special case of the following theorem when $\A=\{n,2n, \ldots, dn\}$ has been considered in \cite[Lemma 3.3]{GHSWY}.

\begin{thm}\label{thm-poly-sat2}
Let $(X,T)$ be a minimal t.d.s., and $\pi:X\rightarrow X_\infty$ be the factor map  from $X$ to its maximal $\infty$-step pro-nilfactor $X_\infty$.
Then there are minimal t.d.s. $X^*$ and $X_\infty^*$ which are almost one to one
extensions of $X$ and $X_\infty$ respectively, an open factor map $\pi^*$, and a commuting diagram below
\[
\begin{CD}
X @<{\varsigma^*}<< X^*\\
@VV{\pi}V      @VV{\pi^*}V\\
X_\infty @<{\varsigma}<< X_\infty^*
\end{CD}
\]
such that
for all sets of essentially distinct non-constant integral polynomials $\A=\{p_1,\ldots, p_d\}$, $N_\A(X^*)$ is ${\pi^*}^{(d)}$-saturated, that is,
$$N_\A(X^*)=({\pi^*}^{(d)})^{-1}\Big({\pi^*}^{(d)}(N_\A(X^*))\Big)=({\pi^*}^{(d)})^{-1}
\Big(N_\A(X_\infty^*)\Big).$$

If in addition $\pi$ is open, then $X^*=X$, $X_\infty^*=X_\infty$ and $\pi^*=\pi$.
\end{thm}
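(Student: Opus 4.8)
The plan is to derive Theorem \ref{thm-poly-sat2} from the fiberwise saturation already established in Theorem \ref{thm-poly-sat1}, by exploiting the two facts that $N_\A(X^*)=\overline{\bigcup_{x}\overline{\O}_\A(x^{\otimes d})}$ is built out of the orbit closures $\overline{\O}_\A(x^{\otimes d})$, and that the saturated set $X_0^*$ from Theorem \ref{thm-poly-sat1} is residual, hence dense. The key structural observation is that being ${\pi^*}^{(d)}$-saturated is equivalent to the equality $N_\A(X^*)=({\pi^*}^{(d)})^{-1}\big({\pi^*}^{(d)}(N_\A(X^*))\big)$, and since ${\pi^*}^{(d)}(N_\A(X^*))$ should equal $N_\A(X_\infty^*)$, the whole content is to show $({\pi^*}^{(d)})^{-1}(N_\A(X_\infty^*))\subseteq N_\A(X^*)$; the reverse inclusion is automatic from $\pi^*\circ T=T\circ\pi^*$ and continuity.

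\textbf{Main steps.} First I would fix the $O$-diagram and the residual set $X_0^*$ provided by Theorem \ref{thm-poly-sat1}, so that for every $x\in X_0^*$ the orbit closure $\overline{\O}_\A(x^{\otimes d})$ is already ${\pi^*}^{(d)}$-saturated. Second, I would prove the easy inclusion ${\pi^*}^{(d)}(N_\A(X^*))=N_\A(X_\infty^*)$: applying ${\pi^*}^{(d)}$ to the defining orbits of $N_\A(X^*)$ and using that $\pi^*$ intertwines $T$ and is surjective (and continuous, so it commutes with closures of images on compact spaces) yields exactly the orbits defining $N_\A(X_\infty^*)$. Third — the crux — I would take an arbitrary point $\mathbf{z}\in({\pi^*}^{(d)})^{-1}(N_\A(X_\infty^*))$ and show $\mathbf{z}\in N_\A(X^*)$. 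Writing ${\pi^*}^{(d)}(\mathbf{z})=\mathbf{w}\in N_\A(X_\infty^*)$, I approximate $\mathbf{w}$ by a point $(T^{p_1(n)}\bar y,\ldots,T^{p_d(n)}\bar y)$ coming from some $\bar y\in X_\infty^*$ and $n\in\Z$; then I lift $\bar y$ to a point of $X_0^*$ (using density of $X_0^*$ together with surjectivity and openness of $\pi^*$) and invoke the fiberwise saturation of $\overline{\O}_\A(x^{\otimes d})$ for that lift to produce points of $N_\A(X^*)$ close to $\mathbf{z}$. Since $N_\A(X^*)$ is closed, letting the approximation error tend to $0$ places $\mathbf{z}$ in $N_\A(X^*)$.

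\textbf{The obstacle.} The delicate point is the third step: the saturation in Theorem \ref{thm-poly-sat1} is stated fiberwise, for a \emph{fixed} base point $x\in X_0^*$ and its single orbit closure, whereas $N_\A(X^*)$ is a union over \emph{all} base points followed by a closure. To bridge this I must control how the fiber ${\pi^*}^{(d)}{}^{-1}(\mathbf{w})$ sits relative to the various $\overline{\O}_\A(x^{\otimes d})$; the natural mechanism is to choose the lift $x$ of $\bar y$ inside the residual set $X_0^*$ so that $\mathbf{w}\in {\pi^*}^{(d)}(\overline{\O}_\A(x^{\otimes d}))$, and then saturation gives the entire fiber over $\mathbf{w}$, in particular the approximant to $\mathbf{z}$, inside $\overline{\O}_\A(x^{\otimes d})\subseteq N_\A(X^*)$. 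Making the matching of the target coordinate $\mathbf{w}$ with an attainable image under ${\pi^*}^{(d)}$ precise — rather than merely up to $\varepsilon$ — is where openness of $\pi^*$ and the density of $X_0^*$ must be combined carefully, exactly as in the proof of Theorem \ref{thm-poly-sat1}. The final clause, that $\pi$ open forces $X^*=X$, $X_\infty^*=X_\infty$, $\pi^*=\pi$, then follows verbatim from Remark \ref{rem-3.3}.
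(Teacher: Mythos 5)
Your proposal is correct and follows essentially the same route as the paper's proof: the easy inclusion from the intertwining property, then approximating a point of $N_\A(X_\infty^*)$ by orbit points based at the dense set $\pi^*(X_0^*)$, lifting to $X_0^*$, invoking the fiberwise saturation of Theorem \ref{thm-poly-sat1}, and passing to the limit via openness of $\pi^*$ (continuity of the fiber map) together with closedness of $N_\A(X^*)$. The "delicate point" you flag is resolved exactly as you suggest and as the paper does.
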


\begin{proof}
Let $X^*, X_\infty^*$ and $\pi^*$ be as in Theorem \ref{thm-GHSWY}.
It is clear that $N_\A(X^*) \subseteq ({\pi^*}^{(d)})^{-1} \Big( N_\A(X_\infty^*) \Big)$. 
Let ${\bf y}=(y_1,y_2,\ldots, y_d)\in N_\A(X_\infty^*)$. It remains to show that $({\pi^*}^{(d)})^{-1}({\bf y})\subseteq N_\A(X^*)$.

Put $Y_0=\pi^*(X^*_0)$, where $X^*_0$ is the set defined in Theorem \ref{thm-poly-sat1}.
Then $Y_0$ is a dense subset of $X_\infty^*$.
Since ${\bf y}=(y_1,y_2,\ldots, y_d)\in N_\A(X_\infty^*)$, there is some sequence $\{y^i\}_{i=1}^\infty\subseteq Y_0$ and $\{n_i\}_{i=1}^\infty\subseteq \Z$ such that $$(T^{p_1(n_i)}y^i, T^{p_2(n_i)}y^i,\ldots, T^{p_d(n_i)}y^i)\rightarrow {\bf y}=(y_1,y_2,\ldots, y_d), \ i\to\infty.$$
Since $y^i\in Y_0$, there is some $x^i\in X_0^*$ with $y^i=\pi(x^i)$ and then by Theorem \ref{thm-poly-sat1}, $$({\pi^*}^{(d)})^{-1} (T^{p_1(n_i)}y^i, T^{p_2(n_i)}y^i,\ldots, T^{p_d(n_i)}y^i)\subseteq \overline{\O_\A}((x^i)^{\otimes d})\subseteq N_\A(X^*).$$
As $\pi^*$ is open, $({\pi^*}^{(d)})^{-1}$ is continuous and it follows that
$$(\pi^{(d)})^{-1} (y_1,y_2,\ldots, y_d) \subseteq N_\A(X^*).$$
Thus we conclude that $({\pi^*}^{(d)})^{-1} \Big( N_\A(X_\infty^*) \Big) \subseteq N_\A(X^*)$. The proof is complete.
\end{proof}

\section{The induced systems $N_\infty(X,\A)$ and $M_\infty(X,\A)$ for polynomials}\label{Section-Def-M}

In this section, for a given finite set $\A$ of integral polynomials and a t.d.s. $(X,T)$, we define t.d.s.
$N_\infty(X,\A)$ and $M_\infty(X,\A)$ of $\Z^2$-actions. We will show that in fact they are isomorphic, which enable us to use
them freely in different situations.
Moreover, we will show that Theorem E is equivalent to the denseness of
minimal points of $M_\infty(X,\A)$.

\subsection{The definition of $N_\infty(X,\A)$}\

\subsubsection{Notation}

Let $d\in\N$ and $\A=\{p_1, p_2,\cdots, p_d\}$ be a collection of integral polynomials with $p_i(0)=0$, $1\le i\le d$.
A point of $(X^d)^{\Z}$ is denoted by
$${\bf x}=({\bf x}_n)_{n\in {\Z}}=\Big((x^{(1)}_n, x^{(2)}_n,\cdots, x^{(d)}_n) \Big)_{n\in \Z}.$$

Let $\vec{p}=(p_1,p_2,\cdots, p_d)$ and let $T^{\vec{p}(n)}: X^d\rightarrow X^d$ be defined by
\begin{equation}\label{}
  T^{\vec{p}(n)}(x_1,x_2,\cdots, x_d)=(T^{p_1(n)}x_1, T^{p_2(n)}x_2,\cdots, T^{p_d(n)}x_d).
\end{equation}

Recall that $T^{(d)}=T\times T\times \cdots\times T$ ($d$-times).
Define $T^\infty: (X^d)^{{\Z}}\rightarrow (X^d)^{{\Z}}$ by
$$T^\infty({\bf x}_n)_{n\in {\Z}}=(T^{(d)}{\bf x}_n)_{n\in {\Z}}.$$
Let $\sigma: (X^d)^{{\Z}}\rightarrow (X^d)^{{\Z}}$
be the shift map, i.e.,  for all $({\bf x}_n)_{n\in {\Z}}\in (X^d)^{{\Z}}$
$$(\sigma {\bf x})_n={\bf x}_{n+1}, \ \forall n\in {\Z}. $$

\subsubsection{Definition of $N_\infty(X,\A)$}
Let $x^{\otimes d}=(x,x,\ldots, x)\in X^d$ and
$$\D_{\infty}(X)=\{x^{(\infty)}\triangleq (\ldots, x^{\otimes d}, x^{\otimes d},\ldots ) \in (X^d)^{{\Z}}: x\in X\}.$$
For each $x\in X$, put
\begin{equation}\label{}
  \w_x^\A\triangleq(T^{\vec{p}(n)}x^{\otimes d})_{n\in \Z}=\big ((T^{p_1(n)}x, T^{p_2(n)}x,\ldots, T^{p_d(n)}x) \big)_{n\in \Z}
  \in (X^d)^{{\Z}},
\end{equation}
and set
\begin{equation}\label{}
  N_\infty(X,\A)=\overline{\bigcup\{\O(\w_x^\A,\sigma): x\in X\}}\subseteq (X^d)^{{\Z}}.
\end{equation}

\begin{rem} We have the following
\begin{enumerate}

\item It is clear that $N_\infty(X,\A )$ is invariant under the action of $T^\infty$ and $\sigma$, and $T^\infty\c \sigma=\sigma\c T^\infty$. Thus
$(N_\infty(X,\A), \langle T^\infty, \sigma\rangle)$ is a $\Z^2$-t.d.s.

\item If $(X,T)$ is transitive, then for each transitive point $x$ of $(X,T)$, $N_\infty(X,\A)=\overline{\O(\w_x^\A, \langle T^\infty, \sigma\rangle )}$.

\item Sometimes we identify  points in $(X^{d_1+d_2})^{\Z}$ as $(X^{d_1})^{\Z}\times (X^{d_2})^{\Z}$ as follows: 

{\small $$\Big((x^{(1)}_n, \cdots, x^{(d_1+d_2)}_n) \Big)_{n\in \Z}=\Big(\big((x^{(1)}_n, \cdots, x^{(d_1)}_n) \big)_{n\in \Z}, \big((x^{(d_1+1)}_n, \cdots, x^{(d_1+d_2)}_n) \big)_{n\in \Z}\Big).$$}


\item Similarly, we may define systems in $(X^{d})^{\Z_+}$ associated with the polynomials $\A$. In such a case,
$\sigma$ is a continuous surjective map but not a homeomorphism.
\end{enumerate}
\end{rem}

\subsection{Some examples}
The action $T^\infty$ on $N_\infty(X,\A)$ is clear. To make readers familiar with the definition, we provide below several examples to describe the action under $\sigma$.

\subsubsection{The case $\A=\{p\}$ with $p(n)=n$}   \label{subsection-linear-one}\
\medskip

In this case we show that for a minimal t.d.s. $(X,T)$,
\begin{equation}\label{}
  (N_\infty(X,\A), \sigma)\cong (X, T),
\end{equation}
where $\cong$ means two systems are isomorphic.

\medskip

Now $p(n)=n$, and we have for each $x\in X$
$$\omega^\A_x=(\ldots, T^{-2}x, T^{-1}x, \underset{\bullet}x,Tx,T^2x,\ldots )=(T^nx)_{n\in {\Z}},$$
where $`` \ \underset{\bullet } \ "$ means the $0$-th coordinate.
Note that
$$\sigma \w_x^\A=(\ldots, T^{-1}x, x, \underset{\bullet }Tx, T^2x, T^{3}x, \ldots)=T^\infty \w_x^\A =\w_{Tx}^\A .$$
It follows that $\sigma^n\w_x^\A=\w^\A_{T^n x}$ for all $n\in \Z$. By this fact and the minimality of $(X,T)$ it is easy to verify that
$$N_\infty(X,\A)=\{\omega_x^\A: x\in X\}.$$
Thus if we define $$\phi:(X,T)\rightarrow (N_\infty(X,\A),\sigma), \ x\mapsto \omega_x ^\A,$$
and then it is an isomorphism between them. So
$$ (N_\infty(X,\A),\sigma)\cong (X,T).$$
Also it is easy to see that $\overline{\O}(\w_x^\A,\sigma)=N_\infty(X,\A)$ for each $x\in X$ in this case.

\medskip

\subsubsection{The case for linear polynomials}\ 
\medskip

Let $\A=\{p_1, p_2, \ldots, p_d\}$, where $p_i(n)=a_in,\ 1\le i\le d$ and $a_1,a_2,\ldots, a_d$ are distinct non-zero integers.
In this case we show that for a minimal t.d.s. $(X,T)$,
\begin{equation}\label{}
  (N_\infty(X,\A), \sigma)\cong ({N}_{\A}(X,T), \tau_{\vec{a}}),
\end{equation}
where ${N}_\A(X,T)=\overline{\O}(\Delta_d(X), \tau_{\vec{a}})$ is as defined in \eqref{nn}, and $\tau_{\vec{a}}=T^{a_1}\times T^{a_2} \times \cdots \times T^{a_d}$
with $\vec{a}=(a_1,a_2,\ldots, a_d)$.

\medskip
As $\A=\{p_1, p_2, \ldots, p_d\}$, we have that for each $x\in X$
$$\omega_x^\A=\Big((T^{a_1n}x, T^{a_2n}x, \ldots, T^{a_dn}x)\Big)_{n\in {\Z}}=(\ldots,\tau^{-1}_{\vec{a}}x^{\otimes d}, \underset{\bullet }x^{\otimes},
\tau_{\vec{a}}x^{\otimes d}, \tau_{\vec{a}}^2 x^{\otimes d},\ldots)\in (X^d)^{\Z}.$$
Note that
\begin{equation*}
  \begin{split}
\sigma \omega^\A_x &= \Big((T^{a_1(n+1)}x, T^{a_2(n+1)}x, \ldots, T^{a_d(n+1)}x)\Big)_{n\in {\Z}}\\
&=\Big(\tau_{\vec{a}} (T^{a_1n}x, T^{a_2n}x, \ldots, T^{a_dn}x)\Big)_{n\in {\Z}}=\tau_{\vec{a}}^\infty \omega_x^\A,
\end{split}
\end{equation*}
where $\tau_{\vec{a}}^\infty=\cdots \times \tau_{\vec{a}}\times \tau_{\vec{a}}\times \cdots,$
and for all $n\in {\Z}$
\begin{equation*}
  \begin{split}
    \sigma^n\omega_x^\A &= ({\tau_{\vec{a}}^\infty})^n\w_x^\A\\
     & = \big( \ldots, \tau^{-1}_{\vec{a}} \tau_{\vec{a}}^nx^{\otimes d},  \underset{\bullet } {\tau_{\vec{a}}^nx^{\otimes d}}, \tau_{\vec{a}} \tau_{\vec{a}}^nx^{\otimes d}, \tau_{\vec{a}}^2\tau_{\vec{a}}^nx^{\otimes d},\ldots \big)\\
     & = \widetilde{\tau_{\vec{a}}}\big(\ldots, \tau_{\vec{a}}^nx^{\otimes d}, \underset{\bullet }{\tau_{\vec{a}}^nx^{\otimes d}}, \tau_{\vec{a}}^nx^{\otimes d},\ldots \big),
   \end{split}
\end{equation*}
where $ \widetilde{\tau_{\vec{a}}}=\cdots \times \tau^{-1}_{\vec{a}} \times  \underset{\bullet } {{\rm id}_{X^d}} \times \tau_{\vec{a}}\times \tau^{2}_{\vec{a}}\times \cdots$.
Thus we have that
$$N_\infty(X, \A)=\{(\ldots, \tau^{-1}_{\vec{a}} {\bf y},  \underset{\bullet } {\bf y}, \tau_{\vec{a}} {\bf y}, \tau_{\vec{a}}^2 {\bf y},\ldots ):{\bf y}\in {N}_d(X,T)\}.$$
Now we define
$$\phi:({N}_\A(X,T),\tau_{\vec{a}})\rightarrow (N_\infty(X,\A),\sigma), \ {\bf y}\mapsto (\ldots, \tau^{-1}_{\vec{a}} {\bf y},  \underset{\bullet } {\bf y}, \tau_{\vec{a}} {\bf y}, \tau_{\vec{a}}^2 {\bf y},\ldots ),$$ and it is an isomorphism. Thus
$ (N_\infty(X,\A), \sigma)\cong ({N}_\A (X,T), \tau_{\vec{a}}),$
and $$ (N_\infty(X,\A),\langle T^\infty, \sigma\rangle)\cong ({N}_\A (X,T), \langle T^{(d)},\tau_{\vec{a}} \rangle).$$

Similarly, we have that $\big(\overline{\O}(\w_x^\A,\sigma),\sigma\big)\cong \big(\overline{\O}(x^{\otimes d},\tau_{\vec{a}}),\tau_{\vec{a}}\big)$
for each $x\in X$.

\subsubsection{Weakly mixing systems}\
\medskip

Let $(X, T)$ be a weakly mixing minimal t.d.s. In this case $N_\infty(X,\A)$ will be very big in $(X^d)^\Z$. The following ones are special cases proved
in Section 5 
(see Theorem \ref{thm-wm1}).

\medskip

\paragraph{{\bf Case} (I):\ $\A=\{p\}$ with $p(n)=n^2$}
\
\medskip

In this case for each $x\in X$,
$$\omega_x^\A=(\ldots, T^{p(-1)}x,  \underset{\bullet }x, T^{p(1)}x, T^{p(2)}x, \ldots)=(T^{p(n)}x)_{n\in {\Z}}.$$
We have that
\begin{equation}\label{}
  (N_\infty(X,\A),\sigma)\cong (X^{\Z}, \sigma),
\end{equation}
and there is a residual subset $X_0$ of $X$ such that for all $x\in X_0$, $\overline{\O}(\w_x^\A,\sigma)=X^\Z$.

\medskip

\paragraph{{\bf Case} (II):\ $\A=\{p_1,p_2\}$ with $p_1(n)=n$ and $p_2(n)=n^2$.}
\
\medskip

In this case for each $x\in X$, $\omega_x^\A$ is the point
$$\Big(\ldots, ( T^{p_1(-1)}x, T^{p_2(-1)}x), \underset{\bullet } {(x, x)},( T^{p_1(1)}x, T^{p_2(1)}x), \ldots\Big)=\Big((T^{p_1(n)}x, T^{p_2(n)}x)\Big)_{n\in {\Z}}.$$
We have that
\begin{equation}\label{}
  (N_\infty(X,\A),\sigma)\cong (X\times X^{\Z}, T \times \sigma'),
\end{equation}
where $\sigma': X^{\Z}\rightarrow X^{\Z}$ is the left shift, and there is a residual subset $X_0$ of $X$ such that for all $x\in X_0$, $\overline{\O}(\w_x^\A,\sigma) \cong X\times X^\Z$.

\subsection{The definition of $M_\infty(X,\A)$} \label{subsection-linear-terms}\
\medskip

In the previous subsection we see that when $\A=\{p_1, p_2, \ldots, p_d\}$ with $p_i(n)=a_in, 1\le i\le d$ and $a_1,a_2,\ldots, a_d$
are distinct non-zero integers, $(N_\infty(X,\A),\sigma) \cong ({N}_\A(X,T), \tau_{\vec{a}})$. In this subsection, we define
$M_\infty(X,\A)$, and particularly we pay attention to the case when $\A$ contains at least one linear element,
and give an equivalent way to look at $(N_\infty(X,\A),\sigma)$.

\subsubsection{Definition of $M_\infty(X,\A)$}
Assume that $\A=\{p_1, \ldots, p_{s}, p_{s+1}, \ldots, p_{d}\}$, where $p_i(n)=a_in, 1\le i\le s$ with
$s\ge 0$, $a_1,a_2,\ldots, a_s\in \Z\setminus\{0\}$ are distinct, and $\deg p_{i}\ge 2, s+1\le i\le d$.
Note that $s=0$  means that $\A=\{p_1, \ldots, p_{d}\}$ with $\deg p_i\ge 2$, $1\le i\le d$.

We have that
$$\omega_x^\A =\Big((T^{a_1n}x, \ldots, T^{a_{s}n}x, T^{p_{s+1}(n)}x,\ldots, T^{p_d(n)}x)\Big)_{n\in {\Z}}\in (X^d)^{\Z}.$$

Define $\widetilde{\sigma}: X^s\times (X^{d-s})^{\Z} \rightarrow X^s\times (X^{d-s})^{\Z}$ as follow:
for $((x_1,\ldots, x_s), {\bf x})\in X^s\times (X^{d-s})^{\Z}$, let
$$\widetilde{\sigma}\Big(\big((x_1,x_2, \ldots, x_s), {\bf x}\big)\Big)=\Big(\big((T^{a_1}x_1, T^{a_2}x_2, \ldots, T^{a_s}x_s), \sigma'{\bf x}\big)\Big),$$
where $\sigma'$ is the left shift on $(X^{d-s})^{\Z}$. Recall that $\tau_{\vec{a}}=T^{a_1}\times \ldots \times T^{a_s}$. So, $\widetilde{\sigma}=\tau_{\vec{a}}\times \sigma'$.
Let
\begin{equation}\label{xi}
  \xi_x^\A=\Big((x, \ldots , x), (T^{p_{s+1}(j)}x,\ldots, T^{p_d(j)}x)_{j\in {\Z}}\Big)\in X^s\times (X^{d-s})^{\Z}.
\end{equation}
Then for $n\in {\Z}$,
\begin{equation}\label{w3}
  \widetilde {\sigma}^n\xi_x^\A =\Big((T^{a_1n }x, \ldots , T^{a_sn}x), (T^{p_{s+1}(n+j)}x,\ldots, T^{p_d(n+j)}x)_{j\in {\Z}}\Big)\in X^s\times (X^{d-s})^{\Z}.
\end{equation}

We set
\begin{equation}\label{}
  M_\infty(X,\A)=\overline{\bigcup\{\O(\xi_x^\A,\widetilde{\sigma}): x\in X\}}\subseteq X^s\times (X^{d-s})^{\Z}.
\end{equation}

It is clear that $M_\infty(X,\A )$ is invariant under the action of $T^\infty$ and $\widetilde{\sigma}$, and $(M_\infty(X,\A), \langle T^\infty, \widetilde{\sigma}\rangle)$ is a t.d.s.

\subsubsection{} Now we show that

\begin{lem}\label{M-N-equ}
For any t.d.s. $(X,T)$ we have
$$(M_\infty(X,\A),\langle T^\infty, \widetilde{\sigma}\rangle)\cong (N_\infty(X,\A), \langle T^\infty, \sigma\rangle),$$ and for each $x\in X$
$$\big(\overline{\O}(\w_x^\A,\sigma),\sigma\big)\cong \big(\overline{\O}(\xi_x^\A,\widetilde{\sigma}), \widetilde{\sigma}\big).$$
\end{lem}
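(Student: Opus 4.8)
The guiding observation is that for a linear polynomial $p_i(n)=a_in$ one has $T^{a_i(n+1)}x=T^{a_i}\cdot T^{a_in}x$, so the whole bi-infinite sequence $(T^{a_in}x)_{n\in\Z}$ that $N_\infty(X,\A)$ carries in its $i$-th coordinate is already determined by its $0$-th entry together with the translation $\tau_{\vec{a}}$. Hence these $s$ linear coordinate-sequences can be compressed into a single base point of $X^s$, which is precisely the bookkeeping that defines $M_\infty(X,\A)$. Accordingly, the plan is to write down the explicit map that ``unfolds'' this base point back into sequences and check it is an equivariant homeomorphism. Concretely, I would define $\Phi\colon X^s\times (X^{d-s})^{\Z}\to (X^d)^{\Z}$ by sending $\big((x_1,\ldots,x_s),({\bf x}_j)_{j\in\Z}\big)$ to the point of $(X^d)^{\Z}$ whose $n$-th coordinate is
$$\big(T^{a_1n}x_1,\ldots,T^{a_sn}x_s,\ {\bf x}_n\big)\in X^d.$$
By construction $\Phi(\xi_x^\A)=\w_x^\A$ for every $x\in X$, and $\Phi$ is continuous since each coordinate function is a composition of powers of $T$ with coordinate projections.

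Next I would verify the two equivariances. For the shift, $\widetilde{\sigma}=\tau_{\vec{a}}\times\sigma'$ replaces $x_i$ by $T^{a_i}x_i$ and shifts $({\bf x}_j)_j$, so the $n$-th coordinate of $\Phi(\widetilde{\sigma}(\cdot))$ is $\big(T^{a_1(n+1)}x_1,\ldots,T^{a_s(n+1)}x_s,{\bf x}_{n+1}\big)$, which is exactly the $(n+1)$-st coordinate of $\Phi(\cdot)$, i.e. the $n$-th coordinate of $\sigma\Phi(\cdot)$; thus $\Phi\circ\widetilde{\sigma}=\sigma\circ\Phi$. Since $T^\infty$ acts diagonally by $T$ on every underlying coordinate on both sides, commutativity of the powers of $T$ gives $\Phi\circ T^\infty=T^\infty\circ\Phi$. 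Finally $\Phi$ is injective: from the $n$-th coordinate of an image one reads off ${\bf x}_n$ (its last $d-s$ entries) for every $n$, and at $n=0$ one recovers the base point $(x_1,\ldots,x_s)$.

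Since $\Phi$ is a continuous injection whose domain is compact, it is a homeomorphism onto its image and in particular a closed map. Combining $\Phi(\xi_x^\A)=\w_x^\A$ with $\Phi\circ\widetilde{\sigma}=\sigma\circ\Phi$ yields $\Phi(\O(\xi_x^\A,\widetilde{\sigma}))=\O(\w_x^\A,\sigma)$. Taking the union over $x\in X$ and using that $\Phi$ is closed, so that $\Phi(\overline{A})=\overline{\Phi(A)}$ for $A=\bigcup_x\O(\xi_x^\A,\widetilde{\sigma})$, gives $\Phi(M_\infty(X,\A))=N_\infty(X,\A)$; applying the same argument to a single $x$ gives $\Phi(\overline{\O}(\xi_x^\A,\widetilde{\sigma}))=\overline{\O}(\w_x^\A,\sigma)$. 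Together with the two equivariances, this shows that $\Phi$ restricts to topological isomorphisms $(M_\infty(X,\A),\langle T^\infty,\widetilde{\sigma}\rangle)\cong(N_\infty(X,\A),\langle T^\infty,\sigma\rangle)$ and $(\overline{\O}(\xi_x^\A,\widetilde{\sigma}),\widetilde{\sigma})\cong(\overline{\O}(\w_x^\A,\sigma),\sigma)$, which are the two assertions.

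All the verifications are routine; the only step requiring a little care is the surjectivity/closure argument, namely justifying $\Phi(\overline{A})=\overline{\Phi(A)}$ from compactness of the domain. I note that this step implicitly contains the genuine content that the collapsed linear block $\{(\tau_{\vec{a}}^{\,n}{\bf y})_{n\in\Z}:{\bf y}\in X^s\}$ is closed in $(X^s)^{\Z}$ (so that a limit of linear blocks is again a linear block); this is exactly what guarantees that $M_\infty(X,\A)$ is the honest image of $N_\infty(X,\A)$ and not a proper quotient, and it is what makes the compression of the linear coordinates lossless.
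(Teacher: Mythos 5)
Your proof is correct and follows essentially the same route as the paper: both construct the explicit ``unfolding'' map sending $\big((x_1,\ldots,x_s),({\bf x}_j)_j\big)$ to the sequence with $n$-th coordinate $\big(\tau_{\vec{a}}^{\,n}(x_1,\ldots,x_s),{\bf x}_n\big)$ and verify equivariance under $\widetilde{\sigma}$ and $T^\infty$. The only cosmetic difference is that you define the map on the full ambient compact product and invoke closedness of a continuous injection to get $\Phi(\overline{A})=\overline{\Phi(A)}$, whereas the paper checks the same surjectivity by matching convergent nets of orbit points directly; both are fine.
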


\begin{proof}
Note that for all $n\in {\Z}$,
\begin{equation}\label{w4}
  \begin{split}
    \sigma^n \omega_x^\A & = \Big((T^{a_1(n+j)}x, \ldots, T^{a_{s}(n+j)}x, T^{p_{s+1}(n+j)}x,\ldots, T^{p_d(n+j)}x)\Big)_{j\in {\Z}} \\
       &= \Big(\big( \tau_{\vec{a}}^{n+j} (x^{\otimes s})\big)_{j\in {\Z}}, (\sigma')^n\big( T^{p_{s+1}(j)}x,\ldots, T^{p_d(j)}x\big)_{j\in {\Z}}\Big)\\
       &= \Big(\widetilde{\tau_{\vec{a}}}\big(\ldots , \tau_{\vec{a}}^nx^{\otimes s}, \underset{\bullet }{ \tau_{\vec{a}}^nx^{\otimes s}}, \tau_{\vec{a}}^nx^{\otimes s},\ldots \big), (\sigma')^n\big( T^{p_{s+1}(j)}x,\ldots, T^{p_d(j)}x\big)_{j\in {\Z}}\Big) ,
   \end{split}
\end{equation}
where $ \widetilde{\tau_{\vec{a}}}=\ldots \tau^{-1}_{\vec{a}}\times \underset{\bullet }{{\rm id}_{X^s}}\times \tau_{\vec{a}}\times \tau^{2}_{\vec{a}}\times \ldots$. By \eqref{w3} and \eqref{w4}, we see that for sequences $\{x_i\}_{i\in\Z}$ of $X$ and $\{n_i\}_{i\in\Z}$ of $\Z$,
$$\widetilde{\sigma}^{n_i} \xi_{x_i}^\A \to ({\bf y}, {\bf x}) \in M_\infty(X,\A)$$
 if and only if
 $$\sigma^{n_i} \omega_{x_i}^\A\to \Big( (\ldots,  \tau^{-1}_{\vec{a}} {\bf y}, \underset{\bullet } {\bf y}, \tau_{\vec{a}} {\bf y}, \tau_{\vec{a}}^2 {\bf y},\ldots ), {\bf x}\Big)\in N_\infty(X,\A).$$
Now we define
$$\phi:(M_\infty(X,T), \widetilde{\sigma})\rightarrow (N_\infty(X,\A),\sigma), \ ({\bf y}, {\bf x})\mapsto\Big( (\ldots,  \tau^{-1}_{\vec{a}} {\bf y}, \underset{\bullet } {\bf y}, \tau_{\vec{a}} {\bf y}, \tau_{\vec{a}}^2 {\bf y},\ldots ), {\bf x}\Big),$$ and it is an isomorphism. Thus
$$ (M_\infty(X,T), \widetilde{\sigma}) \cong (N_\infty(X,\A), \sigma),$$
and
$$ (M_\infty(X,\A),\langle T^\infty, \widetilde{\sigma}\rangle)\cong (N_\infty(X,\A), \langle T^{\infty}, \sigma \rangle).$$
Similarly, it is easy to verify that for all $x\in X$,
$\big(\overline{\O}(\w_x^\A,\sigma),\sigma\big)\cong \big(\overline{\O}(\xi_x^\A,\widetilde{\sigma}), \widetilde{\sigma}\big).$
\end{proof}

\subsubsection{Compare $N_\infty(X,\A)$ with $M_\infty(X,\A)$}
We have show that $(N_\infty(X,\A), \langle T^\infty, \sigma\rangle)$ is isomorphic to $ (M_\infty(X,\A), \langle T^\infty, \widetilde{\sigma}\rangle).$
Now we compare them. The definition of $N_\infty(X,\A)$ is cleaner than $M_\infty(X,\A)$, as we do not care whether there are linear terms in $\A$ or not. But when we study the dynamical properties and $\A$ contains linear terms, $M_\infty(X,\A)$ is easy to handle than $N_\infty(X,\A)$, which we will see in the sequel.

\subsubsection{}
In the sequel when there is no room for confusion, {\bf we will use $\sigma$ instead of $\widetilde{\sigma}$
when studying $(M_\infty(X,\A),\langle T^\infty, \widetilde{\sigma}\rangle)$.}

\subsubsection{}
Let $(X,T)$ be a transitive t.d.s. and ${\rm Trans}_T$ be the set of transitive points, which is a residual subset of $X$. By definition, for each $x\in {\rm Trans}_T$ we have that $ M_\infty(X,\A)=\overline{\O(\xi_x^\A, \langle T^\infty, {\sigma}\rangle)}\subseteq X^s\times (X^{d-s})^{\Z}.$ Thus
$(M_\infty(X,\A), \langle T^\infty,\sigma \rangle)$ is a transitive t.d.s. and for each $x\in {\rm Trans}_T$, $\xi_{x}^\A$ is a transitive point.
In particular, if $(X,T)$ is minimal, then $(M_\infty(X,\A), \langle T^\infty,\sigma \rangle)$ is a transitive t.d.s. and for each $x\in X$, $\xi_{x}^\A$ is a transitive point.

\subsubsection{A remark}

First we recall the following polynomial multiple recurrence theorem by Bergelson and Leibman.
\begin{thm}\cite[Corollary 1.8]{BL96}\label{BL-polynomial}
Let $X$ be a compact metric space, and let $\Gamma$ be an abelian group of its homeomorphisms such that $(X,\Gamma)$ is minimal. Let $T_1,\ldots ,T_d\in \Gamma$, $k\in \N$ and let $p_{i,j}$ be integral polynomial with $p_{i,j}(0)=0$ for all $ 1\le i\le k$ and $1\le j\le d$. Then there exists
a residual set $X_0\subseteq X$ satisfying that for each $x\in X_0$ there exists an increasing sequence
$\{n_m\}_{m=1}^\infty$ of $\N$ such that for each $i=1,\ldots,k$,
\begin{equation*}
  T_1^{p_{i,1}(n_m)}\cdots T_d^{p_{i,d}(n_m)}x\rightarrow x, \ m\to \infty.
\end{equation*}
\end{thm}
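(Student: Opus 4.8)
The plan is to separate the statement into a soft topological reduction and a hard recurrence core, and to attack the core by the polynomial exhaustion (PET) induction of Bergelson and Leibman.

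First I would dispose of the passage from ``one good point'' to ``a residual set of good points'' by a Baire category argument that exploits minimality. For $N,\ell\in\N$ put
$$E_{N,\ell}=\Big\{x\in X:\ \exists\,n\ge N\ \text{such that}\ \rho\big(T_1^{p_{i,1}(n)}\cdots T_d^{p_{i,d}(n)}x,\ x\big)<\tfrac{1}{\ell}\ \text{for every}\ 1\le i\le k\Big\}.$$
Since the $T_j$ are homeomorphisms and only finitely many indices are involved, each $E_{N,\ell}$ is open, so $X_0:=\bigcap_{N,\ell\in\N}E_{N,\ell}$ is a $G_\delta$ set; moreover $x\in X_0$ says exactly that there is an increasing sequence $n_m\to\infty$ with $T_1^{p_{i,1}(n_m)}\cdots T_d^{p_{i,d}(n_m)}x\to x$ for all $i$. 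The key point is that $X_0$ is $\Gamma$-invariant: if $x\in X_0$ with witnessing sequence $(n_m)$ and $\gamma\in\Gamma$, then $\Gamma$ abelian gives $T_1^{p_{i,1}(n_m)}\cdots T_d^{p_{i,d}(n_m)}\gamma x=\gamma\,T_1^{p_{i,1}(n_m)}\cdots T_d^{p_{i,d}(n_m)}x\to\gamma x$ by continuity of $\gamma$, so $\gamma x\in X_0$. Hence $\overline{X_0}$ is closed and $\Gamma$-invariant, and once we know $X_0\neq\emptyset$, minimality forces $\overline{X_0}=X$; a dense $G_\delta$ set is residual, which is precisely the conclusion. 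Thus everything reduces to producing a single multiply recurrent point.

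Second, for the core I would prove the existence of one point $x\in X$ and a sequence $n_m\to\infty$ with simultaneous recurrence, via PET induction on the finite family of polynomial vectors $\{(p_{i,1},\ldots,p_{i,d})\}_{i=1}^{k}$. One attaches to such a family a complexity (a ``weight vector'' recording degrees and the pattern of leading coefficients), and performs a van der Corput--type differencing step, replacing the variable $n$ by $n+h$ and subtracting, which produces a new family of strictly smaller complexity together with lower-order ``error'' polynomials. In ergodic theory this step is powered by the van der Corput lemma; since there is no such lemma in topological dynamics, I would run the differencing inside a product system and transport recurrence for the differenced (lower-complexity) family back to $X$ by a compactness/category argument aligning the two sets of return times --- this is exactly the topological mechanism used by Furstenberg and Weiss \cite{FW} in the linear case, generalised along the PET scheme, with ordinary Birkhoff recurrence as the base case. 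Equivalently, one may first establish the purely combinatorial polynomial van der Waerden theorem by the same PET induction and then derive the topological statement through Furstenberg's correspondence principle and a finite-open-cover argument.

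The main obstacle is the bookkeeping of the PET induction: one must define the complexity so that each differencing step strictly decreases it, verify that the operation keeps all polynomials integral and vanishing at $0$ while killing the ``heaviest'' terms, and carefully track the auxiliary variable $h$ so that the recurrence times for the reduced family can be re-synchronised into recurrence times for the original family. A cleaner but heavier alternative, which in fact yields the stronger conclusion quoted in the footnote (that the relevant return-time set is $\mathrm{IP}^*$, hence syndetic), is to work in the Stone--\v{C}ech compactification $\beta\N$ and to run the polynomial recurrence against a minimal idempotent ultrafilter, using the algebra of idempotents in place of differencing \cite{BL96}. Finally, in the special single-transformation case $T_1=\cdots=T_d=T$ relevant to the present paper, one can bypass PET altogether: by the polynomial saturation theorem (Theorem \ref{thm-poly-sat1}) recurrence of $x^{\otimes d}$ under $T^{\vec p(n)}$ is governed by its image in the maximal $\infty$-step pro-nilfactor, where Leibman's equidistribution theorem for polynomial orbits on nilmanifolds \cite{Leibman052} supplies the required recurrence directly.
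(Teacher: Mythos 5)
First, a point of reference: the paper offers no proof of this statement at all — it is imported verbatim as \cite[Corollary 1.8]{BL96} — so the only meaningful comparison is with the argument in that reference. Your soft reduction is correct and standard: each $E_{N,\ell}$ is open, $X_0=\bigcap_{N,\ell}E_{N,\ell}$ is exactly the set of points admitting the desired sequence, and commutativity of $\Gamma$ makes $X_0$ invariant, so minimality upgrades nonemptiness to density. (A slightly more economical variant, and the one that matches what PET actually delivers, is to show that each $E_{N,\ell}$ is nonempty, hence dense by invariance and minimality, and then invoke Baire; this only requires, for each $\ell$ and $N$, \emph{some} point and \emph{some} time $n\ge N$, rather than a single point carrying a full sequence $n_m\to\infty$ up front.)

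The genuine gap is in the core. You correctly identify PET induction as the engine — this is indeed how \cite{BL96} proceeds — but you do not carry it out, and the step you gesture at (``run the differencing inside a product system and transport recurrence for the differenced family back to $X$ by a compactness/category argument aligning the two sets of return times'') is precisely where all the work lives. There is no topological van der Corput lemma, and the Furstenberg--Weiss mechanism of \cite{FW} cannot be quoted as a black-box differencing device: it is an ``approach in stages'' construction (a sequence of points $x_0,x_1,\ldots$ and times chosen so that distances telescope via uniform continuity and minimality), and in the polynomial setting the induction must be run on a statement strictly stronger than simultaneous recurrence of one fixed family — Bergelson and Leibman induct over a well-ordered set of weight vectors attached to \emph{systems} of polynomial expressions, because the substitution $n\mapsto n+h$ produces new families whose recurrence must be controlled uniformly in $h$. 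Defining the complexity so that it strictly decreases under this operation, keeping all polynomials integral and vanishing at $0$, and re-synchronising $h$ with the return times of the reduced family \emph{is} the theorem; as written, your proposal records that this must be done but does not do it. The alternatives you mention (minimal idempotent ultrafilters in $\beta\N$, or, in the single-transformation case $T_1=\cdots=T_d$, the saturation theorem combined with Leibman's equidistribution) are legitimate routes to the conclusion, but they are likewise only named, not executed, and the last one does not cover the general commuting case stated in the theorem.
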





By the above result it is easy to obtain
\begin{rem}\label{afterBL}
Let $(X,T)$ be a minimal t.d.s. There is a residual subset $\widehat{X}_{pol}$ of $X$ such that for each $x\in \widehat{X}_{pol}$ and any finite set $\A=\{p_1,\ldots, p_d\}$ of integral polynomials vanishing at zero, there exists an increasing sequence $\{n_m\}_{m=1}^\infty$ of $\N$ satisfying
$$\sigma^{n_m}\xi_x^\A \to \xi_x^\A, \ m\to\infty.$$
\end{rem}

\subsection{Condition $(\spadesuit)$ and saturated properties of $M_\infty(X,\A)$}\

\subsubsection{}
Note that for a given integral polynomials $p$ with $p(0)=0$, for each $j\in\Z$ the polynomial $p^{ [j]}$ defined by
$$p^{[j]}(n)=p(n+j)-p(j), \ \forall n\in\Z$$ appears naturally in the definition of $M_\infty(X,\A)$ and $N_\infty(X,\A)$. This causes problems when applying the saturation theorems (it requires the polynomials are distinct) to polynomials $\{p_1,\ldots,p_d\}$, it may happen that $$p_i^{[k_1]}=p_j^{[k_2]}$$ for some $1\le i\not=j\le d$ and $k_1,k_2\in\Z$. For example when $p_1(n)=n^2$ and $p_2(n)=n^2+2n$, we have $p_1^{[1]}=p_2^{[0]}$. To overcome this difficulty we introduce the following condition $(\spadesuit)$.

\subsubsection{Condition $(\spadesuit)$}

\begin{de}
Let $\A=\{p_1, p_2, \ldots, p_{d}\}$ be a family of integral polynomials. We say $\A$ satisfies {\em condition $(\spadesuit)$} if $p_1(0)=\ldots =p_d(0)=0$ and
\begin{enumerate}
  \item $p_1(n)=a_1 n, p_2(n)=a_2n, \ldots, p_{s}(n)=a_{s}n$, where $s\ge 0$, and $a_1,a_2,\ldots, a_s$ are distinct non-zero integers;
  \item $\deg p_{j}\ge 2, s+1\le j\le d$;
  \item for each $i\neq j\in \{s+1,s+2,\ldots, d\}$, $p_j^{[k]}\neq p_i^{[t]}$ for any $k,t\in \Z$.

\end{enumerate}
\end{de}

We have the following simple observation which indicates that the condition $(\spadesuit)$ does not affect our discussion.

\begin{lem}\label{ww=dem}
Let $k\in\N$ and $Q=\{q_1,q_2,\ldots,q_k\}$ be integral polynomials with $q_i(0)=0, 1\le i\le k$. Then there is $1\le d\le k$
and $\A=\{p_1,\ldots,p_d\}\subset \{q_1,\ldots,q_k\}$ such that $\A$ satisfies condition $(\spadesuit)$, and $\{q_1, \ldots, q_k\}\subseteq \{p_i^{[m]}: 1\le i\le d, m\in \Z \}$.

\end{lem}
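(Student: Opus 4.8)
The plan is to organize the polynomials of $Q$ by the equivalence relation generated by the shift operation $p\mapsto p^{[j]}$, and then choose one representative from each class. First I would record the two elementary identities on which everything rests: since $p(0)=0$ we have $p^{[0]}=p$, and for any $j,k\in\Z$ a direct computation gives the cocycle-type relation $(p^{[k]})^{[j]}=p^{[k+j]}$, both sides sending $n$ to $p(n+j+k)-p(j+k)$. I would then define, on the set of integral polynomials vanishing at $0$, the relation $p\sim q$ if and only if $p^{[k]}=q^{[t]}$ for some $k,t\in\Z$.

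Next I would check that $\sim$ is an equivalence relation. Reflexivity and symmetry are immediate, and transitivity follows from the identity above: from $p^{[k_1]}=q^{[t_1]}$ and $q^{[k_2]}=r^{[t_2]}$ one applies the shift $[\,\cdot\,]^{[j]}$ with a suitable $j$ to align the $q$-terms, concluding $p\sim r$. Two further observations are needed. First, $\deg p^{[k]}=\deg p$ for every $k$, so $\sim$ never mixes polynomials of different degrees; in particular a linear polynomial $an$ satisfies $(an)^{[k]}=an$ for all $k$, hence its $\sim$-class among polynomials vanishing at $0$ is the singleton $\{an\}$. Second, and crucially for the covering statement, if $p\sim q$ then in fact $q=p^{[m]}$ for a single $m$: writing $q^{[a]}=p^{[b]}$ and applying $[\,\cdot\,]^{[-a]}$ yields $q=q^{[0]}=p^{[b-a]}$.

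Finally I would build $\A$. Assuming, as we may, that every $q_i$ is non-constant (the identically zero polynomial is the only constant one vanishing at $0$; it corresponds to a trivial coordinate and may be set aside), I split $Q$ into its linear members $\{a_1n,\ldots,a_sn\}$, which automatically have distinct nonzero slopes since the $q_i$ are distinct, and its members of degree $\ge 2$. On the latter set I choose one representative $q_i\in Q$ from each $\sim$-class, obtaining $p_{s+1},\ldots,p_d$. Then $\A=\{a_1n,\ldots,a_sn,p_{s+1},\ldots,p_d\}\subseteq Q$ satisfies $(\spadesuit)$: items (1) and (2) hold by construction, while (3) holds because the higher-degree $p_i$ were chosen pairwise $\sim$-inequivalent, i.e. $p_i^{[t]}\ne p_j^{[k]}$ for $i\ne j$ and all $k,t$. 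For the covering, each linear $q=an\in Q$ equals $(an)^{[0]}$ with $an\in\A$, and each $q\in Q$ of degree $\ge 2$ lies in the class of its representative $p_i$, so $q=p_i^{[m]}$ by the single-shift observation; hence $\{q_1,\ldots,q_k\}\subseteq\{p_i^{[m]}:1\le i\le d,\ m\in\Z\}$. The only real work here is the transitivity of $\sim$ together with the single-shift reduction, both powered by the identity $(p^{[k]})^{[j]}=p^{[k+j]}$; everything else is bookkeeping.
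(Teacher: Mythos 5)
Your proof is correct and is essentially the paper's argument in a more explicit form: the paper removes polynomials greedily whenever $q_i^{[k_1]}=q_j^{[k_2]}$, which amounts to picking one representative per class of the equivalence relation you define, and both arguments ultimately rest on the cocycle identity $(p^{[k]})^{[j]}=p^{[k+j]}$. Your version has the merit of spelling out the transitivity and the single-shift reduction $q=p^{[b-a]}$ that the paper leaves implicit, and of noting the degenerate case of the zero polynomial, which the paper silently ignores.
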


\begin{proof}
If there are $1\le i\not=j\le k$ and $k_1,k_2\in \Z$ such that $q_i^{[k_1]}=q_j^{[k_2]}$ we just remove one of them (say $q_j$) to get a proper subset $Q'=Q\setminus\{q_j\}$ of $Q$. Continuing this process for finitely many steps, the argument will stop, and the resulting $\A$ is what we want.
\end{proof}



\begin{exam}\label{exam1}
Let $\A'=\{n^2, n^2+2n, n^2+6n\}$ and $\A=\{n^2\}$. Then by the definition, $\A'$ does not satisfy the condition $(\spadesuit)$, $\A$ satisfies the condition $(\spadesuit)$ and $\A'\subseteq \{p^{[m]}: p(n)=n^2\in \A, m\in \Z\}$.
Let $(X,T)$ be a t.d.s. Now we show that $(N_\infty(X,\A), \langle T^\infty,\sigma \rangle)$ is isomorphic to $(N_\infty(X,\A'), \langle T^\infty,\sigma \rangle)$.

Recall that
$$\w_x^\A=(T^{n^2}x)_{n\in \Z}\in X^\Z, \ \text{and}\ \w_x^{\A'}=\Big((T^{n^2}x, T^{n^2+2n}x, T^{n^2+6n}x)\Big)_{n\in \Z}\in (X^3)^\Z .$$
Note that $$\sigma^k\w_x^\A=(T^{(n+k)^2}x)_{n\in \Z}=(T^{n^2+2kn+k^2}x)_{n\in Z}=(T^{\infty})^{k^2}(T^{n^2+2kn}x)_{n\in \Z}, \forall\ k\in \Z.$$
We have that $\w_x^{\A'}=\Big(\w_x^\A, (T^{\infty})^{-1}\sigma \w_x^\A, (T^\infty)^{-9}\sigma^3 \w_x^\A\Big)$ for all $x\in X$.

Let $\Psi=\id \times (T^\infty)^{-1}\sigma\times (T^\infty)^{-9}\sigma^3$.
It is easy to check that
$$\Psi: N_\infty(X,\A)\rightarrow N_\infty(X,\A'), \ {\bf x}\mapsto ({\bf x},(T^\infty)^{-1}\sigma{\bf x}, (T^\infty)^{-9}\sigma^3 {\bf x}), \ \forall\ {\bf x}\in N_\infty(X,\A).$$
is an isomorphism from $(N_\infty(X,\A), \langle T^\infty,\sigma \rangle)$ to $(N_\infty(X,\A'), \langle T^\infty,\sigma \rangle)$.
\end{exam}

Similar to Example \ref{exam1}, we have the following general result:

\begin{thm}
Let $(X,T)$ be a t.d.s. and $\A'$ be a family of integral polynomials with $q(0)=0, \forall q\in \A'$. Then there is a family of integral polynomials $\A$ satisfying the condition $(\spadesuit)$ such that
$$(N_\infty(X,\A),\langle T^\infty, {\sigma}\rangle)\cong (N_\infty(X,\A'), \langle T^\infty, \sigma\rangle),$$
and
$$(M_\infty(X,\A),\langle T^\infty, {\widetilde{\sigma}}\rangle)\cong (M_\infty(X,\A'), \langle T^\infty, \widetilde{\sigma}\rangle).$$
\end{thm}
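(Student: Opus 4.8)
The plan is to reduce everything to the statement for $N_\infty$ and then to exhibit an explicit isomorphism generalizing the one computed in Example \ref{exam1}. First I would dispose of the $M_\infty$ assertion: by Lemma \ref{M-N-equ} one has $(M_\infty(X,\A),\langle T^\infty,\widetilde{\sigma}\rangle)\cong(N_\infty(X,\A),\langle T^\infty,\sigma\rangle)$, and likewise for $\A'$, so once the $N_\infty$ isomorphism is in hand the $M_\infty$ one follows by composing the three isomorphisms. Thus it suffices to produce $\A$ and an isomorphism $(N_\infty(X,\A),\langle T^\infty,\sigma\rangle)\cong(N_\infty(X,\A'),\langle T^\infty,\sigma\rangle)$.

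Write $\A'=\{q_1,\ldots,q_k\}$. Applying Lemma \ref{ww=dem} yields $\A=\{p_1,\ldots,p_d\}\subseteq\A'$ satisfying condition $(\spadesuit)$ with $\A'\subseteq\{p_i^{[m]}:1\le i\le d,\ m\in\Z\}$. For each $j$ I fix one representation $q_j=p_{i_j}^{[m_j]}$, arranged so that when $q_j=p_i$ we take $i_j=i$ and $m_j=0$; this is possible precisely because $\A\subseteq\A'$. The whole construction rests on the single identity
$$\w_x^{\{p^{[m]}\}}=(T^\infty)^{-p(m)}\sigma^m\,\w_x^{\{p\}},$$
valid for every integral $p$ with $p(0)=0$ and every $m\in\Z$, which I would verify first by a direct computation; here $\w_x^{\{p\}}=(T^{p(n)}x)_{n\in\Z}\in X^\Z$ and $\sigma,T^\infty$ act on $X^\Z$ as the shift and as $\cdots\times T\times T\times\cdots$ respectively.

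Next I would define $\Psi\colon(X^d)^\Z\to(X^k)^\Z$ by declaring the $j$-th sub-coordinate sequence of $\Psi({\bf z})$ to be $(T^\infty)^{-p_{i_j}(m_j)}\sigma^{m_j}{\bf z}^{(i_j)}$, where ${\bf z}^{(1)},\ldots,{\bf z}^{(d)}$ denote the sub-coordinate sequences of ${\bf z}$. Being assembled from coordinate projections, shifts, and powers of $T$, the map $\Psi$ is continuous and commutes with both $\sigma$ and $T^\infty$, since those commute with one another and the fixed operators $(T^\infty)^{-p_{i_j}(m_j)}\sigma^{m_j}$ are built from them. The identity above gives $\Psi(\w_x^\A)=\w_x^{\A'}$ for all $x\in X$, so $\Psi$ carries $\O(\w_x^\A,\langle T^\infty,\sigma\rangle)$ onto $\O(\w_x^{\A'},\langle T^\infty,\sigma\rangle)$; taking unions over $x\in X$ and closures, continuity and equivariance then give $\Psi(N_\infty(X,\A))=N_\infty(X,\A')$.

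The \emph{main point}, and the step needing genuine care, is that $\Psi$ is an isomorphism rather than merely a continuous equivariant surjection; this is exactly where the inclusion $\A\subseteq\A'$ from Lemma \ref{ww=dem} is indispensable. Each $p_i$ equals some $q_{j(i)}$, and by the choice of representations the $j(i)$-th sub-coordinate of $\Psi({\bf z})$ is $(T^\infty)^{-p_i(0)}\sigma^0{\bf z}^{(i)}={\bf z}^{(i)}$. Hence the coordinate projection
$$\Phi\colon(X^k)^\Z\to(X^d)^\Z,\qquad ({\bf w}^{(1)},\ldots,{\bf w}^{(k)})\mapsto({\bf w}^{(j(1))},\ldots,{\bf w}^{(j(d))}),$$
satisfies $\Phi\circ\Psi=\id$, so $\Psi$ is injective; being a continuous equivariant bijection from the compact space $N_\infty(X,\A)$ onto $N_\infty(X,\A')$, it is an isomorphism, with inverse $\Phi$ (which is itself continuous and equivariant and sends $\w_x^{\A'}$ to $\w_x^\A$). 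Combining this with Lemma \ref{M-N-equ} for $\A$ and for $\A'$ completes the $M_\infty$ statement. The only degenerate case to flag is a constant polynomial $q\equiv0$, whose coordinate is identically the base point; I would handle it separately, stating the argument for non-constant families.
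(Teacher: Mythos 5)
Your proposal is correct and follows exactly the route the paper intends: the paper gives no proof beyond the remark ``Similar to Example \ref{exam1}'', and your argument is precisely the generalization of that example, built on the identity $\w_x^{\{p^{[m]}\}}=(T^\infty)^{-p(m)}\sigma^m\,\w_x^{\{p\}}$ together with Lemma \ref{ww=dem} and Lemma \ref{M-N-equ}. Your explicit retraction $\Phi$ (using $\A\subseteq\A'$ to see $\Phi\circ\Psi=\id$) is a clean way to justify injectivity, which the paper's example handles only implicitly via the identity first coordinate.
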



\subsubsection{}

Let $(X,T)$ be a t.d.s. Assume that $\A=\{p_1, \ldots, p_{s}, p_{s+1}, \ldots, p_{d}\}$, where $p_i(n)=a_in, 1\le i\le s$ with
$s\ge 0$, $a_1,a_2,\ldots, a_s\in \Z\setminus\{0\}$ are distinct, and $\deg p_{i}\ge 2, s+1\le i\le d$. Let $\xi_x^\A$ be defined in \eqref{xi}. Let
\begin{equation}\label{}
  W_{x}^\A=\overline{\O}(\xi_{x}^{\A}, \sigma).
\end{equation}
Then $(W_{x}^\A,\sigma)$ is transitive and $\xi_{x}^\A$ is a transitive point.
Similar to the saturation theorems proved in the previous section, we have


\begin{thm}\label{thm-poly-saturate}
Let $(X,T)$ be a minimal t.d.s., and $\pi:X\rightarrow X_\infty$ be the factor map from $X$ to its maximal $\infty$-step pro-nilfactor $X_\infty$. Then, there are minimal t.d.s. $X^*$ and $X_\infty^*$ which are almost one to one
extensions of $X$ and $X_\infty$ respectively, an open factor map $\pi^*$ and a commuting diagram below such that
\[
\begin{CD}
X @<{\varsigma^*}<< X^*\\
@VV{\pi}V      @VV{\pi^*}V\\
X_\infty @<{\varsigma}<< X_\infty^*
\end{CD}
\]
we have the following properties:
\begin{enumerate}
  \item There is a residual subset $X_0^*$ of $X^*$  such that for all $x\in X_0^*$ and each family $\A=\{p_1,\ldots, p_d\}$ satisfying $(\spadesuit)$, $W_{x}^\A$ is ${\pi^*}^{\infty}$-saturated, that is,
$$W_{x}^\A=({\pi^*}^{\infty})^{-1}\Big({\pi^*}^{\infty}(W_{x}^\A)\Big).$$


  \item For each family $\A=\{p_1,\ldots, p_d\}$ satisfying $(\spadesuit)$, $M_\infty(X^*,\A)$ is ${\pi^*}^\infty$-saturated, that is,
  $$M_\infty(X^*,\A)=({\pi^*}^\infty)^{-1}\Big(M_\infty(X^*_\infty,\A)\Big ).$$
\end{enumerate}

If in addition $\pi$ is open, then $X^*=X$, $X_\infty^*=X_\infty$ and $\pi^*=\pi$.
\end{thm}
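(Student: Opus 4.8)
The plan is to deduce both statements from the saturation theorem already available for diagonal orbits, Theorem \ref{thm-poly-sat1}, by reading off the dynamics of $W_x^\A$ and $M_\infty(X,\A)$ on finite windows. Throughout I would take $X^*$, $X_\infty^*$ and the open map $\pi^*$ supplied by Theorem \ref{thm-GHSWY}, and set $X_0^*=X^*_{pol}$, the $T$-invariant residual set of common continuity points of all the maps $F_\B$ from \eqref{cont-poly}; this is the same set used in Theorem \ref{thm-poly-sat1}. Write ${\pi^*}^\infty$ for the coordinatewise application of $\pi^*$ on $(X^*)^s\times((X^*)^{d-s})^{\Z}$, which commutes with every coordinate projection. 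The only inclusions requiring work are $({\pi^*}^\infty)^{-1}({\pi^*}^\infty(W_x^\A))\subseteq W_x^\A$ in (1) and $({\pi^*}^\infty)^{-1}(M_\infty(X_\infty^*,\A))\subseteq M_\infty(X^*,\A)$ in (2); the opposite inclusions are immediate.

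For part (1), fix $x\in X_0^*$ and $L\in\N$, and let $\mathrm{pr}_{[L]}$ be the projection onto the $(X^*)^s$-coordinate together with the coordinates $j\in\{-L,\ldots,L\}$ of the sequence part. From the formula \eqref{w3} for $\widetilde{\sigma}^n\xi_x^\A$ and the identity $T^{p_i(n+j)}x=T^{p_i(j)}T^{p_i^{[j]}(n)}x$, the $\mathrm{pr}_{[L]}$-image of the $\widetilde{\sigma}$-orbit of $\xi_x^\A$ is the image under the fixed homeomorphism $\Theta=\id_{(X^*)^s}\times\prod_{i,j}T^{p_i(j)}$ of the diagonal orbit $\O_{\A_L}(x^{\otimes N})$, where $N=s+(d-s)(2L+1)$ and
\[
\A_L=\{a_1 n,\ldots,a_s n\}\cup\{p_i^{[j]}:\ s+1\le i\le d,\ -L\le j\le L\}.
\]
Hence $\mathrm{pr}_{[L]}(W_x^\A)=\Theta\big(\overline{\O}_{\A_L}(x^{\otimes N})\big)$. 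The step where condition $(\spadesuit)$ is used essentially is the verification that $\A_L$ is a family of essentially distinct non-constant integral polynomials vanishing at $0$: each member vanishes at $0$ and is non-constant (the $a_i n$ since $a_i\neq0$, each $p_i^{[j]}$ since $\deg p_i^{[j]}=\deg p_i\ge2$); the $a_i n$ are pairwise essentially distinct because the $a_i$ are distinct; for $i\neq i'$ the polynomials $p_i^{[j]},p_{i'}^{[j']}$ are unequal by $(\spadesuit)$(3), and as both vanish at $0$ a constant difference would force equality, so they are essentially distinct; and for fixed $i$ with $j\neq j'$ the difference $p_i(n+j)-p_i(n+j')$ has degree $\deg p_i-1\ge1$, so $p_i^{[j]},p_i^{[j']}$ are essentially distinct too. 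Here the isolation of the linear terms in the $(X^*)^s$-factor is decisive: had they been shifted like the higher-degree terms, $a_i(n+j)-a_i j=a_i n$ would repeat across the window and distinctness would fail. By Theorem \ref{thm-poly-sat1} applied to $\A_L$, the set $\overline{\O}_{\A_L}(x^{\otimes N})$ is ${\pi^*}^{(N)}$-saturated, and since $\Theta$ is a homeomorphism commuting with the factor map (each coordinate of $\pi^*$ intertwines $T$), its image $\mathrm{pr}_{[L]}(W_x^\A)$ is ${\pi^*}^{(N)}$-saturated as well.

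It then remains to assemble the windows. Because $W_x^\A$ is a closed subset of a product space, it equals the set of points all of whose finite-window projections lie in $\mathrm{pr}_{[L]}(W_x^\A)$, and since ${\pi^*}^\infty$ commutes with each $\mathrm{pr}_{[L]}$, window-wise saturation upgrades to saturation of $W_x^\A$: if $w\in W_x^\A$ and ${\pi^*}^\infty(w')={\pi^*}^\infty(w)$, then for every $L$ the projection $\mathrm{pr}_{[L]}(w')$ lies in the saturated set $\mathrm{pr}_{[L]}(W_x^\A)$, whence $w'\in W_x^\A$. This proves (1). For part (2) I would run the proof of Theorem \ref{thm-poly-sat2} verbatim with $W_x^\A$ replacing $\overline{\O}_\A(x^{\otimes d})$: given $\eta\in M_\infty(X_\infty^*,\A)$ one approximates $\eta=\lim_i\widetilde{\sigma}^{n_i}\xi_{y^i}^\A$ with $y^i=\pi^*(x^i)$ and $x^i\in X_0^*$ (possible since $\pi^*(X_0^*)$ is dense), and uses part (1) in the form $({\pi^*}^\infty)^{-1}(W_{y^i}^\A)=W_{x^i}^\A\subseteq M_\infty(X^*,\A)$, where ${\pi^*}^\infty(W_{x^i}^\A)=W_{y^i}^\A$ holds because ${\pi^*}^\infty$ is a factor map sending $\xi_{x^i}^\A$ to $\xi_{y^i}^\A$. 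Since each $({\pi^*}^\infty)^{-1}(\widetilde{\sigma}^{n_i}\xi_{y^i}^\A)\subseteq W_{x^i}^\A\subseteq M_\infty(X^*,\A)$ and $b\mapsto({\pi^*}^\infty)^{-1}(b)$ is continuous in the Hausdorff metric (as $\pi^*$, hence ${\pi^*}^\infty$, is open), passing to the limit and using that $M_\infty(X^*,\A)$ is closed gives $({\pi^*}^\infty)^{-1}(\eta)\subseteq M_\infty(X^*,\A)$. The final clause, that $X^*=X$, $X_\infty^*=X_\infty$, $\pi^*=\pi$ when $\pi$ is open, is Remark \ref{rem-3.3}. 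I expect the main obstacle to be precisely the window step: producing the constant twist $\Theta$ and confirming through $(\spadesuit)$ that $\A_L$ consists of essentially distinct polynomials, so that Theorem \ref{thm-poly-sat1} genuinely applies; the rest is a formal transfer.
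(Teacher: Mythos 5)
Your proposal is correct and follows essentially the same route as the paper's proof: both reduce to finite coordinate windows of $W_x^\A$ (resp.\ $M_\infty(X^*,\A)$), use condition $(\spadesuit)$ to check that the resulting family of shifted polynomials is essentially distinct, and then invoke Theorem \ref{thm-poly-sat1} (resp.\ Theorem \ref{thm-poly-sat2}) on those windows. The only cosmetic differences are your normalizing twist $\Theta$ --- the paper applies Theorem \ref{thm-poly-sat1} directly to the polynomials $p_i(n+j)$, which is permitted since that theorem does not require vanishing at zero --- and your packaging of the window-assembly step via the fact that a closed subset of a compact product is determined by its finite projections, where the paper argues neighborhood by neighborhood.
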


\begin{proof}
Let $X^*, X_\infty^*, \pi^*$ and $X_0^*$ be as in Theorem \ref{thm-poly-sat1}. When $\pi: X\rightarrow X_\infty$ is open, $X^*=X$, $X_\infty^*=X_\infty$ and $\pi^*=\pi$.

\medskip

Let $x\in X_0^*$.
Recall that
$$\xi_x^\A=\Big((x, \ldots , x), (T^{p_{s+1}(j)}x,\ldots, T^{p_d(j)}x)_{j\in {\Z}}\Big)\in {X^*}^s\times ({X^*}^{d-s})^{\Z},$$
and for $n\in {\Z}$,
\begin{equation}\label{y1}
   {\sigma}^n\xi_x^\A =\Big((T^{a_1n }x, \ldots , T^{a_sn}x), (T^{p_{s+1}(n+j)}x,\ldots, T^{p_d(n+j)}x)_{j\in {\Z}}\Big).
\end{equation}
To show that $W_x^\A=({\pi^*}^{\infty})^{-1}\Big({\pi^*}^{\infty}(W_x^\A)\Big)$, it suffices to prove  that for each
$${\bf x}=\Big( (x_1,\ldots, x_s), (x_j^{(s+1)}, x_j^{(s+2)},\ldots, x_j^{(d)})_{j\in {\Z}}\Big)\in ({\pi^*}^{\infty})^{-1}\Big({\pi^*}^{\infty}(W_x^\A)\Big),$$ and any neighbourhood $\widetilde{U}$ of ${\bf x}$, there is some $n\in {\Z}$ such that $\sigma^n\xi_x^\A\in \widetilde{U}\subseteq {X^*}^s\times ({X^*}^{d-s})^{\Z}$.
Assume that
{\small $$\widetilde{U}=U_1\times\ldots \times U_s\times \Big(\prod_{j={-\infty}}^{-(k+1)} {X^*}^{d-s}\times \prod_{j=-k}^k(U_j^{(s+1)}\times U_j^{(s+2)}\times \ldots \times U_j^{(d)})\times \prod_{j={k+1}}^\infty {X^*}^{d-s} \Big),$$}
where $k\in \N$.
By condition $(\spadesuit)$,
$$\A'=\{a_1n, \ldots, a_sn, p_{i}(n+j): s+1\le i\le n, -k\le j\le k\}$$ is a family consisting of essentially distinct non-constant integral polynomials.
By Theorem \ref{thm-poly-sat1},
$$\overline{\O}_{\A'}(x^{\otimes m})=({\pi^*}^{(m)})^{-1}\Big({\pi^*}^{(m)}(\overline{\O}_{\A'}(x^{\otimes m}))\Big), $$
where $m=|\A'|=s+(d-s)(2k+1)$. Recall that
$$\O_{\A'}(x^{\otimes m})=\Big\{\Big((T^{a_1n }x, \ldots , T^{a_sn}x), (T^{p_{s+1}(n+j)}x,\ldots, T^{p_d(n+j)}x)_{j=-k}^k\Big): n\in {\Z}\Big\}.$$
Combining with \eqref{y1}, the projection of $W_x^\A$ to ${X^*}^s\times \prod_{j=-k}^k {X^*}^{d-s}$ is $\overline{\O}_{\A'}(x^{\otimes m})$. Thus
coordinates of the projection of ${\bf x}$ is
$$\Big( (x_1,\ldots, x_s), (x_j^{(s+1)}, x_j^{(s+2)},\ldots, x_j^{(d)})_{j=-k}^k\Big)\in ({\pi^*}^{(m)})^{-1}\Big({\pi^*}^{(m)}(\overline{\O}_{\A'}(x^{\otimes m}))\Big)=\overline{\O}_{\A'}(x^{\otimes m}).$$
So there is some $n\in {\Z}$ such that
\begin{equation*}
  \begin{split}
     & \quad \Big((T^{a_1n }x, \ldots , T^{a_sn}x), (T^{p_{s+1}(n+j)}x,\ldots, T^{p_d(n+j)}x)_{j=-k}^k\Big)\\
     & \in U_1\times\ldots \times U_s\times \prod_{j=-k}^k(U_j^{(s+1)}\times U_j^{(s+2)}\times \ldots \times U_j^{(d)})
   \end{split}
\end{equation*}
It follows that $\sigma^n\xi_x^\A\in \widetilde{U}$. Thus $W_{x}^\A$ is ${\pi^*}^{\infty}$-saturated.

\medskip

\medskip

With the same proof of (1) using Theorem \ref{thm-poly-sat2} instead of Theorem \ref{thm-poly-sat1}, we can prove (2).
\end{proof}




\subsection{Recurrent properties related to $M_\infty(X,\A)$}\
\medskip

The following theorem reveals the connection of the denseness of minimal points of $M_\infty(X,\A)$ and recurrence times of
polynomials.
\begin{thm}\label{Thm-equivalence1}
Let $(X,T)$ be a minimal t.d.s.
Then the following statements are equivalent:

\begin{enumerate}
\item  For any family $\A=\{p_1, p_2,\cdots, p_d\}$ of integral polynomials satisfying $(\spadesuit)$, \\ $(M_\infty({X},\A), \langle T^\infty, \sigma\rangle)$ is an M-system. 

\item For any integral polynomials $p_1,\ldots, p_d$ with $p_i(0)=0$, $1\le i\le d$, we have that for each $x\in X$ and any neighbourhood $U$ of $x$
$$N^{\Z^2}_{p_1,\ldots,p_k}(x,U)=\{(m,n)\in\Z^2: T^{m+p_1(n)}x\in U, \ldots, T^{m+p_d (n)}x\in U\}$$
is piecewise syndetic in $\Z^2$.
\end{enumerate}
\end{thm}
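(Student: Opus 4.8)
The plan is to prove both implications through the dynamical characterization of $M$-systems: by the $\Z^2$-version of Proposition \ref{prop-M} recorded in the remark following it, and since $(M_\infty(X,\A),\langle T^\infty,\sigma\rangle)$ is transitive with $\xi_x^\A$ a transitive point, the system $M_\infty(X,\A)$ is an $M$-system if and only if $N_{\langle T^\infty,\sigma\rangle}(\xi_x^\A,\widetilde U)=\{(m,n)\in\Z^2:(T^\infty)^m\sigma^n\xi_x^\A\in\widetilde U\}$ is piecewise syndetic for every neighbourhood $\widetilde U$ of $\xi_x^\A$. The whole proof is then a bookkeeping translation between these $\Z^2$ return-time sets and the combinatorial sets $N^{\Z^2}_{p_1,\dots,p_d}(x,U)$, read off from the explicit formula \eqref{w3} for $\widetilde\sigma^n\xi_x^\A$ together with the fact that $(T^\infty)^m$ applies $T^m$ in every coordinate.

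For $(2)\Rightarrow(1)$, fix $\A=\{p_1,\dots,p_d\}$ satisfying $(\spadesuit)$, with $p_i(n)=a_i n$ for $1\le i\le s$ and $\deg p_i\ge 2$ for $i>s$. I would test a basic neighbourhood $\widetilde U$ of $\xi_x^\A$ that constrains only the $s$ linear coordinates (to lie in neighbourhoods $U_i\ni x$) and the blocks $-k\le j\le k$ in the higher-degree coordinates (to lie in neighbourhoods $U_j^{(i)}$ of $T^{p_i(j)}x$). By \eqref{w3}, membership $(T^\infty)^m\sigma^n\xi_x^\A\in\widetilde U$ amounts to $T^{m+a_i n}x\in U_i$ and $T^{m+p_i(n+j)}x\in U_j^{(i)}$. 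The one delicate point is that the targets $U_j^{(i)}$ are neighbourhoods of $T^{p_i(j)}x$, not of $x$, so $(2)$ cannot be applied verbatim. The resolution is to conjugate: since $T^{m+p_i(n+j)}x\in U_j^{(i)}$ is equivalent to $T^{m+p_i^{[j]}(n)}x\in T^{-p_i(j)}U_j^{(i)}$, and $p_i^{[j]}(n)=p_i(n+j)-p_i(j)$ vanishes at $0$ while $V_j^{(i)}:=T^{-p_i(j)}U_j^{(i)}$ is a neighbourhood of $x$, every constraint now has the shape $T^{m+r(n)}x\in(\text{nbhd of }x)$ with $r(0)=0$. Intersecting all these neighbourhoods into a single $V\ni x$, the set $N^{\Z^2}_{\mathcal B}(x,V)$ for the family $\mathcal B=\{a_i n:1\le i\le s\}\cup\{p_i^{[j]}:s<i\le d,\ |j|\le k\}$ is piecewise syndetic by $(2)$ and is contained in $N_{\langle T^\infty,\sigma\rangle}(\xi_x^\A,\widetilde U)$; since piecewise syndeticity is hereditary upward, the latter is piecewise syndetic, giving $(1)$.

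For $(1)\Rightarrow(2)$, fix arbitrary integral polynomials $q_1,\dots,q_d$ with $q_i(0)=0$, a point $x$, and a neighbourhood $U\ni x$. By the reduction to condition $(\spadesuit)$ (Lemma \ref{ww=dem} and the isomorphism theorem following Example \ref{exam1}) there is a family $\A$ satisfying $(\spadesuit)$ with $M_\infty(X,\{q_1,\dots,q_d\})\cong M_\infty(X,\A)$, so by $(1)$ the system $M_\infty(X,\{q_1,\dots,q_d\})$ is an $M$-system and $\xi_x^{\{q_i\}}$ is a transitive point. Now I would choose the neighbourhood $\widetilde U$ that constrains exactly the linear coordinates and the $j=0$ block of the higher-degree coordinates to lie in $U$. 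Because every $q_i$ vanishes at $0$, all of these coordinates of $\xi_x^{\{q_i\}}$ equal $x$, so $\widetilde U$ is a genuine neighbourhood, and by \eqref{w3} the condition $(T^\infty)^m\sigma^n\xi_x^{\{q_i\}}\in\widetilde U$ reads exactly $T^{m+q_i(n)}x\in U$ for all $i$. Hence $N_{\langle T^\infty,\sigma\rangle}(\xi_x^{\{q_i\}},\widetilde U)=N^{\Z^2}_{q_1,\dots,q_d}(x,U)$, which is piecewise syndetic by the $M$-system property, proving $(2)$.

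The main obstacle is the target-point mismatch encountered in $(2)\Rightarrow(1)$: a neighbourhood of the transitive point $\xi_x^\A$ necessarily prescribes closeness to the points $T^{p_i(j)}x$ sitting in the non-zero blocks, whereas $(2)$ only controls returns to a neighbourhood of $x$ itself. This is exactly what the identity $p_i(n+j)-p_i(j)=p_i^{[j]}(n)$ is designed to absorb, trading the shifted target for the shifted polynomial while keeping the vanishing-at-$0$ normalization. I would also dispose of the degenerate cases separately: repeated polynomials may be deleted without changing $N^{\Z^2}$, and a polynomial identically $0$ contributes only the constraint $T^m x\in U$, which is independent of $n$ and handled by the syndeticity of $N_T(x,U)$; neither affects the argument above.
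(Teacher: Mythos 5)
Your argument is correct and its skeleton is the paper's: both directions reduce to the observation that, by the $\Z^2$-version of Proposition~\ref{prop-M}, $(M_\infty(X,\A),\langle T^\infty,\sigma\rangle)$ is an $M$-system iff the transitive point $\xi_x^\A$ is $\F_{ps}$-recurrent, and then one matches return sets of $\xi_x^\A$ to basic neighbourhoods against the sets $N^{\Z^2}_{p_1,\ldots,p_d}(x,U)$ via the coordinate formula for $(T^\infty)^m\sigma^n\xi_x^\A$. Your $(2)\Rightarrow(1)$ is the paper's proof verbatim in substance, including the key conjugation $T^{m+p_i(n+j)}x\in U_j^{(i)}\Leftrightarrow T^{m+p_i^{[j]}(n)}x\in T^{-p_i(j)}U_j^{(i)}$ and the passage to the family $\{a_in\}\cup\{p_i^{[j]}\}$. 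The one place you genuinely diverge is the non-$(\spadesuit)$ case of $(1)\Rightarrow(2)$: the paper stays inside a single auxiliary system $M_\infty(X,\A')$ built from Lemma~\ref{ww=dem} and proves a \emph{containment} $\{(m,n):(T^\infty)^m\sigma^n\xi_x^{\A'}\in\widetilde U\}\subseteq N^{\Z^2}_{p_1,\ldots,p_d}(x,U)$ by choosing the $j$-th block of $\widetilde U$ to be $T^{q_i(j)}U$, whereas you invoke the isomorphism $M_\infty(X,\{q_i\})\cong M_\infty(X,\A)$ to transfer the $M$-system property to the original family and then get an exact \emph{equality} of return sets. Your route is cleaner to state but leans on the isomorphism theorem following Example~\ref{exam1}, which the paper asserts with only a sketch; the paper's containment argument is self-contained and is essentially what a full proof of that isomorphism would require anyway.

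One small caveat: your closing remark that a polynomial identically $0$ is ``handled by the syndeticity of $N_T(x,U)$'' does not work as stated, since the intersection of a piecewise syndetic subset of $\Z^2$ with a syndetic one need not be piecewise syndetic (already in $\Z$, $2\Z\cap(2\Z+1)=\emptyset$). This degenerate case falls outside the $M_\infty$ framework (the linear coefficients are required to be nonzero, and the zero polynomial cannot be absorbed by Lemma~\ref{ww=dem}); the paper silently excludes it as well, so it does not affect the comparison, but the one-line dismissal should be dropped or replaced by an actual argument.
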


\begin{proof}
First we show that (1) implies (2).
Without loss of generality, we assume that $p_1,p_2,\cdots, p_d$ are essentially distinct.
Let  $\A=\{p_1, p_2, \ldots, p_s, p_{s+1}, \ldots, p_d\}$ with $p_i(n)=a_in$, $1\le i\le s$ and $\deg (p_j)\ge 2$, $s+1\le j\le d$.

\medskip

If $\A$ satisfies the condition ($\spadesuit$), then
by (1) $(M_\infty({X},\A), \langle T^\infty, \sigma\rangle)$ is an $M$-system, and for each $x\in X$, $\xi_x^\A$ is a transitive point of $(M_\infty({X},\A), \langle T^\infty, \sigma\rangle)$. For a given $x\in X$ and a neighborhood $U$ of $x$, by the definition we have
\begin{equation*}
  \begin{split}
     N^{\Z^2}_{p_1,\ldots,p_k}(x,U)& =\{(m,n)\in\Z^2: T^{m+p_1(n)}x\in U, \ldots, T^{m+p_k(n)}x\in U\}\\
     &= \{(m, n)\in\Z^2: (T^\infty)^m\sigma^n \xi_x^\A\in \widetilde{U}\},
   \end{split}
\end{equation*}
where $\widetilde{U}=U^s\times \Big(\prod_{j=-\infty}^{-1} X^{d-s}\times  U^{d-s} \times \prod_{j=1}^\infty X^{d-s}\Big)\subseteq X^s\times (X^{d-s})^{\Z}$.

Since $(M_\infty({X},\A), \langle T^\infty, \sigma\rangle)$ is an $M$-system, by Proposition \ref{prop-M}, $\xi_x^\A$ is piecewise syndetic recurrent and hence $ N^{\Z^2}_{p_1,\ldots,p_k}(x,U)$ is a piecewise syndetic subset of $\Z^2$.

If $\A$ does not satisfy the condition ($\spadesuit$), then
by Lemma \ref{ww=dem}, there are integral polynomials $\{q_1,\ldots, q_r\}$ satisfying the condition ($\spadesuit$) and $\{p_{s+1},\ldots, p_d\}\subseteq \{q_i^{[j]}=q_i(n+j)-q_i(j): 1\le i\le r, j\in \Z \}.$ Thus there is some $l\in \N$ such that
$$\{p_{s+1},\ldots, p_d\}\subseteq \{q_i^{[j]}=q_i(n+j)-q_i(j): 1\le i\le r, -l\le j\le l\}.$$
Then $\A'=\{a_1n, \ldots, a_sn, q_1(n),\ldots, q_r(n)\}$ satisfies the condition ($\spadesuit$).

By (1), $(M_\infty({X},\A'), \langle T^\infty, \sigma\rangle)$ is an $M$-system, and for each $x\in X$, $\xi_x^{\A'}$
is a transitive point of $(M_\infty({X},\A'), \langle T^\infty, \sigma\rangle)$.
For a given $x\in X$ and a neighborhood $U$ of $x$, let
{\small$$\widetilde{U}=U^s\times \Big(\prod_{j=-\infty}^{-l-1}X^r\times  \prod_{j=-l}^l(T^{q_1(j)}U\times T^{q_2(j)}U\times \cdots \times T^{q_r(j)}U)\times \prod_{j={l+1}}^\infty X^{r}\Big)\subseteq X^s\times (X^{r})^{\Z}.$$}
It follows that
\begin{equation*}
  \begin{split}
     &\quad \{(m, n)\in\Z^2: (T^\infty)^m\sigma^n \xi_x^{\A'}\in \widetilde{U}\}\\
     &= \{(m, n)\in\Z^2: T^{m+a_1n}x,\ldots, T^{m+a_sn}x\in U; T^{m+q_i(n+j)}x\in T^{q_i(j)}U, 1\le i\le r, -l \le j\le l\}\\
     &\subseteq\{(m,n)\in\Z^2: T^{m+p_1(n)}x\in U, \ldots, T^{m+p_d(n)}x\in U\}=N^{\Z^2}_{p_1,\ldots,p_d}(x,U).
   \end{split}
\end{equation*}
Since $\{(m, n)\in\Z^2: (T^\infty)^m\sigma^n \xi_x^{\A'}\in \widetilde{U}\}$ is piecewise syndetic by Proposition \ref{prop-M}, so is  $N^{\Z^2}_{p_1,\ldots, p_d}(x,U)$.

\medskip

Now we show that (2) implies (1). By Proposition \ref{prop-M}, we need to show that for all $x\in X$, $\xi_x^\A$ is piecewise syndetic recurrent, that is, for any neighbourhood $\widetilde{U}$ of $\xi_x^\A$, $\{(m, n)\in\Z^2: (T^\infty)^m\sigma^n \xi_x^{\A}\in \widetilde{U}\}$ is piecewise syndetic in $\Z^2$. Assume that
{\small $$\widetilde{U}=U_1\times\ldots \times U_s\times \Big(\prod_{j={-\infty}}^{-(k+1)} {X}^{d-s}\times \prod_{j=-k}^k(U_j^{(s+1)}\times U_j^{(s+2)}\times \cdots \times U_j^{(d)})\times \prod_{j={k+1}}^\infty {X}^{d-s} \Big),$$}
where $k\in \N$, and $U_i, U_j^{(l)}$ are non-empty open subsets of $X$. Since
$$\xi_x^\A=\Big((x, \ldots , x), (T^{p_{s+1}(j)}x,\ldots, T^{p_d(j)}x)_{j\in {\Z}}\Big)\in \widetilde{U},$$ we have $x\in U_i, T^{p_t(j)}x\in U_j^{(t)}, 1\le i\le s, s+1\le t\le d, -k\le j\le k$. Thus $x\in \bigcap _{i=1}^s U_i \cap \bigcap_{s+1\le t\le d,\atop -k\le j\le k} T^{-p_t(j)}U_j^{(t)}$.
Choose a neighbourhood $U$ of $x$ such that
$$x\in U\subseteq \bigcap _{i=1}^s U_i \cap \bigcap_{s+1\le t\le d,\atop -k\le j\le k} T^{-p_t(j)} U_j^{(t)}.$$
Let $\widetilde{\A}=\{a_in, p_t(n+j)-p_t(j): 1\le i\le s, s+1\le t\le d, -k\le j\le k \}$. Since $\A$ satisfies $(\spadesuit)$, polynomials of $\widetilde{\A}$ are essentially distinct. By (2)
$$N^{\Z^2}_{\widetilde{\A}}(x,U)=\{(m,n)\in\Z^2: T^{m+a_in}x\in U, T^{m+p_t(n+j)-p_t(j)}x\in U, 1\le i\le s, s+1\le t\le d, -k\le j\le k  \}$$
is piecewise syndetic in $\Z^2$. For all $(m,n)\in N^{\Z^2}_{\widetilde{\A}}(x,U)$,
$$T^{m+a_in}x\in U\subset U_i,\quad  T^{m+p_t(n+j)}x\in T^{p_t(j)}U\subseteq U_j^{(t)}, $$
where $1\le i\le s, s+1\le t\le d, -k\le j\le k .$
Hence
\begin{equation*}
   (T^\infty)^m{\sigma}^n\xi_x^\A =\Big((T^{m+a_1n }x, \ldots , T^{m+a_sn}x), (T^{m+p_{s+1}(n+j)}x,\ldots, T^{m+p_d(n+j)}x)_{j\in {\Z}}\Big)\in \widetilde{U}.
\end{equation*}
That is,
$$N^{\Z^2}_{\widetilde{\A}}(x,U)\subseteq \{(m, n)\in\Z^2: (T^\infty)^m\sigma^n \xi_x^{\A}\in \widetilde{U}\},$$
which means $\{(m, n)\in\Z^2: (T^\infty)^m\sigma^n \xi_x^{\A}\in \widetilde{U}\}$  is piecewise syndetic in $\Z^2$.
The proof is complete.
\end{proof}


Similarly, we have

\begin{thm}\label{Thm-equivalence2}
Let $(X,T)$ be a minimal t.d.s.
Then the following statements are equivalent:

\begin{enumerate}
\item  There is a dense $G_\d$ subset $X_0$ of $X$ such that for each $x\in X_0$ and for any family $\A=\{p_1, p_2,\cdots, p_d\}$ of integral polynomials satisfying $(\spadesuit)$,  $(W_x^\A,\sigma)$ is an M-system. 

\item There is a dense $G_\d$ subset $X_0$ of $X$ such that for each $x\in X_0$ and  for any integral polynomials $p_1,\ldots, p_d$ with $p_i(0)=0$, $1\le i\le d$, we have that for any neighbourhood $U$ of $x$
$$N_{p_1,\ldots,p_k}(x,U)=\{n\in\Z: T^{p_1(n)}x\in U, \ldots, T^{p_k(n)}x\in U\}\in \F_{ps}.$$
\end{enumerate}
\end{thm}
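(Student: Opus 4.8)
The plan is to mirror, almost verbatim, the proof of Theorem \ref{Thm-equivalence1}, replacing the $\Z^2$-system $(M_\infty(X,\A),\langle T^\infty,\sigma\rangle)$ by the $\Z$-system $(W_x^\A,\sigma)$ and reading all return times in $\Z$ rather than $\Z^2$. Two facts drive the argument: by construction $\xi_x^\A$ (see \eqref{xi}) is a transitive point of $(W_x^\A,\sigma)$, and by Proposition \ref{prop-M} a transitive system is an $M$-system exactly when its transitive point is $\F_{ps}$-recurrent. Thus the statement ``$(W_x^\A,\sigma)$ is an $M$-system'' is interchangeable with ``$\xi_x^\A$ is piecewise syndetic recurrent'', and the whole equivalence reduces to matching cylinder neighbourhoods of $\xi_x^\A$ with the combinatorial sets $N_{p_1,\ldots,p_d}(x,U)$. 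The same dense $G_\delta$ set $X_0$ will serve in both directions, so no separate construction of $X_0$ is needed.

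\textbf{Direction (1)$\Rightarrow$(2).} I would fix $x\in X_0$ and essentially distinct $p_1,\ldots,p_d$, and split according to whether $\A=\{p_1,\ldots,p_d\}$ satisfies $(\spadesuit)$. If it does, then for a neighbourhood $U$ of $x$ one has the identity $N_{p_1,\ldots,p_d}(x,U)=\{n\in\Z:\sigma^n\xi_x^\A\in\widetilde U\}$, where $\widetilde U$ is the cylinder equal to $U$ on the $s$ linear coordinates and on the zeroth block and all of $X$ elsewhere; since $(W_x^\A,\sigma)$ is an $M$-system with transitive point $\xi_x^\A$, Proposition \ref{prop-M} makes this return set piecewise syndetic. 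If $\A$ fails $(\spadesuit)$, I would apply Lemma \ref{ww=dem} to obtain a family $\A'$ satisfying $(\spadesuit)$ whose shifted polynomials $q_i^{[j]}$, $|j|\le l$, recover the $p_t$'s, apply (1) to $\A'$, and open the cylinder $\widetilde U$ on the coordinates $-l\le j\le l$ using the translated sets $T^{q_i(j)}U$, so that $\{n\in\Z:\sigma^n\xi_x^{\A'}\in\widetilde U\}\subseteq N_{p_1,\ldots,p_d}(x,U)$. Piecewise syndeticity is then inherited because $\F_{ps}$ is a Furstenberg family, hence hereditary upwards.

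\textbf{Direction (2)$\Rightarrow$(1).} I would fix $x\in X_0$ and a family $\A$ satisfying $(\spadesuit)$, and verify directly that $\xi_x^\A$ is $\F_{ps}$-recurrent, which by Proposition \ref{prop-M} gives that $(W_x^\A,\sigma)$ is an $M$-system. Given a cylinder neighbourhood $\widetilde U$ of $\xi_x^\A$ involving only the coordinates with $|j|\le k$, I would shrink it to a single neighbourhood $U$ of $x$ with $U\subseteq\bigcap_{i=1}^s U_i\cap\bigcap_{t,j}T^{-p_t(j)}U_j^{(t)}$, and form $\widetilde\A=\{a_in,\ p_t(n+j)-p_t(j):1\le i\le s,\ s+1\le t\le d,\ -k\le j\le k\}$. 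The members of $\widetilde\A$ are essentially distinct precisely because $\A$ obeys $(\spadesuit)$; this is the one place where the condition is genuinely used. Applying (2) to $\widetilde\A$ shows $N_{\widetilde\A}(x,U)\in\F_{ps}$, and the containment $N_{\widetilde\A}(x,U)\subseteq\{n\in\Z:\sigma^n\xi_x^\A\in\widetilde U\}$ completes this direction.

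\textbf{Main obstacle.} The only genuine obstacle is the bookkeeping in the failure-of-$(\spadesuit)$ case of (1)$\Rightarrow$(2): one has to line up the coordinates of $\xi_x^{\A'}$, indexed by the $q_i$ and their shifts, with the original polynomials $p_1,\ldots,p_d$, and then verify the inclusion of return sets. This is exactly the computation already carried out for Theorem \ref{Thm-equivalence1}, and beyond Lemma \ref{ww=dem} together with the upward closure of $\F_{ps}$ no new idea is required; the passage from a $\Z^2$-action to a $\Z$-action only simplifies the indices, since the $(T^\infty)^m$ factor disappears.
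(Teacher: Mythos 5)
Your proposal is correct and is exactly the argument the paper intends: Theorem \ref{Thm-equivalence2} is stated with only the word ``Similarly'' after the full proof of Theorem \ref{Thm-equivalence1}, and your adaptation (dropping the $(T^\infty)^m$ factor, matching cylinder neighbourhoods of $\xi_x^\A$ in $W_x^\A$ with the sets $N_{p_1,\ldots,p_d}(x,U)$, using Lemma \ref{ww=dem} when $(\spadesuit)$ fails, and invoking Proposition \ref{prop-M} in both directions) is precisely that intended transcription. No discrepancy with the paper's approach.
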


\section{$N_\infty(X,\A)$ for nilsystems and some parts of Theorem C}\label{Section-nil}

In this section we study the properties of $N_\infty(X,\A)$ when $(X,T)$ is a nilsystem, and
give the proof of first and second parts of Theorem C.

\subsection{Nilsystems}\
\medskip

In this subsection, we work on nilmanifolds. All results remain true for $\infty$-step pro-nilsystems. The main results of this section are that if $(X, T )$ is a minimal pro-nilsystem, then $(M_\infty(X,\A), \langle T^{\infty},{\sigma} \rangle)$ is a minimal pro-nilsystem, and for each $x\in X$, $(W_x^\A, {\sigma})$ is a minimal pro-nilsystem.

\subsubsection{Polynomial sequences on $G$}

Let $G$ be a nilpotent Lie group, $\Gamma$ be a discrete cocompact subgroup of $G$ and $X$ be the
compact nilmanifold $G/\Gamma$. Recall that $G$ acts on $X$ by left translations: for $a\in G$ and $x=b\Gamma\in X$ one defines $ax=(ab)\Gamma$.
\begin{de}
We will say that a mapping $g: \Z^d\rightarrow G$ is {\em polynomial} if $g$ can be written in the form
$$g({\bf n})=a_1^{p_1({\bf n})}\cdots a_m^{p_m({\bf n})},$$
where $a_1,\ldots, a_m\in G$ and $p_1,\ldots, p_m: \Z^d\rightarrow \Z$ are polynomial
mappings. Such a mapping will also be called a {\em polynomial action of $\Z^d$ on $X$
by translations}.
\end{de}

On the other hand, a {\em homomorphism $\phi: \Z^d\rightarrow G$} will be referred to as
a linear action.

\begin{thm}\cite[Subsection 2.11]{Leibman052}\label{thm-Leibman-poly-nil}
Let $g: \Z^d\rightarrow G$ be a polynomial mapping and let $x\in X=G/\Gamma$.
There exists a nilpotent Lie group $\widehat{G}$ with a discrete cocompact subgroup $\widehat{\Gamma}$, a continuous surjective map $\eta: \widehat{X}=\widehat{G}/\widehat{\Gamma}\rightarrow X$ , a point $\hat{x}\in \widehat{X}$ and a homomorphism $\phi: \Z^d\rightarrow \widehat{G}$ such that
$$g({\bf n})x=\eta (\phi({\bf n})\hat{x}), \ \forall \ {\bf n}\in \Z^d.$$
Moreover, for any open neighbourhood $U$ of $x$,
$$\{{\bf n}\in \mathbb{Z}^d:g({\bf n})x\in U\}=\{{\bf n}\in \mathbb{Z}^d:\phi({\bf n})\hat{x}\in \eta^{-1}(U)\}$$
is a syndetic subset in $\mathbb{Z}^d$.
\end{thm}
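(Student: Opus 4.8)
The plan is to \emph{linearize} the polynomial map: to replace $g$ by a genuine group homomorphism $\phi$, at the price of passing to a larger nilmanifold $\widehat X$ that factors onto $X$ via $\eta$. The abelian skeleton of why this is possible is visible already in the exponents. Fixing a degree bound $D\ge\deg p_j$ for all $j$, I would expand each $p_j$ in the $\Z$-basis of integer-valued polynomials of degree $\le D$ given by the products of binomial coefficients $\binom{{\bf n}}{{\bf k}}=\prod_{i=1}^d\binom{n_i}{k_i}$ with $\sum_i k_i\le D$. Pascal's identity $\binom{{\bf n}+{\bf e}_i}{{\bf k}}=\binom{{\bf n}}{{\bf k}}+\binom{{\bf n}}{{\bf k}-{\bf e}_i}$ shows that shifting by ${\bf e}_i$ acts on this basis by a fixed unipotent integer matrix, and the $d$ shift operators commute; thus, at the level of exponents, the polynomial dependence on ${\bf n}$ becomes the linear action of $\Z^d$ by these commuting unipotents. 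The noncommutativity of $G$ is absorbed by Leibman's structure theory, namely the theorem that the set $\mathrm{Poly}(\Z^d,G)$ of polynomial maps $\Z^d\to G$, filtered by the lower central series of $G$ together with the degree filtration above, is itself a nilpotent group under pointwise multiplication, on which the shift group $\Z^d$ acts by filtration-preserving automorphisms.

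Building on this, I would let $H$ be the subgroup generated by $g$, all of its shifts ${\bf m}\mapsto g(\cdot+{\bf m})$, and the shift group, inside the semidirect product $\mathrm{Poly}(\Z^d,G)\rtimes\Z^d$. Because all generators lie in a bounded part of the filtration, $H$ is a finitely generated nilpotent group; by Mal'cev's theorem its Mal'cev completion yields a connected, simply connected nilpotent Lie group $\widehat G$ containing $H$ (after passing, if necessary, to a finite-index subgroup) as a cocompact lattice $\widehat\Gamma$, giving the nilmanifold $\widehat X=\widehat G/\widehat\Gamma$. I would then define $\phi:\Z^d\to\widehat G$ by sending the standard generators of $\Z^d$ to the shift elements, take $\hat x$ to be the base point twisted by the group element recording the seed data $(g,x)$, and let $\eta:\widehat X\to X$ be the nilmanifold factor map induced by evaluation at ${\bf n}={\bf 0}$ followed by the projection $G\to X=G/\Gamma$; continuity and surjectivity of $\eta$ are immediate. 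Since $\phi({\bf n})$ implements shift by ${\bf n}$, applying it to the seed and then evaluating at ${\bf 0}$ reproduces $g({\bf n})$, which gives the linearization identity $g({\bf n})x=\eta(\phi({\bf n})\hat x)$ by a direct computation.

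Finally, for the \emph{moreover} statement I would use that the orbit closure $\widehat Y=\overline{\O}(\hat x,\langle\phi({\bf e}_1),\ldots,\phi({\bf e}_d)\rangle)$ of the now-\emph{linear} action is a sub-nilmanifold of $\widehat X$ on which the abelian group of translations $\phi(\Z^d)$ acts minimally; this is exactly the Parry--Leibman structure theory of nilsystems (Theorem \ref{thm-ParryLeibman} together with the remark following it on several commuting translations, where ergodicity of the spanning group forces unique ergodicity, hence minimality, on the orbit closure). The point $\hat x$ lies in the relatively open set $V=\eta^{-1}(U)\cap\widehat Y$, which is nonempty because $\eta(\hat x)=g({\bf 0})x=x\in U$ (here $g({\bf 0})=e$, as the $p_j$ vanish at ${\bf 0}$). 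For a minimal $\Z^d$-action, return times to such a set are syndetic: by minimality $\widehat Y=\bigcup_{{\bf h}\in\Z^d}\phi(-{\bf h})V$, so by compactness $\widehat Y=\bigcup_{{\bf h}\in F}\phi(-{\bf h})V$ for some finite $F\subseteq\Z^d$; then for every ${\bf m}\in\Z^d$ one has $\phi({\bf m})\hat x\in\phi(-{\bf h})V$ for some ${\bf h}\in F$, i.e. ${\bf m}+{\bf h}\in N$, where $N=\{{\bf n}\in\Z^d:\phi({\bf n})\hat x\in\eta^{-1}(U)\}$. Since $g({\bf n})x\in U$ if and only if $\phi({\bf n})\hat x\in\eta^{-1}(U)$, the set $\{{\bf n}:g({\bf n})x\in U\}=N$ is syndetic. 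I expect the genuine obstacle to be the structural input of the first two steps---establishing that $\mathrm{Poly}(\Z^d,G)$ is nilpotent with a well-behaved filtration, that the resulting $H$ is a lattice in a nilpotent Lie group, and that $\eta$ descends to a bona fide nilmanifold factor map realizing the linearization identity---whereas the syndeticity, once linearity is in hand, is routine.
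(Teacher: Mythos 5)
The paper does not prove this statement: it is imported verbatim from Leibman's work (\cite[Subsection 2.11]{Leibman052}), so there is no in-paper proof to compare against. Your sketch is essentially Leibman's own construction --- nilpotency of the group of polynomial maps together with the shifts, Mal'cev completion of the finitely generated subgroup $H$ to obtain $\widehat G/\widehat\Gamma$, and the evaluation map as $\eta$ --- and your derivation of the ``moreover'' clause (minimality of the orbit closure under the now-linear translation action via Parry--Leibman, then the standard compactness argument for syndetic return times) is correct, including the observation that one needs $g({\bf 0})x=x$ for $\hat x$ to lie in $\eta^{-1}(U)$. The only step you flag but do not carry out in detail, namely that $\widehat\Gamma$ can be chosen (essentially as the $\Gamma$-valued polynomial sequences in $H$) so that evaluation at ${\bf 0}$ genuinely descends to a factor map of nilmanifolds, is precisely the content of the cited subsection of Leibman.
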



The following theorem is a folklore, and we include a proof for completeness. First we need a lemma.
For the definition of $\RP^{[s]}(X,G)$ see \cite{GGY}.
\begin{lem}\label{lift-something}
Let $(X,G)$ be minimal with $G$ being abelian and $s\in\N$. Assume that $\pi: (X,G)\rightarrow (Y,G)$ is a factor map.
Then $(Y,G)$ is an $s$-step pro-nilsystem if and only if $\RP^{[s]}(X,G)\subset R_\pi=\{(x,x'): \pi(x)=\pi(x')\}$.
\end{lem}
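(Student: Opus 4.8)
The plan is to deduce the lemma formally from the corresponding structural facts about $\RP^{[s]}(X,G)$ for abelian group actions, which are exactly the $G$-system analogues of Theorems \ref{thm-1} and \ref{thm0}. Since $G$ is abelian, these analogues are available (for the definitions and basic properties we follow \cite{GGY}): namely, (i) $\RP^{[s]}(X,G)$ is a closed invariant equivalence relation; (ii) for a factor map $\pi:(X,G)\to(Y,G)$ between minimal $G$-systems one has the projection formula $\pi\times\pi\big(\RP^{[s]}(X,G)\big)=\RP^{[s]}(Y,G)$; and (iii) a minimal $G$-system $(Y,G)$ is an $s$-step pro-nilsystem if and only if $\RP^{[s]}(Y,G)=\Delta_Y$. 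Granting these, the lemma becomes a purely formal consequence, in complete parallel with how Theorem \ref{thm0}(2) follows from Theorem \ref{thm-1} in the $\Z$-action setting. Note that minimality of $(X,G)$ passes to $(Y,G)$, so (ii) and (iii) apply to both systems.

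First I would treat necessity. Assume $(Y,G)$ is an $s$-step pro-nilsystem. By (iii) this means $\RP^{[s]}(Y,G)=\Delta_Y$. Applying the projection formula (ii), any pair $(x,x')\in\RP^{[s]}(X,G)$ satisfies $(\pi(x),\pi(x'))\in\RP^{[s]}(Y,G)=\Delta_Y$, hence $\pi(x)=\pi(x')$, i.e. $(x,x')\in R_\pi$. Therefore $\RP^{[s]}(X,G)\subset R_\pi$.

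For sufficiency, assume $\RP^{[s]}(X,G)\subset R_\pi$ and take any $(y,y')\in\RP^{[s]}(Y,G)$. By (ii) there is a pair $(x,x')\in\RP^{[s]}(X,G)$ with $\pi(x)=y$ and $\pi(x')=y'$; the hypothesis then gives $(x,x')\in R_\pi$, so that $y=\pi(x)=\pi(x')=y'$. Thus $\RP^{[s]}(Y,G)=\Delta_Y$, and by (iii) $(Y,G)$ is an $s$-step pro-nilsystem.

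The only genuine content lies in establishing ingredients (i)--(iii) for abelian $G$-actions rather than for a single transformation, and this is where I expect the real work to be; I would simply invoke \cite{GGY} for them. Once the projection formula (ii) and the characterization (iii) are in hand, the stated equivalence is immediate, and no further use of minimality or of the specific abelian structure is needed beyond what those two inputs already require.
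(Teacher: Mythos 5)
Your proof is correct and follows essentially the same route as the paper: both directions reduce to the projection formula $\pi\times\pi(\RP^{[s]}(X,G))=\RP^{[s]}(Y,G)$ from \cite{GGY} together with the characterization of $s$-step pro-nilsystems via $\RP^{[s]}=\Delta$, which the paper takes from \cite[Theorem 1.30]{GMV20} rather than from \cite{GGY}. The only cosmetic difference is that the paper phrases the converse as a proof by contradiction, which is logically identical to your direct contrapositive argument.
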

\begin{proof} The case when $G=\Z$ was proved in \cite[Theorem 3.11]{SY}. We follow the proof closely.

Assume that $(Y,G)$ is an $s$-step pro-nilsystem, then by \cite[Theorem 1.30]{GMV20} $\RP^{[s]}(Y,G)$ $=\D_x$.
This implies that $\RP^{[s]}(X,G)\subset R_\pi$ by \cite[Theorem 6.1]{GGY}.

Conversely, assume that $\RP^{[s]}(X,G)\subset R_\pi$. If $(Y, G)$ is not an s-step pro-nilsystem, then by \cite[Theorem 1.30]{GMV20}
$\RP^{[s]}(Y,G)\not=\D_Y$. Let $(y_1, y_2) \in \RP^{[s]}(Y,G)\setminus\D_Y$. Now by \cite[Theorem 6.1]{GGY} again, there are $x_1, x_2 \in X$
such that $(x_1, x_2)\in \RP^{[s]}(X,G)$ with $(\pi\times \pi)(x_1, x_2) = (y_1, y_2)$. Since $\pi(x_1) = y_1 \not= y_2= \pi(x_2),$
$(x_1, x_2)\not\in R_\pi$. This means that $\RP^{[s]}(X,G)\not\subset R_\pi$, a contradiction!
\end{proof}

\begin{thm} \label{thm-nil-top-factor}
Let $(X,G)$ be a minimal $s$-step pro-nilsystem with $G$ being abelian and $s\in\N$. Assume that $\pi: (X,G)\rightarrow (Y,G)$ is a factor map.
Then $(Y,G)$ is an $s$-step pro-nilsystem.
\end{thm}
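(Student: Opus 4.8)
The plan is to deduce this immediately from Lemma~\ref{lift-something} together with the triviality of the regionally proximal relation of order $s$ for an $s$-step pro-nilsystem. Lemma~\ref{lift-something} applies verbatim to the factor map $\pi\colon (X,G)\to (Y,G)$ (here $(X,G)$ is minimal and $G$ is abelian), and it tells us that $(Y,G)$ is an $s$-step pro-nilsystem if and only if $\RP^{[s]}(X,G)\subseteq R_\pi$. So the entire task reduces to verifying this single inclusion.

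First I would use the standing hypothesis that $(X,G)$ is \emph{itself} an $s$-step pro-nilsystem. By the abelian-group analogue of Theorem~\ref{thm-1}(2) --- precisely \cite[Theorem~1.30]{GMV20}, which is the very result already invoked inside the proof of Lemma~\ref{lift-something} --- this forces $\RP^{[s]}(X,G)=\Delta_X$. Since $\Delta_X\subseteq R_\pi$ holds trivially (any factor map satisfies $\pi(x)=\pi(x)$), we obtain $\RP^{[s]}(X,G)=\Delta_X\subseteq R_\pi$. Feeding this back into Lemma~\ref{lift-something} yields that $(Y,G)$ is an $s$-step pro-nilsystem, which is exactly the assertion.

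Strictly speaking there is no substantial obstacle here: the content is carried entirely by Lemma~\ref{lift-something} and by the characterization $\RP^{[s]}=\Delta_X$ for pro-nilsystems. The only point deserving a word of care is to confirm that both cited ingredients are being used in the correct generality of $G$-actions --- that is, that both the equivalence in Lemma~\ref{lift-something} and the identity $\RP^{[s]}(X,G)=\Delta_X$ are the statements for a general abelian group $G$ rather than merely for $G=\Z$. This is precisely the framework of \cite{GGY,GMV20} already in use, so no extra argument is required, and the proof is complete upon assembling these two facts.
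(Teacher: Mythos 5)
Your proposal is correct and coincides with the paper's own argument: the paper likewise invokes \cite[Theorem~1.30]{GMV20} to get $\RP^{[s]}(X,G)=\Delta_X\subseteq R_\pi$ and then applies Lemma~\ref{lift-something}. Nothing further is needed.
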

\begin{proof}
\cite[Theorem 1.30]{GMV20} states that a minimal t.d.s. $(X,T)$ is an $s$-step pro-nilsystem if and only if the regionally proximal relation of order $s$ is the diagonal $\D_X$. 
Thus by the assumption, we have $\RP^{[s]}(X,G)=\D_X\subset R_\pi$.
By Lemma \ref{lift-something}, $(Y,G)$ is an $s$-step pro-nilsystem.
\end{proof}


\subsubsection{$N_\infty(X,\A)$ for a minimal nilsystem}\
\medskip

Let $(X = G/\Gamma,  T )$ be a nilsystem with $T$ the translation by the element $\tau \in G$.
Let $x=g\Gamma\in X$ and $d\in \N$. Let $\A= \{p_1,  p_2, \cdots,  p_d \}$ be a family of non-constant essentially distinct integral polynomials with $p_1(0)=\cdots =p_d(0)=0$. Recall that
\begin{equation}\label{}
  \omega_x^\A=\Big((\tau^{p_1(n)}x, \tau^{p_2(n)}x, \ldots, \tau^{p_d(n)}x )\Big)_{n\in \Z}\in (X^d)^{\Z}.
\end{equation}
Note that for $n\in \Z$, we have $(\sigma\omega^\A_x)(n)= (\tau^{p_1(n+1)}x, \tau^{p_2(n+1)}x, \ldots, \tau^{p_d(n+1)}x )$.

Recall that
\begin{equation}\label{}
  N_\infty(X,\A)=\overline{\bigcup\{\sigma^n\omega^\A_x:n\in \Z, x\in X\}}\subseteq (X^d)^{\Z}.
\end{equation}

Now we show

\begin{thm}\label{thm-nil-ergodic}
Let $(X, T)$ be a minimal pro-nilsystem. 
Let $\A= \{p_1,  p_2, \ldots,  p_d \}$ be a family of non-constant essentially distinct integral polynomials with $p_1(0)=\cdots =p_d(0)=0$. Then we have
\begin{enumerate}
  \item The system $(N_\infty(X,\A), \langle T^{\infty},\sigma \rangle)$ is a minimal pro-nilsystem.

  \item For each $x\in X$, the system $(\overline{\O}(\omega_x^\A, \sigma),\sigma)$ is a
minimal pro-nilsystem.
\end{enumerate}
\end{thm}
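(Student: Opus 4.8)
The plan is to realize each of the two orbit closures as a continuous factor of a minimal nilsystem and then to quote Theorem \ref{thm-nil-top-factor}, which guarantees that a factor of a minimal pro-nilsystem under an abelian acting group is again a pro-nilsystem; here both acting groups, $\langle T^\infty,\sigma\rangle\cong\Z^2$ and $\langle\sigma\rangle\cong\Z$, are abelian. Since a minimal $\infty$-step pro-nilsystem is an inverse limit of minimal nilsystems and the assignments $X\mapsto N_\infty(X,\A)$ and $X\mapsto\overline{\O}(\omega_x^\A,\sigma)$ commute with inverse limits, I would first reduce to the case of a genuine minimal nilsystem $(X=G/\Gamma,T)$, with $T$ translation by $\tau\in G$; the general statement then follows because being a pro-nilsystem is preserved under inverse limits.

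For such an $X$, minimality makes every point transitive, so $N_\infty(X,\A)=\overline{\O}(\omega_x^\A,\langle T^\infty,\sigma\rangle)$ for any fixed $x$. The crux is to linearize the two commuting generators simultaneously. I would regard $X^d=G^d/\Gamma^d$ as a nilmanifold with base point $x^{\otimes d}$ and introduce the $\Z^2$-polynomial map $g'\colon\Z^2\to G^d$, $g'(m,k)=(\tau^{m+p_1(k)},\ldots,\tau^{m+p_d(k)})$. Applying Theorem \ref{thm-Leibman-poly-nil} to $g'$ and $x^{\otimes d}$ yields a nilmanifold $\widehat X=\widehat G/\widehat\Gamma$, a continuous surjection $\eta\colon\widehat X\to X^d$, a base point $\hat x$, and a homomorphism $\phi\colon\Z^2\to\widehat G$ with $g'(m,k)x^{\otimes d}=\eta(\phi(m,k)\hat x)$; writing $c=\phi(1,0)$ and $b=\phi(0,1)$ gives $\phi(m,k)=c^mb^k$ with $bc=cb$. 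I would then define the continuous map $\Psi\colon\widehat X\to(X^d)^\Z$ by $\Psi(y)_n=\eta(b^ny)$.

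The point of folding both generators into one homomorphism is that $\Psi$ intertwines everything in sight: a direct computation using $bc=cb$ gives
\begin{align*}
\Psi(c^mb^k\hat x)_n&=\eta(c^mb^{n+k}\hat x)=g'(m,n+k)x^{\otimes d}\\
&=(\tau^{m+p_1(n+k)}x,\ldots,\tau^{m+p_d(n+k)}x)=\big((T^\infty)^m\sigma^k\omega_x^\A\big)_n,
\end{align*}
so that $\Psi(c^mb^k\hat x)=(T^\infty)^m\sigma^k\omega_x^\A$. Thus $\Psi$ maps the $\langle c,b\rangle$-orbit of $\hat x$ onto the $\langle T^\infty,\sigma\rangle$-orbit of $\omega_x^\A$ and, by continuity, intertwines the two $\Z^2$-actions; taking closures gives $\Psi(Z)=N_\infty(X,\A)$, where $Z=\overline{\O}(\hat x,\langle c,b\rangle)$. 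By the several-commuting-translations version of Theorem \ref{thm-ParryLeibman}, $Z$ is a sub-nilmanifold on which $\langle c,b\rangle$ acts minimally, so $(Z,\langle c,b\rangle)$ is a minimal nilsystem; as a continuous minimal factor of it under the abelian group $\Z^2$, the system $(N_\infty(X,\A),\langle T^\infty,\sigma\rangle)$ is a minimal pro-nilsystem by Theorem \ref{thm-nil-top-factor}, proving (1). For (2) I would restrict to the single generator $b$: since $\Psi(b^k\hat x)=\sigma^k\omega_x^\A$, the same map restricts to a factor map from the minimal nilsystem $(\overline{\O}(\hat x,L_b),L_b)$ onto $(\overline{\O}(\omega_x^\A,\sigma),\sigma)$, which is therefore a minimal pro-nilsystem by the same two theorems.

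Granting the three quoted results, the argument is mostly a matter of assembling them correctly, and I expect the main obstacle to be precisely the bookkeeping of the lift of $T^\infty$: the diagonal translation $\tau^{(d)}$ on $X^d$ need not lift to a translation on all of $\widehat X$, and only its restriction to the orbit closure $Z$ is what we are entitled to use. Building the single $\Z^2$-polynomial map $g'$ is the device that makes this automatic — both $T^\infty$ and $\sigma$ then descend from one homomorphism $\phi$, and the required lift is verified by the displayed identity rather than by a separate argument. The remaining care is the inverse-limit reduction from nilmanifolds to $\infty$-step pro-nilsystems, which should be routine.
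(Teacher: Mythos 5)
Your proposal is correct and follows essentially the same route as the paper: linearize the polynomial orbit via Theorem \ref{thm-Leibman-poly-nil}, exhibit $N_\infty(X,\A)$ (resp. $\overline{\O}(\omega_x^\A,\sigma)$) as a continuous factor of a minimal nilsystem built from the resulting homomorphism, invoke Theorem \ref{thm-nil-top-factor}, and finish by the inverse-limit reduction. The only difference is packaging: you apply Leibman's theorem to a single $\Z^2$-polynomial map into $G^d$ so that both generators come from one homomorphism $\phi$ and one map $\Psi$, whereas the paper uses a $\Z^{d+1}$-polynomial map into $G$ and then assembles the two translations $\Theta_0,\Theta$ and the intermediate systems $\Omega,\widehat{\Omega}$ by hand — your version is a bit more economical, but the substance and the quoted ingredients are identical.
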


\begin{proof}
First we assume that $(X = G/\Gamma, T )$ is a minimal nilsystem with $T$ a translation by an element $\tau \in G$.
We divide the proof into several steps.

\medskip
\noindent{\bf Step 1}. Lifting $X$ to $\widehat{X}$ via $\eta$.
\medskip

We fix a point $x_0\in X$.
Let $g: \Z^{d+1}\rightarrow G$ be a polynomial action of $\Z^{d+1}$ on $X$ defined as follows
$$g(n_0,n_1,\ldots,n_d)=\tau^{n_0+p_1(n_1)+p_2(n_2)+\cdots+ p_d(n_d)}.$$
By Theorem \ref{thm-Leibman-poly-nil}, there is a nilpotent Lie group $\widehat{G}$ with a discrete uniform subgroup $\widehat{\Gamma}$, a continuous surjective map $\eta: \widehat{X}=\widehat{G}/\widehat{\Gamma}\rightarrow X$ , a point $\hat{x}_0\in \widehat{X}$ and a homomorphism $\phi: \Z^{d+1}\rightarrow \widehat{G}$ such that
$g({\bf n})x_0=\eta (\phi({\bf n})\hat{x}_0), \ \forall \ {\bf n}\in \Z^{d+1}.$ That is, there are commutative elements $\tau_0,\tau_1, \ldots, \tau_d\in \widehat{G}$ such that
\begin{equation}\label{}
 \tau^{n_0+p_1(n_1)+p_2(n_2)+\cdots+ p_d(n_d)} x_0=\eta (\tau_0^{n_0}\tau_1^{n_1}\tau_2^{n_2}\cdots\tau_d^{n_d} \hat{x}_0), \ \forall (n_0,n_1,\ldots, n_d)\in \Z^{d+1}.
\end{equation}
In particular, we have (see (\ref{ite-n}) for the definition of $\eta^{(d+1)}$)
$$\eta^{(d+1)}\big(\tau_0^n \hat{x}_0, \tau_1^n \hat{x}_0, \tau_2^n \hat{x}_0,\ldots, \tau_d^n \hat{x}_0 \big)=(\tau^n x_0, \tau^{p_1(n)}x_0, \tau^{p_2(n)}x_0, \ldots, \tau^{p_d(n)}x_0 ), \ \forall n\in \Z.$$


\medskip
\noindent {\bf Step 2}. Defining a nilsystem $(\Omega, \langle \Theta_0, \Theta \rangle)$ in $(\widehat{X}^d, \langle \Theta_0, \Theta \rangle)$.
\medskip

Let
$$\widehat{{\bf x}}_0=(\hat{x}_0,\hat{x}_0,\ldots, \hat{x}_0)\in \widehat{X}^d; \theta_0=(\tau_0,\tau_0,\ldots, \tau_0)\ \text{and}\  \theta=(\tau_1, \tau_2,\ldots, \tau_d)\in \widehat{G}^d.$$
And let $\Theta_0, \Theta: \widehat{X}^d\rightarrow \widehat{X}^d$ be defined as the translations by $\theta_0, \theta$ respectively:
$$\Theta_0: {\bf z}\mapsto \theta_0{\bf z}, \quad \Theta: {\bf z}\mapsto \theta{\bf z}, \ \forall \ {\bf z}\in \widehat{X}^d.$$
Then $(\widehat{X}^d, \langle \Theta_0, \Theta \rangle)$ is a t.d.s.
Put
$$\Omega=\overline{\O(\widehat{\bf x}_0, \langle \Theta_0, \Theta \rangle)}=\overline{\{\theta_0^m\theta^n \widehat{\bf x}_0: (m,n)\in \Z^2\}}\subseteq \widehat{X}^d,$$
and by Theorem \ref{thm-ParryLeibman}, $(\Omega, \langle \Theta_0, \Theta \rangle)$ a minimal nilsystem.
Let $$\widetilde{X}=\overline{\O (\widehat{\bf x}_0, \Theta_0)}=\overline{\{\theta_0^n\widehat{\bf x}_0: n\in \Z\}}\subseteq \Omega.$$
Then by Theorem \ref{thm-ParryLeibman}, $(\widetilde{X}, \Theta_0)$
is also a minimal nilsystem, and by the definition,
$\widetilde{X}\subseteq \Delta_d(\widehat{X})=\{\hat{\bf x}=(\hat{x},\hat{x},\ldots, \hat{x})\in \widehat{X}^d: \hat{x}\in \widehat{X}\}$.
Note that by the definition, we have $$\Omega=\overline{\{\Theta^n\widetilde{X}:n\in \Z\}}.$$
Now define $\widetilde{\eta}: \widetilde{X}\rightarrow X$ such that $\widetilde{\eta}(\hat{x},\ldots,\hat{ x})=\eta(\hat{x})$
for each $\hat{\bf x}\in \widetilde{X}$.
Then we have
\begin{equation}\label{}
\widetilde{\eta}(\theta_0^n \widehat{\bf x}_0)=\eta(\tau_0^n\hat{x}_0)= \tau^n x_0, \ \forall n\in \Z.
\end{equation}
Since $(\widetilde{X}, \Theta_0)$, $(X,T)$ are minimal and $\eta$ is continuous, $\widetilde{\eta}: \widetilde{X}\rightarrow X$ is well-defined.
Moreover, it is a continuous surjective map.

\medskip
\noindent{\bf Step 3}. Defining nilsystems $(\widehat{\Omega}_{\hat{x}},\sigma)$ and $\big(\widehat{\Omega}, \langle \Theta_0^\infty, \sigma \rangle\big)$ in $\Omega^\Z$.
\medskip

For each $\hat{x}\in \widehat{X}$ let
$$\zeta_{\hat{x}}=\Big((\tau_1^n \hat{x}, \tau_2^n \hat{x},\ldots, \tau_d^n \hat{x})\Big)_{n\in \Z}=(\Theta^n  \widehat{\bf x})_{n\in \Z}\in \Omega^\Z\subseteq (\widehat{X}^d)^\Z.$$
Put $$\Omega_{\hat{ x}}=\overline{\O(\hat{\bf x}, \Theta)}\subseteq \Omega\ \ \text{and}\ \ \widehat{\Omega}_{\hat{ x}}=\overline{ \O(\zeta_{\hat{x}},\sigma)}\subset \Omega^\Z.$$ Then by Theorem \ref{thm-ParryLeibman} $(\Omega_{\hat{ x}},\Theta)$ is a minimal nilsystem.
As in Subsection \ref{subsection-linear-one}  we define
\begin{equation}\label{}
  \phi_{\hat{x}}:(\Omega_{\hat{ x}},\Theta) \rightarrow (\widehat{\Omega}_{\hat{x}},\sigma), \ {\bf y}\mapsto \big( \Theta^n{\bf y}\big)_{n\in \Z}, \ \forall {\bf y}\in \Omega_{\hat{x}}.
\end{equation}
Since $\Theta$ is continuous, so is $\phi_{\hat{x}}$. It is clear that $\phi_{\hat{x}}$ is one-to-one. Now we show it is surjective. Let $({\bf y}_n)_{n\in \Z}\in \widehat{\Omega}_{\hat{x}}$. Then there is a sequence $\{n_i\}_{i\in \Z}\subseteq \Z$ such that $\sigma^{n_i}\zeta_{\hat{x}}\to ({\bf y}_n)_{n\in \Z}, i\to\infty$. Note that
$$\sigma^{n_i}\zeta_{\hat{x}}=(\Theta^{n+n_i} \widehat{\bf x})_{n\in \Z}=(\ldots, \Theta^{-1}(\theta^{n_i} \widehat{\bf x}), \underset{\bullet}{\Theta^{n_i} \widehat{\bf x}}, \Theta(\Theta^{n_i} \widehat{\bf x}),\ldots).$$
Since $\sigma^{n_i}\zeta_{\hat{x}}\to ({\bf y}_n)_{n\in \Z}, i\to\infty$, at $0^{th}$-coordinate we have that $\Theta^{n_i}\widehat{\bf x}\to {\bf y}_0, i\to\infty$. Thus
$$\sigma^{n_i}\zeta_{\hat{x}}=(\Theta^{n+n_i} \widehat{\bf x})_{n\in \Z}\to (\Theta^n {\bf y}_0)_{n\in \Z}, i\to\infty.$$
So $({\bf y}_n)_{n\in \Z}=(\Theta^n {\bf y}_0)_{n\in \Z}=\phi_{\hat{x}}({\bf y}_0)$. That is, $ \phi_{\hat{x}}:(\Omega_{\hat{ x}},\Theta) \rightarrow (\widehat{\Omega}_{\hat{x}},\sigma)$ is an isomorphism between them. Now $(\widehat{\Omega}_{\hat{x}},\sigma)$ is  isomorphic to $(\Omega_{\hat{ x}},\Theta)$, and so it is a minimal nilsystem.

Set
$$\widehat{\Omega}=\overline{\bigcup_{\hat{{\bf x}} \in \widetilde{X}} \widehat{\Omega}_{\hat{x}}}.$$
Note that $\displaystyle \Omega=\overline{\{\Theta^n\widetilde{X}:n\in \Z\}}=\overline{\bigcup_{\hat{\bf x}\in \widetilde{X}} \Omega_{\hat{x}}}$.

Now we show that $\big(\widehat{\Omega}, \langle \Theta_0^\infty, \sigma \rangle\big)$ is topologically isomorphic to $(\Omega, \langle \Theta_0, \Theta \rangle)$. The proof is similar to the above. Let
\begin{equation}\label{}
  \phi:(\Omega, \langle \Theta_0, \Theta \rangle) \rightarrow \big(\widehat{\Omega}, \langle \Theta_0^\infty, \sigma \rangle\big), \ {\bf y}\mapsto \big( \Theta^n{\bf y}\big)_{n\in \Z}, \ \forall {\bf y}\in \Omega.
\end{equation}
Since $\Theta$ is continuous, so is $\phi$. It is clear that $\phi$ is one-to-one. Now we show it is surjective. Let $({\bf z}_n)_{n\in \Z}\in \widehat{\Omega}$. Then there is a sequence $\big( \Theta^n{\bf y}^i\big)_{n\in \Z}\in  \widehat{\Omega}_{\hat{x_i}} , i\in \N$ such that $\big( \Theta^n{\bf y}^i\big)_{n\in \Z} \to ({\bf z}_n)_{n\in \Z}, i\to\infty$. Note that
$$\big( \Theta^n{\bf y}^i\big)_{n\in \Z} =(\ldots, \Theta^{-1}({\bf y}^i), \underset{\bullet}{{\bf y}^i}, \Theta({\bf y}^i),\ldots).$$
Since $\big( \Theta^n{\bf y}^i\big)_{n\in \Z} \to ({\bf z}_n)_{n\in \Z}, i\to\infty$, at $0^{th}$-coordinate we have that ${\bf y}^i \to {\bf z}_0, i\to\infty$. Thus
$$\big( \Theta^n{\bf y}^i\big)_{n\in \Z} \to (\Theta^n {\bf z}_0)_{n\in \Z}, i\to\infty.$$
So $({\bf z}_n)_{n\in \Z}=(\Theta^n {\bf z}_0)=\phi({\bf z}_0)$. That is, $\phi:(\Omega, \langle \Theta_0, \Theta \rangle) \rightarrow \big(\widehat{\Omega}, \langle \Theta_0^\infty, \sigma \rangle\big)$ is an isomorphism. Thus $\big(\widehat{\Omega}, \langle \Theta_0^\infty, \sigma \rangle\big)$ is isomorphic to $(\Omega, \langle \Theta_0, \Theta \rangle)$, and so it is a minimal nilsystem.

\medskip
\noindent{\bf Step 4}. Showing that $\big(N_{\infty}(X,\A), \langle T^\infty, \sigma\rangle\big)$ is a factor of $\big(\widehat{\Omega}, \langle \Theta_0^\infty, \sigma \rangle\big)$. 
\medskip

Define a map
$$\widehat{\eta}: \big(\widehat{\Omega}, \langle \Theta_0^\infty, \sigma \rangle\big)\rightarrow \big(N_{\infty}(X,\A), \langle T^\infty, \sigma\rangle\big), ({\bf x}_n)_{n\in \Z}\mapsto (\eta^{(d)}{\bf x}_n)_{n\in \Z}, $$
where $({\bf x}_n)_{n\in \Z}=\Big((x_n^{(1)},\ldots, x_n^{(d)})\Big)_{n\in \Z}\in \widehat{\Omega}$ and
$$(\eta^{(d)}{\bf x}_n)_{n\in \Z}=\Big((\eta(x_n^{(1)}),\ldots,\eta( x_n^{(d)}))\Big)_{n\in \Z} \in (X^d)^\Z.$$ Since $\eta$ is continuous, so is $\widehat{\eta}$.
Note that for each $x\in X$ and each $\hat{\bf x}=(\hat{x},\hat{x},\ldots, \hat{x})\in \widetilde{X}$ with $\widetilde{\eta}(\hat{\bf x})=x$
we have
$$\widehat{\eta}(\zeta_{\hat{x}})=\widehat{\eta}\Big(\Big((\tau_1^n \hat{x}, \tau_2^n \hat{x},\ldots, \tau_d^n \hat{x})\Big)_{n\in \Z}\Big)=
\Big((\tau^{p_1(n)}x, \tau^{p_2(n)}x,\ldots, \tau^{p_d(n)}x)\Big)_{n\in \Z}=\omega_x^\A.$$
It follows that $\widehat{\eta}(\widehat{\Omega})=N_\infty(X,\A)$. That is, $\widehat{\eta}: \widehat{\Omega}\rightarrow N_{\infty}(X,\A)$ is a well-defined continuous map.
It is easy to check that $\widehat{\eta}\circ \sigma=\sigma\circ \widehat{\eta}$ and $\widehat{\eta}\circ \Theta_0^\infty=T^\infty\circ \widehat{\eta}$.
Thus $\widehat{\eta}: \big(\widehat{\Omega}, \langle \Theta_0^\infty, \sigma \rangle\big)\rightarrow \big(N_{\infty}(X,\A), \langle T^\infty, \sigma\rangle\big)$ is a factor map.

\medskip

Since $\big(\widehat{\Omega}, \langle \Theta_0^\infty, \sigma \rangle\big)$ is a minimal nilsystem by Step 3,
$\big(N_{\infty}(X,\A), \langle T^\infty, \sigma\rangle\big)$ is a minimal pro-nilsystem by Theorem \ref{thm-nil-top-factor}.
Thus we have shown (1).

\medskip

Let $x\in X$.
When we restrict the map $\widehat{\eta}$ on $\widehat{\Omega}_{\hat{x}}$, we have the  factor map
$$\widehat{\eta}: (\widehat{\Omega}_{\hat{x}},\sigma) \rightarrow (\overline{\O}(\omega_x^\A,\sigma),\sigma).$$
Thus as a factor of  $(\widehat{\Omega}_{\hat{x}},\sigma)$, by Theorem \ref{thm-nil-top-factor},
$(\overline{\O}(\omega^\A_x,\sigma),\sigma)$ is a pro-nilsystem, and we have shown (2).

\medskip
Now we assume that $(X,T)$ is a minimal pro-nilsystem. Then $(X,T)$ is an inverse limit of nilsystems $(Y_i=G_i/\Gamma_i,T_i)$.
It is easy to see that
$$N_\infty(X,\A)=\underset{\longleftarrow}\lim \ N_\infty (Y_i,\A),$$
which implies that $(N_{\infty}(X,\A), \langle T^\infty, \sigma\rangle)$ is a minimal pro-nilsystem by what we just proved.
The same argument applies to $(\overline{\O}(\omega_x^\A, \sigma),\sigma)$. The whole proof is complete.
\end{proof}

\subsubsection{A corollary}

By Subsection \ref{subsection-linear-terms},
$ (W_x^\A, \widetilde{\sigma})$ is isomorphic to $(\overline{\O}(\w_x^\A,\sigma), \sigma),$
and
$ (M_\infty(X,\A),\langle T^\infty, \widetilde{\sigma}\rangle)$ is isomorphic to $(N_\infty(X,\A), \langle T^{\infty}, \sigma \rangle).$
Thus we have that
\begin{cor}\label{cor-nil}
Let $(X, T )$ be a minimal pro-nilsystem. Then so is $(M_\infty(X,\A), \langle T^{\infty},\widetilde{\sigma} \rangle)$, and for each $x\in X$, the system $(W_x^\A, \widetilde{\sigma})$ is a
minimal pro-nilsystem too.
\end{cor}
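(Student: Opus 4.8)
The plan is to obtain the corollary as a direct repackaging of Theorem \ref{thm-nil-ergodic} by transporting its conclusions through the isomorphisms recorded in Lemma \ref{M-N-equ} (equivalently, through the identifications set up in Subsection \ref{subsection-linear-terms}). First I would invoke Theorem \ref{thm-nil-ergodic}(1): since $(X,T)$ is a minimal pro-nilsystem, the system $(N_\infty(X,\A),\langle T^\infty,\sigma\rangle)$ is a minimal pro-nilsystem. By Lemma \ref{M-N-equ} this system is topologically isomorphic to $(M_\infty(X,\A),\langle T^\infty,\widetilde{\sigma}\rangle)$. Because the property of being a minimal pro-nilsystem is defined purely as being an inverse limit of minimal nilsystems, it is manifestly preserved under topological isomorphism; hence $(M_\infty(X,\A),\langle T^\infty,\widetilde{\sigma}\rangle)$ is a minimal pro-nilsystem as well.

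For the second assertion I would argue identically at the level of individual orbit closures. By Theorem \ref{thm-nil-ergodic}(2), for each $x\in X$ the system $(\overline{\O}(\omega_x^\A,\sigma),\sigma)$ is a minimal pro-nilsystem. Lemma \ref{M-N-equ} supplies an isomorphism $(\overline{\O}(\omega_x^\A,\sigma),\sigma)\cong(\overline{\O}(\xi_x^\A,\widetilde{\sigma}),\widetilde{\sigma})=(W_x^\A,\widetilde{\sigma})$, and once more the isomorphism-invariance of the pro-nilsystem class yields the conclusion for $(W_x^\A,\widetilde{\sigma})$.

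I expect essentially no obstacle here: the corollary is a bookkeeping consequence of results already in hand, and the only point worth recording is that the class of minimal pro-nilsystems is closed under topological isomorphism, which is immediate from the definition. The one mild subtlety is that Theorem \ref{thm-nil-ergodic} is phrased for families of non-constant, essentially distinct polynomials, whereas the $\A$ of the corollary is a priori arbitrary. Should this matter, I would first replace $\A$ by a subfamily satisfying condition $(\spadesuit)$ via the general result following Example \ref{exam1}; this leaves both $N_\infty(X,\A)$ and $M_\infty(X,\A)$ unchanged up to isomorphism, and any family satisfying $(\spadesuit)$ is automatically non-constant and essentially distinct (since for two nonlinear members $p_i(0)=p_j(0)=0$ forces $p_i=p_j$ whenever $p_i-p_j$ is constant, which is excluded by taking $k=t=0$ in $(\spadesuit)$), so Theorem \ref{thm-nil-ergodic} applies and the argument above goes through verbatim.
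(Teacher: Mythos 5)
Your proposal is correct and is essentially identical to the paper's own argument: the paper derives Corollary \ref{cor-nil} precisely by combining Theorem \ref{thm-nil-ergodic} with the isomorphisms $(W_x^\A,\widetilde{\sigma})\cong(\overline{\O}(\w_x^\A,\sigma),\sigma)$ and $(M_\infty(X,\A),\langle T^\infty,\widetilde{\sigma}\rangle)\cong(N_\infty(X,\A),\langle T^\infty,\sigma\rangle)$ from Subsection \ref{subsection-linear-terms}. Your closing remark on reducing to condition $(\spadesuit)$ is a harmless extra precaution; in context $\A$ is already assumed to consist of non-constant, essentially distinct polynomials vanishing at zero, so the theorem applies directly.
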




\subsection{$M_\infty(X,T)$ for distal t.d.s.}\
\medskip

\subsubsection{The statements}

The main result of this subsection is as follows:

\begin{thm}\label{thm-distal}
Let $(X,T)$ be minimal and distal.

\begin{enumerate}
  \item There is residual subset $X_0$ of $X$ such that for all $x\in X_0$ and each family $\A=\{p_1,\ldots, p_d\}$ satisfying $(\spadesuit)$, we have $(W_x^\A,\sigma)$ is an $M$-system. 

  \item For each family $\A=\{p_1,\ldots, p_d\}$ satisfying $(\spadesuit)$,$(M_\infty(X,\A), \langle T^\infty,\sigma \rangle)$ is an M-system. 
\end{enumerate}
\end{thm}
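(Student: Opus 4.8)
Both $(W_x^\A,\sigma)$ and $(M_\infty(X,\A),\langle T^\infty,\sigma\rangle)$ are already transitive, with $\xi_x^\A$ a transitive point (recall the remarks following the definition of $M_\infty(X,\A)$). Hence, by Proposition \ref{prop-M} together with its $\Z^k$-version in the remark that follows it, proving either assertion amounts to showing that the transitive point $\xi_x^\A$ is $\F_{ps}$-recurrent, equivalently that the minimal points are dense. The plan is to concentrate on (2), the $\Z^2$-action; statement (1) is the restriction of the same data to the single transformation $\sigma$ and will follow from the identical argument with $\Z^2$ replaced by $\Z$, over the residual set $X_0$ (the set $X_0^*$ produced by Theorem \ref{thm-poly-saturate}).

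\textbf{Reduction to the pro-nilfactor.} Since $(X,T)$ is distal, the factor map $\pi\colon X\to X_\infty$ onto its maximal $\infty$-step pro-nilfactor is open, so Theorem \ref{thm-poly-saturate} applies with $X^*=X$, $X_\infty^*=X_\infty$ and $\pi^*=\pi$. First I would record the resulting saturation identities
\[
M_\infty(X,\A)=(\pi^\infty)^{-1}\bigl(M_\infty(X_\infty,\A)\bigr),\qquad
W_x^\A=(\pi^\infty)^{-1}\bigl(W_{\pi(x)}^\A\bigr)\quad (x\in X_0),
\]
where $\pi^\infty$ is the coordinatewise application of $\pi$. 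By Corollary \ref{cor-nil} the base systems $M_\infty(X_\infty,\A)$ and $W_{\pi(x)}^\A$ are minimal pro-nilsystems; in particular they are minimal and distal, so every base point is minimal and, as a minimal point of a $\Z^2$- (resp. $\Z$-) action, has syndetic return times to each of its neighbourhoods. Write $\bar\pi$ for the open factor map $M_\infty(X,\A)\to M_\infty(X_\infty,\A)$ obtained by restricting $\pi^\infty$.

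\textbf{Combining base and fibre.} Fix a neighbourhood $\widetilde U$ of $\xi_x^\A$. As $\widetilde U$ constrains only finitely many coordinates, I may take $\widetilde U=(\pi^\infty)^{-1}(\widetilde W)\cap\widetilde V$, where $\widetilde W$ is a neighbourhood of $\xi_{\pi(x)}^\A$ in the base and $\widetilde V$ imposes conditions on the finitely many fibre coordinates lying in a fixed window. The base return set $S=\{(m,n)\in\Z^2:(T^\infty)^m\sigma^n\xi_{\pi(x)}^\A\in\widetilde W\}$ is syndetic by the previous step. For the fibre I would exploit that the ambient product action of $T^\infty$, namely the diagonal $T$ on $X^s\times(X^{d-s})^\Z$, is distal (an arbitrary product of copies of the distal system $(X,T)$); hence $\xi_x^\A$ is a minimal point for $T^\infty$ alone and the extension $\pi$ behaves equicontinuously on fibres. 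The goal is then to intersect $S$ with the set of $(m,n)$ for which $(T^\infty)^m\sigma^n$ returns the finitely many window fibre coordinates to $\widetilde V$, and to verify that this intersection is still piecewise syndetic in $\Z^2$, so that $N^{\Z^2}_{\{p_1,\ldots,p_d\}}(\xi_x^\A,\widetilde U)$ is piecewise syndetic and $\xi_x^\A$ is $\F_{ps}$-recurrent. Once this holds for every $\widetilde U$, Proposition \ref{prop-M} gives that $(M_\infty(X,\A),\langle T^\infty,\sigma\rangle)$ is an $M$-system, proving (2), and the same argument over the single map $\sigma$ yields (1).

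\textbf{The main obstacle.} The delicate point is exactly the fibre control in the last step. A minimal point of $M_\infty(X,\A)$ lifted from the minimal base by Lemma \ref{den-minimal}(3) need not lie in $\widetilde U$, and moving it into $\widetilde U$ requires adjusting several fibre coordinates at once while staying over the same base point and preserving minimality. Because $\sigma$ permutes the coordinates, these coordinates sit over distinct points of $X_\infty$, so no single diagonal map can realise an arbitrary fibre configuration; this is precisely why the naive attempt to declare the whole fibre minimal fails ($\bar\pi$ is not a distal extension, as one sees by altering a single coordinate inside its $\pi$-fibre). The remedy I would use is to bring in suitable elements of $\mathrm{Aut}(X,T)$ (equivalently of the Ellis group $E(X,T)$, which is a \emph{group} because $X$ is distal): acting diagonally on $M_\infty(X,\A)$ they commute with $T^\infty$ and $\sigma$, preserve the $\pi^\infty$-fibres, and send minimal points to minimal points, while the equicontinuous structure on the fibres supplies enough of them to reach a dense set of fibre positions. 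Finding these elements is the hard part, and it is the very difficulty that, for the general Theorem E, is overcome by passing to the universal minimal flow and using its regularizer; in the present distal case the group structure of $E(X,T)$ already makes them available.
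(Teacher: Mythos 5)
Your reduction (saturation via openness of $\pi$, minimality of $M_\infty(X_\infty,\A)$ as a pro-nilsystem, and the idea of moving a minimal point around inside a $\pi^\infty$-fibre by automorphisms) matches the paper's skeleton, but the step where the theorem is actually proved is missing, and the two places where you wave at it are each problematic. First, you propose to act by automorphisms \emph{diagonally} on $M_\infty(X,\A)$, "commuting with $T^\infty$ and $\sigma$" -- but, as you yourself observe one sentence earlier, the coordinates of a point of $M_\infty(X,\A)$ sit over \emph{different} base points, so a single diagonal map cannot realise an arbitrary fibre configuration; a map applying a \emph{different} automorphism in each coordinate is needed, and such a map does \emph{not} commute with $\sigma$. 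The paper's resolution is the decisive trick you lack: given the target window $[-k,k]$, build $\Phi_k=(\phi_{-k}\times\cdots\times\phi_k)^\infty$, periodic of period $2k+1$, which commutes with $T^\infty$ and with $\sigma^{2k+1}$ only; since $\langle T^\infty,\sigma^{2k+1}\rangle$ is a syndetic subgroup of $\langle T^\infty,\sigma\rangle$, Lemma \ref{lem-minimal-point} upgrades $\langle T^\infty,\sigma^{2k+1}\rangle$-minimality of $\Phi_k({\bf x}')$ to $\G$-minimality, and letting $k\to\infty$ gives density of minimal points. Without this, your "intersect the base return set with the fibre return set and verify it is still piecewise syndetic" is not an argument -- $\F_{ps}$ is not a filter, so such intersections need not remain piecewise syndetic, and you give no mechanism to control them.

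Second, the supply of automorphisms is not as free as you suggest. Elements of the Ellis group $E(X,T)$ of a distal system form a group but are in general not continuous, so they are not elements of $\mathrm{Aut}(X,T)$; and $\mathrm{Aut}_\pi(X,T)$ need not act transitively on the fibres of $\pi$. The paper gets around this by invoking the structure theorem for distal extensions (Theorem \ref{distal-group}): $\pi$ is a factor of a \emph{weak group extension} $\phi\colon Z\to X_\infty$, where a topological group $K$ acts on $Z$ with $\phi^{-1}(\phi(z))=zK$, so that every pair of points in a $\phi$-fibre is connected by an element of $\mathrm{Aut}_\phi(Z)$. One then works in $W=(\phi^\infty)^{-1}(M_\infty(X_\infty,\A))\subseteq Z^\Z$, proves density of minimal points there by the $\Phi_k$ construction, and pushes the conclusion down to $M_\infty(X,\A)$ as a factor of $W$. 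This lift to $Z$, the periodic $\Phi_k$, and the syndetic-subgroup lemma are the substance of the proof; your proposal identifies the obstacle correctly but does not supply the construction that overcomes it.
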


Then by Theorems \ref{Thm-equivalence1} and \ref{thm-distal}, 
we have

\begin{cor}\label{cor-ThmC-2}
Let $(X,T)$ be a minimal distal t.d.s.  Then there is a dense $G_\delta$ set $X_0$ such that for each $x\in X_0$, each neighbourhood $U$ of $x$, and for any non-constant integral polynomials $p_1, \ldots, p_d$ with $p_{i}(0)=0$, $i=1,2,\ldots, d$,
$$N_{\{p_1,\ldots,p_d\}}(x,U)=\{n\in\Z: T^{p_1(n)}x\in U, \ldots, T^{p_d(n)}x\in U\}\in \F_{ps}.$$
\end{cor}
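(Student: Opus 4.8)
The plan is to obtain this corollary as the purely formal $\Z$-action consequence of the dynamical statement in Theorem \ref{thm-distal}, mediated by the equivalence recorded in Theorem \ref{Thm-equivalence2}. The crucial observation is that condition (1) of Theorem \ref{Thm-equivalence2} — the existence of a dense $G_\delta$ set $X_0$ on which $(W_x^\A,\sigma)$ is an $M$-system for every family $\A$ satisfying $(\spadesuit)$ — is \emph{verbatim} the conclusion of Theorem \ref{thm-distal}(1) for a minimal distal system. So the two quoted results simply plug into each other.

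Concretely, I would first invoke Theorem \ref{thm-distal}(1) to produce the residual subset $X_0\subseteq X$ on which $(W_x^\A,\sigma)$ is an $M$-system for each $x\in X_0$ and each $\A=\{p_1,\ldots,p_d\}$ obeying $(\spadesuit)$; since $X$ is a compact metric space this residual set is a dense $G_\delta$, so hypothesis (1) of Theorem \ref{Thm-equivalence2} is met. Feeding this into Theorem \ref{Thm-equivalence2} and reading off its condition (2) then yields, on that same set $X_0$, the assertion that for every neighbourhood $U$ of $x\in X_0$ and all integral polynomials $p_1,\ldots,p_d$ with $p_i(0)=0$, the set $N_{\{p_1,\ldots,p_d\}}(x,U)$ lies in $\F_{ps}$. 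Restricting attention to non-constant $p_i$ is exactly the statement of the corollary. The delicate passage from arbitrary polynomials to families satisfying $(\spadesuit)$ — absorbing the shifted polynomials $p^{[j]}$ and the possible coincidences $p_i^{[k_1]}=p_j^{[k_2]}$ — is already internal to the proof of Theorem \ref{Thm-equivalence2} via Lemma \ref{ww=dem}, so no additional bookkeeping is needed at this stage.

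At the level of the corollary itself there is no genuine obstacle: all the analytic content — the polynomial saturation theorems and the nilsystem analysis of $W_x^\A$ behind Theorem \ref{thm-distal} — has been discharged beforehand. The only points requiring a moment's attention are cosmetic: that ``residual'' and ``dense $G_\delta$'' coincide here, that the single $X_0$ delivered by Theorem \ref{thm-distal} works uniformly over all admissible $\A$ (which holds because in both statements the quantifier over $\A$ sits inside the scope of $X_0$), and that dropping the non-constancy hypothesis would merely trivialize the affected coordinates rather than damage the argument. Were Theorem \ref{thm-distal} not available, the real difficulty would relocate to establishing that $(W_x^\A,\sigma)$ is an $M$-system on a residual set for a distal base, which is precisely where the saturation machinery and distality (openness of $\pi:X\to X_\infty$) do their work.
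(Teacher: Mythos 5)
Your derivation is correct and is essentially the paper's own: the corollary is obtained by feeding Theorem \ref{thm-distal}(1) into the equivalence of Theorem \ref{Thm-equivalence2}, exactly as you describe (the paper's citation of Theorem \ref{Thm-equivalence1} at this point appears to be a slip for Theorem \ref{Thm-equivalence2}, since the conclusion concerns piecewise syndeticity in $\Z$ rather than in $\Z^2$). Your remarks on absorbing families not satisfying $(\spadesuit)$ via Lemma \ref{ww=dem} inside the proof of the equivalence, and on the vacuity of constant polynomials, match how the paper handles these points.
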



Note that Corollary \ref{cor-ThmC-2} is the second part of Theorem C. 

\medskip

Now we are going to show Theorem \ref{thm-distal}. Let $(X,T)$ be a minimal t.d.s.
Put $\G=\langle T^\infty,\sigma \rangle$. If $(X,T)$ is a pro-nilsystem, then so is
$(M_\infty(X_\infty,\A), \G)$ by Corollary \ref{cor-nil}.
But in general, when $(X,T)$ is distal, $(M_\infty(X,\A),\G)$ may not be distal. So it is not obvious that $(M_\infty(X,\A), \G)$ is an $M$-system.
To prove Theorem \ref{thm-distal}, we need some preparations.

\subsubsection{Distal extensions and group extensions}
Let $(X,T)$ and $(Y,T )$ be t.d.s. and let $\pi: X \to Y$ be a factor map.
One says that $\pi$ is a {\em weak group extension} if there exists a
topological group $K$ such that the following
conditions hold:
\begin{enumerate}
\item $K$ acts continuously on $X$ from the right: the
right action $X\times K\rightarrow X$, $(x,k)\mapsto xk$ is
continuous and $T(xk)=(Tx)k$ for any $x\in X, k\in K$;
\item the fibers of $\pi$ are the $K$-orbits in $X$, i.e.
$\pi^{-1}(\{\pi(x)\})=xK$ for any $x\in X$.
\end{enumerate}
When $K$ is a compact Hausdorff topological group, then we say that $\pi$ is a {\em group extension}.

Note that a group extension is equicontinuous, and an equicontinous extension is distal.

\medskip

Let
${\rm Aut} (X,T)$ be the group of automorphisms of the t.d.s. $(X,T)$, that is, the group of all self-homeomorphisms $\psi$ of $X$ such that $\psi\circ T=T\circ \psi$. For an extension $\pi: (X,T)\rightarrow (Y,T)$, let $${\rm Aut}_\pi(X,T)=\{\chi\in {\rm Aut}(X,T): \pi\circ \chi=\pi\},$$ i.e., the collection of elements of ${\rm Aut}(X,T)$ mapping every fiber of $\pi$ into itself.

Let $\pi: X\rightarrow Y$ be a weak group extension as above. Then for each $k\in K$, $\phi_k: X\rightarrow X, x\mapsto xk$ defines an element of ${\rm Aut}_\pi(X,T)$.

\begin{thm}\cite[Chapter V, (4.21), (5.16)]{Vr} \label{distal-group}
Let $\pi: X \rightarrow Y$ be an extension of minimal t.d.s. $(X,T)$ and $(Y,T)$. Then
\begin{enumerate}
  \item $\pi$ is distal if and only if $\pi$ is a factor of weak group extension $\phi$, i.e. there are minimal t.d.s. $(Z,T)$ and extensions $\pi':Z\rightarrow X,
  \phi: Z\rightarrow Y$ such that $\phi=\pi\circ \pi'$ and $\phi$ is a weak group extension.

  \item $\pi$ is equicontinuous if and only if $\pi$ is a factor of group extension $\phi$.
\end{enumerate}
\end{thm}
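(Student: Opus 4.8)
The plan is to split each ``iff'' into an easy implication---every extension sitting \emph{below} a (weak) group extension is distal, resp. equicontinuous---and a hard implication---every distal, resp. equicontinuous, extension of minimal systems \emph{arises} this way. The easy halves are elementary consequences of the definitions given just before the theorem together with the fact that proximal pairs lift through factor maps of minimal flows; the hard halves require the structure theory of minimal flows, and this is where the real work lies.

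\textbf{The easy directions.} For (1), suppose $\phi:Z\to Y$ is a weak group extension with structure group $K$ and $\phi=\pi\circ\pi'$. First I show $\phi$ itself is distal. If $z_1\neq z_2$ lie in one $\phi$-fibre then $z_2=z_1 k$ for some $k\in K$ with $z_1 k\neq z_1$, and $\phi_k:z\mapsto zk$ belongs to ${\rm Aut}(Z,T)$. Were $(z_1,z_1 k)$ proximal, passing to a net with $T^{n_i}z_1\to w$ would give $T^{n_i}(z_1 k)=(T^{n_i}z_1)k\to wk$, forcing $wk=w$; but ${\rm Fix}(\phi_k)=\{z:zk=z\}$ is closed and $T$-invariant, so by minimality of $(Z,T)$ it equals $Z$, contradicting $z_1 k\neq z_1$. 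Hence $\phi$ is distal. Finally a factor of a distal extension is distal: a proximal pair in $R_\pi$ lifts through $\pi'$ to a proximal pair in $R_\phi$, which distality of $\phi$ collapses to the diagonal, so $\pi$ is distal. The ``if'' part of (2) is verbatim the same with ``distal'' replaced by ``equicontinuous'': a compact group extension is isometric (average a fibre metric over the compact $K$), and a factor of an equicontinuous extension is equicontinuous by the same lifting.

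\textbf{The hard directions, via the regularizer.} For the ``only if'' of (1), let $\pi:X\to Y$ be distal. The idea is to enlarge $X$ so that the automorphisms over $Y$ become numerous enough to sweep out whole fibres. Concretely I would pass to the \emph{regularizer} of $\pi$: a minimal flow $Z$ with factor maps $\pi':Z\to X$ and $\phi=\pi\circ\pi':Z\to Y$ such that $\phi$ is a \emph{regular} (normal) distal extension, i.e. $K:={\rm Aut}_\phi(Z,T)$, suitably topologized so that its action $(z,k)\mapsto zk$ is continuous, acts transitively on every fibre $\phi^{-1}(y)$. One constructs $Z$ as a minimal subset of an appropriate (relatively independent, possibly transfinite) self-product of $X$ over $Y$, the distality of the $\phi$-fibres being exactly what makes the construction stabilize into a regular extension. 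Granting regularity, transitivity says precisely $\phi^{-1}(\phi(z))=zK$ with $K$ acting on the right and commuting with $T$, so $\phi$ is a weak group extension and $\pi$ is a factor of it. For the ``only if'' of (2), an equicontinuous $\pi$ is in particular distal, so the same $Z,\phi,K$ exist; equicontinuity of $\pi$ forces $\phi$ to be equicontinuous as well, whence $K$ is a uniformly equicontinuous family of homeomorphisms, relatively compact (Arzel\`a--Ascoli) and closed in the uniform topology, i.e. a \emph{compact} group---so $\phi$ is a group extension.

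\textbf{Main obstacle.} The genuine difficulty is constructing the regularizer and proving that for a regular distal extension the automorphism group acts transitively on fibres with a group topology making the action continuous. This is exactly where the theory of the universal minimal flow and the Ellis (Galois) group of an extension---realizing $X\cong M/{\mathcal A}(X)$ and $Y\cong M/{\mathcal A}(Y)$ for $\tau$-closed subgroups ${\mathcal A}(X)\subseteq{\mathcal A}(Y)$ of the Ellis group, with distality of $\pi$ reflected in the algebra of these groups---does the heavy lifting. A cleaner alternative replaces the ad hoc product construction by this Galois correspondence, reading weak-group-extension structure off from normality of ${\mathcal A}(X)$ in ${\mathcal A}(Y)$ and compactness of $K$ off from $\tau$-compactness of the quotient. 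Either way it is this structural step, not the elementary recurrence-style arguments, that is the crux.
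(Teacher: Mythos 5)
First, a point of calibration: the paper does not prove this statement at all --- it is imported verbatim from de Vries \cite[Chapter V, (4.21), (5.16)]{Vr} and used as a black box, so there is no in-paper argument to compare yours against. Judged on its own terms, your write-up gets the soft halves of (1) right and correctly names the machinery for the hard halves, but it contains one step that fails as written and leaves the crux unproved. The easy direction of (1) is fine: the fixed-point-set argument ($\{z: zk=z\}$ is closed and $T$-invariant, hence empty or all of $Z$ by minimality) shows a weak group extension is distal, and since proximal pairs lift through factor maps of minimal flows, $\pi$ inherits distality from $\phi$. In (2), however, ``a factor of an equicontinuous extension is equicontinuous by the same lifting'' does not work: proximality lifts through $\pi'$ because it is an algebraic (enveloping-semigroup) condition, but metric closeness does not --- given $(x_1,x_2)\in R_\pi$ with $\rho(x_1,x_2)<\delta$ there is in general no way to choose lifts $z_i\in(\pi')^{-1}(x_i)$ that are uniformly close, so equicontinuity of $\phi$ cannot be invoked. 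One must argue differently, e.g.\ via the relative regionally proximal relation and the identity $(\pi'\times\pi')(Q_\phi)=Q_\pi$ (available here because these extensions are distal, hence satisfy the Bronstein condition), or by transporting the $T$-invariant fibre metric of $Z$ down to $X$ using the group action. Likewise, averaging the metric over Haar measure of $K$ produces a $K$-invariant but not $T$-invariant metric; the standard proof that a compact group extension is an equicontinuous extension instead compares $\sup_z\rho(z,zk)$ with $\inf_z\rho(z,zk)$ and again uses minimality.

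For the hard directions you only state a plan: build the regularizer $Z$ inside a (possibly transfinite) relative self-product of $X$ over $Y$, then prove that for the resulting regular distal extension the group ${\rm Aut}_\phi(Z,T)$ admits a group topology making the right action jointly continuous and transitive on fibres. That last clause \emph{is} the theorem: producing the topological group $K$ with a continuous action (in the Ellis-group picture, the $\tau$-topology on the relevant quotient of the Galois group, whose separation properties and the continuity of its action are exactly the delicate points) is the entire content, and your proposal defers it wholesale to ``the structure theory.'' So the crux is correctly located but not supplied. Since the paper itself treats this as a citation, the honest course is to do the same rather than to sketch a reproof.
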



\subsubsection{Ideas of the proof of Theorem \ref{thm-distal}}
Recall that we put $\G=\langle T^\infty,\sigma \rangle$. Now we show that for $\A=\{p\}$ with $p(n)=n^2$, $(M_\infty(X,\A), \G)$ is an $M$-system. The proof for the general case follows by the same arguments. 
Moreover, it reveals the ideas when a distal
extension is replaced by a RIC extension in Section \ref{Section-Z2-recurrence}.

\medskip

Let $(X,T)$ be a minimal distal t.d.s.  and let $\pi:X\rightarrow X_\infty$ be the factor map from $X$ to its maximal $\infty$-step pro-nilfactor $X_\infty$. Let $\A=\{p\}$ with $p(n)=n^2$. It is clear 
$M_\infty(X,\A)\subseteq X^{\Z}$. Since $(X,T)$ is distal, $\pi$ is open. By Theorem \ref{thm-poly-saturate}-(2), $M_\infty(X,\A)$ is ${\pi}^\infty$-saturated, that is, $M_\infty(X,\A)=({\pi}^\infty)^{-1}(M_\infty(X_\infty,\A)).$

\medskip

By Theorem \ref{distal-group}, $\pi$ is a factor of a weak group extension $\phi$, i.e. there are minimal t.d.s. $(Z,T)$ and extensions $\pi':Z\rightarrow X, \phi: Z\rightarrow X_\infty$ such that $\phi=\pi\circ \pi'$ and $\phi$ is a weak group extension.
$$
  \xymatrix{
  X \ar[d]_{\pi} & Z \ar[l]_{\pi'} \ar[dl]^{\phi}      \\
  X_\infty }
  $$
Let $W=(\phi^\infty)^{-1}(M_\infty(X_\infty,\A))\subset Z^{\Z}$. Then $(W,\G)$ is a t.d.s. and it is easy to verify that we have the following commuting diagram.
$$\xymatrix{
 M_\infty(X,\A) \ar[d]_{\pi^\infty} & W \ar[l]^{(\pi')^\infty} \ar[dl]^{\phi^\infty}      \\
  M_\infty(X_\infty,\A) }
  $$

To show that $\G$-minimal points are dense in $(M_\infty(X,\A),\G)$, it suffices to show that $\G$-minimal points are dense in $(W,\G)$. For this purpose, we will show that for each  point ${\bf y}\in M_\infty(X_\infty,\A)$, and each ${\bf x}\in (\phi^\infty)^{-1}({\bf y})$, there is a sequence $\{{\bf x}_{k}\}_{k=1}^\infty \subseteq W$ such that each ${\bf x}_k$ is $\G$-minimal and ${\bf x}_k\rightarrow {\bf x}, k\to\infty$.

\medskip

Now fix a point ${\bf y}=(y_i)_{i=-\infty}^\infty \in M_\infty(X_\infty, \A)$ and ${\bf x}=(x_i)_{i=-\infty}^\infty \in (\phi^\infty)^{-1}({\bf y})$. Since ${\bf y}$ is $\G$-minimal by Theorem \ref{thm-nil-ergodic}, there exists a $\G$-minimal point (Lemma \ref{den-minimal})
$${\bf x}'=(x'_i)_{i=-\infty}^\infty \in (\phi^\infty)^{-1}({\bf y}).$$
As $\phi$ is a weak group extension, there exists a
topological group $K$ acts continuously on $Z$ from the right, and the fibers of $\phi$ are the $K$-orbits in $Z$, i.e.
$\phi^{-1}(\{\phi(z)\})=zK$ for any $z\in Z$.
For all $i\in{\Z}$, there is some $k_i\in K$ such that $x_i'k_i=x_i$.
Let $\phi_i: Z\rightarrow Z, z\mapsto zk_i$. Then $\phi_i\in {\rm Aut}_\phi(Z)$ such that ${x}_i=\phi_i(x_i')$. For each $k\in \N$, let
\begin{equation*}
  \begin{split}
    \Phi_k& =(\phi_{-k}\times \phi_{-k+1}\times \cdots \times \phi_k)^\infty\\
    &=\cdots \times (\phi_{-k}\times \phi_{-k+1}\times \cdots \times \phi_k)\times (\phi_{-k}\times \phi_{-k+1}\times \cdots \times \phi_k)\times \cdots.
   \end{split}
\end{equation*}
Then
\begin{equation*}
  \begin{split}
     \Phi_k({\bf x'})&=(\ldots, \phi_k(x_{-k-1}'), {x}_{-k},\ldots,\underset{\bullet}{x_0}, \ldots, {x}_k, \phi_{-k}(x_{k+1}'),\ldots, \phi_k(x_{3k+1}'),\phi_{-k}(x_{3k+2}'),\ldots)\\
     & \rightarrow {\bf x}, \ k\to\infty,
   \end{split}
\end{equation*}

Since  $W=({\phi}^\infty)^{-1}(M_\infty(X_\infty,\A))$,
we have $\Phi_k({\bf x'})\in W$.

It is left to show that $\Phi_k({\bf x'})$ is $\G$-minimal.
For all $k\in \N$, by Lemma \ref{lem-minimal-point}, ${\bf x'}$ is a $\langle T^\infty,\sigma^{2k+1} \rangle$-minimal
point as ${\bf x'}$ is $ \langle T^\infty,\sigma \rangle $-minimal. Note that
$$\sigma^{2k+1}\Phi_k=\Phi_k \sigma^{2k+1},$$
and $T^\infty\Phi_k=\Phi_k T^\infty.$
It follows that $\Phi_k({\bf x'})$ is a $\langle T^\infty,\sigma^{2k+1} \rangle$-minimal point.
Again by Lemma \ref{lem-minimal-point}, $\Phi_k({\bf x'})$ is $\G$-minimal. Thus $(W, \G)$ is an $M$-system, and as its factor, $(M_\infty(X,\A), \G)$ is an $M$-system. This proves Theorem \ref{thm-distal}-(2) for the special case.

\medskip
To show Theorem \ref{thm-distal}-(1) for the special case, we just need to use Theorem \ref{thm-poly-saturate}-(1) and a modification of the above arguments.


\subsection{$M_\infty(X,\A)$ for weakly mixing systems}\

\subsubsection{}

For a weakly mixing minimal t.d.s. $(X,T)$, its $\infty$-step pro-nilfactor $X_\infty$ is trivial and thus by Theorem \ref{thm-poly-saturate} we have

\begin{thm} \label{thm-wm1}
Let $(X,T)$ be minimal and weakly mixing.

\begin{enumerate}
  \item There is a residual subset $X_0$ of $X$ such that for all $x\in X_0$ and each family $\A=\{p_1,\ldots, p_d\}$ satisfying $(\spadesuit)$, we have $W_x^\A=X^s\times (X^{d-s})^{\Z}$ and $(W_x^\A,\sigma)$ is transitive with a transitive point $\xi_x^\A$;

  \item For each family $\A=\{p_1,\ldots, p_d\}$ satisfying $(\spadesuit)$, we have $M_\infty(X,\A)=X^s\times (X^{d-s})^{\Z}$
  with $(M_\infty(X,\A), \G)$ being transitive,  and for each $x\in X$, $\xi_x^\A$ is a transitive point.
\end{enumerate}
\end{thm}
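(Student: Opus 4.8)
The plan is to obtain both statements as immediate specializations of the polynomial saturation theorem (Theorem \ref{thm-poly-saturate}) to the degenerate case in which the maximal $\infty$-step pro-nilfactor collapses to a point. First I would record that, since $(X,T)$ is weakly mixing and minimal, its maximal $\infty$-step pro-nilfactor $X_\infty$ is trivial (a one-point system), exactly as already noted before Corollary \ref{wm-3}; one way to see this is that any pro-nilfactor of $(X,T)$ is simultaneously weakly mixing and distal, hence trivial, so $\RP^{[\infty]}=X\times X$. Consequently the factor map $\pi:X\to X_\infty$ is the map onto a single point, which is trivially open, so by the final sentence of Theorem \ref{thm-poly-saturate} we may take $X^*=X$, $X^*_\infty=X_\infty$ and $\pi^*=\pi$.

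With $\pi^*=\pi$ onto a one-point space, the induced map $\pi^\infty$ sends the whole product space $X^s\times(X^{d-s})^{\Z}$ onto a single point. For part (1), Theorem \ref{thm-poly-saturate}(1) furnishes a residual ($T$-invariant) set $X_0=X_0^*$ such that for every $x\in X_0$ and every $\A$ satisfying $(\spadesuit)$ the set $W_x^\A$ is $\pi^\infty$-saturated, i.e. $W_x^\A=(\pi^\infty)^{-1}\big(\pi^\infty(W_x^\A)\big)$. Since $\pi^\infty(W_x^\A)$ is the unique point of the target and the preimage of that point is the entire space, I would conclude $W_x^\A=X^s\times(X^{d-s})^{\Z}$. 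For part (2), Theorem \ref{thm-poly-saturate}(2) gives $M_\infty(X,\A)=(\pi^\infty)^{-1}\big(M_\infty(X_\infty,\A)\big)$; as $X_\infty$ is a point, $M_\infty(X_\infty,\A)$ is a single point whose preimage is again all of $X^s\times(X^{d-s})^{\Z}$, so $M_\infty(X,\A)=X^s\times(X^{d-s})^{\Z}$.

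It then remains only to record transitivity and the transitive points, which require no new argument: by construction $W_x^\A=\overline{\O}(\xi_x^\A,\sigma)$, so $(W_x^\A,\sigma)$ is transitive with transitive point $\xi_x^\A$, and it was already established in Section \ref{Section-Def-M} that for minimal $(X,T)$ the system $(M_\infty(X,\A),\G)$ is transitive with each $\xi_x^\A$ a transitive point. Combining these facts with the two full-space equalities above yields both assertions. The only point needing genuine care, and hence the main (modest) obstacle, is verifying that the degenerate $X_\infty$ really converts ``$\pi^\infty$-saturated'' into ``equal to the full space''—that is, confirming that the openness hypothesis of Theorem \ref{thm-poly-saturate} genuinely applies to the map onto a point and that the image of a saturated set under $\pi^\infty$ is exactly the unique point of the one-point target. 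All the substantive difficulty has already been absorbed into the saturation theorem, so no further dynamical or combinatorial obstacle remains.
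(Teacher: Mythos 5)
Your proposal is correct and follows essentially the same route as the paper, which likewise derives Theorem \ref{thm-wm1} directly from Theorem \ref{thm-poly-saturate} after observing that the maximal $\infty$-step pro-nilfactor of a weakly mixing minimal system is trivial (so $\pi$ is open, $X^*=X$, and $\pi^\infty$-saturation forces equality with the full product space). The remaining observations about transitivity of $(W_x^\A,\sigma)$ and $(M_\infty(X,\A),\G)$ with transitive point $\xi_x^\A$ are, as you say, already contained in Section \ref{Section-Def-M}.
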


\subsubsection{}

\newpage

For a weakly mixing minimal t.d.s., it is easy to show systems in Theorem \ref{thm-wm1} are $M$-systems.
First we look at a special case when $\A=\{p\}$ with $p(n)=n^2$ which illustrates the ideas how to show that the minimal
points for $\sigma$ (resp. $\G$) are dense.

\medskip
Let $(X,T)$ be a weakly mixing minimal t.d.s. and $\A=\{p\}$ with $p(n)=n^2$.
Then for $x\in X$
$$\xi_x^\A=\w_x^\A=(T^{p(n)}x)_{n\in {\Z}}=(\ldots, T^{p(-1)}x, \underset{\bullet} x, T^{p(1)}x, T^{p(2)}x, \ldots).$$
By Theorem \ref{thm-wm1}, there is residual subset $X_0$ of $X$ such that for all $x\in X_0$,  $(W_x^\A,\sigma)$ is a transitive t.d.s. with a transitive point $\xi_x^\A$ and $(M_\infty(X,\A), \G)$ is a transitive t.d.s.
Since in both cases, the spaces are $X^{\Z}$, it will not be difficult to show minimal points are dense
by the following simple arguments.

For any $x_{-k},x_{-k+1}, \ldots, x_k\in X$, $k\in\N$, it is clear
$$(x_{-k},x_{-k+1},\ldots, x_k)^\infty\triangleq(\ldots, x_{-k},,\ldots, x_k,x_{-k},\ldots, \underset{\bullet} {x_0}, \ldots,x_k, x_{-k},\ldots, x_k,\ldots)$$
is a $\sigma$-periodic point. It follows that $\sigma$-periodic points are dense in $X^{\Z}$. In particular,
for $x\in X_0$, $(T^{p(-k)}x, \ldots, T^{p(-1)}x, x, T^{p(1)}x,\ldots, T^{p(k)}x)^\infty$ is $\sigma$-minimal. Thus, $(W_x^\A,\sigma)$ is an $M$-system.

\medskip
By the minimality of $(X,T)$, for each $k\in \N$, minimal points of $(X^{2k+1},T^{(2k+1)})$ are dense in $X^{2k+1}$ (Lemma \ref{den-minimal}).
Let $x_{-k}, x_{-k+1}, \ldots, x_k\in X$ such that $(x_{-k},\ldots, x_k)$ is $T^{(2k+1)}$-minimal. We claim that $(x_{-k}, \ldots, x_k)^\infty$ is minimal for $\G$. To see this fact, let
$$U=\prod_{i=-\infty}^{-j(2k+1)-k-1} X\times \prod_{i=-j(2k+1)-k}^{-1} U_i\times \underset{\bullet}{U_0} \times  \prod_{i=1}^{j(2k+1)+k} U_i \times \prod_{i=j(2k+1)+k+1}^\infty X\subset X^{\Z}$$ be an open neighborhood of $(x_{-k},\ldots, x_k)^\infty$ with $j\in \N$. Then
$$N_{T^\infty}((x_{-k},\ldots, x_k)^\infty,U)$$ is syndetic in $\Z$ by the minimality of $(x_{-k},\ldots, x_k)$ under $T^{(2k+1)}$. Thus,
$$N_{\langle T^\infty, \sigma^{(2j+1)(2k+1)} \rangle}((x_{-k},\ldots, x_k)^\infty,U)=N_{T^\infty}((x_{-k},\ldots, x_k)^\infty,U)\times \Z$$
is syndetic in $\Z^2$, which implies $N_{\langle T^\infty, \sigma \rangle}((x_{-k},\ldots, x_k)^\infty,U)$ is syndetic in $\Z^2$. Hence,
$(x_{-k},\ldots, x_k)^\infty$ is minimal under $\G$.

Particularly, for each $x\in X$ since $(T^{p(-k)}x,\ldots, T^{p(k)}x)$ is $T^{(2k+1)}$ minimal, we have that
$(T^{p(-k)}x,\ldots, T^{p(k)}x)^\infty\in M_\infty(X,\A)$ is $\G$-minimal.
 Consequently, $(M_\infty(X,\A), \G)$ is an $M$-system.


\begin{rem}
We have the following remarks
\begin{enumerate}
\item 
$M_\infty(X,\{n^2\})$ is not minimal since it contains $\Delta_\infty(X)$ which is invariant under $\langle T^\infty,\sigma \rangle$.
\item If ${\bf x}=(\ldots, x_{-1},x_0,x_1,\ldots)$ is a minimal point of $T^\infty$, $\bf x$ is not necessarily a minimal
point of $\langle T^\infty,\sigma \rangle$, say ${\bf x}=(T^{n^2} x )_{n\in {\Z} }=(\ldots, T^4x, Tx, x,Tx,T^4x,\ldots)$.
\end{enumerate}
\end{rem}

\subsubsection{}

With similar analysis in the example above, we can prove the following result.

\begin{thm} \label{thm-wm2}
Let $(X,T)$ be minimal and weakly mixing.

\begin{enumerate}
  \item There is residual subset $X_0$ of $X$ such that for all $x\in X_0$ and each family $\A=\{p_1,\ldots, p_d\}$ satisfying $(\spadesuit)$, we have $W_x^\A=X^s\times (X^{d-s})^{\Z}$ and $(W_x^\A,\sigma)$ is an $M$-system.

  \item For each family $\A=\{p_1,\ldots, p_d\}$ satisfying $(\spadesuit)$, $M_\infty(X,\A)=X^s\times (X^{d-s})^{\Z}$ and
  $(M_\infty(X,\A), \G)$ is an $M$-system.
\end{enumerate}
\end{thm}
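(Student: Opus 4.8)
The plan is to deduce both parts from Theorem~\ref{thm-wm1}, which already identifies the underlying spaces $W_x^\A$ and $M_\infty(X,\A)$ with the full product $X^s\times (X^{d-s})^{\Z}$ and provides transitivity, together with the following elementary principle about abelian group actions, which is the engine of the whole argument and generalizes Lemma~\ref{den-minimal}(2): if $(Y,G)$ is a minimal $G$-system with $G$ abelian and $H\le G$ is \emph{any} subgroup, then the set of $H$-minimal points is dense in $Y$. Indeed, one picks an $H$-minimal point $y$ (Zorn), and for every $g\in G$ the point $gy$ is again $H$-minimal because $\overline{\O}(gy,H)=g\,\overline{\O}(y,H)$ by commutativity; since $(Y,G)$ is minimal the orbit $Gy$ is dense and consists of $H$-minimal points. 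Because in both parts the ambient space is already the full product and transitive, it then remains only to produce a dense set of minimal points; being an $M$-system follows by definition.

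For part (1) I would fix $x\in X_0$ so that $W_x^\A=X^s\times (X^{d-s})^{\Z}$, recall $\widetilde\sigma=\tau_{\vec a}\times\sigma'$, and approximate an arbitrary target point by a point $\mathbf w=\big((v_1,\dots,v_s),(u_j)_{j\in\Z}\big)$ whose shift coordinates $(u_j)$ are $\sigma'$-periodic of period $q_0$ (chosen large enough to contain the prescribed window and matched to the target there) and whose linear coordinates $(v_1,\dots,v_s)$ form a $\tau_{\vec a}$-minimal point of $X^s$ near the target. Such $\tau_{\vec a}$-minimal points are dense by the principle above applied to $G=\Z^s$ acting on $X^s$ by the full independent product (which is minimal since $(X,T)$ is) and $H=\langle\tau_{\vec a}\rangle$. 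Since $(u_j)$ is $q_0$-periodic, $\widetilde\sigma^{q_0}=\tau_{\vec a}^{q_0}\times\mathrm{id}$ on $\mathbf w$, so $\overline{\O}(\mathbf w,\widetilde\sigma^{q_0})=\overline{\O}((v_1,\dots,v_s),\tau_{\vec a}^{q_0})\times\{(u_j)\}$; as $(v_i)$ is $\tau_{\vec a}$-minimal it is $\tau_{\vec a}^{q_0}$-minimal (Lemma~\ref{lem-minimal-point}, finite index), hence $\mathbf w$ is $\widetilde\sigma^{q_0}$-minimal and therefore $\widetilde\sigma$-minimal (Lemma~\ref{lem-minimal-point} again). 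These points are dense, proving (1).

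For part (2), given a nonempty open $U\subseteq M_\infty(X,\A)=X^s\times (X^{d-s})^{\Z}$ meeting a window $|j|\le k$, I would set $q_0=2k+1$, $m=s+(d-s)(2k+1)$, and let $S=\tau_{\vec a}^{q_0}\times\mathrm{id}$ act on $X^m=X^s\times (X^{d-s})^{q_0}$ alongside $T^{(m)}$. The periodization map $\Pi\colon X^m\to X^s\times (X^{d-s})^{\Z}$ sending a core to its $q_0$-periodic extension is a homeomorphism onto the $q_0$-periodic points that conjugates $\langle T^{(m)},S\rangle$ to $H=\langle T^\infty,\widetilde\sigma^{q_0}\rangle$ (using $(\sigma')^{q_0}=\mathrm{id}$ there). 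Thus a $\langle T^{(m)},S\rangle$-minimal core yields an $H$-minimal point $\mathbf w=\Pi(\text{core})$, which is $\G$-minimal because $H$ has finite index in $\G=\langle T^\infty,\widetilde\sigma\rangle$ (Lemma~\ref{lem-minimal-point}). Finally, $\langle T^{(m)},S\rangle$-minimal cores are dense in $X^m$ by the same principle applied to $G=\Z^m$ acting by the full independent product and $H=\langle \mathbf 1_m,\mathbf b\rangle$ with $\mathbf b=(a_1q_0,\dots,a_sq_0,0,\dots,0)$, whose associated product-power action is exactly $\langle T^{(m)},S\rangle$. Choosing such a core near the target and periodizing produces a $\G$-minimal point in $U$.

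The main obstacle, and the only genuine difference from the special case $\A=\{n^2\}$ treated above, is that once $\A$ has linear terms ($s\ge 1$) the periodized point $\mathbf w$ is no longer periodic under $\widetilde\sigma$: the factor $\widetilde\sigma^{q_0}$ still moves the linear coordinates by $\tau_{\vec a}^{q_0}\ne\mathrm{id}$. Hence the clean reasoning ``$\sigma^{q_0}$ fixes $\mathbf w$, so $T^\infty$-minimality of the core suffices'' collapses and must be replaced by minimality of the core under the honest $\Z^2$-action $\langle T^{(m)},S\rangle$; supplying such minimal cores densely is precisely what the subgroup principle delivers. I would stress that weak mixing enters only through Theorem~\ref{thm-wm1}, to guarantee that the ambient spaces are the full products, whereas the density of minimal points itself uses nothing beyond the minimality of $(X,T)$.
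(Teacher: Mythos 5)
Your proof is correct and follows essentially the same route as the paper, which presents this theorem only as ``similar analysis'' to the worked example $\A=\{n^2\}$: approximate by points whose shift coordinates are periodic, reduce $\G$-minimality to minimality of the finite core under a syndetic subgroup via Lemma \ref{lem-minimal-point}, and use weak mixing only through Theorem \ref{thm-wm1} to identify the ambient spaces with full products. Your explicit treatment of the case $s\ge 1$ --- replacing $T^{(m)}$-minimality of the core by joint $\langle T^{(m)},S\rangle$-minimality, supplied by the (correct) abelian-subgroup density principle generalizing Lemma \ref{den-minimal}(2) --- is exactly the detail the paper leaves implicit, and you have handled it correctly.
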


By Theorems \ref{Thm-equivalence1} and \ref{thm-wm2}, 
we have

\begin{cor}\label{cor-ThmC-1}
Let $(X,T)$ be a weakly mixing minimal t.d.s.  Then there is a dense $G_\delta$ subset $X_0$ of $X$ such that for each $x\in X_0$, each neighbourhood $U$ of $x$, and for any non-constant integral polynomials $p_1, \ldots, p_d$ with $p_{i}(0)=0$, $i=1,2,\ldots, d$,
$$N_{\{p_1,\ldots,p_d\}}(x,U)=\{n\in\Z: T^{p_1(n)}x\in U, \ldots, T^{p_d(n)}x\in U\}\in \F_{ps}.$$
\end{cor}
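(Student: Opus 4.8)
The plan is to deduce the corollary formally from two ingredients already in hand: the denseness of minimal points established in Theorem \ref{thm-wm2} for weakly mixing systems, and the translation of that density into piecewise syndetic recurrence times given by Theorem \ref{Thm-equivalence2} (the $\Z$-action analogue of the equivalence in Theorem \ref{Thm-equivalence1}). The conceptual point is that the return times of the single point $x$ under the whole polynomial family $\{p_1,\dots,p_d\}$ are encoded exactly as the return times of the transitive point $\xi_x^\A$ to a suitable cylinder set in the shift system $(W_x^\A,\sigma)$, so that $\F_{ps}$-recurrence of $x$ in $X$ becomes the $M$-system property of $W_x^\A$. No fresh dynamical input beyond Theorem \ref{thm-wm2} is required.

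First I would apply Theorem \ref{thm-wm2}-(1): since $(X,T)$ is minimal and weakly mixing, it furnishes a residual, hence dense $G_\delta$, subset $X_0\subseteq X$ such that for every $x\in X_0$ and every family $\A=\{p_1,\dots,p_d\}$ satisfying condition $(\spadesuit)$, the system $(W_x^\A,\sigma)$ is an $M$-system with $\xi_x^\A$ a transitive point. This is precisely statement (1) of Theorem \ref{Thm-equivalence2}, so its statement (2) holds for the same set $X_0$: for each $x\in X_0$, every neighbourhood $U$ of $x$, and arbitrary integral polynomials $p_1,\dots,p_d$ with $p_i(0)=0$, one has $N_{\{p_1,\dots,p_d\}}(x,U)\in\F_{ps}$. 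Restricting to non-constant $p_i$ (as in the corollary) reads off exactly the claimed conclusion.

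If one wishes to see the translation concretely rather than black-boxing Theorem \ref{Thm-equivalence2}, I would recall Proposition \ref{prop-M}: in the $M$-system $(W_x^\A,\sigma)$ the transitive point $\xi_x^\A$ is $\F_{ps}$-recurrent. Writing $\A$ so that $p_i(n)=a_i n$ for $i\le s$ and $\deg p_i\ge 2$ for $i>s$, and taking the cylinder neighbourhood $\widetilde U=U^s\times\big(\prod_{j<0}X^{d-s}\times U^{d-s}\times\prod_{j>0}X^{d-s}\big)$ of $\xi_x^\A$, one checks that $\sigma^n\xi_x^\A\in\widetilde U$ holds exactly when $T^{a_1 n}x,\dots,T^{a_s n}x\in U$ and $T^{p_{s+1}(n)}x,\dots,T^{p_d(n)}x\in U$, i.e. exactly when $n\in N_{\{p_1,\dots,p_d\}}(x,U)$. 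Hence $\F_{ps}$-recurrence of $\xi_x^\A$ at $\widetilde U$ is literally $N_{\{p_1,\dots,p_d\}}(x,U)\in\F_{ps}$.

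The one step that deserves care, rather than being routine, is the passage from an arbitrary family to one satisfying $(\spadesuit)$, which is what forces us to route through the equivalence theorem. A general $\{p_1,\dots,p_d\}$ need not satisfy $(\spadesuit)$, since the shifted polynomials $p_i^{[j]}(n)=p_i(n+j)-p_i(j)$ appearing in $W_x^\A$ may coincide; by Lemma \ref{ww=dem} one selects a sub-family $\A$ fulfilling $(\spadesuit)$ whose shifted copies recover all the original polynomials, and then intersects the corresponding cylinder conditions, exactly as in the proof of Theorem \ref{Thm-equivalence1}. Since this bookkeeping is internal to Theorem \ref{Thm-equivalence2}, the corollary itself requires nothing beyond the two citations above; the substantive difficulty lives entirely upstream, in Theorem \ref{thm-wm2} and ultimately in the saturation theorems together with the block-wise construction of $\sigma$-minimal points for weakly mixing systems.
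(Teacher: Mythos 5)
Your proof is correct and follows essentially the same route as the paper: it combines Theorem \ref{thm-wm2}-(1) (density of $x$ with $(W_x^\A,\sigma)$ an $M$-system in the weakly mixing case) with the equivalence of Theorem \ref{Thm-equivalence2} translating that into $\F_{ps}$-recurrence of the polynomial return times, with the $(\spadesuit)$ reduction via Lemma \ref{ww=dem} absorbed into the equivalence theorem exactly as the paper intends. (The paper's citation of Theorem \ref{Thm-equivalence1} at this point appears to be a slip for Theorem \ref{Thm-equivalence2}, which is the $\Z$-action statement you correctly invoke.)
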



Note that  Corollary \ref{cor-ThmC-1} is the first part of Theorem C. 

\section{The proofs of Theorems C and D}\label{Section-Z-poly-rec}

In this section we finish the proof of Theorem C by showing the last part of Theorem C, and prove Theorem D. 
To finish the proof of Theorem C we need

\subsection{Some lemmas}

\begin{lem}\label{wm-2}
Let $p_1, p_2$ be two essentially distinct integral polynomials of degree $\ge 2$. Then there is a positive constant $L$ depending
on $p_1,p_2$ such that if $k_1,k_2\in \mathbb{Z}$ with $|k_1-k_2|\ge L$ then
the integral polynomials $p_{i_1}(n+k_{j_1})-p_{i_1}(k_{j_1})$ and $p_{i_2}(n+k_{j_2})-p_{i_2}(k_{j_2})$ in one variable $n$  are essentially distinct for each $(i_1,j_1)\neq (i_2,j_2)\in \{1,2\}\times \{1,2\}$.
\end{lem}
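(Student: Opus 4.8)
The plan is to reduce everything to a statement about translates of derivatives, which makes the role of $L$ transparent. The starting observation is that two integral polynomials $f,g$ in the variable $n$ are essentially distinct if and only if their formal derivatives satisfy $f'\neq g'$, because $f-g$ is constant exactly when $(f-g)'\equiv 0$. Writing $q_{i,j}(n)=p_i(n+k_j)-p_i(k_j)$ for the four polynomials under consideration, differentiation removes the constant term $p_i(k_j)$ and yields $q_{i,j}'(n)=p_i'(n+k_j)$. Hence $q_{i_1,j_1}$ and $q_{i_2,j_2}$ are essentially distinct precisely when $p_{i_1}'(n+k_{j_1})\neq p_{i_2}'(n+k_{j_2})$ as polynomials in $n$; equivalently, after the substitution $m=n+k_{j_1}$, precisely when $p_{i_1}'$ is \emph{not} the translate of $p_{i_2}'$ by $c\triangleq k_{j_2}-k_{j_1}$.

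With this reformulation in hand I would split the six unordered pairs into three types. If $i_1=i_2=i$ and $j_1\neq j_2$, an equality would force $p_i'=p_i'(\cdot+c)$ with $c=k_{j_2}-k_{j_1}$; but $\deg p_i'=\deg p_i-1\ge 1$, so $p_i'$ is non-constant and thus invariant under no nonzero translation, whence the equality fails as soon as $k_1\neq k_2$. This is the one place the hypothesis $\deg p_i\ge 2$ is used. If $i_1\neq i_2$ and $j_1=j_2$, an equality forces $p_{i_1}'=p_{i_2}'$, i.e. $p_1-p_2$ is constant, contradicting that $p_1,p_2$ are essentially distinct; so these pairs impose no constraint on $L$.

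The only pairs that genuinely require $L$ to be large are those with $i_1\neq i_2$ and $j_1\neq j_2$, where one compares $p_1'$ with the translate of $p_2'$ by $c=\pm(k_1-k_2)$. If $\deg p_1\neq\deg p_2$, or $\deg p_1=\deg p_2=d$ with the leading coefficients of $p_1'$ and $p_2'$ distinct, then no translate of $p_2'$ can equal $p_1'$ and there is nothing to do. Otherwise $\deg p_1=\deg p_2=d$ and $p_1',p_2'$ share their leading coefficient $b=d\,a_{1,d}=d\,a_{2,d}\neq 0$, where $a_{i,\ell}$ denotes the coefficient of $n^\ell$ in $p_i$. Matching the next coefficient (that of $n^{d-2}$) in $p_1'$ and $p_2'(\cdot+c)$ shows the equality can hold for at most the single value $c^\ast=\dfrac{a_{1,d-1}-a_{2,d-1}}{d\,a_{1,d}}$, so requiring $|k_1-k_2|>|c^\ast|$ excludes it.

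Finally I would take $L$ to be any integer exceeding $\max(1,|c^\ast|)$ (with $c^\ast$ contributing nothing unless the degrees and leading coefficients of $p_1,p_2$ coincide); then $|k_1-k_2|\ge L$ simultaneously forces $k_1\neq k_2$ and $|k_1-k_2|>|c^\ast|$, so all six pairs are essentially distinct. The only mildly delicate bookkeeping is the extraction of the top two coefficients of $p_2'(\cdot+c)$ in the last case to pin down the unique bad translation parameter $c^\ast$; everything else is immediate from the translate reformulation, so I do not expect a real obstacle here.
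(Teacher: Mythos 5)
Your proof is correct. The reduction ``$f,g$ essentially distinct $\iff f'\neq g'$'' is valid (over characteristic zero, $f-g$ is constant exactly when $(f-g)'\equiv 0$, and this applies to integral polynomials even though their coefficients need only be rational), the translation-invariance argument disposes of the pairs with $i_1=i_2$ using only $k_1\neq k_2$ and $\deg p_i'\ge 1$, and in the remaining case your computation of the unique bad translation $c^\ast=(a_{1,d-1}-a_{2,d-1})/(d\,a_{1,d})$ from the $n^{d-2}$ coefficient of $p_2'(\cdot+c)$ is right, so any $L>\max(1,|c^\ast|)$ works.

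The paper's proof reaches the same conclusion by directly comparing the coefficient of $n^{d-1}$ in the untranslated polynomials $p_i(n+k_j)-p_i(k_j)$, with an explicit (and rather generous) closed-form $L=(\tfrac{1}{|a^1_{d_1}|}+\tfrac{1}{|a^2_{d_2}|}+1)(|a^1_{d_1-1}|+|a^2_{d_2-1}|+1)$; the case decomposition is the same as yours, but every case is settled by coefficient extraction rather than by your two structural observations (a non-constant polynomial admits no nonzero period; a translate of $p_2'$ can agree with $p_1'$ for at most one translation parameter). Your packaging makes it transparent that only the mixed case $i_1\neq i_2$, $j_1\neq j_2$ actually constrains $L$, and it isolates the single obstruction $c^\ast$; the paper's version buys an explicit uniform constant with no division, at the cost of redoing essentially the same second-coefficient computation in each case. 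Both are complete proofs of the stated lemma, which only asserts the existence of some $L$.
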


\begin{proof} Let $$p_1(n)=\sum_{i=0}^{d_1} a^1_i n^i \text{ and }p_2(n)=\sum_{i=0}^{d_2} a^2_i n^i$$
with $d_1,d_2\ge 2$ and $a^1_{d_1}\neq 0,a^2_{d_2}\neq 0$. Put
$L=(\frac{1}{|a^1_{d_1}|}+\frac{1}{|a^2_{d_2}|}+1)\cdot(|a^1_{d_1-1}|+|a^2_{d_2-1}|+1).$
Then $0<L<\infty$.

Fix $k_1,k_2\in \mathbb{Z}$ with $|k_1-k_2|\ge L$.
Let $(i_1,j_1)\neq (i_2,j_2)\in \{1,2\}\times \{1,2\}$. There are three cases:

\medskip
\noindent{\bf Case 1.} $i_1=i_2=1$. In this case $j_1\neq j_2$. Note that the coefficient of $n^{d_1-1}$ in the polynomial $p_{i_1}(n+k_{j_1})-p_{i_1}(k_{j_1})$ is $a^1_{d_1}d_1 k_{j_1}+a^1_{d_1-1}$, and  the coefficient of $n^{d_1-1}$ in the polynomial $p_{i_2}(n+k_{j_2})-p_{i_2}(k_{j_2})$ is
$a^1_{d_1}d_1 k_{j_2}+a^1_{d_1-1}$. Thus since $|k_1-k_2|\ge L$ and $a^1_{d_1}\neq 0$, one has that $a^1_{d_1}d_1 k_{j_1}+a^1_{d_1-1}\neq a^1_{d_1}d_1 k_{j_2}+a^1_{d_1-1}$ and so
$p_{i_1}(n+k_{j_1})-p_{i_1}(k_{j_1})$ and $p_{i_2}(n+k_{j_2})-p_{i_2}(k_{j_2})$ in one variable $n$  are essentially distinct.

\medskip
\noindent{\bf Case 2.} $i_1=i_2=2$. In this case $j_1\neq j_2$. By the similar arguing as Case 1,
$p_{i_1}(n+k_{j_1})-p_{i_1}(k_{j_1})$ and $p_{i_2}(n+k_{j_2})-p_{i_2}(k_{j_2})$ in one variable $n$  are essentially distinct.

\medskip
\noindent{\bf Case 3.} $i_1\neq i_2$. In this case, when $j_1=j_2$ one has that $k_{j_1}=k_{j_2}$, and so the polynomials $p_{i_1}(n+k_{j_1})-p_{i_1}(k_{j_1})$ and $p_{i_2}(n+k_{j_2})-p_{i_2}(k_{j_2})$ in one variable $n$  are essentially distinct  since $p_{i_1}$ and $p_{i_2}$ are essentially distinct.

Now consider the case when $j_1\neq j_2$. If $\deg(p_1)\neq \deg(p_2)$, then the degrees of $p_{i_1}(n+k_{j_1})-p_{i_1}(k_{j_1})$ and $p_{i_2}(n+k_{j_2})-p_{i_2}(k_{j_2})$ in one variable $n$  are  distinct. Hence they are essentially distinct.

If $\deg(p_1)=\deg(p_2)$, then let $d_1=d_2=d\ge 2$. Note that the coefficient of $n^{d}$ in the polynomial $p_{i_1}(n+k_{j_1})-p_{i_1}(k_{j_1})$ is
$a^{i_1}_{d_{i_1}}$, and  the coefficient of $n^{d}$ in the polynomial $p_{i_2}(n+k_{j_2})-p_{i_2}(k_{j_2})$ is
$a^{i_2}_{d_{i_2}}$. Thus when $a_{d_1}^1\neq a_{d_2}^2$,   the polynomials  $p_{i_1}(n+k_{j_1})-p_{i_1}(k_{j_1})$ and $p_{i_2}(n+k_{j_2})-p_{i_2}(k_{j_2})$ in one variable $n$  are essentially distinct.

When $a_{d_1}^1= a_{d_2}^2$, let $a_{d_1}^1= a_{d_2}^2=a$. Note that the coefficient of $n^{d-1}$ in the polynomial $p_{i_1}(n+k_{j_1})-p_{i_1}(k_{j_1})$ is
$a^{i_1}_{d_{i_1}} d_{i_1} k_{j_1}+a^{i_1}_{d_{i_1}-1}=adk_{j_1}+a^{i_1}_{d_{i_1}-1}$, and  the coefficient of $n^{d-1}$ in the polynomial $p_{i_2}(n+k_{j_2})-p_{i_2}(k_{j_2})$ is
$a^{i_2}_{d_{i_2}} d_{i_2} k_{j_2}+a^{i_2}_{d_{i_2}-1}=a d k_{j_2}+a^{i_2}_{d_{i_2}-1}$.  Thus since $|k_1-k_2|\ge L$ and $i_1\neq i_2$ , one has
\begin{equation*}
  \begin{split}
     &\quad |(a^{i_1}_{d_{i_1}}d_{i_1} k_{j_1}+a^{i_1}_{d_{i_1}-1})-(a^{i_2}_{d_{i_2}} d_{i_2} k_{j_2}+a^{i_2}_{d_{i_2}-1})|\\
&\ge |a|d L-(|a^1_{d_1-1}|+|a^2_{d_2-1}|)\\
&\ge (|a^1_{d_1-1}|+|a^2_{d_2-1}|+1)-(|a^1_{d_1-1}|+|a^2_{d_2-1}|)\\
&=1.
   \end{split}
\end{equation*}
This implies that $a^{i_1}_{d_{i_1}}d_{i_1} k_{j_1}+a^{i_1}_{d_{i_1}-1}\neq a^{i_2}_{d_{i_2}} d_{i_2}k_{j_2}+a^{i_2}_{d_{i_2}-1}$ and so the polynomials $p_{i_1}(n+k_{j_1})-p_{i_1}(k_{j_1})$ and $p_{i_2}(n+k_{j_2})-p_{i_2}(k_{j_2})$ in one variable $n$  are essentially distinct.
\end{proof}

For $n<m$, we denote by $[n,m]$ the interval $\{n,n+1,\ldots, m\}$ of $\Z$. By the definition, it is easy to verify the following lemma.

\begin{lem}\label{lem-pw}
Let $A\subseteq \Z^2$ a piecewise syndetic subset in $\Z^2$ and $B\subseteq \Z^2$. If for any $M\in \N$, there is some ${\bf n}_M\in \Z^2$ such that
$${\bf n}_M+(A\cap [-M,M]^2)\subseteq B,$$
then $B$ is also a piecewise syndetic subset in $\Z^2$.
\end{lem}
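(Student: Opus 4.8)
The plan is to work directly from the definition of piecewise syndeticity and to show that the \emph{same} finite set witnessing that $A$ is piecewise syndetic also witnesses it for $B$. Since $A$ is piecewise syndetic in $\Z^2$, fix a finite set $F\subseteq\Z^2$ such that $T:=\bigcup_{i\in F}(A-i)$ is thick, and fix $R\in\N$ with $F\subseteq[-R,R]^2$. I claim that $\bigcup_{i\in F}(B-i)$ is thick as well, which gives the conclusion. To establish thickness it suffices to produce, for each $K\in\N$, a translate of the box $[-K,K]^2$ contained in $\bigcup_{i\in F}(B-i)$.

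First I would use thickness of $T$: given $K$, there is ${\bf t}\in\Z^2$ with $[-K,K]^2+{\bf t}\subseteq T$. Writing $Q=[-K,K]^2+{\bf t}$, note that $Q+F\subseteq[-K-R,K+R]^2+{\bf t}$ is a fixed finite set, so I may \emph{then} choose $M\in\N$ large enough that $Q+F\subseteq[-M,M]^2$ (for instance $M=K+R+\|{\bf t}\|_\infty$). Only at this stage do I invoke the hypothesis, obtaining ${\bf n}_M\in\Z^2$ with ${\bf n}_M+(A\cap[-M,M]^2)\subseteq B$.

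Next I would run the pointwise check. For any ${\bf x}\in Q$ we have ${\bf x}\in T$, so there is $i\in F$ with ${\bf x}+i\in A$; moreover ${\bf x}+i\in Q+F\subseteq[-M,M]^2$, hence ${\bf x}+i\in A\cap[-M,M]^2$ and therefore ${\bf n}_M+{\bf x}+i\in B$, i.e.\ ${\bf n}_M+{\bf x}\in B-i\subseteq\bigcup_{j\in F}(B-j)$. Thus ${\bf n}_M+Q=[-K,K]^2+({\bf n}_M+{\bf t})$ is a translate of $[-K,K]^2$ lying in $\bigcup_{j\in F}(B-j)$. As $K$ was arbitrary, $\bigcup_{j\in F}(B-j)$ is thick and $B$ is piecewise syndetic.

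The only genuine subtlety is that thickness of $T$ locates the box $Q$ at an uncontrolled position ${\bf t}$, while $A$ is truncated to the origin-centred window $[-M,M]^2$; a naive attempt with $M$ fixed in advance would fail precisely because the located box may lie far outside the window. This is exactly why the hypothesis is quantified over \emph{all} $M$ with an $M$-dependent shift ${\bf n}_M$: I can defer the choice of $M$ until after ${\bf t}$ is known and take $M$ large enough to swallow the located box together with its $F$-translates. Everything else is routine bookkeeping about boxes and finite shifts.
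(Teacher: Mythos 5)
Your proof is correct, and it is exactly the direct verification from the definition that the paper has in mind (the paper omits the proof entirely, saying only that the lemma "is easy to verify by the definition"). The one point of substance — deferring the choice of $M$ until after the location ${\bf t}$ of the thick box is known, so that $Q+F$ fits inside $[-M,M]^2$ — is handled correctly.
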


\subsection{The proof of Theorem C}\ 
\medskip



In last section, we have proved the first and second part of Theorem C.
Now we finish the proof of Theorem C by proving the last part of it.  Let $(X,T)$ be minimal and $\pi:X\lra X_\infty$ be the factor map. By the discussion in
Section \ref{Section-nil}, we know that the theorem holds for $\infty$-step pro-nilsystems. The idea of our proof is that
we lift the property from $X_\infty$ to $X$ by using the saturation theorem for polynomials when $\pi$ is open. If $\pi$ is not open
then we can modify it by almost one to one extensions. Note that in the middle of the proof we will explain the idea of the proof
for some special case.

\begin{proof}[The proof of the last part of Theorem C] By Theorem \ref{thm-GHSWY}, if $\pi:X\rightarrow X_\infty$ is the factor map to $X_\infty$, then there are minimal t.d.s. $X^*$ and $X_\infty^*$ which are almost one to one extensions of $X$ and $X_\infty$ respectively, and a commuting diagram below such that $X_\infty^*$ is a
$k$-step topological characteristic factor of $X^*$ for all $k\ge 2$.
\[
\begin{CD}
X @<{\varsigma^*}<< X^*\\
@VV{\pi}V      @VV{\pi^*}V\\
X_\infty @<{\varsigma}<< X_\infty^*
\end{CD}
\]

If the result holds for $X^*$, then it also holds for $X$ since $\varsigma^*$ is almost one to one. So without loss of generality, we may assume that $X=X^*$. That is, we may assume the following diagram
$$
\xymatrix@R=0.5cm{
  X \ar[dd]_{\pi} \ar[dr]^{\pi^*}             \\
                & X^*_\infty \ar[dl]_{\varsigma}         \\
  X_\infty                 }
$$
where $\pi^*$ is open and $\varsigma$ is almost one to one.
Let $\Omega=\{x\in X^*_\infty: \varsigma^{-1}(\varsigma(x))=\{x\}\}.$
Then $\Omega$ is a dense $G_\d$ subset of $X^*_\infty$. Let
$$X_0=X_{pol}\cap (\pi^*)^{-1}(\Omega),$$
where $X_{pol}$ is defined in \eqref{cont-poly}.
Then $X_0$ is a dense $G_\d$ subset of $X$, and we will show it is what we are looking for, that is, for essentially distinct integral polynomials  $p_1,\ldots, p_d$
with $p_i(0)=0$ and $\deg (p_i)\ge 2$, $1\le i\le d$,
for each $x\in X_0$ and each neighbourhood $U$ of $x$
$$N_{\{p_1,\ldots,p_d\}}(x,U)=\{n\in\Z: T^{p_1(n)}x\in U, \ldots, T^{p_d(n)}x\in U\}\in \F_{ps}.$$

\medskip
{\em To make the proof easier to understand, we first outline the proof when $d=1$ and $p_1(n)=n^2$.}

\medskip
In this case let $x\in X_0$, $y=\pi^*(x)$ and $V=\pi^*(U)$ which is an open subset of $X_\infty^*$, where $U$ is an open neighborhood of $x$. By Theorem~ \ref{thm-Leibman-poly-nil}
and some discussions we get that
$$\{n\in\Z: T^{n^2}y\in V\}=\{\ldots<s_{-1}<s_0<s_1<\ldots\}$$ is syndetic. It is clear that $T^{-s_j^2}U\cap (\pi^*)^{-1}(y)\not=\emptyset$
for each $j\in\Z$.

Let $k\in \N$. 
We will show that there is $m=m(k)\in\Z$ such that
$$m+s_{-k},\ldots, m+s_k\in \{n\in\Z: T^{n^2}x\in U\}$$ which implies that $\{n\in\Z: T^{n^2}x\in U\}\in \F_{ps}.$

Let $q_i(n)=n^2+2s_in$, $n\in\Z, -k\le i\le k$. Then $q_{-k},\ldots,q_k$ are pairwise distinct polynomials. Applying Theorem \ref{thm-poly-sat1} to $q_{-k},\ldots,q_k$ and the open sets
$T^{-s_{-k}^2}U,\ldots, T^{-s_k^2}U$ we get that there is $m\in\Z$ such that  $T^{q_j(m)}x\in T^{-s_j^2}U$, i.e.
$T^{(m+s_j)^2}x\in U$ for each $-k\le j\le k$.
{\em This finishes the proof for the special case.}

\medskip

Now come back to the proof for the general case.
By Lemma \ref{wm-2},  there is some $L\in \mathbb{N}$ such that if $k_1,k_2\in \mathbb{Z}$ with $|k_1-k_2|\ge L$, then for any $(i_1,j_1)\neq (i_2,j_2)\in \{1,2,\ldots, d\}\times \{1,2\}$, the polynomials  $p_{i_1}(n+k_{j_1})-p_{i_1}(k_{j_1})$ and $p_{i_2}(n+k_{j_2})-p_{i_2}(k_{j_2})$ in one variable $n$  are essentially distinct.

\medskip

Let $z=\pi^*(x)$. Since $x\in X_0$, $z\in \Omega$. Given a neighbourhood $U$ of $x$, let $\ep>0$ such that $B_{2\ep}(x)\subseteq U$,  and let $\widetilde{U}=B_\ep(x)\subseteq U$. Note $\pi^*(\widetilde{U})$ is an open neighbourhood of $z$ as $\pi^*$ is open. Let $z_0=\varsigma(z)$. Since $z\in \Omega$, there exists an open neighbourhood
$W$ of $z_0$ in $X_\infty$ such that $\varsigma^{-1}(W)\subseteq  \pi^*(\widetilde{U})$. Thus
\begin{align}\label{lift-recu-eq-1}
\begin{aligned}
F& \triangleq N_{\{p_1,\ldots,p_d\}}(z,\pi^*(\widetilde{U}))=\{n\in \mathbb{Z}: T^{p_1(n)}z, \ldots , T^{p_d(n)}z \in \pi^*(\widetilde{U})\}\\
&\supseteq N_{\{p_1,\ldots,p_d\}}(z_0,W)=\{n\in \mathbb{Z}: T^{p_1(n)}z_0, \ldots , T^{p_d(n)}z_0 \in W\}.
\end{aligned}
\end{align}

Note that  the minimal $\infty$-step
pro-nilsystem $X_\infty$ is inverse limit of minimal nilsystems. Let $(X_\infty,T)=\displaystyle
\lim_{\longleftarrow}(X_r,T)_{r\in \N}$, where each $(X_r,T)$ is minimal nilsystem and $\pi_{\infty,r}:(X_\infty,T)\rightarrow (X_r,T)$ is the corresponding factor map.
Since
 $$\pi_{\infty,1}^{-1}\big(\pi_{\infty,1}(z_0)\big)\supseteq \pi_{\infty,2}^{-1}\big(\pi_{\infty,2}(z_0)\big)\supseteq \cdots \supseteq \pi_{\infty,r}^{-1}\big(\pi_{\infty,r}(z_0)\big)\supseteq \cdots$$  and
 $$\bigcap_{r=1}^\infty \pi_{\infty,r}^{-1}\big(\pi_{\infty,r}(z_0)\big)=\{z_0\},$$
 we can find $r_*\in \mathbb{N}$ such that $\pi_{\infty,r_*}^{-1}\big(\pi_{\infty,r_*}(z_0)\big)\subseteq W$.
 Let $z_*=\pi_{\infty,r_*}(z_0)$.  Note that $\pi_{\infty,r_*}$ is an open map, we can find an open neighborhood $W_*$ of $z_*$ in $X_{r_*}$ such that $\pi_{\infty,r_*}^{-1}(W_*)\subseteq W$.
 Hence
 \begin{equation}\label{lift-recu-eq-2}
 N_{\{p_1,\ldots,p_d\}}(z_0,W)\supseteq  N_{\{p_1,\ldots,p_d\}}(z_*,W_*)=\{n\in \mathbb{Z}: T^{p_1(n)}z_*, \ldots , T^{p_d(n)}z_* \in W_*\}.
 \end{equation}

Since $(X_{r_*},T)$ is a minimal nilsystem, there exist a nilpotent lie group $G$, a discrete
cocompact subgroup $\Gamma$ of $G$ and $a\in G$ such that $X_{r_*}=G/\Gamma$ and $T(h\Gamma)=ah\Gamma$ for any $h\in G$.
Let $\widetilde{G}=G\times G\times \cdots \times G$ ($d$-times),
 $\widetilde{\Gamma}=\Gamma\times \Gamma\times \cdots \times \Gamma$ ($d$-times) and $g: \mathbb{Z}\rightarrow \widetilde{G}$ with $g(n)=(a^{p_1(n)},\cdots,a^{p_d(n)})$ for any $n\in \mathbb{Z}$. Then $g: \mathbb{Z}\rightarrow \widetilde{G}$ is a polynimal.
 Let $$\widetilde{z_*}=(z_*,z_*,\cdots,z_*)\in \widetilde{G}/\widetilde{\Gamma}=G/\Gamma\times G/\Gamma\times \cdots\times G/\Gamma.$$
 Thus by Theorem  \ref{thm-Leibman-poly-nil},
 we have a continuous surjective map $\eta: \widehat{X}\rightarrow \widetilde{G}/\widetilde{\Gamma}$ of a `larger' nil-manifold $\widehat{X}=\widehat{G}/\widehat{\Gamma}$, a homomorphism $\phi: \mathbb{Z} \rightarrow \widehat{G}$ and $\hat{x}\in \widehat{X}$ such that
\begin{equation}\label{eeeee-eq-1}
g(n)\widetilde{z_*}=\eta(\phi(n)\hat{x}), \ \forall  n\in \mathbb{Z}.
\end{equation}
 Take $\hat{W}=\eta^{-1}(W_*\times W_*\times \cdots \times W_*)$, $\hat{a}=\phi(1)\in \hat{G}$, and $R(\hat{h}\hat{\Gamma})=\hat{a}\hat{h}\hat{\Gamma}$ for $\hat{h}\in \hat{G}$. Then $(\widehat{X},R)$ is nilsystem, $\hat{W}$ is an open neighbourhood of $\hat{x}$ in $\widehat{X}$,  and
 \begin{align*}
 N_{\{p_1,\ldots,p_d\}}(z_*,W_*)&=\{n\in \mathbb{Z}: (T^{p_1(n)}z_*, \ldots , T^{p_d(n)}z_*) \in W_*\times \cdots\times W_*\}\\
 &=\{n\in \mathbb{Z}: g(n)\widetilde{z_*}\in W_*\times W_*\times \cdots \times W_*\}\\
 &\overset{\eqref{eeeee-eq-1}}=\{n\in \mathbb{Z}: \phi(n)\hat{x}\in \eta^{-1}(W_*\times W_*\times \cdots \times W_*)\}\\
 &=\{n\in \mathbb{Z}: R^n\hat{x}\in \hat{W}\}\\
 &=N_R(\hat{x},\hat{W}).
 \end{align*}
Combining this with \eqref{lift-recu-eq-1} and \eqref{lift-recu-eq-2}, one has
$$F=N_{\{p_1,\ldots,p_d\}}(z,\pi^*(\widetilde{U}))\supseteq N_R(\hat{x},\hat{W}).$$
Hence $F\in \F_s$ as $N_R(\hat{x},\hat{W})$ is syndetic.
Let $$F=\{n'_j\}_{j\in \mathbb{Z}}=\{\ldots <n'_{-2}<n'_{-1}<n'_0<n'_1<n'_2<\ldots\}.$$
We take $n_j=n'_{Lj}$  for $j\in \mathbb{Z}$.
Then $\{n_{j}\}_{j\in\mathbb{Z}}\in \F_s$ and  $n_{i+1}-n_{i}\ge L, \forall i\in \mathbb{Z}$.

\medskip

To finish the proof we now show that for each $l\in \mathbb{N}$, there is some $m\in \mathbb{Z}$ such that $m+\{n_j\}_{-l\le j\le l}\subseteq N_{\{p_1,\ldots, p_d\}}(x,U)$ and hence $N_{\{p_1,\ldots, p_d\}}(x,U)\in \F_{ps}$.

Fix $l\in \N$. Let $\A'=\{p_i(n+n_j): -l\le j\le l, 1\le i\le d\}$. Then $\A'$ is a set of essentially distinct integral polynomials, since $n_{i+1}-n_{i}\ge L$. By Theorem \ref{thm-poly-sat1},
 $\overline{\O}_{\A'}(x^{\otimes d'})$ is ${\pi^*}^{(d')}$-saturated, that is,
$$\overline{\O}_{\A'}(x^{\otimes d'})=({\pi^*}^{(d')})^{-1}\Big({\pi^*}^{(d')}(\overline{\O}_{\A'}(x^{\otimes d'}))\Big),$$
where $d'=d(2l+1)$. One has that
$$F_{\A'}(x)\cap U^{d(2l+1)}\neq \emptyset.$$
In particular, there is some $m\in \Z$ such that
$$\Big(T^{p_i(m+n_j)}(x)\Big)_{1\le i\le d, -l\le j\le l}\in {U}^{d(2l+1)}.$$
Thus $$m+\{n_j\}_{-l\le j\le l}=\{m+n_j\}_{-l\le j\le l}\subseteq N_{\{p_1,\ldots,p_d\}}(x,U).$$
The proof is complete.
\end{proof}


By Theorem \ref{Thm-equivalence2}, we have the following corollary of Theorem C:

\begin{cor}\label{}
Let $(X,T)$ be a minimal t.d.s.
There is a dense $G_\d$ subset $X_0$ of $X$ such that for each $x\in X_0$ and for any family $\A=\{p_1, p_2,\cdots, p_d\}$ of integral polynomials satisfying $(\spadesuit)$ and $\deg p_i\ge 2, 1\le i\le d$,  $(W_x^\A,\sigma)$ is an M-system and $\xi_x^\A$ is a transitive point.
\end{cor}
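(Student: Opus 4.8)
The plan is to read this off from the last part of Theorem C via Theorem \ref{Thm-equivalence2}, exactly as the placement of the corollary suggests. Concretely, the last part of Theorem C (just proved) furnishes a dense $G_\d$ subset $X_0$ of $X$ with the following property: for every family of essentially distinct non-constant integral polynomials $p_1,\ldots,p_d$ with $p_i(0)=0$ and $\deg p_i\ge 2$, and for every $x\in X_0$ and every neighbourhood $U$ of $x$, one has $N_{\{p_1,\ldots,p_d\}}(x,U)\in\F_{ps}$. This is precisely statement (2) of Theorem \ref{Thm-equivalence2}, but restricted to the subclass of families whose members all have degree at least $2$.

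Next I would run the implication (2)$\Rightarrow$(1) of Theorem \ref{Thm-equivalence2} and observe that, for a family $\A=\{p_1,\ldots,p_d\}$ satisfying $(\spadesuit)$ with every $\deg p_i\ge 2$, the degree restriction is preserved throughout that argument. Indeed, in this situation $s=0$, so $\xi_x^\A=\w_x^\A\in(X^d)^\Z$ and $W_x^\A=\overline{\O}(\xi_x^\A,\sigma)$ is transitive with transitive point $\xi_x^\A$ by definition. Fix $x\in X_0$ and a basic neighbourhood $\widetilde U$ of $\xi_x^\A$ constraining the coordinates with index $-k\le j\le k$; choosing a small neighbourhood $U$ of $x$ as in the proof of Theorem \ref{Thm-equivalence1}, the set $\{n\in\Z:\sigma^n\xi_x^\A\in\widetilde U\}$ contains $N_{\widetilde\A}(x,U)$ with $\widetilde\A=\{p_t(n+j)-p_t(j):1\le t\le d,\ -k\le j\le k\}$.

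The decisive point is that condition $(\spadesuit)$, part (3), guarantees that the members of $\widetilde\A$ are pairwise essentially distinct, while each $p_t(n+j)-p_t(j)$ is non-constant of degree $\deg p_t\ge 2$ and vanishes at $0$. Hence $\widetilde\A$ lies in the subclass for which the recurrence statement above is available, so $N_{\widetilde\A}(x,U)\in\F_{ps}$, and therefore $\{n\in\Z:\sigma^n\xi_x^\A\in\widetilde U\}\in\F_{ps}$. Thus $\xi_x^\A$ is $\F_{ps}$-recurrent, and by Proposition \ref{prop-M} the transitive system $(W_x^\A,\sigma)$ is an $M$-system; the transitivity of $\xi_x^\A$ is immediate from $W_x^\A=\overline{\O}(\xi_x^\A,\sigma)$.

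I expect no serious obstacle here, since the substantial work has already been carried out in Theorem C and in Theorem \ref{Thm-equivalence2}. The only thing to verify carefully is that the degree-$\ge 2$ restriction survives the passage to the auxiliary family $\widetilde\A$, which is exactly why $(\spadesuit)$ together with $\deg p_i\ge 2$ forces $s=0$ and keeps all shifted differences of degree $\ge 2$ and essentially distinct; this is what allows the single dense $G_\d$ set $X_0$ from Theorem C to serve uniformly for every admissible $\A$.
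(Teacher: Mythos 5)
Your proposal is correct and follows the same route as the paper, which simply derives the corollary from the last part of Theorem C via the implication (2)$\Rightarrow$(1) of Theorem \ref{Thm-equivalence2}. Your extra care in checking that the auxiliary family $\widetilde\A=\{p_t(n+j)-p_t(j)\}$ stays within the degree-$\ge 2$, essentially-distinct subclass (so that the restricted form of Theorem C suffices, with $s=0$ forcing $\xi_x^\A=\w_x^\A$) is exactly the point the paper leaves implicit when it says the proof of Theorem \ref{Thm-equivalence2} is "similar" to that of Theorem \ref{Thm-equivalence1}.
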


\subsection{The proof of Theorem D}\
\medskip

In this subsection we prove Theorem D.
\subsubsection{A lemma}

\begin{lem}\label{lem-syn}
Let $E$ be a syndetic subset of $\mathbb{Z}^2$. Then there is some $m_*\in \mathbb{Z}$ such that
$$E(m_*):=\{ n\in \mathbb{Z}: (m_*,n)\in E\}\in \F_{ps}.$$
\end{lem}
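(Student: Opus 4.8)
The plan is to use the syndeticity of $E$ to replace the full plane by a bounded strip of rows, and then to invoke the partition regularity of piecewise syndetic sets. Since $E$ is syndetic in $\Z^2$, unwinding the definition produces an $N\in\N$ such that every box $\{a,a+1,\ldots,a+N\}\times\{b,b+1,\ldots,b+N\}$, with $(a,b)\in\Z^2$, meets $E$. I would then restrict attention to the finitely many rows indexed by $m\in\{0,1,\ldots,N\}$ and set $S=\bigcup_{m=0}^{N}E(m)$. The first substantive claim is that $S$ is syndetic in $\Z$: given any $b\in\Z$, applying the box condition with $a=0$ yields a point $(m,n)\in E$ with $0\le m\le N$ and $b\le n\le b+N$, so that $n\in S\cap\{b,b+1,\ldots,b+N\}$. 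Hence every window of length $N+1$ in $\Z$ meets $S$, so $S$ is syndetic, and in particular $S\in\F_{ps}$.

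Next I would exploit that $S$ is the union of the $N+1$ sets $E(0),E(1),\ldots,E(N)$. By the classical partition regularity of piecewise syndetic sets (if a member of $\F_{ps}$ is covered by finitely many sets, then one of these sets already lies in $\F_{ps}$), together with the fact that $\F_{ps}$ is hereditary upward, there is some $m_*\in\{0,1,\ldots,N\}$ with $E(m_*)\in\F_{ps}$. This index $m_*$ is exactly the integer demanded by the statement, which finishes the proof.

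The main obstacle is this last step. A naive density count on a single long interval only locates a row carrying a positive proportion of the points of $S$, and such a row need not possess the bounded gaps that piecewise syndeticity requires; so the conclusion genuinely rests on partition regularity of $\F_{ps}$ rather than on any elementary averaging. In the write-up I would cite this standard fact — for instance through the smallest two-sided ideal $K(\beta\Z)$, where a set is piecewise syndetic precisely when it belongs to some ultrafilter in $K(\beta\Z)$, and primeness of ultrafilters yields the partition regularity — or, if a self-contained treatment is preferred, prove the two-set form $A\cup B\in\F_{ps}\Rightarrow(A\in\F_{ps}\ \text{or}\ B\in\F_{ps})$ by a direct interval-nesting argument and iterate it $N$ times.
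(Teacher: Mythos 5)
Your proof is correct and follows essentially the same route as the paper: both arguments reduce the statement to the partition regularity (Ramsey property) of $\F_{ps}$, cited there as \cite[Theorem 1.24]{F}, applied to a finite cover of a large subset of $\Z$ by the row sets $E(m)$. The only cosmetic difference is that the paper covers all of $\Z$ by the translates $E(i)-j$ with $-L\le i,j\le L$ and then removes the shift using translation invariance of $\F_{ps}$, whereas you observe directly that $\bigcup_{m=0}^{N}E(m)$ is already syndetic, so no translates are needed.
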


\begin{proof}
Since $E$ is $\Z^2$-syndetic, there is some $L\in \N$ such that
\begin{equation}\label{hh}
  E+[-L,L]^2=\Z^2,
\end{equation}
where $[-L,L]=\{-L, -L+1,\ldots,L\}$ is an interval of $\Z$.
That is, for each $(m,n)\in \Z^2$, there exits $(i,j)\in [-L,L]^2$ such that $(m +i, n+j)=(m, n)+(i,j)\in E.$

For each $i \in [-L,L]$, let
$$E(i)\triangleq \{ n\in \mathbb{Z}: (i,n)\in E\}.$$

By \eqref{hh}, for each $n\in \Z$, there is some $(i,j)\in [-L,L]^2$ such that
$$(0,n)+(i,j)\in E.$$
That is, $n+j\in E(i)$. It follows that
$$\Z=\bigcup_{-L\le i,j\le L} (E(i)-j).$$
Since $\F_{ps}$ has the Ramsey property (see \cite[Theorem 1.24]{F}), there are some $m_*,j\in [-L,L]$ such that
$E(m_*)-j\in \F_{ps}$. And hence
$$E(m_*)\in \F_{ps},$$
as $\F_{ps}$ is translation invariant.
\end{proof}

\subsubsection{The proof of Theorem D}
Now we are ready to give the proof of Theorem D. Note that the idea of the proof of D is similar to the one of the last part
of Theorem C. Since linear terms may appear in $\A$, this makes the argument longthy.

\begin{proof}[Proof of Theorem D]
By Theorem \ref{thm-poly-sat1}, if $\pi:X\rightarrow X_\infty$ is the factor map to $X_\infty$, then there are minimal t.d.s. $X^*$ and $X_\infty^*$ which are almost one to one extensions of $X$ and $X_\infty$ respectively, and a commuting diagram below.
\[
\begin{CD}
X @<{\varsigma^*}<< X^*\\
@VV{\pi}V      @VV{\pi^*}V\\
X_\infty @<{\varsigma}<< X_\infty^*
\end{CD}
\]

If the result in Theorem D holds for $X^*$, then it also holds for $X$. So without loss of generality, we may assume that $X=X^*$. That is, we may assume
$$
\xymatrix@R=0.5cm{
  X \ar[dd]_{\pi} \ar[dr]^{\pi^*}             \\
                & X^*_\infty \ar[dl]_{\varsigma}         \\
  X_\infty                 }
$$
where $\pi^*$ is open and $\varsigma$ is almost one to one.
Let $\Omega=\{x\in X^*_\infty: \varsigma^{-1}(\varsigma(x))=\{x\}\}.$
Then $\Omega$ is a dense $G_\d$ subset $X^*_\infty$. Let
$$X_0=X_{pol}\cap (\pi^*)^{-1}(\Omega),$$
where $X_{pol}$ is defined in \eqref{cont-poly}.
Then $X_0$ is a dense $G_\d$ subset of $X$, $T(X_0)=X_0$ and for all $x\in X_0$ statements in Theorem \ref{thm-poly-sat1} still hold.

We will show that for integral polynomials  $p_1,\ldots, p_d$
with $p_i(0)=0$, $1\le i\le d$,
for any non-empty open set $U$, we can find some $x\in X_0\cap U$ such that $$N_{\{p_1,\ldots, p_d\}}(x,U)=\{n\in\Z: T^{p_1(n)}x\in U, \ldots, T^{p_d(n)}x\in U\}\in \F_{ps}.$$
Without loss of generality, we may require that  $p_1,\ldots, p_d$
 are essentially distinct non-constant integral polynomials such that $p_i(0)=0$, $1\le i\le d$.

If $\deg(p_i)\ge 2, \forall 1\le i\le d$, then by the third part of Theorem C it is done. So we need to deal with the case when there are linear polynomials in $p_1,\ldots, p_d$. Let
\begin{align}\label{A-def-eq}
\A=\{p_{1}, \ldots, p_s, p_{s+1}(n),\ldots, p_d(n)\},
\end{align}
where $p_i(n)=a_in$, $1\le i\le s$ and $\deg(p_i)\ge 2, s+1\le i\le d$.  Since $p_1,\ldots, p_d$
are essentially distinct non-constant integral polynomials, $a_1,\ldots, a_s\in \mathbb{Z}\setminus\{0\}$ are distinct.

 Let $U$ be a non-empty open set of $X$. Choose an $\ep>0$ and an non-empty open set $\widetilde{U}$ of $X$ such that $B_{2\ep}(\widetilde{U})\subseteq U$. First we need the following claim whose proof will be given in the next subsection.

\medskip
\noindent{\bf Claim:} \ {\em There is some $x\in \widetilde{U}\cap X_0$ such that
$$N_{\{p_1,\ldots, p_s\}}(x,\widetilde{U})\cap N_{\{p_{s+1},\ldots,p_d\}}(\pi^*(x),\pi^*(\widetilde{U}))\in \F_{ps}.$$
}

By Lemma \ref{wm-2}, there is some $L\in \mathbb{N}$ such that if $k_1,k_2\in \mathbb{Z}$ with $|k_1-k_2|\ge L$, then for any
$(i_1,j_1), (i_2,j_2)\in \{s+1,\ldots, d\}\times \{1,2\}$ with $(i_1,j_1)\neq (i_2,j_2)$, the polynomials  $p_{i_1}(n+k_{j_1})-p_{i_1}(k_{j_1})$ and $p_{i_2}(n+k_{j_2})-p_{i_2}(k_{j_2})$  are essentially distinct.

\medskip

Let $z=\pi^*(x)$, where $x\in \widetilde{U}\cap X_0$ is as in the Claim above. Note $\pi^*(\widetilde{U})$ is an open neighbourhood of $z$ as $\pi^*$ is open.
Let
$$F=N_{\{p_1,\ldots,p_s\}}(x,\widetilde{U})\cap N_{\{p_{s+1},\ldots,p_d\}}(\pi^*(x),\pi^*(\widetilde{U})).$$
By Claim, $F=\{n_j\}_{j\in \mathbb{Z}}=\{\ldots < n_{-2}<n_{-1}<n_0<n_1<n_2<\ldots\}$ is a piecewise syndetic set of $\mathbb{Z}$. By taking a piecewise syndetic subset, we may assume that $|n_{i}-n_j|\ge L, \forall i\neq j\in \mathbb{Z}$.

We show that for each $l\in \N$, there is some $m\in \Z$ such that $m+\{n_j\}_{-l\le j\le l}\subseteq N_\A(x,U)$ and hence $N_\A(x,U)\in \F_{ps}$ by Lemma \ref{lem-pw}, where $\A$ is defined in \eqref{A-def-eq}.

\medskip

Fix $l\in \N$. Since $\{n_j\}_{-l\le j\le l}\subseteq N_{\{p_1,\ldots, p_s\}}(x, \widetilde{U})$, we have
$$x\in \widetilde{U}\cap \bigcap_{j=-l}^{l}\left( T^{-a_1n_j}\widetilde{U}\cap T^{-a_2n_j}\widetilde{U}\cap \ldots \cap T^{-a_sn_j}\widetilde{U} \right).$$
Let $0<\ep'<\ep$ such that
$$x\in B_{2\ep'}(x) \subseteq \widetilde{U}\cap \bigcap_{j=-l}^{l}\left( T^{-a_1n_j}\widetilde{U}\cap T^{-a_2n_j}\widetilde{U}\cap \ldots \cap T^{-a_sn_j}\widetilde{U} \right).$$
Thus we have
\begin{equation}\label{b6}
  T^{a_tn_j}B_{2\ep'}(x)\subseteq \widetilde{U}, \quad 1\le t\le s, -l\le j\le l.
\end{equation}

Let $$\A'=\{p_1, \ldots, p_s\}\cup \{p_i^j: -l\le j\le l, s+1\le i\le d\},$$
where $p_i^j(n)=p_i(n+n_j)$.
Then $\A'$ is a set of essentially distinct integral polynomials. As $x\in X_0$,
by Theorem \ref{thm-poly-sat1} $\overline{\O}_{\A'}(x^{\otimes d'})$ is ${\pi^*}^{(d')}$-saturated, that is,
$$\overline{\O}_{\A'}(x^{\otimes d'})=({\pi^*}^{(d')})^{-1}\Big({\pi^*}^{(d')}(\overline{\O}_{\A'}(x^{\otimes d'}))\Big),$$
where $d'=(d-s)(2l+1)+s$. One has that
$$F_{\A'}(x)\cap \Big (B_{2\ep'}(x)^s\times U^{(d-s)(2l+1)}\Big)\neq \emptyset,$$
where $F_{*}$ is defined in Lemma \ref{cont-points}. In particular, there is some $m\in \Z$ such that
$$\left(T^{a_1m}x,\ldots, T^{a_sm}x, \big(T^{p_i(m+n_j)}(x)\big)_{s+1\le i\le d, -l\le j\le l}\right) \in B_{2\ep'}(x)^s\times U^{(d-s)(2l+1)}.$$
Thus
$$T^{p_i(m+n_j)}x\in  U, \quad s+ 1\le i\le d, -l\le j\le l,$$
and by \eqref{b6},
$$T^{a_t(m+n_j)}x\in T^{a_tn_j}B_{2\ep'}(x)\subseteq \widetilde{U}\subseteq U, \quad 1\le t\le s, -l\le j\le l.$$

Thus $$m+\{n_j\}_{-l\le j\le l}=\{m+n_j\}_{-l\le j\le l}\subseteq N_{\{p_1,\ldots,p_d\}}(x,U).$$
The proof is complete.
\end{proof}

\subsubsection{The proof of the Claim}
\begin{proof}[Proof of Claim]
Let $x_0\in \widetilde{U}\cap X_0$ and $z_0=\pi^*(x_0)$. Since $\pi^*$ is open, $\pi^*(\widetilde{U})$ is an open neighbourhood of $z_0$. By the same arguing in the proof of Theorem~ C (the third part),  we can find a nil-manifold $\widehat{X}=\widehat{G}/\widehat{\Gamma}$, a homomorphism $\phi: \mathbb{Z}^2 \rightarrow \widehat{G}$, $\hat{x}\in \widehat{X}$ and an open neighbourhood $\hat{W}$ of $\hat{x}$ in $\widehat{X}$ such that
 \begin{align}\label{key-eeee-1}
 \begin{aligned}
&\hskip0.5cm \{(m,n)\in \mathbb{Z}^2: T^{m+p_{s+1}(n)}z_0\in \pi^*(\widetilde{U}),\ldots, T^{m+p_d(n)}z_0\in \pi^*(\widetilde{U})\}\\
 &\supseteq \{(m,m)\in \mathbb{Z}^2: \phi(m,n)\hat{x}\in \hat{W}\}.
 \end{aligned}
 \end{align}

We consider a $\mathbb{Z}^2$-action on $\widehat{X}=\widehat{G}/\widehat{\Gamma}$ by
$$g^{(m,n)}(\hat{h}\widehat{\Gamma})=\phi(m,n)\hat{h}\widehat{\Gamma}$$
for any $\hat{h}\in \widehat{G}$ and $(n,m)\in \mathbb{Z}^2$. Then
$(\widehat{X},\{g^{(m,n)}\}_{(m,n)\in \mathbb{Z}^2})$ is a distal $\mathbb{Z}^2$-t.d.s.

Then \eqref{key-eeee-1} can be rewritten as
  \begin{align}\label{key-eeee-2}
 \begin{aligned}
&\hskip0.5cm \{(m,n)\in \mathbb{Z}^2: T^{m+p_{s+1}(n)}z_0\in \pi^*(\widetilde{U}),\ldots, T^{m+p_d(n)}z_0\in \pi^*(\widetilde{U})\}\\
 &\supseteq \{(m,n)\in \mathbb{Z}^2: g^{(m,n)}\hat{x}\in \hat{W}\}.
 \end{aligned}
 \end{align}

Take $a=\max_{1\le j\le s}|a_j|$ and $t=2a+1$. Let
$$\sigma_{t}=T\times \ldots\times T \ (t \ \text{times}),\ \text{and}\ \tau=T^{-a}\times \cdots\times T^{-1}\times \id \times T \times \ldots \times T^{a}.$$
By Theorem \ref{thm-Glasner}, $N_{t}(X)=\overline{\O}(x_0^{\otimes t}, \langle \sigma_{t}, \tau\rangle)$ is a $\langle \sigma_{t}, \tau\rangle$-minimal t.d.s., where $x_0^{\otimes t}=(x_0,\ldots, x_0) \in X^t$.
We define a $\Z^2$-action on $X^{t}\times \widehat{X}$ by
$$\big((x_1,\ldots, x_{t}),\hat{h}\widehat{\Gamma}\big)\mapsto (\sigma_{t}^m\tau^n\times g^{(m,n)})\big((x_1,\ldots, x_{t}),\hat{h}\widehat{\Gamma}\big), \forall (m,n)\in \mathbb{Z}^2.$$
We denote the corresponding $\mathbb{Z}^2$-t.d.s. by $(X^{t}\times \widehat{X},\{\sigma_{t}^m\tau^n\times g^{(m,n)}\}_{(m,n)\in \mathbb{Z}^2} )$.

Since $\hat{x}$ is a $\{g^{(m,n)}\}_{(m,n)\in \mathbb{Z}^2}$-distal point and $x_0^{\otimes t}$ is a $\{\sigma_{t}^m\tau^n\}_{(m,n)\in \mathbb{Z}^2}$-minimal point, one has $(x_0^{\otimes t}, \hat{x})$ is a minimal point of $(X^{t}\times \widehat{X},\{\sigma_{t}^m \tau^n\times g^{(m,n)}\}_{(m,n)\in \mathbb{Z}^2} )$. \footnote{Let $(X,T), (Y,S)$ be t.d.s. If $x\in X$ is distal and $y\in Y$ is minimal, then $(x,y)$ is a $T\times S$-minimal point \cite[Theorem 9.11.]{F}. This also holds for $\mathbb{Z}^2$-t.d.s. and more general group actions \cite{AF}.}

Thus for the open neighborhood $\widetilde{U}\times \cdots \times \widetilde{U}\times \hat{W}$ of $(x_0^{\otimes t}, \hat{x})$,
$$E\triangleq \{(m,n)\in \mathbb{Z}^2: \sigma_{t}^m\tau^n\times g^{(m,n)}\big( x_0^{\otimes t}, \hat{x} \big)\in  \widetilde{U}\times \cdots \times \widetilde{U}\times \hat{W}\}$$
is a syndetic subset of $\mathbb{Z}^2$.
Moreover, by Lemma \ref{lem-syn} we can find $m_*\in \mathbb{Z}$ such that
$$E(m_*):=\{ n\in \mathbb{Z}: (m_*,n)\in E\}\in \F_{ps}.$$

For any $n\in E(m_*)$, one has
$$T^{m_*+jn}(x_0)\in \widetilde{U} \text{ for }j=-a,\cdots,-1,0,1,\cdots,a$$ and
$g^{(m_*,n)}\hat{x}\in \widehat{W}$, which implies
$T^{m_*+p_i(n)}z_0\in \pi^*(\widetilde{U})$ for $i=s+1,s+2,\cdots,d$ by \eqref{key-eeee-2}. In particular, $T^{m_*}x_0\in \widetilde{U}$.

Now put $x=T^{m_*}(x_0)$. Then $x\in \widetilde{U}\cap X_0$ as $X_0$ is $T$-invariant. Note that $\pi^*(x)=T^{m_*}(z_0)$. Then by the above discussion for
any $n\in E(m_*)$, one has that
$$T^{a_jn}x=T^{m_*+a_jn}x_0\in \widetilde{U} \text{ for }j=1,\cdots, s,$$
(that is, $n\in N_{\{p_1,\ldots,p_s\}}(x,\widetilde{U})$) and
$$T^{p_i(n)}\pi^*(x)=T^{m_*+p_i(n)}z_0\in \pi^*(\widetilde{U})\text{ for }i=s+1, s+2, \cdots, d,$$
(that is, $n\in  N_{\{p_1,\ldots,p_d\}}(\pi^*(x),\pi^*(\widetilde{U}))$).
Hence $$E(m_*)\subseteq N_{\{p_1,\ldots,p_s\}}(x,\widetilde{U})\cap N_{\{p_{s+1},\ldots,p_d\}}(\pi^*(x),\pi^*(\widetilde{U}))$$ and so
$$N_{\{p_1,\ldots, p_s\}}(x,\widetilde{U})\cap N_{\{p_{s+1},\ldots, p_d\}}(\pi^*(x),\pi^*(\widetilde{U}))\in \mathcal{F}_{ps}.$$
The proof of the Claim is complete.
\end{proof}

\medskip

\section{The proof of Theorem E}\label{Section-Z2-recurrence}


In this section we will show that for any minimal t.d.s. $(X,T)$,  $(M_\infty({X},\A), \langle T^\infty, \sigma\rangle)$ is an $M$-system. And as a corollary, we give a proof of Theorem E.

\medskip

In this section we will use the regularizer and the universal minimal t.d.s. from the theory of minimal flows, which usually deals with group actions on compact Hausdorff spaces.
First we need some facts about minimal t.d.s. which will be used in the proof.


\subsection{Some facts about minimal t.d.s.}\

\subsubsection{The universal minimal t.d.s.}

Let $\beta\Z$ be the Stone-C\v{e}ch
compactification of $\Z$, which is a compact Hausdorff topological
space where $\Z$ is densely and equivariantly embedded. Moreover,
the addition on $\Z$ can be extended to an addition on $\beta\Z$
in such a way that $\beta\Z$ is a compact semigroup with continuous
right translations. 
Let ${\bf M}$ be a minimal left ideal in $\beta\Z$. Then ${\bf M}$ is a closed semigroup with continuous right translations.
By the Ellis-Namakura Theorem the set $J=J({\bf M})$ of idempotents in ${\bf M}$ is non-empty.
Moreover, $\{v{\bf M}:v \in J \}$ is a partition of ${\bf M}$ and every $v{\bf M}$ is a group with unit element $v$.
Let $(X,T)$ be a t.d.s. and $x\in X$. A necessary and sufficient condition for $x$ to be minimal is that $ux=x$ for some $u\in J$.

Let $(X,T)$ be a t.d.s. and then $\Z$ acts on the compact metric space $X$ as follows:
for any $m\in \Z$ and $x \in X$ one has $mx=T^mx$. Then the sets
$\beta\Z$ and ${\bf M}$ also act on $X$ as semigroups and $\beta\Z
x=\{px:p\in \beta\Z\}=\overline{\O}(x,T)$. If $(X,T)$ is minimal, then
${\bf M}x=\overline{\O}(x,T)=X$ for every $x\in X$. Also, as a t.d.s., $({\bf M}, T)$ is an extension of any minimal t.d.s. $(X,T)$. We call $({\bf M}, T)$
the {\em universal minimal} t.d.s. Up to isomorphisms, the universal minimal t.d.s. is unique \cite[Chapter 8]{Au88}.


\subsubsection{Hyperspace t.d.s.}

When $X$ is not metric, neither is $2^X$. And in this case $2^X$ can be
endowed with the Vietoris topology. A basis for this
topology on $2^X$ is given by
$$\langle U_1,\ldots,U_n\rangle=\{A\in 2^X: A\subseteq \bigcup_{i=1}^n U_i\
\text{and $A\cap U_i\neq \emptyset$ for every $i\in \{
1,\ldots,n\}$}\},$$
where each $U_i \subseteq X$ is open. Note that when $X$ is a metric space, the Hausdorff topology consists with the topology induced by Hausdorff metric of $2^X$.

Let $(X,T)$ be a t.d.s. It induces a t.d.s. on $2^X$, which is called the {\em hyperspace t.d.s.}
The
action of $\Z$ on $2^X$ is given by $mA=\{ma:a\in A\}=\{T^m a: a\in A\}$ for each $m \in
\Z$ and $A \in 2^X$. This action induces another one of $\beta\Z$
on $2^X$. To avoid ambiguities one denotes the action of $\beta\Z$
on $2^X$ by the {\em circle operation} as follows: let $p\in
\beta\Z$ and $A\in 2^X$, and define $p \c
A={\lim\limits_{\lambda}} \ m_\lambda A$ for any net $\{m_\lambda\}_{\lambda \in \Lambda}\subseteq \Z$ converging to $p$.
 Moreover
\begin{equation*}
p\c A=\{x\in X: \text{for each $\lambda \in \Lambda$ there is $d_\lambda\in A$ with $x=\lim_\lambda m_\lambda d_\lambda$}\}
\end{equation*}
for any fixed net  $\{m_\lambda\}_{\lambda \in \Lambda}$ converging to $p$. Observe that $pA \subseteq p \circ A$, where $pA=\{pa: a \in A\}$.

\subsubsection{RIC extensions}

Let $\pi: (X,T )\rightarrow (Y,T)$ be a factor map of minimal t.d.s., and $x_0\in X$, $y_0=\pi(x_0)$. Let $u\in J$ such that $ux_0=x_0$. We say that $\pi$
is a {\em RIC} ({\em relatively incontractible}) extension if for every $y
= py_0\in Y$, $p\in\M$,
\begin{equation}\label{cir-op}
\pi^{-1}(y)=p\c \big(u\pi^{-1}(y_0)\big).
\end{equation}
Note that every distal extension is RIC, and every RIC extension is open.

Every factor map between minimal systems can be
lifted to a RIC extension by proximal extensions (see \cite[Theorem 5.13]{EGS} or \cite[Chapter VI]{Vr}).

\begin{thm}\label{RIC}
Given a factor map $\pi:(X,T)\rightarrow (Y,T)$ of minimal systems, there exists a commutative diagram of factor maps (called {\em RIC-diagram}) 
	
	\[
	\begin{CD}
	X @<{\theta '}<< X'\\
	@VV{\pi}V      @VV{\pi'}V\\
	Y @<{\theta}<< Y'
	\end{CD}
	\]
	such that:
	\begin{enumerate}
		\item[(a)] $\theta '$ and $\theta$ are proximal extensions;
		\item[(b)] $\pi '$ is a RIC extension;
		\item[(c)] $X '$ is the unique minimal set in $R_{\pi \theta
		}=\{(x,y)\in X\times Y ': \pi(x)=\theta(y)\}$, and $\theta '$ and
		$\pi '$ are the restrictions to $X '$ of the projections of $X\times
		Y '$ onto $X$ and $Y '$ respectively.
	\end{enumerate}
\end{thm}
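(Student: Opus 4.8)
This is a classical structure result in the abstract theory of minimal flows, and the plan is to follow the enveloping-semigroup approach of \cite{EGS} and \cite{Vr}, constructing $Y'$ as a suitable quasifactor of $X$ sitting inside the hyperspace $2^X$. Fix a minimal left ideal $\M\subseteq\beta\Z$, an idempotent $u\in J(\M)$, a base point $x_0\in X$ with $ux_0=x_0$, and put $y_0=\pi(x_0)$. The guiding observation is that $\pi$ fails to be RIC precisely because the circle image $p\c\big(u\pi^{-1}(y_0)\big)$ may be a proper subset of the fiber $\pi^{-1}(py_0)$; I would cure this by passing to a proximal extension $Y'$ of $Y$ whose points \emph{record} these circle images, so that in the lifted extension $\pi'$ the fibers are forced to coincide with them. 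This is the exact analogue of the O-diagram of Theorem \ref{thm-O}, with the pair (proximal, RIC) replacing (almost one to one, open).

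Concretely, I would consider the closed set $A_0=u\c\pi^{-1}(y_0)\in 2^X$ in the hyperspace system $(2^X,T)$ equipped with its circle action of $\M$. Since $u\c A_0=A_0$, the point $A_0$ is almost periodic, so its orbit closure $Y':=\overline{\O}(A_0,T)=\M\c A_0$ is a minimal system whose elements are closed subsets of $X$, each contained in a single $\pi$-fiber (because $\pi$ is a factor map, $\pi(p\c A)=p\,\pi(A)$ is a singleton). I would then define $\theta:Y'\to Y$ by sending $B\in Y'$ to the unique point of $\pi(B)$; this is a well-defined, continuous, equivariant surjection. Finally I take $X'$ to be the minimal subset of $R_{\pi\theta}=\{(x,B)\in X\times Y':\pi(x)=\theta(B)\}$ and let $\theta'$, $\pi'$ be the restrictions to $X'$ of the two coordinate projections, which produces the required commuting diagram since $\pi(\theta'(x,B))=\pi(x)=\theta(B)=\theta(\pi'(x,B))$.

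The three assertions would be verified as follows. For (a), I would show $\theta$ is proximal by computing Ellis groups at the base points: writing $A(Y)=\{p\in u\M u:py_0=y_0\}$ and $A(Y')=\{p\in u\M u:p\c A_0=A_0\}$, one has $A(Y')\subseteq A(Y)$ immediately from $\theta(p\c A_0)=py_0$, and the reverse inclusion $A(Y)\subseteq A(Y')$ is where the special choice $A_0=u\c\pi^{-1}(y_0)$ is used; equality of Ellis groups is the standard criterion for $\theta$ to be proximal. Proximality of $\theta'$ then follows because $X'$ is a minimal set in a fiber product over the proximal extension $\theta$, and the same mechanism yields the uniqueness of the minimal set in $R_{\pi\theta}$ asserted in (c). For (b), with base point $x_0'=(x_0,A_0)\in X'$ and an idempotent $u'\in J(\M)$ fixing $x_0'$, I would check that for $B=p\c A_0$ the fiber satisfies $\pi'^{-1}(B)=p\c\big(u'\pi'^{-1}(A_0)\big)$: the second coordinate of $\pi'^{-1}(A_0)$ is pinned to $A_0=u\c\pi^{-1}(y_0)$, so applying $p$ through the circle operation reconstructs exactly the lifted fiber, which is the definition of RIC.

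The main obstacle is the fine algebra of the circle operation underlying (a) and (b): proving the reverse inclusion $A(Y)\subseteq A(Y')$ (that every $p$ fixing $y_0$ also fixes $A_0=u\c\pi^{-1}(y_0)$), and proving that the circle image $p\c\big(u'\pi'^{-1}(A_0)\big)$ fills out the \emph{entire} fiber $\pi'^{-1}(B)$ rather than a proper subset. These rest on the idempotent identities $u\c A_0=A_0$ and $p\c(q\c A)=(pq)\c A$ together with the minimality of $X'$, and they are precisely the delicate computations carried out in \cite[Theorem 5.13]{EGS} and \cite[Chapter VI]{Vr}. Since the statement is entirely standard in the theory of minimal flows, I would either reproduce those verifications or simply invoke the cited theorems.
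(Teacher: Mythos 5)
The paper offers no proof of this statement at all: it records the RIC-diagram as a known structure theorem and refers directly to \cite[Theorem 5.13]{EGS} and \cite[Chapter VI]{Vr}. Your outline is exactly the standard quasifactor construction of $Y'$ from those references (with the delicate circle-operation verifications deferred to them, as you note), so your proposal is correct and coincides with the paper's own treatment.
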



\subsubsection{Regular extensions}

\begin{de}
Let $\pi: (X, T)\rightarrow (Y,T)$ be an extension of minimal t.d.s. One says $\pi$ is {\em regular} if for any
minimal point $(x_1,x_2)$ in $R_\pi$ (i.e. $\pi(x_1)=\pi(x_2)$) there exists $\chi\in
{\rm Aut}_\pi (X,T)$ such that $\chi(x_1)=x_2$. It is equivalent to: for any point $(x_1,x_2)\in R_\pi$ there exists  $\chi \in {\rm Aut}_\pi(X,T)$ such
that $(\chi(x_1),x_2)\in {\bf P} (X,T)$.
\end{de}

The notion of regularity was introduced by Auslander \cite{Au66}. Examples of regular extensions are
proximal extensions, group extensions.
For more information about regularity, refer to \cite{Au66, Au88, G92}.

\begin{thm}\cite[Chapter V, 6.3-7]{Vr}\label{thm-reg}
Let $(X,T)$ and $(Y,T)$ be minimal t.d.s. and let $\phi$ be the factor map. Then there is an extension $\theta: (X^*,T)\rightarrow (X,T)$ such that $\phi^*=\phi\circ \theta: (X^*,T)\rightarrow (Y,T)$ is regular. Moreover, $\theta$ is an isomorphism if and only $\phi$ is regular.
\begin{equation*}
  \xymatrix@R=0.5cm{
                &         {X}^* \ar[dl]_{\theta} \ar[dd]^{\phi^*}    \\
  X \ar[dr]_{\phi}      \\
                &         Y          }
\end{equation*}

The extension $\phi^*: X^*\rightarrow Y$ is called a {\em regularizer} of $\phi$.
\end{thm}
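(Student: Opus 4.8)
The statement is a classical result in the abstract theory of minimal flows, so the plan is to reconstruct its proof through the Galois correspondence attached to the universal minimal flow $(\M,T)$ introduced above. First I would fix an idempotent $u\in J(\M)$ and form the group $G=u\M$ with identity $u$; recall that $G$ carries Ellis's $\tau$-topology, and that $\tau$-closed subgroups of $G$ stand in an order-reversing bijection with the pointed minimal factors of $(\M,T)$. Choosing base points $x_0\in X$ with $ux_0=x_0$ and $y_0=\phi(x_0)$, I would represent $(X,x_0)$ and $(Y,y_0)$ as such factors, with associated Ellis groups
\[
A=\{\alpha\in G:\alpha x_0=x_0\}\subseteq F=\{\alpha\in G:\alpha y_0=y_0\},
\]
the inclusion encoding the factor map $\phi$.

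The heart of the argument is a group-theoretic reformulation of regularity. I would first describe the minimal points of $R_\phi$ lying over $y_0$ with first coordinate $x_0$: they are exactly the pairs $(x_0,f x_0)$ with $f\in F$, since each such pair is fixed by the idempotent $u$ (as $ux_0=x_0$ and $uf=f$) and is therefore minimal. Next I would identify ${\rm Aut}_\phi(X,T)$ with $N_F(A)/A$, where $N_F(A)$ is the normalizer of $A$ in $F$, the isomorphism sending the class of $\alpha$ to the automorphism induced by right translation by $\alpha$. Comparing these two descriptions, the requirement in the definition of regularity --- that for every $f\in F$ some $\chi\in{\rm Aut}_\phi(X,T)$ carry $x_0$ to $f x_0$ --- translates precisely into $N_F(A)=F$, i.e. into the normality of $A$ in $F$. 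This is the key lemma: $\phi$ is regular if and only if $A\trianglelefteq F$.

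With this in hand the construction is immediate. Let $A^*=\bigcap_{f\in F} fAf^{-1}$ be the normal core of $A$ in $F$, the largest subgroup of $A$ that is normal in $F$. Since conjugation by an element of $G$ preserves $\tau$-closed subgroups and arbitrary intersections of $\tau$-closed subgroups are again $\tau$-closed, $A^*$ is a $\tau$-closed subgroup; moreover $A^*\subseteq A\subseteq F$ and $A^*\trianglelefteq F$. Feeding $A^*$ back through the Galois correspondence produces a pointed minimal flow $(X^*,T)$ whose Ellis group is $A^*$, and the inclusions $A^*\subseteq A\subseteq F$ yield factor maps $\theta:X^*\to X$ and $\phi^*=\phi\circ\theta:X^*\to Y$. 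By the key lemma applied to $\phi^*$, the normality $A^*\trianglelefteq F$ says exactly that $\phi^*$ is regular, which is the desired regularizer. Finally, $\theta$ corresponds to the inclusion $A^*\subseteq A$, so $\theta$ is an isomorphism if and only if $A^*=A$, i.e. if and only if $A$ is already normal in $F$ --- equivalently, by the key lemma, if and only if $\phi$ itself is regular.

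The main obstacle lies entirely in the $\tau$-topology bookkeeping rather than in the algebra: one must set up the $\tau$-topology on $G$, verify the Galois correspondence between $\tau$-closed subgroups and pointed minimal factors (including that inclusions of subgroups correspond to factor maps), and --- most delicately --- prove that $fAf^{-1}$ is $\tau$-closed for $\tau$-closed $A$ and $f\in G$, so that the core $A^*$ is legitimately $\tau$-closed. The identification of ${\rm Aut}_\phi(X,T)$ with $N_F(A)/A$, and the proof that regularity corresponds to $N_F(A)=F$, also demand care: one has to check that right translations by normalizing elements genuinely descend to automorphisms of $X$ over $Y$, and that every element of ${\rm Aut}_\phi(X,T)$ arises this way. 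Once these foundational facts are established, the regularizer drops out of the normal-core construction exactly as above.
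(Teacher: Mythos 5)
The paper does not actually prove this statement --- it is imported verbatim from \cite{Vr} --- so I can only measure your argument against the standard one from the theory of minimal flows. Your necessary condition is correct: every $\chi\in{\rm Aut}_\phi(X,T)$ commutes with the $\beta\Z$-action, so $\chi(x_0)=fx_0$ forces $f^{-1}Af\subseteq A$, and letting $f$ range over the group $F$ gives $A\trianglelefteq F$. The fatal problem is the converse, which is exactly the direction your construction needs. The ``Galois correspondence'' you invoke is not a bijection: a proximal extension does not change the Ellis group, so distinct pointed minimal flows can share the same $\tau$-closed subgroup and a flow is not recoverable from its group. Concretely, take $Y$ a point (so $F=G$) and $X$ a Sturmian subshift. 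The factor map onto its maximal equicontinuous factor, an irrational rotation of $\mathbb{T}$, is almost one-to-one, hence proximal, so $A=\mathcal{G}(X,x_0)$ equals the Ellis group of the rotation; that group is the kernel of the surjective group homomorphism $G\to E(\mathbb{T})\cong\mathbb{T}$ obtained by restriction, hence is normal in $G$. Yet $X$ is not regular: ${\rm Aut}(X,T)$ consists only of the powers of the shift, while the almost periodic pairs $(x_1,x_2)$ fill all fibers of the product over the two-torus (alternatively, by \cite{G92} a regular metric PI flow is equicontinuous). So $A\trianglelefteq F$ does not imply regularity, and your identification ${\rm Aut}_\phi(X,T)\cong N_F(A)/A$ is in general only an injection ${\rm Aut}_\phi(X,T)\hookrightarrow N_F(A)/A$: right translation by a normalizing element need not descend to a well-defined, let alone continuous, self-map of $X$. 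The same defect breaks your last step, since $A^*=A$ only makes $\theta$ a proximal extension, not an isomorphism.

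The standard proof avoids this by constructing $X^*$ concretely instead of extracting it from the subgroup lattice. One takes the point $z_0=(fx_0)_{f\in F}\in X^{F}$, whose coordinates are exactly the almost periodic points of $\phi^{-1}(y_0)$ relative to $x_0$, notes that $uz_0=z_0$, and sets $X^*=\overline{\O}(z_0,T)$ with $\theta$ the projection to the $u$-coordinate. Regularity of $\phi^*=\phi\circ\theta$ is then proved directly: for $g\in F$ the permutation of coordinates induced by right translation of the index set by $g$ is a continuous self-map of $X^{F}$ commuting with $T$ and preserving the fibers over $Y$, and these coordinate permutations supply the required elements of ${\rm Aut}_{\phi^*}(X^*,T)$. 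Your computation that $\mathcal{G}(X^*,z_0)=\bigcap_{f\in F}fAf^{-1}$ is correct for this $X^*$ and explains the algebra behind the picture, but the normality of the core is a by-product of the construction, not a substitute for it. If you want to keep a purely algebraic formulation you must either work with the canonical flows attached to $\tau$-closed subgroups (and then still verify that $X$ is a factor of the canonical flow of $A^*$), or simply run the product construction above.
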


\begin{rem}\label{rem-reg}
Let $(Y,T)$ be a minimal t.d.s., and let $\psi: (\M, T) \rightarrow (Y,T)$ be the factor map from the universal minimal t.d.s. $({\bf M}, T)$ to $(Y,T)$. By Theorem \ref{thm-reg}, we have the following diagram.
\begin{equation*}
  \xymatrix@R=0.5cm{
                &         {\M} \ar[dl]_{\theta} \ar[dd]^{\psi^*}    \\
  \M \ar[dr]_{\psi}      \\
                &         Y          }
\end{equation*}
Since $\M$ is the universal minimal t.d.s., $\theta$ is an isomorphism. Thus $\psi$ is regular.
\end{rem}

\subsection{Some lemmas}\ 
\medskip

In the next subsection we show that $(M_\infty({X},\A), \langle T^\infty, \sigma\rangle)$ is an $M$-system.
To do this, we need some preparation. 

The following lemma is well known (for example see \cite[Theorem 5.5-(2)]{AGHSY}). Note that in these lemmas, all spaces in t.d.s. are assumed to be compact and Hausdorff.

\begin{lem} \label{lemma-promimal-finite}
Let $\pi:(X,T)\rightarrow  (Y,T)$ be a proximal extension of minimal t.d.s. $(X,T)$ and $(Y,T)$. Then for each $y\in Y$ and any $x, x_1,x_2,\ldots, x_k\in \pi^{-1}(y)$, there is a net $\{n_i\}\subseteq \N$ such that $T^{n_i}(x_1,x_2,\ldots,x_k)\rightarrow (x,x,\ldots,x), \ i\to\infty$.
\end{lem}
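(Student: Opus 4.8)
The statement is equivalent to producing a single element $p$ of $\beta\Z$ (or of a minimal left ideal ${\bf M}$) with $px_j=x$ for every $j$: once such a $p$ is found, write $p=\lim_\lambda m_\lambda$ for a net $\{m_\lambda\}\subseteq\Z$, and the continuity of the map $q\mapsto q(x_1,\ldots,x_k)$ on $\beta\Z$ gives $T^{m_\lambda}(x_1,\ldots,x_k)\to p(x_1,\ldots,x_k)=(x,\ldots,x)$. The plan is to take $p$ to be a single minimal idempotent $u$ that fixes $x$, and to show that such a $u$ automatically collapses the whole fibre $\pi^{-1}(y)$ onto $x$; the simultaneity over $j=1,\ldots,k$ then costs nothing.

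First I would fix a minimal left ideal ${\bf M}\subseteq\beta\Z$ and, using that $x$ is a minimal point of the minimal system $(X,T)$, choose an idempotent $u\in J({\bf M})$ with $ux=x$. Applying $\pi$ gives $uy=u\pi(x)=\pi(ux)=\pi(x)=y$, so for each $j$ the point $ux_j$ satisfies $\pi(ux_j)=uy=y$ and $u(ux_j)=ux_j$; hence $ux_j$ lies in $\pi^{-1}(y)$ and is fixed by $u$. Since both $x$ and $ux_j$ are fixed by the same idempotent $u$, the pair $(x,ux_j)$ is fixed by $u$ in $(X\times X,T\times T)$ and is therefore a minimal point, so $N:=\overline{\O}\big((x,ux_j),T\times T\big)$ is a minimal subset of $X\times X$.

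The key step is to show $x=ux_j$. Because $\pi$ is a proximal extension and $x,ux_j\in\pi^{-1}(y)$, the pair $(x,ux_j)$ is proximal; by the standard description of proximality inside a minimal ideal there is a minimal idempotent $v$ with $vx=v(ux_j)$, so the diagonal point $\big(vx,v(ux_j)\big)=(vx,vx)$ belongs to $N$. Thus $N\cap\Delta(X)$ is a non-empty, closed, $(T\times T)$-invariant subset of the minimal set $N$, whence $N\subseteq\Delta(X)$; in particular $(x,ux_j)\in\Delta(X)$, i.e. $ux_j=x$. As $j$ was arbitrary, $u(x_1,\ldots,x_k)=(x,\ldots,x)$, which is exactly what is needed.

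Finally I would address the requirement $\{n_i\}\subseteq\N$ rather than $\Z$: since $x$ is uniformly recurrent one may run the whole argument inside the forward enveloping semigroup (equivalently, take ${\bf M}$ to be a minimal left ideal of $\beta\N$ and $u$ a minimal idempotent there fixing $x$), so that the approximating net can be chosen in $\N$. I expect the main obstacle to be the central collapsing step, the proof that $ux_j=x$: it rests on combining the proximal-extension hypothesis (to obtain proximality of $(x,ux_j)$, and hence a diagonal point in its orbit closure) with the elementary fact that a minimal subset of $X\times X$ meeting the diagonal must be contained in it. Once this is in place, treating all $k$ coordinates at once and the $\N$-versus-$\Z$ bookkeeping are routine.
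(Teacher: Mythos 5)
Your proof is correct: the key collapsing step ($ux_j=x$, obtained from the fact that a proximal pair which is almost periodic in the product system must lie on the diagonal) is the standard Ellis-semigroup argument, and your handling of the $\N$-versus-$\Z$ issue is sound because for a minimal homeomorphism the forward orbit closure of a minimal point coincides with its full orbit closure, so one may take the idempotent $u$ in a minimal left ideal of $\beta\N$ while still invoking the ($\Z$-)proximality supplied by the hypothesis. The paper gives no proof of this lemma — it is quoted as well known with a reference to \cite{AGHSY} — and your argument is essentially the standard one found there.
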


The following lemma is also well known (see e.g. \cite[Chapter V-(2.9)]{Vr}).

\begin{lem}\label{proximal-prod}
Let $\pi_i: (X_i,T)\rightarrow (Y_i,T), i\in I$ be proximal extensions, where $I$ is an index set. Then
$$\prod_{i\in I}\pi_i: \prod_{i\in I} X_i\rightarrow  \prod_{i\in I} Y_i, \ (x_i)_{i\in I}\mapsto (\pi_i(x_i))_{i\in I}$$
is proximal.
\end{lem}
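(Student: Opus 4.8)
The plan is to work inside the enveloping semigroup and reduce everything to producing a \emph{single} element of a minimal left ideal that identifies the two points in every coordinate at once. Write $\Pi=\prod_{i\in I}\pi_i$, and fix $x=(x_i)_{i\in I}$ and $x'=(x_i')_{i\in I}$ in $\prod_{i\in I}X_i$ with $\Pi(x)=\Pi(x')$, so that $\pi_i(x_i)=\pi_i(x_i')=:y_i$ for every $i$. Recall that a pair is proximal exactly when some element $p$ of $\beta\Z$ (equivalently, of a minimal left ideal $\M$) satisfies $px=px'$; since $\beta\Z$ acts coordinatewise on a product, $p(x_i)_{i}=(px_i)_{i}$, the whole point is that one and the same ultrafilter $p$ can be made to govern all coordinates simultaneously. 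This is precisely why no common subsequence has to be extracted by hand, and why the statement survives for an arbitrary (possibly uncountable) index set $I$.

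First I would fix a minimal left ideal $\M\subseteq\beta\Z$ and, using the Ellis--Namakura theorem, a minimal idempotent $u\in J(\M)$. The heart of the proof is the claim that $u x_i=u x_i'$ for \emph{every} $i\in I$, for this \emph{same} $u$. To see it, note that $u x_i$ and $u x_i'$ are minimal points of $X_i$ (each is fixed by $u$, so the criterion ``$w$ is minimal iff $vw=w$ for some $v\in J$'' applies), and that $\pi_i$ intertwines the $\beta\Z$-actions, whence $\pi_i(u x_i)=u\pi_i(x_i)=u y_i=u\pi_i(x_i')=\pi_i(u x_i')$; thus $u x_i$ and $u x_i'$ lie in one fibre of $\pi_i$. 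Because $\pi_i$ is a \emph{proximal} extension, the pair $(u x_i,u x_i')$ is then proximal. On the other hand $(u x_i,u x_i')$ is fixed by the minimal idempotent $u$ acting diagonally, so it is a minimal point of $X_i\times X_i$. A pair that is simultaneously proximal and a minimal point must lie on the diagonal: if $q(u x_i)=q(u x_i')=z$ for some $q\in\beta\Z$, then $(z,z)$ lies in the minimal set $\overline{\O}((u x_i,u x_i'),T)$, which therefore equals $\overline{\O}((z,z),T)\subseteq\Delta(X_i)$; hence $u x_i=u x_i'$.

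Granting the claim, we get $u x=(u x_i)_i=(u x_i')_i=u x'$ in the product, so $u$ identifies $x$ and $x'$, and by the enveloping-semigroup characterization of proximality $(x,x')\in{\bf P}(\prod_{i\in I} X_i,T)$; as $x,x'$ were arbitrary points of a common fibre of $\Pi$, the extension $\Pi$ is proximal. The main obstacle to guard against is that proximality is \emph{not} transitive, so one cannot simply chain the proximalities $(x_i,u x_i)$, $(x_i',u x_i')$ and $(x_i,x_i')$; the device that circumvents this is the passage to the idempotent $u$, which converts ``lies in a common fibre'' into genuine equality via the proximal-extension hypothesis together with the diagonal lemma for minimal pairs. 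The two routine points I would still verify are that $\pi_i$ indeed commutes with the $\beta\Z$-action (immediate from continuity and equivariance, by taking limits along the defining net of the ultrafilter) and that the $\beta\Z$-action on $\prod_{i\in I} X_i$ is coordinatewise (immediate from the definition of the product topology).
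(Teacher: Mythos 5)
Your proof is correct. The paper does not actually prove this lemma (it only cites \cite[Chapter V-(2.9)]{Vr}), and your argument is the standard one behind that reference: the key point, which you handle properly, is that a single minimal idempotent $u$ collapses \emph{every} coordinate pair at once (because $(ux_i,ux_i')$ is a minimal point of $X_i\times X_i$ lying in a fibre of the proximal extension $\pi_i$, hence diagonal), which is exactly what is needed for an arbitrary, possibly uncountable and non-metrizable, product.
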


Now we have

\begin{lem} \label{lemma-proximal}
Let $\pi: (X,T)\rightarrow (Y,T)$ be a factor map of minimal t.d.s. and let $\A=\{p_1, p_2,\ldots, p_d\}$
be a set of integral polynomials. If $\pi$ is proximal, then so is $\pi^\infty: (M_\infty(X,\A), \G)\rightarrow (M_\infty(Y,\A),\G)$, where $\G=\langle T^\infty, \sigma\rangle$; and $(M_\infty(X,\A), \G)$ is minimal if and only if $(M_\infty(Y,\A),\G)$ is minimal.
\end{lem}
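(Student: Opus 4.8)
The plan is to establish the two assertions separately, obtaining the proximality of $\pi^\infty$ from the product description of $M_\infty$ and then feeding it, together with Lemma~\ref{lemma-promimal-finite}, into the minimality transfer. First I would note that $\pi^\infty$ --- the coordinatewise application of $\pi$ on $X^s\times(X^{d-s})^{\Z}$ --- is continuous, commutes with both $T^\infty$ and $\sigma$, and sends $\xi_x^\A$ to $\xi_{\pi(x)}^\A$; since $\pi$ is onto this makes $\pi^\infty\colon M_\infty(X,\A)\to M_\infty(Y,\A)$ a factor map. To see it is proximal, I would regard it as the restriction to $M_\infty(X,\A)$ of the product map $\Pi=\prod_{\iota}\pi$ over the countable index set labelling the coordinates of $X^s\times(X^{d-s})^{\Z}$, with $T^\infty$ the diagonal transformation on each product. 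Lemma~\ref{proximal-prod} then gives that $\Pi$ is proximal under $T^\infty$. As $\langle T^\infty\rangle\subseteq\G$ and enlarging the acting group only enlarges the proximal relation, every pair in $R_{\pi^\infty}\subseteq R_\Pi$ is proximal under $T^\infty$ --- with a contracting net staying inside the closed $T^\infty$-invariant set $M_\infty(X,\A)$ --- and hence proximal under $\G$. Thus $\pi^\infty$ is proximal.

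For the minimality equivalence, the forward implication is immediate: if $M_\infty(X,\A)$ is minimal, then so is its factor $M_\infty(Y,\A)$. The converse is the heart of the matter, and I expect it to be the main obstacle, because proximality of a factor map over a minimal base does not by itself force the total space to be minimal (there are transitive, non-minimal, proximal extensions even of a one-point system); the argument must exploit the concrete form of $M_\infty$. Assume $M_\infty(Y,\A)$ is minimal. Since $X$ is minimal, $(M_\infty(X,\A),\G)$ is transitive and every $\xi_x^\A$ is a transitive point, so after fixing $x_0\in X$ and $y_0=\pi(x_0)$ it suffices to prove that $\xi_{x_0}^\A\in\overline{\O}(\mathbf x,\G)$ for an arbitrary $\mathbf x\in M_\infty(X,\A)$.

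To this end I would first use the minimality of $M_\infty(Y,\A)$: it gives $\xi_{y_0}^\A\in\overline{\O}(\pi^\infty\mathbf x,\G)$, so a subnet of a suitable $\G$-orbit of $\mathbf x$ converges to some $\mathbf x'\in\overline{\O}(\mathbf x,\G)$ with $\pi^\infty\mathbf x'=\xi_{y_0}^\A$. Writing $\mathbf x'=\big((x_1,\dots,x_s),(x_j^{(s+1)},\dots,x_j^{(d)})_{j\in\Z}\big)$, this means $x_i\in\pi^{-1}(y_0)$ for $i\le s$ and $T^{-p_t(j)}x_j^{(t)}\in\pi^{-1}(y_0)$ for $s<t\le d$, $j\in\Z$. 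Now comes the key step: I would apply Lemma~\ref{lemma-promimal-finite} to the proximal extension $\pi\colon X\to Y$ of \emph{minimal} systems. Fixing $k\in\N$, the finitely many fiber points $x_1,\dots,x_s$ and $T^{-p_t(j)}x_j^{(t)}$ with $|j|\le k$, together with the prescribed target $x_0$, all lie in $\pi^{-1}(y_0)$; the lemma then produces a net $\{m_i\}$ such that $T^{m_i}$ sends each of them to $x_0$ simultaneously. Applying $(T^\infty)^{m_i}\in\G$ to $\mathbf x'$ and using continuity of each $T^{p_t(j)}$, the images converge to $\xi_{x_0}^\A$ on all coordinates indexed by $i\le s$ and by $|j|\le k$, so the $T^\infty$-orbit of $\mathbf x'$ meets every basic neighbourhood of $\xi_{x_0}^\A$. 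Hence $\xi_{x_0}^\A\in\overline{\O}(\mathbf x',\G)\subseteq\overline{\O}(\mathbf x,\G)$, and since $\xi_{x_0}^\A$ is transitive this forces $\overline{\O}(\mathbf x,\G)=M_\infty(X,\A)$. As $\mathbf x$ is arbitrary, $M_\infty(X,\A)$ is minimal.

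The one delicate point, and the reason the scheme works, is that in the last step I freeze the shift coordinate (taking the $\sigma$-power to be $0$) and translate only by the diagonal powers $(T^\infty)^{m_i}$; this reduces the required simultaneous contraction to finitely many points in a single fiber of $\pi\colon X\to Y$, exactly the situation covered by Lemma~\ref{lemma-promimal-finite} on the small system $X$, rather than an unavailable statement about the (a priori non-minimal) big system $M_\infty(X,\A)$.
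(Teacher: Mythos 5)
Your proposal is correct and follows essentially the same route as the paper's proof: Lemma \ref{proximal-prod} for the proximality of $\pi^\infty$, then, for the converse minimality implication, pulling a point of $\overline{\O}(\mathbf x,\G)$ down onto a single fiber over a point of $Y$ and using Lemma \ref{lemma-promimal-finite} on finitely many coordinates at a time, contracting with powers of $T^\infty$ only (the $\sigma$-coordinate frozen), exactly as in the paper. The only cosmetic differences are that the paper phrases the argument in $N_\infty(X,\A)$ via the isomorphism with $M_\infty(X,\A)$ and lands on $\w_x^\A$ for \emph{some} $x\in\pi^{-1}(y)$ rather than a pre-chosen $x_0$, which is equivalent since every such point is transitive.
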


\begin{proof}
If $\pi$ is proximal, then by Lemma \ref{proximal-prod} $\pi^\infty: (M_\infty(X,\A), T^\infty)\rightarrow (M_\infty(Y,\A),T^\infty)$ is proximal, and hence $\pi^\infty: (M_\infty(X,\A), \G)\rightarrow (M_\infty(Y,\A),\G)$ is also proximal.

It remains to show that $(M_\infty(X), \G)$ is minimal if $(M_\infty(Y),\G)$ is minimal.
By Subsection \ref{subsection-linear-terms},
$ M_\infty(X,\A)$ is isomorphic to $N_\infty(X,\A).$ So it suffices to show that $(N_\infty(X,\A), \G)$ is minimal if $(N_\infty(Y,\A),\G)$ is minimal.

\medskip

Since $N_\infty(X,\A)=\overline{\O}(\w_x^\A,\G)$ for each $ x\in X$, to show that $(N_\infty(X,\A), \G)$ is minimal, we need to show that for any ${\bf x}\in N_\infty (X,\A)$, there is some $x\in X$ such that $\w_x^\A\in \overline{\O}({\bf x}, \G)$.

Let ${\bf x}=({\bf x}_n)_{n\in {\Z}}=\Big((x^{(1)}_n, x^{(2)}_n,\cdots, x^{(d)}_n) \Big)_{n\in {\Z}}\in N_\infty(X,\A).$ Then ${\bf y}=\pi^\infty({\bf x})\in N_\infty(Y,\A)$. By assumption, $(N_\infty(Y,\A), \G)$ is minimal, and there is some $y\in Y$ and a net $\{g_i\}_{i} \subseteq \G$ such that
$$g_i{\bf y}\to \w_y^\A=(\ldots, T^{\vec{p}(-1)}(y^{\otimes d}), \underset{\bullet}{T^{\vec{p}(0)}y^{\otimes d}},T^{\vec{p}(1)}(y^{\otimes d}),T^{\vec{p}(2)}(y^{\otimes d}), \cdots),$$
where $T^{\vec{p}(n)}=T^{p_1(n)}\times T^{p_2(n)}\times\cdots\times T^{p_d(n)}.$ Without loss of generality, assume that
$$g_i{\bf x}\to {\bf \tilde{x}}=({\bf \tilde{x}}_n)_{n\in {\Z}}=\Big((\tilde{x}^{(1)}_n,\tilde{ x}^{(2)}_n,\cdots, \tilde{x_n}^{(d)}) \Big)_{n\in {\Z}}.$$
Then ${\bf \tilde{x}}\in \overline{\O}({\bf x}, \G)$ and $$\pi^\infty({\bf \tilde{x}})=\lim_i \pi^\infty(g_i{\bf x})=\lim_i g_i\pi^\infty({\bf x})=\lim_i g_i {\bf y}=\w_y^\A.$$
Hence
$$ T^{-p_j(n)}\tilde{x_n}^{(j)} \in \pi^{-1}(y), \ \text{for all }\ 1\le j\le d,\ n\in {\Z}. $$
Fix a point $x\in \pi^{-1}(y)$. By Lemma \ref{lemma-promimal-finite}, for any $k\in \N$ there is some net $\{n_i\}_{i}\subseteq \N$ such that
$$\big({T^{((2k+1)d)}}\big)^{n_i}\Big( (T^{\vec{p}(-k)})^{-1}({\bf \tilde{x}}_{-k}), (T^{\vec{p}(-k+1)})^{-1}({\bf \tilde{x}}_{-k+1}),\ldots, (T^{\vec{p}(k)})^{-1}({\bf \tilde{x}}_k)\Big)\to x^{\otimes (2k+1)d}.$$
Since $T^{\vec{p}(-k)}\times \ldots \times T^{\vec{p}(-1)}\times T^{\vec{p}(0)} \times T^{\vec{p}(1)}\times  \ldots \times T^{\vec{p}(k)}$ is continuous, it follows that
$$\big({T^{((2k+1)d)}}\big)^{n_i}\Big({\bf \tilde{x}}_{-k}, {\bf \tilde{x}}_{-k+1}, \ldots, {\bf \tilde{x}}_k\Big)$$
converges to $$(T^{\vec{p}(-k)}x^{\otimes d}, T^{\vec{p}(-k+1)}x^{\otimes d},\ldots, T^{\vec{p}(k)}x^{\otimes d}).$$
We may assume that $(T^\infty)^{n_i}{\bf \tilde{x}}$ converges to
$${\bf \tilde{x}}^{[k]}= \big(({\bf \tilde{x}}_n^k)_{n=-\infty}^{-k-1}, T^{\vec{p}(-k)}x^{\otimes d}, T^{\vec{p}(-k+1)}x^{\otimes d},\ldots, \underset{\bullet}{T^{\vec{p}(0)}x^{\otimes d}}, \ldots, T^{\vec{p}(k)}x^{\otimes d}, ({\bf \tilde{x}}_n^k)_{n=k+1}^\infty\big),$$
where ${\bf \tilde{x}}_n^k\in X^d, |n|\ge k+1 .$
Thus for all $k\in \N$,
$${\bf \tilde{x}}^{[k]} \in \overline{\O}({\bf \tilde{x}}, T^\infty) \subset \overline{\O}({\bf x}, \langle T^\infty, \sigma\rangle).$$
It is clear that $\lim_{k\to\infty} {\bf \tilde{x}}^{[k]}=(T^{\vec{p}(n)}x^{\otimes d})_{n\in \Z}=\w_x^\A $, and we have
$$\w_x^\A\in \overline{\O}({\bf x}, \langle T^\infty, \sigma\rangle).$$
The proof is complete.
\end{proof}

\subsection{The proof of an equivalence form of theorem E}\
\medskip

Now we are ready to prove an equivalence form of Theorem E. In the rest of the paper we put $\G=\langle T^\infty, \sigma\rangle$. By Theorem \ref{Thm-equivalence1}, we show that for a minimal t.d.s., for any family $\A=\{p_1, p_2,\ldots, p_d\}$ of integral polynomials satisfying $(\spadesuit)$, $(M_\infty({X},\A), \G)$ is an $M$-system. Since we will use Theorem \ref{thm-poly-saturate} which requires $X$ being metric, in this Subsection we assume that $X$ is a metric space. But in the proof of the theorem, we will use the universal minimal t.d.s. ${\bf M}$ which is not a metric space.

\subsubsection{Idea of the proof}

Since the proof is long, we describe the idea of the proof first.
Let $(X,T)$ be minimal and $\pi:X\lra X_\infty$ be the factor map. By the discussion in Section \ref{Section-nil},
$(M_\infty({X_\infty},\A), \G)$ is minimal. Using the RIC-diagram (modifications by proximal extensions), we obtain $\pi^*:X^*\lra X_\infty^*$
which is RIC. By Lemma \ref{lemma-proximal}, $(M_\infty({X_\infty^*},\A), \G)$ is also minimal.

The natural idea to show the density of minimal point in $(M_\infty(X^*,\A), \G)$ is that
for a minimal point ${\bf x}$ of $\G$ and a non-empty open set $U\subset M_\infty(X^*,\A)$, one finds some $\phi\in {\rm Aut}(X,T)$ such that $\phi({\bf x})\in U$
is the limit of minimal points. This idea is realized in the proof of Theorem \ref{thm-distal} for the distal case. When we apply
the same idea for the RIC extension, we find that we do not have enough elements in ${\rm Aut}(X,T)$ to realize the idea. Thus, we need to pass our discussion to the universal minimal system to get suitable elements in ${\rm Aut}(X,T)$ for our purpose.

The proof is done in four steps: firstly we construct the extension ({\bf Step 1} below), secondly we transfer the question into
some extension of $(M_\infty(X^*,\A), \G)$ ({\bf Step 2} below);
then we show the set of minimal points is dense in a certain region ({\bf Step 3} below), and finally we use the RIC property to spread minimal points to the whole space ({\bf Step 4} below).

\subsubsection{}
Now we give the main result of this section.

\begin{thm}\label{thm-poly-M-t.d.s}
Let $({X},T)$ be a minimal t.d.s. and let $\A=\{p_1, p_2,\ldots, p_d\}$ be a family of integral polynomials satisfying $(\spadesuit)$. Then $(M_\infty({X},\A), \G)$ is an $M$-system.
\end{thm}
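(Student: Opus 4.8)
The plan is to reduce the $M$-system property to density of $\G$-minimal points, since transitivity is free: for minimal $(X,T)$ every $\xi_x^\A$ is a transitive point of $(M_\infty(X,\A),\G)$ (Section \ref{Section-Def-M}), so by the $\Z^2$-version of Proposition \ref{prop-M} it suffices to show the $\G$-minimal points are dense. The entire scheme imitates the distal case, Theorem \ref{thm-distal}, where a weak group extension supplied block automorphisms $\Phi_k$ that perturb one fibered minimal point into all nearby ones; the work is to recover such automorphisms when $(X,T)$ is merely minimal. First I would set up a good extension. Let $\pi:X\to X_\infty$ be the factor map onto the maximal $\infty$-step pro-nilfactor, so that $(M_\infty(X_\infty,\A),\G)$ is minimal by Corollary \ref{cor-nil}. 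Applying the RIC-diagram of Theorem \ref{RIC} (after the almost one to one modification of Theorem \ref{thm-poly-saturate}) produces proximal extensions $\theta^*:X^*\to X$, $\theta:X_\infty^*\to X_\infty$ and a RIC, hence open, extension $\pi^*:X^*\to X_\infty^*$. Since $\theta$ is proximal, Lemma \ref{lemma-proximal} keeps $(M_\infty(X_\infty^*,\A),\G)$ minimal; since $\pi^*$ is open and the maximal pro-nilfactor of $X^*$ (still $X_\infty$) is a factor of $X_\infty^*$, the open-extension form of the saturation theorem (Theorem \ref{thm-poly-saturate}, cf. Remark \ref{rem-3.3}) gives $M_\infty(X^*,\A)=(\pi^{*\infty})^{-1}\big(M_\infty(X_\infty^*,\A)\big)$. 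As $(\theta^*)^\infty$ is a factor map carrying minimal points to minimal points and dense sets to dense sets, it suffices to prove density of $\G$-minimal points in $M_\infty(X^*,\A)$.

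The obstacle flagged in the outline now appears: the naive idea of moving a fibered minimal point ${\bf x}'$ to a target ${\bf x}$ by elements of ${\rm Aut}_{\pi^*}(X^*,T)$ fails because that group may be far too small. To manufacture the needed maps I would pass to the universal minimal t.d.s. $(\M,T)$ and the factor map $\psi:\M\to X_\infty^*$, which is \emph{regular} by Remark \ref{rem-reg}; regularity provides, for any two fibered points, an element of ${\rm Aut}_\psi(\M,T)$ carrying one to the other up to proximality. Fixing a factorization $\M\to X^*\to X_\infty^*$ and lifting the relevant configurations into $\M$, I can then run the distal proof's block construction coordinatewise.

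Next I would establish density of minimal points in a single fiber and spread it. Fix ${\bf y}\in M_\infty(X_\infty^*,\A)$; by minimality it is $\G$-minimal. Given ${\bf x}$ over ${\bf y}$, choose by Lemma \ref{den-minimal}(3) a $\G$-minimal ${\bf x}'$ in the same fiber; using regularity coordinate by coordinate select automorphisms matching the $i$-th coordinate of ${\bf x}'$ to that of ${\bf x}$ up to proximality, the residual proximality being absorbed along the orbit by Lemma \ref{lemma-promimal-finite}, and assemble $\Phi_k=(\chi_{-k}\times\cdots\times\chi_k)^\infty$ exactly as in the proof of Theorem \ref{thm-distal}. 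Then $\Phi_k({\bf x}')\to{\bf x}$, each $\Phi_k({\bf x}')$ stays in the saturated set $M_\infty(X^*,\A)$, and each is $\G$-minimal: since $\langle T^\infty,\sigma^{2k+1}\rangle$ is a syndetic normal subgroup of $\G$, ${\bf x}'$ is $\langle T^\infty,\sigma^{2k+1}\rangle$-minimal by Lemma \ref{lem-minimal-point}, and as $\Phi_k$ commutes with $T^\infty$ and $\sigma^{2k+1}$ so is $\Phi_k({\bf x}')$, whence $\G$-minimal by Lemma \ref{lem-minimal-point} again. This makes $\G$-minimal points dense in the fiber over ${\bf y}$. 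Finally I expect the RIC identity $\pi^{*-1}(y)=p\circ(u\,\pi^{*-1}(y_0))$, combined with the minimality of $M_\infty(X_\infty^*,\A)$, to transport this dense set to every fiber; since the fibers exhaust $M_\infty(X^*,\A)$ by saturation, the $\G$-minimal points are dense there, and pushing down through $(\theta^*)^\infty$ finishes the proof.

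The main obstacle is exactly the tension between Steps above: the distal argument's fiber automorphisms have no counterpart in ${\rm Aut}_{\pi^*}(X^*,T)$ for a general minimal $\pi^*$, forcing the detour through the \emph{non-metric} universal minimal system and the regularizer to harvest enough automorphisms, whereas the saturation theorem — on which the whole reduction rests — requires metrizability and can only be applied on the metric model $X^*$. Reconciling these two incompatible demands, and verifying that the perturbed points are genuinely $\G$-minimal (not merely recurrent) via the syndetic-normal-subgroup criterion while the RIC/circle-operation correctly spreads minimality from one fiber to the whole space, is the delicate heart of the argument.
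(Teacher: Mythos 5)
Your overall architecture — reduce to density of $\G$-minimal points, lift through the maximal $\infty$-step pro-nilfactor using the polynomial saturation theorem, pass to the universal minimal flow to harvest automorphisms from regularity, run the block construction $\Phi_k$ with minimality certified via the syndetic normal subgroups $\langle T^\infty,\sigma^{2k+1}\rangle$, and finally spread by the RIC circle-operation identity — is exactly the paper's strategy. But there is a genuine gap in your middle step. You propose to take an arbitrary ${\bf x}$ in the fiber over ${\bf y}$, a $\G$-minimal ${\bf x}'$ in the same fiber, and automorphisms $\chi_i$ matching $x_i'$ to $x_i$ \emph{up to proximality}, with the slack ``absorbed along the orbit by Lemma \ref{lemma-promimal-finite}''; you then assert $\Phi_k({\bf x}')\to{\bf x}$. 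This fails: if $\chi_i(x_i')$ is merely proximal to $x_i$, then some iterate $T^n$ brings them close, but $\chi_i(x_i')$ itself need not be anywhere near $x_i$, so $\Phi_k({\bf x}')$ need not converge to ${\bf x}$ (and translating by $T^n$ only produces minimal points near the orbit of ${\bf x}$, which is circular). The correct move, and the one the paper makes, is to restrict the block construction to the sub-fiber $\prod_i u_i\psi^{-1}(y_i)$, where each $x_i'$ and each target coordinate is fixed by the \emph{same} minimal idempotent $u_i$; then $(x_i',\widetilde x_i)$ is a minimal point of $R_\psi$ and the strong form of regularity gives $\chi_i(x_i')=\widetilde x_i$ \emph{exactly}, so $\Phi_k({\bf x}')\to\widetilde{\bf x}$ genuinely. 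Only after that does the RIC identity $v\circ\bigl(u_i\psi^{-1}(y_i)\bigr)=\psi^{-1}(y_i)$ carry the dense set of minimal points from the sub-fiber to the whole fiber. Your final paragraph contains the RIC spreading step, so the proof is repairable, but as written the intermediate convergence claim is false and the two halves of your argument (proximal matching versus RIC spreading) are doing overlapping and partly incompatible jobs.

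A secondary, more structural divergence: you perform the RIC shadow diagram on the metric pair $X^*\to X_\infty^*$ and then separately invoke regularity of $\M\to X_\infty^*$, whereas the paper applies the RIC diagram to $\xi':\M\to X_\infty^*$ itself, obtaining a single map $\psi:\M\to Y$ that is simultaneously RIC (by construction) and regular (because its domain is $\M$, Remark \ref{rem-reg}), with $Y\to X_\infty^*$ proximal. Your route forces you to re-establish the polynomial saturation theorem for the newly RIC-modified metric systems; Remark \ref{rem-3.3} makes this plausible (a RIC extension is open, and proximal extensions do not change the maximal pro-nilfactor since $R_\pi\subseteq{\bf P}\subseteq\RP^{[\infty]}$), but Theorems \ref{thm-poly-sat1} and \ref{thm-poly-saturate} as stated apply only to the almost one to one modification, so this needs to be argued rather than cited. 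The paper's arrangement avoids this entirely: the saturation statement is used only for the fixed metric pair $X^*\to X_\infty^*$, all proximal and RIC modifications happen upstairs at the level of $\M$ (which, being universal, is unchanged by them), and minimality of $M_\infty(Y,\A)$ is recovered from Lemma \ref{lemma-proximal}. You would also need to check that the automorphisms you build in ${\rm Aut}_\psi(\M)$ push down compatibly through $\M\to X^*$ before applying the RIC property of $\pi^*$ downstairs, a matching of levels the paper never has to perform.
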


\begin{proof} We split our proof into the following steps.

\medskip
\noindent {\bf Step 1}: The construction of the extensions.

\medskip
Let $X^*,X_\infty^*, \pi^*$ etc. be as in Theorem \ref{thm-poly-saturate}.
Let $\eta': (\M, T)\rightarrow (X^*,T)$ be the factor map from the universal minimal t.d.s. $(\M, T)$ to $(X^*,T)$, and let $\xi'=\pi^*\circ \eta': \M\rightarrow X_\infty^*$. Now using the RIC-diagram for $\xi'$ (Theorem \ref{RIC}), we have the following commutative diagram, where $\theta',\theta$ are proximal and $\psi: \M\rightarrow Y$ is RIC. Also by Remark \ref{rem-reg}, $\psi$ is regular.
\begin{equation*}
\xymatrix{
	X\ar[d]_{\pi}&	{X^*} \ar[l]_{\varsigma^*}\ar[d]_{\pi^*} &  \M \ar[l]_{\eta'}\ar[dl]^{\xi'}  &  \M \ar[l]_{\theta'}\ar[d]^{\psi}  \\
	X_\infty & {X}^*_\infty\ar[l]^{\varsigma} &  & {Y}\ar[ll]^{\theta}	
	}
\end{equation*}
The diagram above can be rewritten as follows:
\begin{equation*}
\xymatrix{
	X\ar[d]_{\pi}&	{X^*} \ar[l]_{\varsigma^*}\ar[d]_{\pi^*} &  \M \ar[l]_{\eta}\ar[dl]^{\xi}  \ar[d]^{\psi}  \\
	X_\infty & {X}^*_\infty\ar[l]^{\varsigma} &   {Y}\ar[l]^{\theta}
	}
\end{equation*}
where $\eta=\eta'\circ\theta'$ and $\xi=\xi'\circ \theta'$. Now let
$$W=(\psi^\infty)^{-1}(M_\infty(Y,\A))\subseteq \M^s\times (\M^{d-s})^{\Z},$$
where $s,d$ are the numbers appearing in the definition of Condition $(\spadesuit)$. Then $W$ is a $\G$-invariant closed subset, that is $(W, \G)$ is a t.d.s. It is clear that $M_\infty(\M,\A)\subseteq W$. We claim that we have the following commutative diagram:
\begin{equation*}
\xymatrix
{
M_\infty(X^*,\A) \ar[d]_{{\pi^*}^\infty}  & W \ar[l]_{\eta^\infty}\ar[d]^{\psi^\infty} \\
M_\infty(X_\infty^*,\A) &  M_\infty(Y,\A)\ar[l]^{\theta^\infty} .
}
\end{equation*}
To see this, note first that $\theta^\infty: (M_\infty(Y,\A), \G)\rightarrow (M_\infty(X_\infty^*,\A), \G)$  is a factor map, and it follows that
\begin{equation*}
  \begin{split}
     W & = (\psi^\infty)^{-1} (M_\infty(Y,\A))\subseteq (\psi^{\infty})^{-1} (\theta^\infty)^{-1}(M_\infty(X_\infty^*,\A)) \\
       & =(\theta^\infty\circ \psi^{\infty})^{-1}(M_\infty(X_\infty^*,\A))=({\pi^*}^\infty\circ \eta^\infty)^{-1}(M_\infty(X_\infty^*,\A))\\
       &= (\eta^\infty)^{-1}\Big(({\pi^*}^\infty)^{-1}(M_\infty(X_\infty^*,\A))\Big).
   \end{split}
\end{equation*}
Thus
$$ M_\infty(X^*,\A)=\eta^{\infty}(M_\infty({\bf M}, \A))\subseteq \eta^\infty (W)\subseteq ({\pi^*}^\infty)^{-1}M_\infty(X_\infty^*,\A).$$
By Theorem \ref{thm-poly-saturate}, $M_\infty(X^*,\A)=({\pi^*}^\infty)^{-1}(M_\infty(X_\infty^*,\A))$. Hence
$$\eta^\infty: (W, \G)\rightarrow (M_\infty(X^*,\A), \G)$$ is a factor map.

\medskip
\noindent {\bf Step 2}: Transitions of the question.

\medskip
By Theorem \ref{thm-poly-saturate}, $(M_\infty(X^*,\A), \G)$ is a transitive t.d.s. To show $(M_\infty(X^*,\A),\G)$ is an $M$-system, it suffices to show that $\G$-minimal points are dense in $(W,\G)$, as $(M_\infty(X^*,\A), \G)$ is a factor of $(W, \G)$.

For convenience, sometimes we denote the point ${\bf x}$ in $\M^s\times (\M^{d-s})^{\Z}$ by
${\bf x}=(x_i)_{i\in I}$ instead of $\small {\bf x}=\Big((x_1,x_2, \ldots, x_s), \big((x^{(1)}_n, x^{(2)}_n,\ldots,
x^{(d-s)}_n)\big)_{n\in {\Z}} \Big)$, and similarly for the points in $Y^s\times (Y^{d-s})^{\Z}$.

\medskip

By Corollary \ref{cor-nil}, $(M_\infty(X_\infty,\A),\G)$ is minimal. Since $\varsigma$ and $\theta$ are proximal,
by Lemma~\ref{lemma-proximal}, $(M_\infty(Y,\A),\G)$ is minimal, and hence $T^\infty$-minimal points are dense
in $M_\infty(Y,\A)$ (Lemma \ref{den-minimal}). Since $\psi$ is open, thus to show that $\G$-minimal points are dense in $(W,\G)$, it
suffices to show for each $T^\infty$-minimal point ${\bf y}\in M_\infty(Y,\A)$, and each ${\bf x}\in (\psi^\infty)^{-1}({\bf y})$,
there is a net $\{{\bf x}_{\a}\}_\a\subseteq W$ such that each ${\bf x}_\a$ is $\G$-minimal and ${\bf x}_\a\rightarrow {\bf x}$.

\medskip

Now fix a $T^\infty$-minimal point ${\bf y}=(y_i)_{i\in I}\in M_\infty(Y, \A)$. Since $(M_\infty(Y,\A),\G)$ is minimal, ${\bf y}$ is also $\G$-minimal. Thus there exists a $\G$-minimal point
${\bf x}'=(x'_i)_{i\in I}\in (\psi^\infty)^{-1}({\bf y})$ by Lemma \ref{den-minimal}. For all $i\in I$, since $x_i'\in {\bf M}$, there is some minimal idempotent $u_i\in \M$ such that $u_ix_i'=x_i'$. Thus
$${\bf x'}\in \prod_{i\in I} u_i\psi^{-1}(y_i),$$ since $x_i'\in \psi^{-1}(y_i)$ for each $i\in I$.
We remark that if $\psi$ is distal, then $u_i\psi^{-1}(y_i)=\psi^{-1}(y_i)$ for each $i\in I$.
\medskip

\noindent {\bf Step 3}: \ {\em For each ${\bf \widetilde{x}}\in \prod_{i\in I} u_i\psi^{-1}(y_i)$, there is a sequence $\{{\bf z}_k\}_{k\in \N}\subseteq \prod_{i\in I} u_i\psi^{-1}(y_i)$ such that ${\bf z}_k$ is $\G$-minimal and ${\bf z}_k\rightarrow {\bf \widetilde{x}}, k\to\infty$.
In particular, $\G$-minimal points are dense in $\prod_{i\in I} u_i\psi^{-1}(y_i)$. }

\medskip

First we show the case for $\A=\{p\}$ with $\deg p\ge 2$. In this case, $I={\Z}$, and the proof is clearer than the general case. Then we give the proof of the general case.

Let ${\bf \widetilde{x}}=(\widetilde{x}_i)_{i=-\infty}^\infty \in \prod_{i=-\infty}^\infty u_i\psi^{-1}(y_i)$. For each $i\in {\Z}$, $x_i',\ \widetilde{x}_i\in u_i\M$. Since $\psi$ is regular, there is some $\phi_i\in {\rm Aut}_\psi(\M)$ such that $\widetilde{x}_i=\phi_i(x_i')$.
For each $k\in \N$, let
\begin{equation*}
  \begin{split}
    \Phi_k& =(\phi_{-k}\times \phi_{-k+1}\times \cdots \times \phi_k)^\infty\\
    &=\cdots \times (\phi_{-k}\times \phi_{-k+1}\times \ldots \times \phi_k)\times (\phi_{-k}\times \phi_{-k+1}\times \ldots \times \phi_k)\times \cdots.
   \end{split}
\end{equation*}
Then
\begin{equation*}
  \begin{split}
     \Phi_k({\bf x'})&=(\ldots, \phi_k(x_{-k-1}'), \widetilde{{x}}_{-k},\ldots,\underset{\bullet}{\widetilde{x_0}}, \ldots, \widetilde{{x}_k}, \phi_{-k}(x_{k+1}'),\ldots, \phi_k(x_{3k+1}'),\phi_{-k}(x_{3k+2}'),\ldots)\\
     & \rightarrow \widetilde{{\bf x}}, \ k\to\infty.
   \end{split}
\end{equation*}
Since $\phi_j\in {\rm Aut}_\psi(\M)$ for each $j\in\Z$, we have that $\phi_j\psi^{-1}(y_i)=\psi^{-1}(y_i)$ and
$$\phi_j(u_i\psi^{-1}(y_i))=u_i\phi_j(\psi^{-1}(y_i))=u_i\psi^{-1}(y_i)$$ for each $i\in \Z$
and hence $\Phi_k({\bf x'})\in \prod_{i=-\infty}^\infty u_i\psi^{-1}(y_i)\subseteq W.$


It is left to show that $\Phi_k({\bf x'})$ is $\G$-minimal.
For all $k\in \N$, by Lemma \ref{lem-minimal-point}, ${\bf x'}$ is a $\langle T^\infty,\sigma^{2k+1} \rangle$-minimal point as ${\bf x'}$ is $\G$-minimal.
Note that
$$\sigma^{2k+1}\Phi_k=\Phi_k \sigma^{2k+1}.$$
It follows that $\Phi_k({\bf x'})$ is a $\langle T^\infty,\sigma^{2k+1} \rangle$-minimal point. Again by Lemma \ref{lem-minimal-point}, it is $\G$-minimal.

\medskip

Now we show the general case. Now
$$\small {\bf y}=(y_i)_{i\in I}=\Big((y_1, \ldots, y_s), \big((y^{(1)}_n, \cdots, y^{(d-s)}_n)\big)_{n\in {\Z}} \Big),$$
$$\small {\bf \widetilde{x}}=(\widetilde{x}_i)_{i\in I}=\Big((\widetilde{x}_1, \ldots, \widetilde{x}_s), \big((\widetilde{x}^{(1)}_n, \cdots, \widetilde{x}^{(d-s)}_n)\big)_{n\in {\Z}} \Big),$$
$$\small {\bf x'}=\Big((x'_1,\ldots, x'_s), \big((x'^{(1)}_n,\cdots, x'^{(d-s)}_n)\big)_{n\in {\Z}} \Big)$$ and
$$\small \prod_{i\in I} u_i\psi^{-1}(y_i)=\prod_{i=1}^su_i\psi^{-1}(y_i)\times \prod_{n=-\infty}^\infty \big(u_n^{(1)}\psi^{-1}(y_n^{(1)})\times \cdots \times u_n^{(d-s)}\psi^{-1}(y_n^{(d-s)})\big).$$
For each $i\in I$, $x_i',\ \widetilde{x}_i\in u_i\M$. Since $\psi$ is regular, there is some $\phi_i\in {\rm Aut}_\psi(\M)$ such that $\widetilde{x}_i=\phi_i(x_i')$. For each $k\in \N$, let
$$\Phi_k=(\phi_1\times \cdots \times \phi_s)\times \Big(\prod_{j=-k}^k \big(\phi_j^{(1)}\times \cdots \times \phi_j^{(d-s)}\big) \Big)^\infty,$$
where
\begin{equation*}
  \begin{split}
\small &\Big(\prod_{j=-k}^k \big(\phi_j^{(1)}\times \cdots \times \phi_j^{(d-s)}\big) \Big)^\infty\\
&=\cdots \times \prod_{j=-k}^k \big(\phi_j^{(1)}\times \cdots \times \phi_j^{(d-s)}\big)\times \prod_{j=-k}^k \big(\phi_j^{(1)}\times \cdots \times \phi_j^{(d-s)}\big)\times \cdots.
 \end{split}
\end{equation*}

Note since $\phi_i\in {\rm Aut}_\psi(\M)$, $\phi_i(u_j\psi^{-1}(y_j))=u_j\psi^{-1}(y_j)$ for all $i,j\in I$. Thus
\begin{equation*}
  \begin{split}
    \Phi_k({\bf x'})&=(\phi_1\times \cdots \times \phi_s)\times \Big(\prod_{j=1}^k \big(\phi_j^{(1)}\times \cdots \times \phi_j^{(d-s)}\big) \Big)^\infty (\bf x')\\
    &= \Big((\widetilde{x}_1, \ldots, \widetilde{x}_s), \big((\phi_j^{(1)}({x_j'}^{(1)}), \ldots, \phi_j^{(d-s)}({x_j'}^{(d-s)}))\big)_{j=-\infty}^{-k-1} \\
     & \quad \quad \quad \quad \big((\widetilde{x}^{(1)}_j, \ldots, \widetilde{x}^{(d-s)}_j)\big)_{j=1}^k, \big((\phi_j^{(1)}({x_j'}^{(1)}), \ldots, \phi_j^{(d-s)}({x_j'}^{(d-s)}))\big)_{j=k+1}^\infty \Big)\\
     &\in u_1\psi^{-1}(y_1)\times \cdots \times u_s\psi^{-1}(y_s)\times \prod_{n=-\infty}^\infty \big(u_n^{(1)}\psi^{-1}(y_n^{(1)})\times \cdots \times u_n^{(d-s)}\psi^{-1}(y_n^{(d-s)})\big)
   \end{split}
\end{equation*}
and hence
$$\Phi_k({\bf x'})\rightarrow {\bf \widetilde{x}}, \ k\to\infty.$$
Now we show that $\Phi_k({\bf x'})$ is $\langle T^\infty,\sigma \rangle$-minimal. This follows by the same arguments as the simple case we just proved.
This ends the {\bf Step 3}.

\medskip

\noindent {\bf Step 4}: The final arguments of the proof using the RIC property.

\medskip
Finally, we show that for each ${\bf x}=(x_i)_{i\in I}\in (\psi^\infty)^{-1}({\bf y})$, there is a net $\{{\bf x}_{\a}\}_\a\subseteq W$ such that each ${\bf x}_\a$ is $\G$-minimal and ${\bf x}_\a\rightarrow {\bf x}$.

Since ${\bf y}=(y_i)_{i\in I}\in M_\infty(Y)$ is $T^\infty$-minimal, there is some minimal idempotent $v\in \M$ such that $v{\bf y}={\bf y}$. Thus $vy_i=y_i$ for all $i\in I$. As $\psi$ is RIC, we have (see (\ref{cir-op}))
$$v\circ\big( u_j\psi^{-1}(y_i)\big)=\psi^{-1}(vy_i)=\psi^{-1}(y_i), \ \forall i\in I.$$
It follows that
\begin{equation}\label{w2}
  v\circ \Big(\prod_{i\in I} u_i\psi^{-1}(y_i)\Big)=\prod_{i\in I} \psi^{-1}(y_i)= (\psi^\infty)^{-1}({\bf y}).
\end{equation}
Since ${\bf x}=(x_i)_{i\in I}\in (\psi^\infty)^{-1}({\bf y})$, by \eqref{w2}, for any net $\{n_\a\}_\a\subseteq \Z$ converging to $v$, there is some ${\bf z}_\a\in  \prod_{i\in I} u_i\psi^{-1}(y_i)$ such that
$$(T^{\infty})^{n_\a} {\bf z}_\a\rightarrow {\bf x}.$$
By the definition it is easy to see $p\circ A=p\circ B, p\in \M$, whenever $A$ is dense in $B$. By {\bf Step~ 3}, we may assume that each ${\bf z}_\a$ is $\G$-minimal. Let ${\bf x}_\a=(T^{\infty})^{n_\a} {\bf z}_\a$. Then ${\bf x}_\a$ is $\G$-minimal and ${\bf x}_\a\rightarrow {\bf x}$. The proof is complete.
\end{proof}

\subsection{The proof of Theorem E}\
\medskip

Theorem E follows from Lemma \ref{ww=dem}, Theorem \ref{Thm-equivalence1} and Theorem \ref{thm-poly-M-t.d.s}. 


\subsection{Proof of Corollary F}\
\medskip
In this subsection we show Corollary F.
\begin{proof}[Proof of Corollary F] We will show the corollary for $N_\infty(X,\A)$, and then we have the same result for
$M_\infty(X,\A)$ by Lemma \ref{M-N-equ}. Without loss of generality, we assume that $\A$  satisfies $(\spadesuit)$.
Recall that for each $x\in X$
\begin{equation*}
  \w_x^\A=(T^{\vec{p}(n)}x^{\otimes d})_{n\in \Z}=\big ((T^{p_1(n)}x, T^{p_2(n)}x,\ldots, T^{p_d(n)}x) \big)_{n\in \Z}
  \in (X^d)^{{\Z}},
\end{equation*}
and
\begin{equation*}
  N_\infty(X,\A)=\overline{\bigcup\{\O(\w_x^\A,\sigma): x\in X\}}\subseteq (X^d)^{{\Z}}.
\end{equation*}


(1) First we assume that $(X,T)$ has  dense $T$-minimal points and $x\in X$. Then there are $T$-minimal points $x_i\in X$ such that $x_i\rightarrow x, i\to\infty$. It is clear that $\omega_{x_i}^\A\rightarrow \omega_x^\A, i\to\infty$ and for each $i\in\N$, $(\overline{\O}(x_i,T),T)$
 is a minimal t.d.s. By Lemma \ref{ww=dem}, Theorem \ref{Thm-equivalence1} and Theorem \ref{thm-poly-M-t.d.s},
$\big(N_\infty(\overline{\O}(x_i,T),\A),\G\big)$ is an $M$-system for each $i\in\Z$, and thus $\omega_{x_i}^\A$ is the limit of
$\G$-minimal points which implies that $\omega_{x}^\A$ is the limit of $\G$-minimal points. Then by the definition,
the set of $\G$-minimal points is dense in $N_\infty(X,\A).$

\medskip
Conversely, assume that $N_\infty(X,\A)$ has dense $\G$-minimal points. Applying Lemma \ref{den-minimal} to the orbit closures of the minimal points,
we get that the $T^\infty$-minimal points are dense in
$N_\infty(X,\A)$. It follows by projection to $0$-th coordinate that the set of $T$-minimal points is dense in $X$.

\medskip
(2) Now we assume that $(X,T)$ is an $M$-system. Then for each transitive point $x$ of $(X,T)$, $\w_x^\A$ is a transitive point of $(N_\infty(X,\A),\G)$. By (1) the set of $\G$-minimal points is dense in $N_\infty(X,\A)$, and it follows that $(N_\infty(X,\A),\G)$ is an $M$-system.
\medskip

Conversely, suppose that $(N_\infty(X,\A),\G)$ is an $M$-system. By (1) the set of $T$-minimal points is dense in $X$.
To prove the transitivity of $(X,T)$ we assume that ${\bf x}$
is a transitive point of $N_\infty(X,\A)$. There are a sequence $\{x_i\}_{i\in\N}$ of $X$ and a sequence $\{n_i\}_{i=1}^\infty$ of $\Z$ such that
$\sigma^{n_i}\omega_{x_i}^\A\rightarrow {\bf x}, i\to\infty.$ 
Then it is easy to verify that for each $\ep>0$, there is $i=i(\ep)$ such that
${\O}(\sigma^{n_i}\omega_{x_i}^\A,\G)$ is $\ep$-dense in $N_\infty(X,\A)$.\footnote{Recall the definition of $\ep$-dense subsets: if $B$ is a subset of a metric space $S$, then $B$  is $\ep$-dense in $S$ for given $\ep>0$ if for any $s\in S$, there is $b\in B$ such that the distance between $s$ and $b$ is less than $\ep$.} 
Hence, ${\O}(x_i,T)$ is $\ep$-dense in $X$ since
the $0$-th coordinate of $\sigma^n(T^\infty)^m\big(\sigma^{n_i}\omega_{x_i}^\A\big)$ is $(T^{m+p_1(n+n_i)}x_i,\ldots, T^{m+p_1(n+n_i)}x_i)$
for each $n,m\in\Z$.

To sum up, for each $\ep>0$, there is some $x_i\in X$ such that ${\O}(x_i,T)$ is $\ep$-dense in $X$, which implies that $(X,T)$ is transitive.
\end{proof}

\section{Combinatoric consequences}\label{Section-combinatorial}

In this section, we give some combinatoric consequences. Theorem A is a corollary of the following theorem.

\begin{thm}\label{thm-comb1}
Let $S\subseteq \Z $ and $d\in \N$, and let $p_i$ be an integral polynomial with $p_i(0)=0$, $1\le i\le d$. If $S\in \F_{ps}(\Z)$, then
there is $A\in\F_{ps}(\Z^2)$ such that for any $N\in \N$, there is some ${\bf m}_N\in \Z^2$ satisfying
$${\bf m}_N+(A\cap [-N,N]^2)\subseteq \{(m,n)\in\Z^2: m+p_1(n), m+p_2(n),\ldots, m+p_d(n)\in S\}.$$

In particular, $\{(m,n)\in\Z^2: m+p_1(n), m+p_2(n),\ldots,m+p_d(n)\in S\}\in \F_{ps}(\Z^2).$
\end{thm}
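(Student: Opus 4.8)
The plan is to deduce Theorem \ref{thm-comb1} from the dynamical statement Theorem E via the Furstenberg correspondence principle: I would convert the piecewise syndeticity of $S$ into a recurrence statement for a suitable minimal subshift, apply Theorem E there, and then transfer the resulting $\Z^2$-piecewise syndeticity back to $S$. All the genuine difficulty is already contained in Theorem E, so what remains is the translation.

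First I would set up the correspondence. Let $x=\mathbf{1}_S\in\{0,1\}^{\Z}$ be the indicator sequence of $S$, let $\sigma$ be the left shift, and put $X=\overline{\O}(x,\sigma)$. Writing $[1]_0=\{y\in\{0,1\}^{\Z}: y(0)=1\}$, the key observation is that piecewise syndeticity of $S$ is exactly what guarantees a $\sigma$-minimal point $z\in X$ with $z(0)=1$: since $S$ is piecewise syndetic, there is a fixed $N$ so that $x$ has a $1$ in every window of length $N$ along arbitrarily long intervals; a suitable limit of shifts of $x$ then has a $1$ in every such window throughout $\Z$, and passing to a minimal subset of its orbit closure and shifting by at most $N$ produces the desired $z$ (note $z\in X$ and $z\in[1]_0$). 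I set $M=\overline{\O}(z,\sigma)$, a minimal subsystem, and $U=[1]_0\cap M$, a clopen neighbourhood of $z$ in $M$.

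Second, I would apply Theorem E to the minimal t.d.s. $(M,\sigma)$, the point $z$, and the neighbourhood $U$. Since $\sigma^{m+p_i(n)}z\in U$ holds if and only if $z(m+p_i(n))=1$, Theorem E gives that
$$A:=N^{\Z^2}_{\{p_1,\ldots,p_d\}}(z,U)=\{(m,n)\in\Z^2: z(m+p_1(n))=\cdots=z(m+p_d(n))=1\}$$
is piecewise syndetic in $\Z^2$; this is the set $A\in\F_{ps}(\Z^2)$ claimed in the statement. Third, I would transfer from $z$ back to $S$ using $z\in\overline{\O}(x,\sigma)$. Fix $N\in\N$ and set $Q(N)=N+\max_{1\le i\le d}\max_{|n|\le N}|p_i(n)|$, so that $|m+p_i(n)|\le Q(N)$ whenever $(m,n)\in[-N,N]^2$. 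As $z$ lies in the orbit closure of $x$, there is $a=a(N)\in\Z$ with $x(a+k)=z(k)$ for all $|k|\le Q(N)$. Then for every $(m,n)\in A\cap[-N,N]^2$ and every $i$ we obtain $x(a+m+p_i(n))=z(m+p_i(n))=1$, i.e. $(a+m)+p_i(n)\in S$. Hence, taking $\mathbf{m}_N=(a(N),0)$,
$$\mathbf{m}_N+(A\cap[-N,N]^2)\subseteq\{(m,n)\in\Z^2: m+p_1(n),\ldots,m+p_d(n)\in S\},$$
which is precisely the assertion of Theorem \ref{thm-comb1}. The final ``in particular'' clause then follows immediately from Lemma \ref{lem-pw}.

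I do not expect a real obstacle, since the hard dynamics is absorbed into Theorem E, which has already been established. The only place requiring care is the correspondence itself: producing the minimal point $z$ with $z(0)=1$, and correctly choosing the window radius $Q(N)$ together with the approximating shift $a(N)$ so that the finitely many indices $m+p_i(n)$ with $(m,n)\in[-N,N]^2$ all lie in the region where $\sigma^{a(N)}x$ and $z$ agree. This is routine once the setup is fixed, and it is exactly the role of $Q(N)$ to make the finitely-many-coordinates approximation available.
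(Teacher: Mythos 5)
Your proposal is correct and follows essentially the same route as the paper: pass to the subshift generated by $\mathbf{1}_S$, use piecewise syndeticity to locate a minimal point $z$ in its orbit closure with $z(0)=1$, apply Theorem E to the clopen cylinder $[1]_0$ in the minimal subsystem to get the piecewise syndetic set $A\subseteq\Z^2$, and then transfer back to $S$ by approximating $z$ with a shift of $\mathbf{1}_S$ on the finitely many relevant coordinates, concluding with Lemma \ref{lem-pw}. The only cosmetic difference is that the paper phrases the existence of $z$ via a syndetic set $S'$ with $\mathbf{1}_{S'}$ in the orbit closure, which is equivalent to your window argument.
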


\begin{proof}
Let $\Sigma_2=\{0,1\}^\Z$ and $\sigma: \Sigma_2\rightarrow \Sigma_2$ be the shift, that is, for ${\bf x}=(x_n)_{n\in \Z}\in \Sigma_2$,
$(\sigma{\bf x})_n=x_{n+1}, \forall n\in \Z$. Let the metric $\rho$ on $\Sigma_2$ be defined as follows: for ${\bf x}=(x_n)_{n\in \Z}, {\bf y}=(y_n)_{n\in \Z}$,
$$\rho({\bf x}, {\bf y})=0\ \text{if}\ {\bf x}={\bf y};\ \  \rho({\bf x}, {\bf y})=\frac{1}{k+1}, \ \text{if}\ {\bf x}\not={\bf y}\ \text{and}\ k=\min\{|i|: x_i\neq y_i\}.$$

\medskip

Let ${\bf x}=1_S\in \Sigma_2$ be the indicator function, i.e., $x_n=1$ if and only if $n\in S$.
Let $Z=\overline{\O}({\bf x},\sigma)$. Since $S$ is piecewise syndetic, $Z$ contains a point $1_{S'}$ with $S'$ actually syndetic. Since $S'$ is syndetic, no limit point of translates of $1_{S'}$ is equal to 
${\bf 0}=(\ldots 000\ldots)$. Let $X$ be a minimal subset in orbit closure of $1_{S'}$. Then for any ${\bf y}\in X$, ${\bf y}\neq {\bf 0}$.

Let $U=\{{\bf z}=(z_n)_{n\in \Z}\in X: z_0=1\}$. By the choice of $X$, $U$ is a non-empty open subset of $X$. By Theorem E, there is some ${\bf y}\in U$ such that
$$A=N^{\Z^2}_{\{p_1,\ldots,p_d\}}({\bf y},U)=\{(m, n)\in\Z^2: \sigma^{m+p_1(n)}{\bf y}\in U, \ldots, \sigma^{m+p_d(n)}{\bf y}\in U\}$$
is piecewise syndetic in $\Z^2$. That is,
$$A=\{(m,n)\in \Z^2: y_0= y_{m+p_1(n)}= y_{m+p_2(n)}=\ldots=y_{m+p_d(n)}=1 \}\in \F_{ps}(\Z^2).$$

Let $$B=\{(m , n)\in\Z^2: m+p_1(n), m+p_2(n),\ldots, m+p_d(n)\in S\}.$$
We show that for any $N\in \N$, there is some ${\bf m}_N\in \Z^2$ such that
$${\bf m}_N+(A\cap [-N,N]^2)\subseteq B,$$
then by Lemma \ref{lem-pw}, $B$ is also a piecewise syndetic subset in $\Z^2$.

\medskip

Let $N\in \N$ and set
$$A\cap [-N,N]^2=\{(m_i, n_i)\}_{i=1}^{l_N}.$$
Then
$$y_{m_i+ p_j(n_i)}=y_0=1, \ \forall 1\le j\le d, \ 1\le i\le l_N.$$
Note that ${\bf y}\in X\subseteq Z=\overline{\O}({\bf x},\sigma)$.
Choose $m \in \Z$ such that $\sigma^{m}{\bf x}$ is close to ${\bf y}$ such that $\rho(\sigma^{m}{\bf x}, {\bf y})<\frac{1}{\max\{|m_i|+|p_j(n_i)|: 1\le j\le d, 1\le i\le l_N\}}$.
It follows that
$$x_{m+k}=y_k, \ \forall |k|\le \max\{|m_i|+|p_j(n_i)|: 1\le j\le d, 1\le i\le l_N\}.$$
In particular,
$$x_{m+ m_i+p_j(n_i)}= y_{m_i+p_j(n_i)}=y_0=1, \ \forall 1\le j\le d, \ 1\le i\le l_N.$$
So we have
$$m+ m_i+ p_j(n_i) \in S, \ \forall 1\le j\le d, \ 1\le i\le l_N ,$$
which implies that
$${\bf m}_N+(A\cap [-N,N]^2)\subseteq B , $$
where ${\bf m}_N=(m,0)$.
Thus $B$ is piecewise syndetic in $\Z^2$. The proof is complete.
\end{proof}

As a corollary, we have Furstenberg-Glasner's theorem which was mentioned in the introduction.
With the almost same proof of Theorem \ref{thm-comb1} using Theorem D instead of Theorem E, we have the following result.

\begin{thm}\label{thm-9.3}
Let $S\subseteq \N$ and $d\in \N$, and let $p_i$ be an integral polynomial with $p_i(0)=0$ for each $1\le i\le d$. If $S\in \F_{ps}(\Z)$, then there is $A\in\F_{ps}(\Z)$
such that for any $N\in \N$, there is some ${m}_N\in \Z$ satisfying
$$A\cap [-N,N]\subseteq \{n\in\Z:  m_N+p_1(n), m_N+p_2(n),\ldots,m_N+p_d(n)\in S\}.$$
\end{thm}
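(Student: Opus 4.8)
The plan is to mimic the proof of Theorem~\ref{thm-comb1} almost verbatim, replacing the $\Z^2$-action input (Theorem E) by the $\Z$-action input (Theorem D). Since Theorem D already produces a piecewise syndetic \emph{recurrence set in $\Z$}, the resulting set $A$ lives in $\Z$ from the outset, and the final appeal to Lemma~\ref{lem-pw} (which in Theorem~\ref{thm-comb1} upgrades a windowed containment to piecewise syndeticity in $\Z^2$) is no longer needed: the windowed containment is exactly the assertion of the present theorem. Concretely, I would work inside the full shift $\Sigma_2=\{0,1\}^\Z$ with the shift $\sigma$ and the metric $\rho$ of Theorem~\ref{thm-comb1}, and set ${\bf x}=1_S$. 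Passing to $Z=\overline{\O}({\bf x},\sigma)$, piecewise syndeticity of $S$ guarantees that $Z$ contains a point $1_{S'}$ with $S'$ genuinely syndetic, and syndeticity of $S'$ forces the zero sequence ${\bf 0}$ to lie outside the orbit closure of $1_{S'}$. Choosing a minimal subset $X$ of that orbit closure, every ${\bf y}\in X$ satisfies ${\bf y}\neq{\bf 0}$, so $U=\{{\bf z}=(z_n)_{n\in\Z}\in X: z_0=1\}$ is a nonempty open subset of the minimal system $(X,\sigma)$.

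With this setup the engine is Theorem D applied to $(X,\sigma)$ and the open set $U$: it yields a point ${\bf y}\in U$ for which
\[
A=N_{\{p_1,\ldots,p_d\}}({\bf y},U)=\{n\in\Z:\sigma^{p_1(n)}{\bf y}\in U,\ldots,\sigma^{p_d(n)}{\bf y}\in U\}
\]
is piecewise syndetic in $\Z$. Unwinding the definition of $U$, this says exactly that $A=\{n\in\Z: y_{p_1(n)}=\cdots=y_{p_d(n)}=1\}\in\F_{ps}(\Z)$, and this is the set $A$ claimed by the theorem. It then remains to produce, for each $N\in\N$, the required shift $m_N$, using only that ${\bf y}\in X\subseteq Z=\overline{\O}({\bf x},\sigma)$. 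The window $A\cap[-N,N]$ is finite, so $R_N:=\max\{|p_j(n)|: 1\le j\le d,\ n\in A\cap[-N,N]\}$ is finite; choosing $m=m_N\in\Z$ with $\sigma^m{\bf x}$ close enough to ${\bf y}$, namely $\rho(\sigma^m{\bf x},{\bf y})<\tfrac1{R_N+1}$, forces $x_{m+k}=y_k$ for all $|k|\le R_N$. Then for every $n\in A\cap[-N,N]$ and every $1\le j\le d$ one has $x_{m+p_j(n)}=y_{p_j(n)}=1$, i.e. $m_N+p_j(n)\in S$, which is precisely the containment $A\cap[-N,N]\subseteq\{n\in\Z: m_N+p_1(n),\ldots,m_N+p_d(n)\in S\}$.

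The bulk of the difficulty is entirely absorbed into Theorem D, whose proof is the substantive part of the paper; the present argument is a routine Furstenberg-type symbolic translation. The only points requiring care are structural rather than hard: ensuring that the minimal subsystem $X$ avoids ${\bf 0}$, so that the cylinder $U$ is a legitimate nonempty open set encoding membership in $S$ via the $0$-th coordinate, and the compactness/approximation step in the last paragraph, which transfers recurrence of the single good point ${\bf y}$ back to the original sequence $1_S$ over an arbitrarily large but finite window. I anticipate no genuine obstacle beyond the bookkeeping in this approximation step.
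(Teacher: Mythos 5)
Your proposal is correct and is exactly the argument the paper intends: it states Theorem \ref{thm-9.3} follows "with the almost same proof of Theorem \ref{thm-comb1} using Theorem D instead of Theorem E," and you have simply written out that substitution, correctly noting that the appeal to Lemma \ref{lem-pw} becomes unnecessary since the windowed containment is itself the conclusion. The symbolic setup, the use of Theorem D to produce $A=N_{\{p_1,\ldots,p_d\}}({\bf y},U)\in\F_{ps}(\Z)$, and the finite-window approximation producing $m_N$ all match the paper's route.
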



\section{Some open questions}

We have some natural questions for the further investigation.

\begin{ques}
Assume that $k,d\in \N$, $S\subset \Z^k$ is piecewise syndetic, and $p_{i,j}$ is an integral polynomial with $p_{i,j}(0)=0$ for each $1\le i\le d$
and $1\le j\le k$. Consider the set

\begin{align}\label{mpoly}
\begin{aligned}&\{(m_1,\ldots,m_k,n)\in \Z^{k+1}: (m_1+p_{1,1}(n),\ldots,m_k+p_{1,k}(n))\in S,\\
& \quad \quad \quad \ldots, (m_1+p_{d,1}(n),\ldots,m_k+p_{d,k}(n))\in S\}.
\end{aligned}
\end{align}
Is it true that the (\ref{mpoly}) is piecewise syndetic in $\Z^{k+1}$?
\end{ques}

The corresponding dynamical version is the following

\begin{ques}Let $k,d\in\N$ and $(X,\langle T_1,\ldots,T_k\rangle)$ be a minimal t.d.s., where $T_i:X\rightarrow X$ is a homeomorphism and
$T_i\circ T_j=T_j\circ T_i$ for $1\le i,j\le k$. Let $p_{i,j}$ be an integral polynomial with $p_{i,j}(0)=0$ for each $1\le i\le d$
and $1\le j\le k$.

Is it true that for each $x\in X$ and each neighbourhood $U$ of $x$
\begin{align}\label{mpolytopo}
\begin{aligned}
&\{(m_1,\ldots,m_k,n)\in\Z^{d+1}: T_1^{m_1+p_{1,1}(n)}\cdots T_k^{m_k+p_{1,k}(n)}x\in U,\\
 &\quad \quad \quad \quad \quad \quad \quad \quad  \quad \quad \quad  \ldots, T_1^{m_1+p_{d,1}(n)}\cdots T_k^{m_k+p_{d,k}(n)}x\in U\}
\end{aligned}
\end{align}
is piecewise syndetic in $\Z^{k+1}$?
\end{ques}

The above two questions can also be asked for 
a nilpotent group.

\medskip
We remark that for $k=1$ the two questions have been answered in the current paper.
Moreover, it seems that the tools in the current paper can not be applied directly to the above questions, namely
one needs to work out some kind ``saturation theorem" for commuting homeomorphisms.


\begin{thebibliography}{SS}

\bibitem{Ak97} E. Akin, \textit{ Recurrence in topological dynamical systems: Furstenberg families and Ellis actions}, Plenum Press, New York, 1997.

\bibitem{AGHSY} E. Akin, E. Glasner, W. Huang, S. Shao and X. Ye, \textit{Sufficient conditions under which a transitive system is chaotic},
 Ergodic Theory Dynam. Systems, {\bf 30} (2010), 1277--1310.



\bibitem{Au66} J. Auslander, \textit{Regular minimal sets. I.}, Trans. Amer. Math. Soc., {\bf 123} (1966), 469--479.

\bibitem{Au88} J. Auslander, \textit{Minimal flows and their extensions}, North-Holland Mathematical Studies, vol.153, North-Holland, Amsterdam.

\bibitem{AF} J. Auslander and H. Furstenberg, \textit{Product recurrence and
distal points}, Trans. Amer. Math. Soc., \textbf{343} (1994),
221--232.

\bibitem{Beiglbock} M. Beiglb\"{o}ck, \textit{Arithmetic progression in abundance by combinatorial tools}, Proc. Amer. Math. Soc. {\bf 137} (2009), 3981--3983.




\bibitem{BH01} V. Bergelson and N. Hindman, \textit{Partition Regular Structures Contained in
Large Sets Are Abundant},  J. Combin. Theory Ser. A,  {\bf 93}(2001), 18--36.




\bibitem{BL96} V. Bergelson and A. Leibman, \textit{Polynomial extensions of van der Waerden's and
Szemer\'edi's theorems},  J. Amer. Math. Soc., {\bf 9} (1996), 725--753.


\bibitem{D-Y} P. Dong, S. Donoso, A. Maass, S. Shao and X. Ye, \textit{Infinite-step nilsystems, independence andcomplexity},   Ergodic Theory Dynam. Systems, {\bf 33}(2013), 118--143.

\bibitem{EGS} R. Ellis, S. Glasner and L. Shapiro, \textit{Proximal-Isometric Flows},  Adv. Math., {\bfseries 17} (1975), 213--260.

\bibitem{Fort} M. K. Fort, Jr., \textit{Points of continuity of semi-continuous functions}, Publ. Math. Debrecen,
{\bf 2} (1951), 100--102.







\bibitem{F77} H. Furstenberg, \textit{Ergodic behavior of diagonal measures and a theorem of Szemer\'edi on arithmetic progressions}.  J. Anal. Math., {\bf 31} (1977), 204--256.

\bibitem{F} H. Furstenberg, \textit{Recurrence in ergodic theory and combinatorial number theory}, M. B. Porter Lectures. Princeton University Press, Princeton, N.J., 1981.

\bibitem{F81} H. Furstenberg, \textit{Poincar\'e recurrence and number theory}, \text{ Bull. Amer. Math. Soc.},  {\bf 5} (1981), 211--234.


\bibitem{FG98} H. Furstenberg and  E. Glasner, \textit{Subset dynamics and van der Waerden's theorem},
Contemp. Math., {\bf 215} (1998), 197--203.



\bibitem{FW} H. Furstenberg and B. Weiss, \textit{Topological dynamics and combinatorial number theory},  J. Anal. Math., {\bf 34} (1978), 61--85.


\bibitem{G92} E. Glasner, \textit{Regular PI metric flows are equicontinuous}, Proc.
Amer. Math. Soc., {\bf 114} (1992), 269--277.

\bibitem{G94} E. Glasner, \textit{Topological ergodic decompositions and applications to products of powers of a minimal transformation}, J. Anal. Math., {\bf 64} (1994), 241--262.

\bibitem{GGY} E. Glasner, Y. Gutman and X. Ye, \textit{ Higher order regionally proximal equivalence relations for general minimal group actions}, Adv. Math., {\bf 333} (2018), 1004-1041.


\bibitem{GHSWY} E. Glasner, W. Huang, S. Shao, B. Weiss and X. Ye, \textit{Topological characteristic factors and nilsystems}, arXiv:2006.12385, to appear in  J. Eur. Math. Soc.



\bibitem{GH} W. Gottschalk and G. Hedlund, \textit{Topological dynamics}, American Mathematical Society Colloquium Publications, Vol. 36. American Mathematical Society, Providence, R. I., 1955. vii+151 pp.

\bibitem{GMV20}  Y.  Gutman, F. Manners and P. Varj$\acute{\text{u}}$,  \textit{The structure theory of nilspaces III: Inverse limit representations and topological dynamics}, Adv. Math., {\bf 365} (2020), 107059, 53 pp.








\bibitem{HK18} B. Host and B. Kra, \textit{Nilpotent Structures in Ergodic Theory},
Mathematical Surveys and Monographs {\bf 236}, AMS, 2018.

\bibitem{HKM} B. Host, B. Kra and A. Maass, \textit{Nilsequences and a structure theory for topological dynamical systems},  Adv. Math., {\bf 224} (2010) 103--129.

\bibitem{HY05} W. Huang and X. Ye, \textit{Dynamical systems disjoint from any minimal system}, {Trans. Amer. Math. Soc.}, {\bf 357} (2005), 669--694.

\bibitem{HSY-19-1} W. Huang, S. Shao and X. Ye, \textit{Topological correspondence of multiple ergodic averages of nilpotent actions}, J. Anal. Math.,  {\bf 138} (2019), 687--715.

\bibitem{HSY-21} W. Huang, S. Shao and X. Ye, \textit{Multiply minimal points for the product of iterates}, arXiv:2103.16759, to appear in Israel J. Math.

\bibitem{HSY-new} W. Huang, S. Shao and X. Ye, \textit{Polynomial Furstenberg joinings and its applications}, preprint, 2022.



\bibitem{Kura1} K. Kuratowski, \textit{Topology, Vol. I}, Acad. Press, New York, N.Y., 1966.

\bibitem{Kura2} K. Kuratowski, \textit{Topology, Vol. II}, Acad. Press, New York, N.Y., 1968.

\bibitem{Leibman051} A. Leibman, \textit{ Pointwise convergence of ergodic averages for polynomial sequences of translations on a nilmanifold}, Ergodic Theory Dynam. Systems, {\bf 25} (2005), 201--213.

\bibitem{Leibman052} A. Leibman, \textit{Pointwise convergence of ergodic averages for polynomial actions of ${\mathbb Z}^d$ by translations on a nilmanifold}, Ergodic Theory Dynam. Systems, {\bf 25} (2005),  215--225.


\bibitem{P} W. Parry, \textit{Ergodic properties of affine transformations and flows on nilmanifolds}, Amer. J. Math., {\bf 91} (1969), 757--771.


\bibitem{Qiu} J. Qiu, \textit{Polynomial orbits for totally minimal systems}, arXiv:2202.08782, 2022.





\bibitem{SY} S. Shao and X. Ye, \textit{Regionally proximal relation of order $d$ is an equivalence one for minimal systems and a combinatorial consequence},  Adv. Math., {\bf 231}(2012), 1786--1817.




\bibitem{Veech} W. A. Veech, \textit{Point-distal flows.} Amer. J. Math.,  {\bf 92} (1970), 205--242.

\bibitem{Vr} J. de Vries, \textit{Elements of Topological Dynamics}, Kluwer Academic Publishers, Dordrecht, 1993.



\bibitem{WXY} Q. Wu, H. Xu and X. Ye, \textit{On structure theorems and non-saturated examples,}
arXiv:2201.00152, to appear in Commun. Math. Stat.

\end{thebibliography}
\end{document}